\preto\tabular{\setcounter{magicrownumbers}{0}}
\preto\longtable{\setcounter{magicrownumbers}{0}}
\newcounter{magicrownumbers}
\theoremstyle{plain}
\newtheorem{theorem}{Theorem}[section]
\newtheorem*{theorem*}{Theorem}
\newtheorem{proposition}[theorem]{Proposition}
\newtheorem{corollary}[theorem]{Corollary}
\newtheorem{lemma}[theorem]{Lemma}
\newtheorem{conjecture}[theorem]{Conjecture}
\theoremstyle{definition}
\newtheorem{definition}[theorem]{Definition}
\newtheorem{proposition-definition}[theorem]{Proposition-Definition}
\newtheorem{convention}[theorem]{Convention}
\theoremstyle{remark}
\newtheorem{example}[theorem]{Example}
\newtheorem{construction}[theorem]{Construction}
\newtheorem{statement}[theorem]{}
\newtheorem{algorithm}[theorem]{Algorithm}
\newtheorem*{remark}{Remark}
\newcolumntype{C}{>{$}c<{$}} 
\title{Geography of Landau-Ginzburg models and threefold syzygies}
\author{Yang He\textsuperscript{1}}
\email{heyang@bimsa.cn}
\author{Artan Sheshmani \textsuperscript{1,2,3}}
\email{artan@mit.edu}
\address{\textsuperscript{1}Beijing Institute of Mathematical Sciences and Applications, No. 544, Hefangkou Village, Huaibei Town, Huairou District, Beijing 101408}
\address{\textsuperscript{2}Massachusetts Institute of Technology, IAiFi Institute, 77 Massachusetts Ave, 26-555. Cambridge, MA 02139}
\address{\textsuperscript{3}National Research University, Higher School of Economics, Russian Federation, Laboratory of Mirror Symmetry, NRU HSE, 6 Usacheva str., Moscow, Russia, 119048}
\begin{document}
\begin{abstract}
    We study the behavior of toric Landau-Ginzburg models under extremal contraction and minimal model program. We also establish a relation between the moduli space of toric Landau-Ginzburg models and the geography of central models. We conjecture that there is a correspondence between extremal contractions and minimal model program on Fano varieties, and degenerations of their associated toric Landau-Ginzburg models written explicitly. We prove the conjectures for smooth toric varieties, as well as general smooth Fano varieties in dimensions 2 and 3. As an application, we compute the elementary syzygies for smooth Fano threefolds.
\end{abstract}

\maketitle
\smallskip
\textbf{MSC codes.} 14E05, 14E07, 14E30

\textbf{Keywords.} Sarkisov program, Mori fibre spaces, Landau-Ginzburg models

\tableofcontents

\section{Introduction}


\subsection{Background}

\subsubsection{The Sarkisov Program}

The study of the birational automorphism groups of the projective spaces $\mathbb{P}^{n}$ dates back to the 19th century, with significant contributions from the Italian school of algebraic geometry and notable figures like Max Noether. These groups were known as Cremona groups after Luigi Cremona, who initially introduced them. One of the most famous theorems during this period is the classical theorem of Noether and Castelnuovo:

\begin{theorem*}[Noether-Castelnuovo]
  The birational automorphism group of $\mathbb{P}^2$ is generated by the regular automorphism group and a single quadratic transformation.
\end{theorem*}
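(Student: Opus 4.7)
The plan is to induct on the degree $d$ of the defining linear system of a birational self-map $\varphi \colon \mathbb{P}^2 \dashrightarrow \mathbb{P}^2$. After resolving the indeterminacy of $\varphi$ by a sequence of blow-ups $\pi \colon X \to \mathbb{P}^2$, the pullback of the hyperplane system on the target is a \emph{homaloidal net} $\mathcal{H} \subset |\mathcal{O}_{\mathbb{P}^2}(d)|$ whose general member is an irreducible rational curve of degree $d$ passing through base points $p_1,\dots,p_r$ (possibly infinitely near) with multiplicities $m_1 \geq m_2 \geq \cdots \geq m_r$. The base case $d=1$ forces $\varphi \in \mathrm{PGL}_3$, so the goal is to strictly reduce $d$ at every step.

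The numerical input is the pair of identities
\[
  \sum_{i=1}^{r} m_i \;=\; 3(d-1), \qquad \sum_{i=1}^{r} m_i^2 \;=\; d^2 - 1,
\]
extracted from $\mathcal{H}^2 = 1$ and $p_a(\mathcal{H}) = 0$ via the standard blow-up formulas on $X$. From these I derive the \emph{Noether inequality} $m_1 + m_2 + m_3 \geq d + 1$ whenever $d \geq 2$, by a short combinatorial comparison of the two sums above. Granted this, I compose $\varphi$ with the standard quadratic Cremona transformation $\sigma$ whose three indeterminate points are mapped to $p_1, p_2, p_3$ by a suitable element of $\mathrm{PGL}_3$; a direct multiplicity computation shows the resulting homaloidal system has degree $2d - (m_1 + m_2 + m_3) \leq d-1$, so the induction closes and expresses $\varphi$ as a product of elements of $\mathrm{PGL}_3$ and copies of $\sigma$.

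The main obstacle, and the subtle classical point, is that the three highest-multiplicity base points $p_1, p_2, p_3$ are in general \emph{infinitely near} rather than proper points of $\mathbb{P}^2$, so there is no ordinary quadratic Cremona transformation literally based at them. To handle this, I replace the offending infinitely near configuration by proper points through an auxiliary sequence of standard quadratic transformations and projective automorphisms — an ``untwisting'' step that itself lives in the subgroup generated by $\mathrm{PGL}_3$ and $\sigma$. Showing that this auxiliary procedure always strictly decreases a well-chosen induction invariant (say, the pair $(d, m_1)$ in lexicographic order) and ruling out the borderline configurations where Noether's inequality is saturated without a degree drop is where the genuine difficulty of the theorem lies; once this reduction is in place, it feeds into the inductive step above and completes the proof.
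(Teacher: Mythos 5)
The paper does not prove this statement: it is quoted as a classical background theorem in the introduction, with no argument supplied, so there is nothing in the text to compare your proposal against. Judged on its own terms, your outline is the standard Noether-style degree-reduction argument, and the parts you actually carry out are correct: the homaloidal conditions $\sum m_i = 3(d-1)$ and $\sum m_i^2 = d^2-1$ follow from $\mathcal{H}^2=1$ and $p_a=0$, the inequality $m_1+m_2+m_3 \geq d+1$ follows from them by the comparison you indicate, and composing with a quadratic map based at three proper base points in general position of multiplicities $m_1,m_2,m_3$ does drop the degree to $2d-(m_1+m_2+m_3) \leq d-1$.

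The genuine gap is the step you yourself flag and then defer: the case where the three base points of highest multiplicity are infinitely near (or, a case you do not mention, proper but collinear, in which the net of conics through them degenerates and no quadratic transformation based at them exists). You describe an ``untwisting'' procedure and assert that a lexicographic invariant such as $(d,m_1)$ strictly decreases, but you neither construct the procedure nor verify the decrease, and this is precisely the point at which Noether's original 1870s argument was incomplete for roughly thirty years. Castelnuovo's actual proof avoids the issue by a different route (factoring first through de Jonqui\`eres transformations and then decomposing those into quadratic ones), and the completion of the Noether-style induction required the later careful case analysis of the degenerate configurations. As written, your proposal reduces the theorem to exactly its historically hard kernel without resolving it, so it is an outline of a known strategy rather than a proof; to close it you would need either to carry out the untwisting with an explicit strictly decreasing invariant and a treatment of the saturated cases of Noether's inequality, or to switch to the de Jonqui\`eres factorization.
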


In the 1980s, the minimal model program (MMP) was introduced, expanding the study of Cremona groups to include birational maps between Mori fibre spaces. Sarkisov, in his work \cite{Sarkisov89}, introduced the notion of \emph{elementary links} between Mori fibre spaces, and proposed a proof that every birational transformation between threefold Mori fibre spaces can be factorized into these elementary links. More precisely, he invented a procedure to explicitly untwist any birational map between polarized Mori fibre spaces by an elementary link and introduced an invariant, the \emph{Sarkisov degree}, that \emph{strictly decreases} (with some exceptions) in this procedure. He stated that this procedure must terminate after finitely many steps. This procedure was subsequently named after Sarkisov and is known as the \emph{Sarkisov program}.

The first detailed proof of the strong Sarkisov program in dimension 3 was provided by Corti:
\begin{theorem*}[\cite{Cor}]
  The Sarkisov program holds in dimension 3.
\end{theorem*}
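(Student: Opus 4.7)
The plan is to follow the framework outlined in Sarkisov's announcement and make each step rigorous using the full toolbox of three-dimensional MMP. Given a birational map $\varphi\colon X/S \dashrightarrow X'/S'$ between two Mori fibre spaces, I would fix a very ample linear system $\mathcal{H}'$ on $X'$, pull it back to a mobile linear system $\mathcal{H}$ on $X$, and attach to $\mathcal{H}$ a triple of invariants — the \emph{Sarkisov degree} $(\mu, c, e)$. Here $\mu$ is the rational number with $K_X + \frac{1}{\mu}\mathcal{H}\equiv 0$ over $S$ (the quasi-effective threshold), $c$ is the canonical threshold of the pair $(X,\frac{1}{\mu}\mathcal{H})$, and $e$ counts the crepant exceptional divisors computing $c$. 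The game is to show that whenever $\varphi$ is not an isomorphism of Mori fibre spaces, some elementary link strictly decreases the degree in a well-chosen lexicographic order.

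The key structural step is a Noether--Fano--Iskovskikh type inequality: if $\varphi$ is not an isomorphism, then either $c<1$ (a maximal singularity of $\mathcal{H}$ exists) or $K_X + \frac{1}{\mu}\mathcal{H}$ fails to be nef over $S$. In either case one obtains a canonical initial datum on $X$ — in the first case, an exceptional divisorial valuation realising $c$; in the second, an extremal ray on $X/S$ of the appropriate sign. From this datum I would produce an elementary Sarkisov link of one of the four types by running a two-ray game: blow up the maximal centre or extract the divisor computing $c$, run the relative $K$-MMP on the resulting variety using the existence and termination of threefold flips of Mori, Kawamata, Kollár and Shokurov, and read off the second extremal ray that emerges at the end. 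The output is a new Mori fibre space $X_1/S_1$ and a birational map $\varphi_1\colon X_1/S_1\dashrightarrow X'/S'$.

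The last and hardest step is termination: one must show that the degree of $\varphi_1$ is strictly smaller than that of $\varphi$ in the lexicographic order $(\mu, c, e)$, and that an infinite strictly decreasing sequence cannot exist. Monotonicity is a case-by-case verification depending on the type of link and the sign of the relevant discrepancies, comparing $K_X + \frac{1}{\mu}\mathcal{H}$ before and after the link. For termination I would need: (a) discreteness of the set of possible values of $\mu$, which requires a uniform bound on denominators — this is controlled by the Cartier indices of $K$ on three-dimensional terminal Mori fibre spaces, and ultimately by Alexeev-style boundedness of log terminal singularities applied to a general surface section of $\mathcal{H}$; (b) an analogous ACC for $c$; and (c) integrality of $e$.

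I expect item (a) to be the main obstacle. Producing a uniform bound on the denominators of $\mu$ that is stable across all Mori fibre spaces appearing in a potential infinite Sarkisov chain is nontrivial — this is exactly the discreteness issue that Sarkisov's original sketch left open, and the point where Corti's paper must do real work. If a direct uniform bound is out of reach, the fallback would be to exploit boundedness of $\mathbb{Q}$-factorial threefold terminal Fano models together with Shokurov's ACC-type results, and to combine them with an inductive control of $c$ from the two-ray game to force termination.
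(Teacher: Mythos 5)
The paper does not prove this statement; it is quoted verbatim from \cite{Cor} as background, so there is no internal proof to compare against. Your outline --- the Sarkisov degree $(\mu,c,e)$, the Noether--Fano--Iskovskikh dichotomy, untwisting via a two-ray game using threefold flips, and termination resting on discreteness of $\mu$ via boundedness of terminal $\mathbb{Q}$-Fano threefolds --- is precisely the strategy of Corti's original argument, with the genuinely hard points correctly identified.
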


    Instead of looking into Sarkisov's construction and tracing the Sarkisov degree, Hacon and McKernan used the theory of the geography of log models to prove the existence of factorization into Sarkisov links. More explicitly:
\begin{theorem*}[\cite{HM}]
  Every birational map $f: X/S \dashrightarrow Y/T$ between Mori fibre spaces is a composition of Sarkisov links.
\end{theorem*}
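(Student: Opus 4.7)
The plan is to follow the strategy of the geography of ample (log) models, interpolating between the two Mori fibre space structures $X/S$ and $Y/T$ by a path in the space of log pairs on a common birational model, and then identifying each wall crossing along this path as a Sarkisov link of one of the four standard types.

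Concretely, I would first resolve the indeterminacy of $f$ by a common log resolution $W \to X$, $W \to Y$, and reduce to comparing the two Mori fibrations as distinct ample models of log pairs on $W$. Then I would fix a sufficiently ample divisor $A$ on $Y$ (relatively ample over $T$) and let $D$ denote its strict transform on $X$. For a small ample $H$ on $X$ (relatively ample over $S$) and a small parameter $\epsilon>0$, I would consider the family of boundaries
\[
\Delta_t \;=\; t\,D \,+\, \epsilon\,H, \qquad t\in[0,1],
\]
together with the associated pairs $(X,\Delta_t)$. Possibly after a base change of parameters, the ample model at $t=0$ recovers $X/S$ and the ample model for $t$ near $1$ recovers $Y/T$. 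By the finiteness of ample models established in the general minimal model program, there are only finitely many critical values $0=t_0<t_1<\dots<t_n=1$ across which the ample model changes, so $f$ is decomposed into finitely many elementary birational transitions.

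Next, I would classify each transition at a wall $t=t_i$. On each side of the wall one has an ample model equipped with a structure morphism to a base (either a Mori fibration or the identity), and at the wall itself a common minimal model over a third, higher-dimensional base appears; the $2$-ray game on this common model over that base produces the two neighboring ample models. A case analysis on the relative Picard rank of the two sides versus the wall, and on whether a divisorial contraction or a small modification occurs, matches each transition to a Sarkisov link of type I, II, III, or IV. Composing these links from $t=0$ to $t=1$ yields the required factorization of $f$.

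The main obstacle will be the wall-crossing analysis in the previous step: one must show that the ample model cannot jump in a more complicated way, in particular that the transition really is governed by a single extremal ray on each side and fits the rigid combinatorial shape of a Sarkisov link. This requires careful bookkeeping of the Mori chamber decomposition of the pseudo-effective cone of the common resolution, together with an application of the cone and contraction theorems to control the extremal rays that become $(K+\Delta_{t_i})$-trivial at the wall. Once this is done, the Sarkisov structure of each wall crossing is essentially forced by the relative numerical geometry.
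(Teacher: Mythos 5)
The paper does not prove this statement; it is quoted verbatim from \cite{HM}, and the surrounding discussion attributes the proof there exactly to the geography of log/ample models. Your proposal is in outline the same argument as the cited one (finiteness of ample models on a common resolution, a path between the two boundary points representing $X/S$ and $Y/T$, and identification of each wall crossing with a Sarkisov link via a two-ray game), so it matches the approach the paper relies on; the only caveat is that the genuine work lies in the wall-crossing analysis you defer, and in Hacon--McKernan this is carried out for a general path on the boundary of the pseudo-effective polytope of a multi-parameter family of boundaries on $W$, rather than for a single segment $tD+\epsilon H$ on $X$.
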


    However, the Sarkisov degree may not necessarily decrease for such factorization. We refer to this result as \emph{the weak Sarkisov program}.

\subsubsection{The theory of syzygies of Mori fibre spaces}

  On June 21st, 2011, Vyacheslav Shokurov delivered a lecture on mobility and rigidity in birational algebraic geometry at RIMS, at the conference ``MMP and extremal rays'' dedicated to Mori’s 60th birthday. In his lecture, he introduced the notion of \emph{central models} as a gift to Mori on the occasion. Actually, we have
    $$
      \{\text{central models of rank }1 \}  \leftrightarrow \{ \text{Mori fibre spaces}\},
    $$
    which is one of the central objects of MMP and modern birational geometry. Moreover, it was established in \cite{SC}, Corollary 6.12 (also p. 527) that
    $$
      \{\text{central models of rank }2 \}  \leftrightarrow \{ \text{Sarkisov links}\},
    $$
    which means that every central model of rank 2 corresponds to a Sarkisov link, up to a choice of direction. Conversely, every Sarkisov link has a center. In the paper \cite{SC} was also established the weak Sarkisov theory: any two Mori models of a given variety are related by a composition of Sarkisov links (\cite[Theorem 7.2]{SC}). The prove uses the geography approach of \cite[Example 2.11]{IskSh} up to perturbation. The same idea was used in \cite{HM} where many technical details from MMP needed for log geography were added, in particular, flips and termination with scaling.

    As a continuation of \cite{SC}, in the talk of Shokurov it was stated that
    $$
      \{\text{central models of rank }3 \}  \leftrightarrow \{ \text{elementary
    relations of Sarkisov’s links}\}.
    $$ 
    This means that any relation for Sarkisov’s links can be decomposed into elementary relations (and trivial relations such as $aa^{-1} = 1$). This follows from the geography of log models. The paper \cite{Ka13} used the ideas presented in this lecture.

    Finally, Shokurov conjectured the following: 
    $$
      \{\text{central models of higher rank}\}  \leftrightarrow \{ \text{higher relations (syzygies) of Mori fibre spaces}\}.
    $$

This conjectured program was solved in \cite{Myself_Syzygy} where we have the following theorems:

  \begin{theorem*}[Homotopical syzygies of $Y/R$]\label{homotopical syzygy}
    There exists a regular (possibly infinite dimensional) CW complex $CW = CW(Y/R)$ satisfying the following:
  \begin{enumerate}[label=(\arabic*),align=left]
    \item \emph{Correspondence:} There is a one-to-one correspondence:
     $$
    \{\text{cells of }CW \text{ of dimension }d \}  \leftrightarrow \{ \text{central models of rank } d + 1\text{ of }Y/R\}.
     $$
     For a central model $X/Z$, the corresponding cell will be denoted by $C(X/Z)$.
    \item \emph{Order-preserving:} For any two central models $X_{1}/Z_{1}$ and $X_{2}/Z_{2}$ of $Y/R$, we have
    $$
    X_{1}/Z_{1} \preceq X_{2}/Z_{2} \Leftrightarrow C(X_{1}/Z_{1}) \subseteq C(X_{2}/Z_{2}).
    $$
    \item \emph{Contractibility:} If $CW \neq \emptyset$, then $CW$ is contractible.
    \item \emph{Action of birational automorphisms:} There is an action of $G$ on $CW$ such that for any central model $X/Z$ of $Y/R$ and element $\sigma \in G$, we have
        $$
        \sigma(C(X/Z)) = C(\sigma(X/Z)).
        $$
         (cf. \cref{def:birational model} for the action of $G$ on models).
  \end{enumerate}
  \end{theorem*}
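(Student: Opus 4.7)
The plan is to construct $CW(Y/R)$ as the dual cell complex of the polytopal decomposition afforded by the \emph{geography of log models}. For a fixed klt pair $(Y,B_{0})$ over $R$, consider a simplex $\Delta$ of effective boundaries $B$ such that $(Y,B)$ remains klt and $K_{Y}+B$ is pseudo-effective over $R$. By the work of Shokurov--Choi and Hacon--McKernan, $\Delta$ admits a locally finite polytopal decomposition into locally closed strata on which the log canonical model (together with its induced MMP structure) is constant, and these strata correspond precisely to the central models of $Y/R$. Under this correspondence, a central model of rank $r$ corresponds to a stratum of codimension $r-1$, so top-dimensional chambers are Mori fibre spaces (rank $1$), walls are Sarkisov links (rank $2$), and so on.

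First I would build $CW(Y/R)$ as the dual CW complex of this decomposition: to each stratum of codimension $d$ assign a cell of dimension $d$, glued by the standard dual-polytopal prescription. This yields item (1) tautologically, since cells of dimension $d$ biject with strata of codimension $d$, i.e.\ with central models of rank $d+1$. For item (2), the dual construction reverses the face relation on strata into the face relation on dual cells, so $C(X_{1}/Z_{1}) \subseteq C(X_{2}/Z_{2})$ is equivalent to the appropriate face relation between the corresponding strata, which by definition of the partial order on central models is $X_{1}/Z_{1} \preceq X_{2}/Z_{2}$.

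For contractibility (3), I would argue that the effective locus in $\Delta$ (the set of $B$ where $(Y,B)$ admits a good minimal model over $R$) is convex, hence contractible, and then invoke the standard nerve-lemma / dual-complex equivalence: for a locally finite polytopal decomposition of a contractible space whose strata and their pairwise intersections are convex, the dual regular CW complex is homotopy equivalent to the underlying space. Here every stratum is an open relative polytope, so all convexity hypotheses are automatic. For the birational action (4), any $\sigma \in G$ acts linearly on $\Delta$ by pullback of boundaries and identifies log canonical models, hence permutes the strata of the geography decomposition; passing to the dual, this descends to a cellular self-homeomorphism of $CW(Y/R)$ satisfying $\sigma \cdot C(X/Z) = C(\sigma(X/Z))$ by construction.

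The main obstacle I expect is assembling the \emph{global}, possibly infinite-dimensional, complex and checking that it is canonically attached to $Y/R$. Working with a single finite-dimensional $\Delta$ only gives a subcomplex, and one must verify that different choices of $\Delta$ produce compatible subcomplexes and that the colimit inherits the desired properties. This requires termination of flips with scaling to ensure local finiteness of every local chart, the cone theorem for the polytopal structure inside $\Delta$, and the choice of an equivariant exhaustion by $\Delta$'s so that the $G$-action passes to the limit together with regularity and contractibility.
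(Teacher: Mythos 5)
The first thing to note is that the paper does not prove this theorem: it is quoted from the companion work \cite{Myself_Syzygy}, and the present text only indicates that the argument goes through the geography of log models (``the geography approach of \cite[Example 2.11]{IskSh} up to perturbation''). So your proposal can only be measured against that indication, and in outline it matches: the intended object is indeed a dual complex built from the polytopal decomposition of divisor space into loci with constant ample model, and your bookkeeping (codimension $r-1$ strata for rank $r$ central models, hence $d$-cells for rank $d+1$) is consistent with the local geography the paper computes explicitly for $\mathrm{Bl}_{P_{1},P_{2}}\mathbb{P}^2$ in \cref{example:local geography}.

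There are, however, two genuine gaps. First, your contractibility argument does not work as stated. Central models are fibrations with $\dim X > \dim Z$, so the strata that contribute cells live where the ample model drops dimension, i.e. on the boundary of the pseudo-effective (or movable) cone rather than on a convex region of $\Delta$; after projectivization this locus is a sphere, and the nerve-lemma-plus-convexity argument gives the wrong answer. The paper's own local computation shows this concretely: the central models of rank $\leq 2$ under $\mathrm{Bl}_{P_{1},P_{2}}\mathbb{P}^2$ form five vertices and five edges arranged in a pentagon, i.e. a circle, which is only capped off by the $2$-cell of the rank-$3$ model itself. Contractibility of $CW(Y/R)$ must therefore come from filling each such sphere $S^{r-2}$ with the $(r-1)$-cell of the dominating rank-$r$ model and running an inductive connectivity (or explicit retraction) argument over all ranks; that is a different and substantially harder statement than convexity of the effective locus. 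Second, the passage from a single simplex $\Delta$ of boundaries on the fixed $Y$ to the full complex is not a technicality you can defer: a fixed $\Delta$ only detects central models dominated by ample models of divisors on $Y$, whereas $CW(Y/R)$ must contain a cell for \emph{every} central model of \emph{every} rank, which forces you to pass to arbitrarily high birational models of $Y$, to verify that the resulting directed system of subcomplexes is compatible, closure-finite and regular, and to check that the $G$-action and contractibility survive the colimit. That assembly, together with the termination and local finiteness statements needed to make each chart a genuine regular CW structure, is where the substance of the cited proof lies, and your proposal names it as an obstacle without supplying it.
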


\begin{theorem}[Homological syzygies of $Y/R$]\label{homological syzygy}
  Assume that $CW \neq \emptyset$. Then we have the following exact sequence of $\mathbb{Z}[G]$-modules:
  $$
  \cdots \rightarrow B_{2} \xrightarrow{\partial_{2}} B_{1} \xrightarrow{\partial_{1}} B_{0} \rightarrow \mathbb{Z} \rightarrow 0,
  $$
  where $B_{d}$ is freely generated by the central models of rank $r=d+1$ of $Y/R$ as an abelian group.
  \end{theorem}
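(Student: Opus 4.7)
The plan is to derive the homological syzygy statement as a direct consequence of the homotopical one, by taking the cellular chain complex of the CW complex $CW = CW(Y/R)$ constructed in the previous theorem. Since that complex is regular, cellular homology is well-defined even in the infinite-dimensional setting, and agrees with singular homology. Setting $B_d := C_d(CW;\mathbb{Z})$, the correspondence part of the homotopical syzygy theorem identifies $B_d$ as the free abelian group on the $d$-cells, which by (1) of that theorem is precisely the set of central models of rank $d+1$.

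Next I would invoke contractibility: since $CW \neq \emptyset$ and $CW$ is contractible by item (3) of the previous theorem, its reduced homology vanishes in every degree. Equivalently, the augmented cellular chain complex
$$
\cdots \rightarrow B_{2} \xrightarrow{\partial_{2}} B_{1} \xrightarrow{\partial_{1}} B_{0} \xrightarrow{\varepsilon} \mathbb{Z} \rightarrow 0,
$$
where $\varepsilon$ sends every $0$-cell (i.e.\ every central model of rank $1$, i.e.\ every Mori fibre space) to $1 \in \mathbb{Z}$, is exact. This realizes the required exact sequence of abelian groups.

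It remains to upgrade this to an exact sequence of $\mathbb{Z}[G]$-modules. By item (4) of the homotopical syzygy theorem, $G$ acts on $CW$ by cellular homeomorphisms permuting the cells according to $\sigma(C(X/Z)) = C(\sigma(X/Z))$. Such a cellular action induces a $\mathbb{Z}[G]$-module structure on each $C_d(CW;\mathbb{Z}) = B_d$, and the cellular differentials $\partial_d$ are automatically $G$-equivariant because they are computed from incidence data preserved by the action. The augmentation $\varepsilon$ is also trivially $G$-equivariant since $G$ acts trivially on $\mathbb{Z}$ and $\varepsilon$ is constant on cells.

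The main technical point I anticipate is bookkeeping the orientations of cells required to define the cellular differential in a $G$-equivariant way: one must either choose orientations compatibly on each $G$-orbit (possibly using that $CW$ is regular, so attaching maps are embeddings and incidence numbers lie in $\{0,\pm 1\}$), or interpret $B_d$ as the appropriate $\pm$-permutation module so that the action and differential intertwine strictly. Once a consistent orientation convention is fixed, equivariance of $\partial_d$ and contractibility together yield the desired exact sequence of $\mathbb{Z}[G]$-modules, and no further input is needed beyond the previous theorem.
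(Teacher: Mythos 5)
Your derivation is correct and is exactly the intended one: the paper does not reprove this theorem (it is quoted as background from \cite{Myself_Syzygy}), but the statement is designed to follow from the homotopical syzygy theorem by taking the augmented cellular chain complex of the regular CW complex, using contractibility for exactness and the cellular $G$-action for the $\mathbb{Z}[G]$-structure, precisely as you do. Your remark on orientations is the right caveat — note that the theorem only asserts $B_d$ is free \emph{as an abelian group}, which deliberately leaves room for the $G$-action to be a signed permutation action.
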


  We mention the following corollary:

  \begin{corollary}[Spectral sequence of birational automorphism group]\label{spectral sequence from syzygy}
  Notation as above. Then we have the following spectral sequence:
  $$
  E_{i,j}^{2} = H_{i}(H_{j}(G,B_{\bullet})) \Rightarrow H_{i+j}(G,\mathbb{Z}),
  $$
  where $B_{d}$ is freely generated by central models of rank $r = d+1$ of $Y/R$ as an abelian group.
  \end{corollary}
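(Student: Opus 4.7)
The plan is to view the long exact sequence of \cref{homological syzygy} as a $\mathbb{Z}[G]$-module resolution of the trivial module $\mathbb{Z}$ (not necessarily projective) and then apply the standard hyperhomology double complex argument to compute $H_*(G,\mathbb{Z}) = \mathrm{Tor}_*^{\mathbb{Z}[G]}(\mathbb{Z},\mathbb{Z})$ in two different ways.

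First I would pick an auxiliary free resolution $P_\bullet \to \mathbb{Z}$ of the trivial $\mathbb{Z}[G]$-module and form the first-quadrant double complex $C_{p,q} = B_p \otimes_{\mathbb{Z}[G]} P_q$, with horizontal differential $\partial_p \otimes \mathrm{id}$ coming from \cref{homological syzygy} and vertical differential $\mathrm{id} \otimes d_q$ coming from $P_\bullet$. Running the horizontal filtration first: each $P_q$ is flat, so tensoring the exact sequence $\cdots \to B_1 \to B_0 \to \mathbb{Z} \to 0$ with $P_q$ remains exact. Hence the horizontal $E^1$ collapses to a single column isomorphic to $\mathbb{Z} \otimes_{\mathbb{Z}[G]} P_\bullet$, and the subsequent vertical homology is $\mathrm{Tor}_*^{\mathbb{Z}[G]}(\mathbb{Z}, \mathbb{Z}) = H_*(G, \mathbb{Z})$. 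This identifies the abutment of the total complex as $H_{i+j}(G, \mathbb{Z})$.

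Next I would run the other spectral sequence. Taking the vertical homology first gives $H_q^{\mathrm{vert}}(C_{p,\bullet}) = \mathrm{Tor}_q^{\mathbb{Z}[G]}(B_p, \mathbb{Z}) = H_q(G, B_p)$, and the horizontal differentials induced by $\partial_\bullet$ arrange these into a chain complex $H_j(G, B_\bullet)$ in the $i$-direction. Taking horizontal homology produces exactly $E^2_{i,j} = H_i\bigl(H_j(G, B_\bullet)\bigr)$, which then abuts to $H_{i+j}(G, \mathbb{Z})$ by the previous step.

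The argument is essentially formal given \cref{homological syzygy}, so I do not anticipate a serious obstacle. The only minor technicality is that $B_\bullet$ may extend infinitely far to the right (since the CW complex may be infinite-dimensional), but strong convergence of the first-quadrant spectral sequence is unaffected: each fixed total degree $n$ receives contributions only from the finitely many bidegrees $(p,q)$ with $p + q = n$ and $p, q \ge 0$.
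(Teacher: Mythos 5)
Your argument is correct. The paper states this corollary without proof (it is imported from the companion work \cite{Myself_Syzygy}), and what you give is the standard hyperhomology derivation: form the first-quadrant double complex $B_p \otimes_{\mathbb{Z}[G]} P_q$, use flatness of the $P_q$ to collapse one spectral sequence onto $\mathbb{Z}\otimes_{\mathbb{Z}[G]}P_\bullet$ and identify the abutment as $H_{*}(G,\mathbb{Z})$, then read off $E^2_{i,j}=H_i\bigl(H_j(G,B_\bullet)\bigr)$ from the other filtration. This is exactly the intended formal consequence of \cref{homological syzygy}, and your remark on convergence for a possibly infinite-length $B_\bullet$ disposes of the only technical point.
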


    Roughly speaking, the above corollary provides a practical algorithm to compute the discrete group homology $H_{i}(G^{discrete},\mathbb{Z})$ of the birational automorphism group from the following data:
    \begin{enumerate}[align=left,label=(\arabic*)]
    \item The classification of the central models of $Y/R$.
    \item The order between the central models. 
    \item The discrete group homology of the (orientation and fibre preserving) biregular automorphism groups of the central models.
    \end{enumerate}

\subsubsection{Known results and open problems}

    It remains a widely open problem to compute the above data for syzygies. Here, we present some current known cases and some expected progress in dimension 2 and 3.

\hfill 

\paragraph{Classification of central models}\label{paragraph: classification of central models}

    The classifications of central models are known in dimension 2. In dimension 3, there are 3 types of central models:
    \begin{enumerate}
        \item Terminal Fano threefolds (or $\mathbb{Q}$-Fano threefolds). The classification of the smooth case was completed in \cite{Isk77}, \cite{Isk78} and \cite{MoriMukai}. The semi-stable $\mathbb{Q}$-Fano threefolds of Picard number 1 were completed by Miles Reid and his collaborators, cf. \cite{ABR}, \cite{BS07a}, \cite{BS07b}.
        \item Del-Pezzo fibrations over $\mathbb{P}^1$. \cite{Manetti} and \cite{HaPro} studied the local behavior when the general fibre satisfies $K^{2} \geq 5$.
        \item $\mathbb{P}^1$ fibrations over surfaces. Such central models are called $\mathbb{Q}$-conic bundles, and this case was partially studied in \cite{MoriProkhorov07}, \cite{MoriProkhorov08} and \cite{MoriProkhorov09}. In particular, they showed that the base has only Du Val singularities of type \textbf{A}, and they classified the possible local germs.
    \end{enumerate}

    The incompleteness of the classification of threefold central models appears to be a major obstruction in the study of the Sarkisov program in dimension 3. In fact, terminal Fano threefolds naturally appear even in Sarkisov links with a smooth center, see \cref{case 2.28} for an example. We want to describe these terminal Fano threefolds appearing in elementary syzygies as detailed as possible. A natural way is to consider some invariants of terminal Fano threefolds, for instance:
    \begin{enumerate}
        \item The invariants coming from the singularities of $X$. Terminal Fano threefold singularities were classified in \cite{Mori1985}. For each smooth Fano threefold, the singularities of its extremal contractions are studied in \cite{Matsuki1995}.
        \item The ($\mathbb{Q}$-Gorenstein) deformation invariants of $X$. For smooth Fano threefolds, many invariants are known to be deformation invariant. For instance, the invariants related to anti-pluricanonical systems, quantum periods and toric Landau-Ginzburg models. Some invariants also work in general, like the anti-canonical volume $(-K_{X})^3$, quantum periods and toric Landau-Ginzburg models.
    \end{enumerate}
    
    Many natural invariants in birational geometry are not constant in a $\mathbb{Q}$-Gorenstein family. For instance, the Hodge numbers $h^{p,q}$ may jump. On the other side, the invariants constructed from mirror symmetry, such as quantum periods and toric Landau-Ginzburg models, are naturally deformation invariant. Moreover, it's conjectured in \cite{CKPT} that toric Landau-Ginzburg models are even \emph{complete invariants} for $\mathbb{Q}$-Gorenstein deformation families:

\begin{conjecture}[cf. \cite{CKPT}, Conjecture 5.1]\label{conjecture: Q-Fano variety and LG model}
    Rigid maximally mutable Laurent polynomials in $n$ variables (up to mutation) are in one-to-one correspondence with pairs $(X,D)$, where $X$ is a Fano $n$-fold of class TG (that is, it admits a $\mathbb{Q}$-Gorenstein degeneration with reduced fibres to a normal toric variety) with terminal locally toric $\mathbb{Q}$-Gorenstein-rigid singularities and $D \in |-K_{X}|$ is a general elephant (up to $\mathbb{Q}$-Gorenstein deformation).
\end{conjecture}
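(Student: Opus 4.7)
The plan is to construct two maps, $\Phi$ sending pairs $(X,D)$ to mutation classes of Laurent polynomials and $\Psi$ in the reverse direction, and to verify that they are mutual inverses. For $\Phi$, given $(X,D)$ of class TG, I would first choose a $\mathbb{Q}$-Gorenstein degeneration $X \rightsquigarrow X_{0}$ to a normal toric variety; the spanning fan of $X_{0}$ produces a Fano polytope $\Delta_{0}$. One then attaches coefficients to the lattice points of $\Delta_{0}$ so that the classical period of the resulting Laurent polynomial $f$ matches the regularized quantum period of $(X,D)$ in the sense of Coates--Corti--Galkin--Golyshev--Kasprzyk. The maximally mutable condition is to be verified by checking mutability along every codimension-$1$ face of $\Delta_{0}$, which should correspond to the existence of sufficiently many toric degenerations, while rigidity should follow from the $\mathbb{Q}$-Gorenstein rigidity of the singularities of $X$.

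For $\Psi$, given a rigid maximally mutable Laurent polynomial $f$, I would take $\Delta = \mathrm{Newt}(f)$ and form the Gorenstein toric Fano $X_{\Delta}$ with at worst terminal toric singularities. Q-Gorenstein smoothing techniques (Altmann's combinatorial criterion, Gross--Hacking--Keel type constructions, or direct deformation theory of the affine toric singularities appearing on $X_{\Delta}$) should produce a one-parameter family whose generic fibre $X$ is a Fano with locally toric $\mathbb{Q}$-Gorenstein-rigid terminal singularities, and $D$ would be recovered as the general elephant cut out by the anti-canonical linear system determined by $f$.

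The main obstacle, and the heart of the problem, is \emph{well-definedness}: showing that distinct toric $\mathbb{Q}$-Gorenstein degenerations of the same $X$ give mutation-equivalent Laurent polynomials, and conversely that mutation-equivalent polynomials smooth to deformation-equivalent Fanos. This is precisely where the geography of toric Landau--Ginzburg models developed in this paper should intervene. The strategy is to match the combinatorial mutation operations on $f$ with the extremal contractions / elementary links between toric degenerations of $X$, so that a sequence of mutations between two Laurent polynomials is realized by a Sarkisov-type factorization between the corresponding toric models over the base of a common $\mathbb{Q}$-Gorenstein family. Under this dictionary, the transitivity of mutation on one side mirrors the connectedness of the space of central models of $X/\mathrm{pt}$ on the other side.

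In dimension $2$ the strategy collapses into a finite verification: del Pezzo surfaces are classified, the list of rigid maximally mutable Laurent polynomials in two variables is known, and one matches them by period comparison. In dimension $3$ I would combine the Iskovskih and Mori--Mukai classification of smooth Fano threefolds with the existing tabulations of rigid maximally mutable Laurent polynomials in three variables, checking the correspondence family by family and reducing the well-definedness to the Sarkisov-type analysis of toric LG models carried out earlier in the paper. Higher dimensions remain inaccessible by these methods since neither the Fano side nor the polytope side admits a full classification; in that range one would need a structural (classification-free) proof, most plausibly via a purely homological-mirror-symmetry argument identifying the deformation functor of $X$ with the moduli of Laurent polynomials modulo mutation.
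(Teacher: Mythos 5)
This statement is not a theorem of the paper: it is Conjecture~5.1 of \cite{CKPT}, quoted verbatim and left open. The paper offers no proof and explicitly notes that even the threefold case has only been ``partially studied'' in the literature. So there is no argument in the paper to compare yours against, and your text should be judged as a standalone attempt.

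As such, what you have written is a programme, not a proof, and the gap sits exactly where you place ``the heart of the problem.'' For the map $\Phi$ you must show that any two toric $\mathbb{Q}$-Gorenstein degenerations of the same $X$ yield mutation-equivalent Laurent polynomials; this is the open content of the conjecture, and saying that mutations ``should'' match extremal contractions between toric models does not establish it --- the results of this paper relate degenerations of LG models to contractions of $X$ itself, not to wall-crossings between different toric degenerations of a fixed $X$. For $\Psi$ you assert that $X_{\Delta}$ admits a $\mathbb{Q}$-Gorenstein smoothing to a Fano with the required singularities; Altmann-type criteria cover only special classes of toric singularities, and the existence of such smoothings for the Newton polytope of an arbitrary rigid maximally mutable polynomial is not known. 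Injectivity is also unaddressed: you would need an invariant (the period is the natural candidate) that separates mutation classes and separates $\mathbb{Q}$-Gorenstein deformation classes, and neither separation statement is a theorem. Finally, the ``finite verification'' in dimensions $2$ and $3$ is itself incomplete in the literature --- the surface case is known, but the threefold case has not been carried out for all families, which is precisely why the paper states this as a conjecture rather than a theorem.
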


    In other words, every ($\mathbb{Q}$-Gorenstein) deformation family of terminal Fano threefolds has different toric Landau-Ginzburg models. The threefold case of \cref{conjecture: Q-Fano variety and LG model} has been partially studied in \cite{coates2022mirrorsymmetrylaurentinversion}.

\hfill

\paragraph{The order between central models}

    There are 2 problems related to the order between central models:

\begin{enumerate}
    \item For a fixed central model $X/Z$, determine the central models $X'/Z'$ under it.
    \item When $X/Z$ moves in a moduli space, determine how $X'/Z'$ moves in the corresponding moduli space.
\end{enumerate}

    For surfaces, the first problem is well-known. However, the second problem is complicated to describe even for surfaces. In addition, we don't have the answer to both problems for threefolds. Some rank 2 cases for the first problem, namely, smooth weak central models of rank 2 associated to Sarkisov links of type II, were partially classified in \cite{CutroneMarshburn}.

    One of the main purposes for this article is to solve the first problem for terminal Fano threefolds. As discussed in \ref{paragraph: classification of central models}, we can study the behavior the invariants of these models under the minimal model program.

\hfill

\subsubsection{Birational geometry in mirror symmetry}

The idea of using the theory of Landau-Ginzburg models to study the Sarkisov Program with deformations comes from \cite{CKP}. They conjectured that the birational geometry of a Fano variety, in particular the Sarkisov program, can be studied from the moduli space of Landau-Ginzburg models. The expectation is that if we associate every Fano variety with a moduli of Landau-Ginzburg models, and consider the total space gluing up all such moduli spaces, then Sarkisov links should correspond to certain wall-crossing phenomena.

\subsection{Main results}

    We present our main results in the following way: 
    \begin{itemize}
        \item Firstly, we propose the general conjectures;
        \item Secondly, we list the cases in which these conjectures are known.
    \end{itemize}
    
\subsubsection{Conjectures}

\hfill

\paragraph{Mirror correspondence}

    First, we examine how Landau-Ginzburg models change under extremal contractions.

\begin{conjecture}\label{conj: mirror of extremal contractions}
    Let $X$ be a Fano variety with terminal singularities. Let $f$ be a toric Landau-Ginzburg model of $X$.
    \begin{enumerate}
        \item Let $g: X \rightarrow X'$ be a divisorial contraction and $f'$ be a Landau-Ginzburg model of $X'$. Then there exists an associated degeneration from $f$ to $f'$.
        \item Let $h: X \rightarrow Z$ be a Mori fibre space, $F$ be a general fibre of $h$, and $f'$ be a toric Landau-Ginzburg model of $F$. Then there exists an associated degeneration from $f$ to $f'$.
    \end{enumerate}
\end{conjecture}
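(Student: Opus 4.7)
My plan is to prove the conjecture first in the toric case by direct combinatorial manipulation of fans and Newton polytopes, and then to reduce the general case to the toric setting via the TG-degeneration hypothesis that is built into \cref{conjecture: Q-Fano variety and LG model}.

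For the toric case, I would exploit the explicit description of a toric LG model $f$ of a toric Fano $X$: after mutation, the vertices of the Newton polytope of $f$ are the primitive generators of the rays of the fan $\Sigma_X$, with coefficients determined by the maximally mutable condition. An extremal contraction corresponds to collapsing a distinguished set of rays of $\Sigma_X$. For a divisorial contraction $g: X \to X'$, I would construct a one-parameter family $f_t$ of Laurent polynomials by scaling to $0$ the coefficient of the monomial attached to the contracted ray, so that the limit is (up to mutation) a Laurent polynomial whose Newton polytope is that of $X'$. For a Mori fibre space $h: X \to Z$, the fan of the general fibre $F$ sits as a quotient of $\Sigma_X$ by the sublattice $N(Z)$, and the degeneration $f_t \to f'$ is obtained by specializing to $0$ the coefficients of the monomials that do not descend through this projection.

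To pass to a general smooth Fano $X$ of class TG, I would extend the extremal contraction $g$ (resp.\ $h$) to a relative contraction $\mathcal{X} \to \mathcal{X}'$ over the base of the $\mathbb{Q}$-Gorenstein degeneration of $X$ to a toric variety $X_0$, using the relative MMP. The toric step then produces a degeneration between the toric LG models of the central fibres, and the mutation-equivalence part of \cref{conjecture: Q-Fano variety and LG model} should transport this back to a degeneration between $f$ and $f'$. Matching of quantum periods or of the regularized quantum differential equations on the two sides gives an independent consistency check that the candidate $f_t$ does describe the same LG model.

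The hardest step will be the reduction to the toric case. Extending $g$ or $h$ across the full $\mathbb{Q}$-Gorenstein family is subtle: the relative MMP may require flips, the exceptional locus of $g$ need not specialize flatly to that of the central fibre, and there may be several non-mutation-equivalent toric degenerations of $X$ to choose from. One must also verify that distinct mutation-equivalent Laurent polynomials yield compatible degenerations, which demands control of the full mutation graph of $f$. These difficulties are precisely why the conjecture remains open in general; the strategy succeeds unconditionally for smooth toric Fanos (where the reduction step is vacuous) and, in dimensions $2$ and $3$, by appealing to the known classifications of smooth Fanos and their extremal contractions to verify the degenerations case by case.
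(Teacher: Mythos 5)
Your toric-case argument coincides with the paper's (\cref{prop: LG model for toric divisorial contraction} and \cref{prop: LG model for toric Mori fibre space} are exactly the ``kill the coefficient of the contracted ray'' and ``restrict to the cones mapping to the origin'' manipulations of the Hori--Vafa mirror). For the general case, however, your route diverges from the paper's in an essential way, and the divergence hides a gap. The paper does \emph{not} reduce the smooth surface and threefold cases to the toric case by extending the contraction over a TG-degeneration and transporting through mutations. Instead, it uses a parametrized toric LG model only to produce the candidate family $f_t = \widetilde{f}_{tE}$ (resp. $\widetilde{f}_{th^{*}A}$), and then proves that the limit $f'$ satisfies the period condition by a direct Gromov--Witten comparison: blow-up invariance of genus-zero invariants in dimension $2$ (\cref{theorem: surface MMP and LG models}), the degeneration to the normal cone $X \cup_{E} W$ together with the degeneration formula for divisorial contractions in dimension $3$ (\cref{theorem: threefold divisorial contraction and LG models}), and functoriality of virtual classes under restriction to a general fibre for Mori fibre spaces (\cref{theorem: threefold Mori fibre spaces and LG models}).

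The gap in your proposal is that you treat the period condition as ``an independent consistency check,'' when it is in fact the entire substance of the statement. A Laurent polynomial whose Newton polytope matches the fan polytope of a toric degeneration of $X'$ (or $F$) is not thereby a toric LG model of $X'$: one must still identify its regularized period with the regularized quantum period, and neither the combinatorial collapse of rays nor the mutation-equivalence of \cref{conjecture: Q-Fano variety and LG model} supplies that identification. Your reduction step would, even granting the (unproved, and as you note problematic) extension of $g$ or $h$ across the $\mathbb{Q}$-Gorenstein family, only relate toric data of central fibres; lifting the degeneration back to $f$ and $f'$ is precisely where the quantum-period comparison must be done, and your proposal offers no mechanism for it. Likewise, the claim that dimensions $2$ and $3$ can be settled ``case by case from the classification'' is not substantiated: the paper's dimension-$2$ and $3$ theorems are uniform Gromov--Witten arguments, not enumerations, and a case-by-case check would still require verifying the period condition in each case by some such method.
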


    Next, we examine how Landau-Ginzburg models change under the minimal model program.

\begin{conjecture}\label{conj: mirror of MMP}
    Let $X$ be a Fano variety with terminal singularities and $D$ be an effective divisor on $X$. Let $f$ be the Landau-Ginzburg model of $X$. Let $X'$ be a $D$-minimal model of $X$ and $Z$ an ample model of $D$. Then
    \begin{enumerate}
        \item The minimal degeneration $f_{min,D}$ of $f$ in the direction of $D$ is the Landau-Ginzburg model of $X'$.
        \item The maximal degeneration $f_{max,D}$ of $f$ in the direction of $D$ is the Landau-Ginzburg model of the general fibre of $X' \rightarrow Z$.
    \end{enumerate}
\end{conjecture}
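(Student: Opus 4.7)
The plan is to reduce Conjecture~\ref{conj: mirror of MMP} to Conjecture~\ref{conj: mirror of extremal contractions} by decomposing the $D$-MMP into its elementary steps and matching each step with a degeneration of toric Landau--Ginzburg models. Running the $D$-MMP on $X$ produces a sequence
$$
X = X_{0} \dashrightarrow X_{1} \dashrightarrow \cdots \dashrightarrow X_{n} = X',
$$
in which each arrow is either a divisorial contraction or a flip of a ray extremal with respect to a suitable scaling of $K_{X_{i}}$ by $D$; if $D$ is not big on $X'$, the ample model $Z$ is attained by a further contraction $X' \to Z$ with positive-dimensional fibres. At each stage I would attach a Landau--Ginzburg model $f_{i}$ and produce a compatible chain of degenerations $f = f_{0} \rightsquigarrow f_{1} \rightsquigarrow \cdots \rightsquigarrow f_{n}$, followed (in case~(2)) by a final degeneration to the LG model of the general fibre $F$ of $X' \to Z$.

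Divisorial contractions are handled directly by part~(1) of Conjecture~\ref{conj: mirror of extremal contractions}, and the terminal Mori fibration $X' \to Z$, when present, is handled by part~(2). To identify the composed chain with the extremal degeneration --- $f_{\min,D}$ in case~(1) or $f_{\max,D}$ in case~(2) --- one must match the geography of log models of $(X,tD)$ with the wall structure of the moduli of degenerations of $f$ in the $D$-direction. This is the mirror-symmetric shadow of the wall-crossing picture anticipated in \cite{CKP}: walls in the Mori chamber decomposition determined by scaling $K_{X}$ against $D$ should correspond bijectively, and in an order-preserving manner, to walls at which the LG model degenerates.

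The principal obstacle is the treatment of flips, which Conjecture~\ref{conj: mirror of extremal contractions} does not cover. My plan is to factor a $D$-flip $X_{i} \dashrightarrow X_{i}^{+}$ as two small contractions meeting on a common resolution and then argue that the corresponding LG models are related by a Laurent polynomial mutation, invoking Conjecture~\ref{conjecture: Q-Fano variety and LG model} to identify $f_{i}^{+}$ up to mutation equivalence. In the toric setting the $D$-MMP is literally a sequence of bistellar subdivisions of the fan, each corresponding to an explicit Laurent polynomial mutation, so the entire chain can be written down combinatorially; in dimensions two and three the Iskovskikh--Mori--Mukai classification reduces the general case to a finite verification, which is presumably how the authors obtain the unconditional results announced in the abstract. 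The main difficulties I anticipate in full generality are therefore (i) developing a robust mutation theory for LG models under flips of possibly singular terminal Fano varieties, and (ii) proving \emph{extremality} of the composed degeneration, i.e.\ that its outcome is literally $f_{\min,D}$ or $f_{\max,D}$ rather than merely some intermediate degeneration sitting between them.
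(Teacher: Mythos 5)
First, note that the statement you are addressing is stated in the paper as a conjecture: the paper establishes it only for smooth toric varieties (\cref{section: toric}) and for surfaces (\cref{section: surface}), and in dimension 3 it proves only \cref{conj: mirror of extremal contractions}, not \cref{conj: mirror of MMP}. Your overall reduction --- run the $D$-MMP, match each divisorial contraction and the terminal Mori fibration with the degenerations supplied by \cref{conj: mirror of extremal contractions} --- is exactly the paper's argument in dimension 2, where the MMP is a chain of $(-1)$-curve contractions ending in $\mathbb{P}^2$, $\mathbb{F}_{0}$ or $\mathbb{F}_{1}$ and no flips occur. In the toric case, however, the paper argues differently and more globally: by \cref{prop: toric LG model for toric pairs} the degeneration $f_{t}=\sum_{i} e^{-d_{i}t}x^{v_{i}}$ kills precisely the monomials $x^{v_{i}}$ for which $D_{i}$ lies in the support of the $\mathbb{R}$-fixed part of $D$, and these are exactly the divisors contracted by the $D$-MMP; the extremality of $f_{\min}$ and $f_{\max}$ falls out of the same characterization, with no step-by-step tracking of the MMP and in particular no need to say anything about flips.

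The genuine gap in your plan is the treatment of flips. In the paper's own dictionary (\cref{section: framework}), flips and flops correspond to \emph{wall-crossings} in the moduli space of LG models, not to mutations: a flip contracts no divisor, so the Newton polytope of the toric LG model is unchanged and the same parametrized Laurent polynomial is merely evaluated in an adjacent chamber of $\mathrm{Eff}_{\mathbb{R}}(X)$. Mutations, by contrast, are mirror to toric $\mathbb{Q}$-Gorenstein degenerations of the Fano variety; they change the Newton polytope while preserving the period. Factoring a flip through a mutation and invoking \cref{conjecture: Q-Fano variety and LG model} to identify the result therefore aims at the wrong operation, and your claim that a toric bistellar flip corresponds to an explicit Laurent polynomial mutation is false for the Hori--Vafa mirror, which literally does not change under a toric flip. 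Finally, your assertion that the three-dimensional case is obtained by finite verification against the classification overstates what the paper proves: in dimension 3 only the extremal-contraction conjecture is established, and \cref{conj: mirror of MMP} remains open there.
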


\hfill

\paragraph{Geography of LG models}

    Now we examine the moduli space of Landau-Ginzburg models. First we examine the moduli space of toric Landau-Ginzburg models for a fixed Fano variety $X$, which is constructed from the geography of $X$. We think of it as a natural compactification of the parametrized toric LG models defined in \cite{DHKOP}.

\begin{conjecture}[Local geography]\label{conj: local geography}
    Let $X$ be a terminal Fano variety. Let $\Sigma$ be the fan given by the geography on $\mathrm{Eff}_{\mathbb{R}}(X)$, and $X_{\Sigma}$ be the associated toric variety. Then $X_{\Sigma}$ is a moduli space of toric Landau-Ginzburg models of all central models under $X$, which is compatible with the geography structure on $\mathrm{Eff}_{\mathbb{R}}(X)$. More precisely, we have the following correspondence:
    $$
    \begin{tikzcd}
        \{\text{Faces of } \Sigma \text{ in the boundary of } \mathrm{Eff}_{\mathbb{R}}(X)\} \arrow[r,<->,"\text{Geography}"] \arrow[d,<->,"\text{Toric correspondence}"] & \{ \text{Central models under } X \} \arrow[d,<->, "\text{LG models}"] \\
        \{ \text{Toruses in }X_{\Sigma} \} \arrow[r,<->,"\text{Parametrization}"] & \{ \text{Moduli space of LG models} \}
    \end{tikzcd}
    $$
\end{conjecture}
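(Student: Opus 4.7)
The plan is to decompose the diagram into its four sides, identify which sides are essentially known and which depend on this paper's mirror conjectures, and then verify commutativity. The top horizontal arrow is the order-preserving correspondence between central models and boundary cones of the geography fan from \cite{Myself_Syzygy}: every face of $\Sigma$ on the boundary of $\mathrm{Eff}_{\mathbb{R}}(X)$ determines, via any interior divisor class $D$, a $D$-minimal model and an ample model, and hence a central model under $X$. The left vertical arrow is the standard toric dictionary between cones of $\Sigma$ and torus orbits of $X_{\Sigma}$. Thus the genuine content of the conjecture lies in the bottom horizontal arrow together with commutativity.

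To construct the bottom arrow, I would begin with the parametrized family of toric Landau-Ginzburg models of $X$ from \cite{DHKOP}, which naturally lives over the open torus of $X_{\Sigma}$ corresponding to the maximal cone, and extend it across the boundary tori of $X_{\Sigma}$. For each face $F$ I would take the limit of the Laurent polynomials along the one-parameter subgroups determined by $F$, and check that these limits are again toric LG models by matching the associated Newton polytope degenerations to the behavior predicted by \cref{conj: mirror of MMP}. In Mori fibre space directions the limit should become the LG model of a general fibre as in \cref{conj: mirror of extremal contractions}(2); in divisorial directions it should become the LG model of the contracted variety as in \cref{conj: mirror of extremal contractions}(1).

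Commutativity of the square then reduces to the single assertion that if $F$ corresponds to a central model $X'/Z'$, the degenerated family over the torus orbit $O_{F}$ is a moduli of toric LG models of $X'/Z'$, which is exactly the combined content of the two mirror conjectures. To promote this construction into a bona fide moduli space one must further verify that fibres over $O_{F}$ are mutation-equivalent precisely when they correspond to the same LG model, i.e.\ that mutations are accounted for by the toric automorphisms of $O_{F}$ induced by symmetries of the geography fan.

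The main obstacle is exactly this last point on mutations: mutation equivalence of toric LG models is not a priori a toric phenomenon on $X_{\Sigma}$, and there is no general framework that forces the parametrization of \cite{DHKOP} to identify mutation-equivalent Laurent polynomials along the whole fan. Resolving it likely requires case-by-case analysis of Newton polytope degenerations, feasible when $X$ is toric (where LG models are given by explicit Givental-style Laurent polynomials whose mutations are combinatorially controlled) or when $X$ is smooth of low dimension. This is precisely the scope of the cases proved in this paper; a fully general theory of mutation compatible with the geography fan appears to be beyond reach at present.
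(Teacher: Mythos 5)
Your strategy coincides with the paper's own proof of (the surface case of) this conjecture: reduce via the MMP to divisorial contractions and Mori fibre spaces, degenerate the parametrized toric LG model of \cite{DHKOP} along the corresponding boundary directions of $\mathrm{Eff}_{\mathbb{R}}(X)$, and identify the limiting families over the boundary tori of $X_{\Sigma}$ by invoking a parametrized version of \cref{conj: mirror of extremal contractions} and \cref{conj: mirror of MMP}. Your closing caveat about mutation equivalence is a reasonable additional observation not addressed in the paper, but it does not alter the approach, which is exactly the one carried out there in the cases where the mirror conjectures are established.
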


    Now we construct a global moduli space which parametrize toric LG models of all central models.

\begin{conjecture}[Global geography]\label{conj: main theorem}
    Let $Y$ be a projective variety. Then there exists a moduli space $M$ that parametrizes toric LG models of all central models of $Y$ under some terminal Fano varieties with the following properties:
    \begin{enumerate}
    \item \emph{Geography:} We can write $M = \bigcup\limits_{X/Z} M(X/Z)$, where $X/Z$ runs through the central models of $X/Z$. Moreover, we have $M(X/Z) \cong X_{\Sigma}$, where $\Sigma$ is the fan given by the geography on the cone of effective divisors $\mathrm{Eff}_{\mathbb{R}}(X/Z)$ and $X_{\Sigma}$ is the toric variety associated to $\Sigma$. The moduli space $M(X/Z)$ parametrized toric LG models of all central models under $X/Z$.
    \item \emph{Syzygies theory:} The space $M$ is contractible.
    \item \emph{Action of birational automorphism with Zariski topology:} There is an action of the birational automorphism group $\mathrm{Bir}(Y/R)$ on $M$, which respects the Zariski topology of $\mathrm{Bir}(Y/R)$.
    \end{enumerate}
\end{conjecture}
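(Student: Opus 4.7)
The plan is to build $M$ by gluing the local moduli spaces $M(X/Z)$ from Conjecture \ref{conj: local geography} along the partial order on central models furnished by the Homotopical syzygies theorem. For every central model $X/Z$ of $Y/R$, set $M(X/Z) := X_{\Sigma(X/Z)}$, where $\Sigma(X/Z)$ is the geography fan on $\mathrm{Eff}_{\mathbb{R}}(X/Z)$. Whenever $X_1/Z_1 \preceq X_2/Z_2$, Conjectures \ref{conj: mirror of extremal contractions} and \ref{conj: mirror of MMP} should supply canonical identifications between certain torus orbits of $M(X_1/Z_1)$ and $M(X_2/Z_2)$, since both orbits parametrize the LG models of the same smaller central model; these identifications provide the gluing data. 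The moduli space $M$ is then defined as the colimit of this diagram in a suitable category of (possibly non-finite-type) schemes or torus-stratified spaces. Property (1) is built into the construction, once one verifies the key compatibility: on the face $\mathrm{Eff}_{\mathbb{R}}(X_1/Z_1)$ of $\mathrm{Eff}_{\mathbb{R}}(X_2/Z_2)$, the fan $\Sigma(X_2/Z_2)$ restricts to a refinement of $\Sigma(X_1/Z_1)$, which is a functoriality of the log geography decomposition under passage to a face.

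For contractibility in property (2), I would exhibit a deformation retract of $M$ onto the regular CW complex $CW(Y/R)$ of the Homotopical syzygies theorem. Each stratum of $M$ indexed by a rank $d+1$ central model is an algebraic torus; choosing compatible moment maps on each $X_{\Sigma(X/Z)}$ sends the interior of such a torus to a $d$-cell matching $C(X/Z)$ under the order-preserving bijection. Continuity of the retraction across the gluings is automatic from the compatibility established above, and contractibility of $M$ then follows from the established contractibility of $CW(Y/R)$.

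For property (3), the $\mathrm{Bir}(Y/R)$-action on $M$ is induced from its action on central models: an element $\sigma \in \mathrm{Bir}(Y/R)$ permutes the strata via canonical isomorphisms $M(X/Z) \cong M(\sigma(X/Z))$ coming from an isomorphism of the geography fans under $\sigma$. Compatibility with the Zariski topology on $\mathrm{Bir}(Y/R)$ should follow by upgrading the construction to families: a flat family of birational automorphisms produces a flat family of permutations of strata, hence a morphism $\mathrm{Bir}(Y/R) \times M \to M$ that is regular in the first factor.

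The main obstacle is the gluing step itself. Concretely, one must establish a genuinely functorial version of the local geography (Conjecture \ref{conj: local geography}) under the order $\preceq$, which is a nontrivial strengthening of the mere pointwise statement. The key technical issues I foresee are: (i) that the geography fans on different effective cones are truly compatible under passage to faces, not merely qualitatively similar; (ii) that the resulting colimit admits a workable description as a topological or geometric space, given that the collection of central models is generally infinite with unbounded ranks; and (iii) that the LG model parametrizations themselves glue, which reduces to verifying that the degenerations predicted by Conjectures \ref{conj: mirror of extremal contractions} and \ref{conj: mirror of MMP} are transitive under composition of extremal contractions and MMP steps. Once these three points are settled, the remaining verifications are essentially formal consequences of the Homotopical syzygies theorem.
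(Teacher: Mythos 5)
First, note that the statement you are addressing is stated in the paper as a conjecture, and the paper itself offers no proof of it in general: the closest it comes is \cref{theorem: surface geography}, an explicitly weakened version in dimension $2$ that establishes only the existence of $M'_{2}$ (a restricted form of property (1)) and says nothing about contractibility or the Zariski-topology compatibility of the $\mathrm{Bir}(Y/R)$-action. Your outline for property (1) is close in spirit to what the paper does there: compactify $\mathrm{Pic}(X)\otimes\mathbb{C}^{*}$ to the toric variety $X_{\Sigma}$ of the geography fan, then glue. But your gluing is indexed by central models regarded only as varieties with a fibration, whereas a central model of $Y/R$ carries the additional datum of a birational map to $Y$; the paper handles this by forming $(G\times X_{\Sigma})/\mathrm{Aut}(X)$ in \cref{prop:local-to-global}, so that the piece attached to one isomorphism class is a $G/\mathrm{Aut}(X)$-family of toric varieties. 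Without building this in, your colimit does not have the right index set, and the stratum-permuting action you invoke for property (3) is not defined.

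Second, the argument is a programme rather than a proof, and it defers exactly the points that carry the mathematical content. The gluing data are taken from \cref{conj: local geography}, \cref{conj: mirror of extremal contractions} and \cref{conj: mirror of MMP}, which are themselves open beyond the toric, surface, and partial threefold cases treated in the paper; the face-compatibility of the geography fans and the transitivity of the predicted degenerations are asserted, not established. For property (2) there is moreover a concrete mismatch in the proposed retraction onto $CW(Y/R)$: the open torus stratum of $M(X/Z)$ for a rank-$r$ central model is $r$-dimensional, while the corresponding cell $C(X/Z)$ has dimension $r-1$, so a moment map onto the geography cone does not land on the cell --- one needs an additional collapse (for instance, passing to the projectivized effective cone), and continuity of that collapse across the gluings is not automatic. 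As it stands, your submission correctly identifies the shape of a construction consistent with the paper's partial results, but none of the three asserted properties is actually proved, and the statement remains a conjecture both in the paper and after your argument.
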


\subsubsection{List of known cases}

    First, in the smooth toric case we have:

\begin{theorem}
    \cref{conj: mirror of extremal contractions} and \cref{conj: mirror of MMP} hold for smooth toric varieties.
\end{theorem}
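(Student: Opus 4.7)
The plan is to reduce both conjectures to explicit combinatorial statements about fans and Newton polytopes, using the Givental--Hori--Vafa construction. Recall that for a smooth Fano toric variety $X_{\Sigma}$ with rays $\rho_1,\dots,\rho_k$ and primitive generators $v_1,\dots,v_k \in N$, a standard toric Landau--Ginzburg model is the Laurent polynomial
\[
  f(x) \;=\; \sum_{i=1}^{k} c_i\, x^{v_i},
\]
with the $c_i$ varying in an appropriate parameter space, and ``degeneration'' meaning letting some subset of the $c_i$ tend to $0$ (or more generally, moving to a boundary stratum of the coefficient moduli compatibly with the wall-and-chamber structure on $\mathrm{Eff}_{\mathbb{R}}(X_\Sigma)$). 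Under this setup, the proof becomes a translation between toric geometry on the $N$-side and Newton polytope combinatorics on the $M$-side.

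For \cref{conj: mirror of extremal contractions}, I would treat the two types of extremal contractions separately. A divisorial contraction $g:X_\Sigma \to X_{\Sigma'}$ of a smooth toric Fano corresponds combinatorially to removing (or star-subdividing the complement of) a single ray $\rho_j$ and coarsening the surrounding cones; the star of $\rho_j$ in $\Sigma$ encodes the exceptional divisor. On the LG side this is exactly the specialization $c_j \to 0$: the remaining monomials $\sum_{i\neq j} c_i x^{v_i}$ are by construction the Givental--Hori--Vafa LG model of $X_{\Sigma'}$ (after possibly re-identifying rays that now become primitive generators of coarsened cones). For a toric Mori fibre space $h:X_\Sigma \to Z$, the fan $\Sigma$ splits into ``vertical'' rays (pulled back from $\Sigma_Z$) and ``horizontal'' rays spanning the fan of a general fibre $F$ in the sublattice $N_F \subset N$. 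Sending the coefficients of the vertical rays to zero leaves exactly the LG model of $F$, which is the required degeneration. In both items, the fact that the contraction exists as an extremal contraction of the smooth toric Fano means the combinatorial picture is literally the one described.

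For \cref{conj: mirror of MMP}, I would use the toric MMP of Reid: running the $D$-MMP for an effective toric divisor $D$ on $X_\Sigma$ produces a sequence of divisorial contractions and flips of extremal rays, terminating either in a $D$-minimal model $X'$ (with $D$ nef) or, if $D$ is not big, in a Mori fibre space $X' \to Z$ over the ample model of $D$. Each step is combinatorial on the fan, and by the argument above each step corresponds on the LG side to setting a prescribed family of coefficients to zero. The key point is that this family is selected by the sign of the intersection of $D$ with the extremal ray being contracted, which in the coefficient parameter space is exactly the wall crossed when one moves in the direction determined by $[D] \in \mathrm{Eff}_{\mathbb{R}}(X)$. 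Concatenating these wall-crossings gives precisely the minimal degeneration $f_{\min,D}$ from $f$ in the direction of $D$, and pushing one step further into the interior of the chamber corresponding to the fibre structure over $Z$ yields the maximal degeneration $f_{\max,D}$, matching the LG model of the general fibre by the Mori fibre space case of \cref{conj: mirror of extremal contractions}.

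The main obstacle, I expect, is not any single combinatorial verification but the careful bookkeeping when the intermediate outputs of the toric MMP cease to be smooth: toric $D$-MMP on a smooth $X_\Sigma$ can produce $\mathbb{Q}$-factorial terminal singularities, and one must check that the Givental--Hori--Vafa recipe (in the form used to define the Laurent polynomial LG models here) continues to produce the correct polynomial associated to the singular intermediate variety, i.e.\ that ``set $c_j\to 0$'' is really the LG model attached to the combinatorially modified fan after flips. Once this compatibility is established for each elementary toric Mori step (and for flips this amounts to noting that flips do not change the set of rays, only the cone structure, hence do not alter the Laurent polynomial), the matching of full MMP sequences with iterated degenerations of $f$ follows by induction on the number of Mori steps.
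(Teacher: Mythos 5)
Your treatment of \cref{conj: mirror of extremal contractions} is essentially the paper's: the paper also works with the Hori--Vafa polynomial $\sum_i c_i x^{v_i}$ and observes that a toric divisorial contraction removes one ray (so one monomial degenerates away), while a toric Mori fibre space splits the rays into vertical and horizontal ones, the horizontal rays spanning the fan of the general fibre in the kernel sublattice, so that the degeneration kills the vertical monomials. Nothing to add there.

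For \cref{conj: mirror of MMP} you take a genuinely different route, and it is here that your proposal has a gap. You induct on the steps of the toric $D$-MMP and invoke the divisorial-contraction case of \cref{conj: mirror of extremal contractions} at each step. But that conjecture, and the definition of a toric LG model used here (a period condition against the regularized quantum period), is only formulated for Fano varieties; the intermediate outputs of a $D$-MMP on a smooth toric Fano are in general neither Fano nor weak Fano, so ``the LG model of the intermediate variety'' is not defined, and the compatibility you defer to (``once this is established for each elementary Mori step'') is not mere bookkeeping --- it is the content of the statement. The paper avoids this entirely by never leaving $X$: it first proves (\cref{prop: toric LG model for toric pairs}, via Givental's mirror theorem and Batyrev's description of the intersection pairing on a simplicial toric variety) that for any effective representation $D \sim_{\mathbb{R}} \sum_i d_i D_i$ the polynomial $\sum_i e^{-d_i t} x^{v_i}$ is a toric LG model of the pair $(X, tD)$, and then characterizes the limit $t \to \infty$ in one shot: the monomial $x^{v_i}$ survives in the minimal degeneration if and only if $D_i$ is not contained in the support of the $\mathbb{R}$-fixed part of $D$, i.e.\ if and only if $D_i$ is not contracted by the $D$-MMP. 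The maximal degeneration is handled the same way, the surviving rays being the horizontal ones over the log canonical model. To salvage your inductive approach you would need either to extend the definition of toric LG models to the non-Fano intermediate models, or to reorganize the argument so that the comparison is made between $X$ and the final $D$-minimal model directly --- which is in effect what the paper does.
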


    We also show that in the surface case we have:

\begin{theorem}
    \cref{conj: mirror of extremal contractions} and \cref{conj: mirror of MMP} hold for surfaces.
\end{theorem}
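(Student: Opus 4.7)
The plan is to reduce to an exhaustive check over the (finitely many) deformation families of smooth Fano surfaces, namely $\mathbb{P}^{2}$, $\mathbb{P}^{1}\times \mathbb{P}^{1}$, and the blow-ups $\mathrm{Bl}_{k}\mathbb{P}^{2}$ for $1\le k\le 8$. The previous theorem (smooth toric case) already dispatches the toric del Pezzos $\mathbb{P}^{2}$, $\mathbb{P}^{1}\times \mathbb{P}^{1}$, $\mathbb{F}_{1}$, $\mathrm{Bl}_{2}\mathbb{P}^{2}$ and $\mathrm{Bl}_{3}\mathbb{P}^{2}$, so what remains is the non-toric del Pezzo surfaces of degree $\leq 5$ together with all extremal contractions and MMP walls connecting them to the toric cases.

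First I would fix, for every smooth del Pezzo $X$, a preferred toric representative of its mutation class of toric Landau--Ginzburg models: for the toric del Pezzos these are the standard Newton polytopes coming from the fan, and for $\mathrm{Bl}_{k}\mathbb{P}^{2}$ with $4\le k\le 8$ I would use the Laurent polynomials from the mirror constructions underlying \cite{CKPT} (each arising as the $T$-polytope of a $\mathbb{Q}$-Gorenstein toric degeneration of $X$). Every divisorial contraction on a smooth del Pezzo is the contraction of a $(-1)$-curve $E$, and every Mori fibre space is one of the standard $\mathbb{P}^{1}$-bundles or conic bundles. On the LG side, the corresponding degenerations are all of one of two elementary combinatorial types: deletion of a vertex of the Newton polytope, or projection of the Newton polytope along an edge. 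I would match these combinatorial moves to the geometric contractions by comparing Newton polytopes before and after, using that the extremal ray $R=\mathbb{R}_{\ge 0}[E]$ in $\overline{\mathrm{NE}}(X)$ is dual to the face of the polytope that is affected.

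For \cref{conj: mirror of MMP}, I would decompose $\mathrm{Eff}_{\mathbb{R}}(X)$ into Mori chambers. Inside a single chamber the $D$-minimal model is constant, so the statement reduces to the wall-crossing behaviour between adjacent chambers; each wall is governed by one of the extremal contractions classified in the previous step. The minimal degeneration $f_{\min,D}$ and the maximal degeneration $f_{\max,D}$ are then read off from the one-parameter Laurent families already written down for \cref{conj: mirror of extremal contractions}, and the statements follow by direct comparison with the LG models of the target and of the general fibre (in the latter case using that a general fibre of a conic bundle on a del Pezzo is $\mathbb{P}^{1}$, whose toric LG model is $x+x^{-1}$).

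The hard part is the compatibility with mutation in the non-toric cases $\mathrm{Bl}_{k}\mathbb{P}^{2}$ with $k\ge 4$: here the toric LG model is only well-defined up to cluster mutation, so \cref{conj: mirror of extremal contractions} is really a statement about mutation classes. My strategy is to exhibit, for each extremal contraction $X\to X'$ (or $X\to Z$), a single choice of toric representatives on both sides whose Newton polytopes differ by the expected vertex deletion or edge projection; independence of the chosen representatives then follows because algebraic mutation commutes with setting a monomial coefficient to zero along the affected facet, a verification that is local in the polytope and can be done once and for all. A secondary technical issue is that certain del Pezzos of low degree admit many non-equivalent $(-1)$-curves and conic bundle structures; I would handle these by using the transitive action of the Weyl group of the $E_{k}$ root lattice on such configurations, so that only one representative per Weyl orbit must be checked explicitly.
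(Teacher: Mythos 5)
Your strategy is genuinely different from the paper's. The paper does not enumerate deformation families or manipulate Newton polytopes: it takes the parametrized toric LG model $\widetilde f$, forms the degeneration $f_t=\widetilde f_{tE}$ (resp.\ $\widetilde f_{tF}$), and computes the regularized period of the limit $f'$ directly as the restriction of the quantum period of $X$ to curve classes $\beta$ with $E\cdot\beta=0$ (resp.\ $F\cdot\beta=0$). The divisorial case is then closed by the invariance of genus-$0$ Gromov--Witten invariants under blow-up for classes disjoint from the exceptional curve, and the fibration case by observing that in dimension $2$ the only Mori fibre spaces over a curve are $\mathbb{F}_0/\mathbb{P}^1$ and $\mathbb{F}_1/\mathbb{P}^1$, which are toric, together with the fact that classes $\beta$ with $F\cdot\beta=0$ and nonvanishing invariants are multiples of $[F]$. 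This is uniform in the del Pezzo surface and never needs the Weyl group or a mutation-independence argument.

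The genuine gap in your proposal is that you never verify the period condition, which is the substantive content of the theorem. A toric Landau--Ginzburg model must satisfy three conditions, and your combinatorial bookkeeping (vertex deletion, edge projection, duality between the extremal ray and a face of the polytope) only addresses the polytope condition. A Newton polytope supports many Laurent polynomials, and for $\mathrm{Bl}_k\mathbb{P}^2$ with $k\ge 4$ the coefficients of the LG model are nontrivial (see \cref{example: parametrized LG model}); after the degeneration you must show that the \emph{surviving coefficients} are exactly those of an LG model of $X'$ or of the general fibre, i.e.\ that $\hat P_{f'}=\hat G_{X'}$. Your phrase ``direct comparison with the LG models of the target'' could in principle be upgraded to a coefficient-by-coefficient check against the known tables, which would make the argument a valid finite verification, but as written the comparison happens at the level of polytopes, where the claim is strictly weaker than what is needed. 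Two further steps are asserted without justification: that algebraic mutation commutes with specializing coefficients to zero along the affected facet (mutations can rearrange monomials across facets, so ``local in the polytope'' is not automatic), and that the Weyl group action on $(-1)$-curve configurations lifts to an action on the chosen toric LG representatives; both would have to be proved before the reduction to one representative per orbit is legitimate.
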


    Also, we have a weak version of \cref{conj: main theorem} in the surface case:

\begin{theorem}[cf. \cref{theorem: surface geography}]
    There exists a moduli space $M'_{2}$ parametrizing LG models of all central models under some smooth del Pezzo surfaces.
\end{theorem}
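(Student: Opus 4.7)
The plan is to build $M'_{2}$ by gluing local pieces. For each smooth del Pezzo surface $X$, I would first invoke the established surface case of \cref{conj: mirror of extremal contractions} and \cref{conj: mirror of MMP} to obtain a local moduli $M(X)$ parametrizing toric LG models of central models under $X$. Concretely, let $\Sigma_X$ be the fan cut out by the geography chamber decomposition of the effective cone $\mathrm{Eff}_{\mathbb{R}}(X)$, and let $X_{\Sigma_X}$ be the associated toric variety. The correspondence of \cref{conj: local geography} in the surface case, which follows from iterating the mirror correspondence for divisorial contractions and Mori fibre structures along every chamber wall, identifies the torus orbits of $X_{\Sigma_X}$ with the central models under $X$, and the LG model attached to each central model deforms continuously across each wall by the minimal/maximal degeneration clauses of \cref{conj: mirror of MMP}. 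This gives a well-defined $M(X) \cong X_{\Sigma_X}$.

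Next I would form the disjoint union $\bigsqcup_{X} M(X)$, where $X$ ranges over the (finitely many deformation types of) smooth del Pezzo surfaces, and impose the gluing: whenever a central model $Y/Z$ lies under two del Pezzo surfaces $X_{1}$ and $X_{2}$, the torus orbit in $X_{\Sigma_{X_{1}}}$ corresponding to $Y/Z$ and the torus orbit in $X_{\Sigma_{X_{2}}}$ corresponding to the same $Y/Z$ must be identified via the parametrization of LG models of $Y/Z$. Because the parametrization is intrinsic to $Y/Z$ and independent of the ambient del Pezzo, this gives a well-defined equivalence relation on $\bigsqcup_{X} M(X)$, and I would define $M'_{2}$ as the quotient. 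The parametrization property is then immediate: a point of $M'_{2}$ lies in some $M(X)$ and hence corresponds to a toric LG model of some central model under $X$.

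The main technical obstacle is verifying the compatibility of the gluing, i.e., that the LG model attached to a common central model $Y/Z$ from the $M(X_{1})$ side and from the $M(X_{2})$ side agree as families, not merely as individual LG models. This requires tracking how the minimal/maximal degenerations of \cref{conj: mirror of MMP} commute when one descends first to $Y/Z$ via one direction in $\mathrm{Eff}_{\mathbb{R}}(X_{1})$ versus another direction in $\mathrm{Eff}_{\mathbb{R}}(X_{2})$. I would handle this by reducing to a common refinement: any two del Pezzos that share $Y/Z$ as a common lower model sit inside a common blow-up, and the geographies of the two effective cones both refine through that blow-up, so the compatibility reduces to the surface case of \cref{conj: mirror of MMP}, already available.

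Finally, I would not attempt to establish the global contractibility statement (item (2) of \cref{conj: main theorem}) nor the birational automorphism action with Zariski topology (item (3)); these are precisely the features omitted in the weak version, so the above construction suffices to produce the required $M'_{2}$. The structural statement recorded in \cref{theorem: surface geography} would then be the output of this assembly procedure.
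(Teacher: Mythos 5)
Your Step 1 matches the paper's: compactify the parameter torus $\mathrm{Pic}(X)\otimes\mathbb{C}^{*}\cong(\mathbb{C}^{*})^{r}$ of a parametrized toric LG model to the toric variety $X_{\Sigma}$ attached to the geography fan on $\mathrm{Eff}_{\mathbb{R}}(X)$, and use \cref{theorem: surface MMP and LG models} to identify boundary orbits with central models of lower rank under $X$. That part is fine.

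The gap is in your Step 2. You index the pieces by ``(finitely many deformation types of) smooth del Pezzo surfaces'' and glue torus orbits labelled by a common lower model. But a central model in this paper is not an abstract variety: by \cref{def:birational model} it is a variety \emph{together with a birational map} to the reference $Y/R$, and two central models with isomorphic underlying varieties but birational markings differing by a non-regular element of $G=\mathrm{Bir}(Y/R)$ are distinct central models. There are infinitely many of them in each isomorphism class (one for each coset in $G/\mathrm{Aut}$), so a moduli space indexed by deformation types cannot parametrize ``all central models'' in the intended sense; it only parametrizes isomorphism classes. The paper resolves exactly this with the local-to-global construction of \cref{prop:local-to-global}: for each fixed $X$ one takes $M(X)=(G\times X_{\Sigma})/\mathrm{Aut}(X)$, where $\mathrm{Aut}(X)$ acts on $G$ by left multiplication and on $X_{\Sigma}$ through its action on $\mathrm{Pic}(X)$. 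This produces one copy of $X_{\Sigma}$ for each birational marking of $X$, and it is what makes the subsequent $\mathrm{Bir}$-action and the compatibility with the CW complex of central models possible. Your proposal omits this entirely, so the object you build is not the $M'_{2}$ of the theorem. Your concluding remarks (no contractibility, no topologized group action needed for the weak version) are correct, and your concern about gluing compatibility across two ambient del Pezzo surfaces is legitimate but secondary; the paper disposes of it by direct verification against the geography, whereas the essential missing ingredient in your argument is the $(G\times X_{\Sigma})/\mathrm{Aut}(X)$ bookkeeping of birational markings.
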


    In dimension 3, we have:

\begin{theorem}[cf. \cref{theorem: threefold divisorial contraction and LG models}, \cref{theorem: threefold Mori fibre spaces and LG models}]
    \cref{conj: mirror of extremal contractions} holds for smooth Fano threefolds which admit a parametrized toric Landau-Ginzburg model.
\end{theorem}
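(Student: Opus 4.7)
The plan is to split the theorem along the two parts of \cref{conj: mirror of extremal contractions} and to handle each separately, corresponding to \cref{theorem: threefold divisorial contraction and LG models} and \cref{theorem: threefold Mori fibre spaces and LG models}. In both parts the strategy is case-by-case: combine the Iskovskikh--Mori--Mukai classification of smooth Fano threefolds with the classification of extremal contractions refined in \cite{Matsuki1995}, and for every family that admits a parametrized toric Landau--Ginzburg model $f$ exhibit the predicted degeneration explicitly on the parametrized family of Laurent polynomials of \cite{DHKOP}.

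For part (1), I would fix a smooth Fano threefold $X$ together with its LG model $f$ and enumerate the possible extremal divisorial contractions $g : X \to X'$. If $X'$ is smooth Fano with a known LG model, I would identify the class of the contracted divisor $E$ with a direction in $\mathrm{Eff}_{\mathbb{R}}(X)$ and, inside the parametrized family, degenerate $f$ by sending to zero the coefficients of the monomials dual to that direction; the limit should then be shown to agree, up to mutation, with the LG model $f'$ of $X'$. When $X'$ is singular one invokes \cref{conjecture: Q-Fano variety and LG model} to identify $f'$ with the mirror of the $\mathbb{Q}$-Gorenstein smoothing class, and if necessary runs further MMP steps to land on a variety whose mirror is already known.

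For part (2), the general fibre of a Mori fibre space $h : X \to Z$ on a smooth Fano threefold is either a smooth del Pezzo surface (when $\dim Z = 1$) or a conic (when $\dim Z = 2$). In either case the LG model $f'$ of the fibre is known from the surface or toric results established earlier in the paper. I would match each Mori fibre structure with a face of $\mathrm{Eff}_{\mathbb{R}}(X)$ and realize the corresponding degeneration of $f$ as the maximal degeneration in that direction, which geometrically collapses the Newton polytope of $f$ onto the Newton polytope of $f'$, and then verify mutation equivalence with the target.

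The principal obstacle is the sheer scale of the case analysis: the Mori--Mukai list contains over one hundred families, each carrying several extremal rays, and for every instance one must pin down the correct degeneration direction and check that the resulting Laurent polynomial is mutation-equivalent to the predicted mirror. The most delicate cases are those in which $X'$ acquires non-Gorenstein terminal singularities, since the expected LG model of $X'$ is then specified only up to mutation and $\mathbb{Q}$-Gorenstein deformation, so the match must be argued via mutation graphs rather than direct polytope comparison. A secondary subtlety is to verify that each constructed degeneration is itself a toric LG model in the sense used throughout the paper (Laurent, maximally mutable, and compatible with parametrization), so that the equivalences produced realize the conjectural mirror correspondence and not merely an ad hoc Laurent limit.
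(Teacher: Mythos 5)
Your proposal takes a case-by-case route through the Mori--Mukai classification, but the paper's proof of \cref{theorem: threefold divisorial contraction and LG models} and \cref{theorem: threefold Mori fibre spaces and LG models} is a single uniform argument in Gromov--Witten theory, and the gap in your plan is precisely the step that this uniform argument supplies. To show that the limit $f' = \lim_{t\to+\infty} f_t$ of a degeneration in the direction of the exceptional divisor $E$ (resp.\ of $h^*A$) is a toric LG model of $X'$ (resp.\ of the general fibre $F$), one must verify the \emph{period condition}: the regularized period of $f'$, which is the sum of quantum-period terms of $X$ over classes $\beta$ with $E\cdot\beta=0$ (resp.\ $D\cdot\beta=0$), must equal the regularized quantum period of $X'$ (resp.\ $F$). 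The paper does this by establishing a bijection of the relevant curve classes and then proving $\langle \tau_{-K_X\cdot\widetilde\beta-2}\mathbf 1\rangle^X_{\widetilde\beta} = \langle \tau_{-K_{X'}\cdot\beta-2}\mathbf 1\rangle^{X'}_{\beta}$ via a degeneration of $X'$ to $X\cup_E W$ and the degeneration formula (for contractions of types E1--E5, with an explicit table of $W$), and in the fibration case via $[\overline{M}_{0,1}(F,\beta)]^{virt}=i^![\overline{M}_{0,1}(X,\beta)]^{virt}$. Your proposal replaces this with ``verify mutation equivalence with the predicted mirror,'' which is not an argument: collapsing Newton polytopes or matching polynomials up to mutation does not by itself certify that the period of the limit is the quantum period of the target.

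The second, related problem is circularity in the singular cases. When $g:X\to X'$ is of type E3, E4 or E5, the variety $X'$ is a terminal (possibly non-Gorenstein) Fano threefold whose toric LG model is \emph{not} previously known --- indeed the theorem's conclusion ``in particular, $Y$ has a toric LG model'' is part of what is being proved. You propose to identify $f'$ with the mirror of the $\mathbb{Q}$-Gorenstein smoothing class by invoking \cref{conjecture: Q-Fano variety and LG model}, but that is an open conjecture, so the resulting proof would be conditional, and in any case one still needs to compute the quantum period of $X'$ itself (defined in the paper via orbifold Gromov--Witten theory after $\mathbb{Q}$-smoothing) to check the period condition. The degeneration-formula comparison of Gromov--Witten invariants is exactly what makes these cases unconditional and uniform; without it, the case-by-case program cannot be completed. (The explicit Laurent-polynomial and mutation computations you describe do appear in the paper, but only in \cref{section: applications} as applications of the already-proved theorems, not as their proof.)
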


\subsubsection{Applications to computations}

    In \cref{section: applications}, we compute some elementary syzygies in dimension 3 for smooth Fano threefolds. Here we collect some interesting cases, which are difficult to deal with in the past. First, we have varieties of Gorenstein index 2.

\begin{example}[Divisorial contractions of type E5]
\hfill

    \begin{enumerate}
        \item Mori-Mukai No. 3.9 (cf. \cref{case 3.9}): Let $X$ be a smooth Fano variety in the Mori-Mukai family No. 3.9. Then there is a diagram
        $$
        \begin{tikzcd}
            & X \arrow[dl]\arrow[dr] & \\
            Y_{1} \arrow[dr] & & Y'_{1} \arrow[dl]\\
            & Y_{2} &
        \end{tikzcd}
        $$
        where $X \rightarrow Y_{1}$ and $X \rightarrow Y'_{1}$ are two divisorial contractions of type E5, where $Y_{1}$ and $Y'_{1}$ have a quotient singularity of the form $\frac{1}{2}(1,1,1)$. A toric LG model of $Y_{1}$ and $Y'_{1}$ is given by
    $$
    f_{Y_{1}} = f_{Y'_{1}} = x + y + z + xy + xz + \frac{1}{yz} + \frac{2}{xyz} + \frac{1}{x^{2}yz}.
    $$
    The variety $Y_{2}$ has two quotient singularities of the form $\frac{1}{2}(1,1,1)$. A toric LG model of $Y_{2}$ is given by
    $$
    f_{Y_{2}} = y + z + xy + xz + \frac{1}{yz} + \frac{2}{xyz} + \frac{1}{x^{2}yz}.
    $$
        \item Mori-Mukai No. 3.14 (cf. \cref{case 3.14}): Let $X$ be a smooth Fano variety in the Mori-Mukai family No. 3.14. Then we there is a sequence of divisorial contractions $X \rightarrow Y \rightarrow Y'$,     where $Y$ and $Y'$ have a quotient singularity of the form $\frac{1}{2}(1,1,1)$. A toric LG model of $Y$ is given by
    $$
    f_{Y} = x+y+z+xyz+\frac{1}{xy}+\frac{1}{yz}+\frac{1}{xz}
    $$
    and a toric LG model of $Y'$ is given by
    $$
    f_{Y'} = x+y+z+\frac{1}{xy}+\frac{1}{yz}+\frac{1}{xz}.
    $$
        \item Mori-Mukai No. 3.22 (cf. \cref{case 3.22}): Let $X$ be a smooth Fano variety in the Mori-Mukai family No. 3.22. Then there is a divisorial contraction $X \rightarrow Y$, where $Y$ has a singularity of type $\frac{1}{2}(1,1,1)$. A toric LG model of $Y$ is given by
    $$
    f_{Y} = x+y+z+\frac{z}{x}+\frac{1}{yz}+\frac{1}{xyz}.
    $$
    \end{enumerate}
\end{example}

    We can also recalculate some Gorenstein cases, which were firstly computed in \cite{galkin2018smalltoricdegenerationsfano} using principal invariants.

\begin{example}[A Gorenstein Fano 3-fold in elementary syzygies, cf. \cref{case 4.2}]
    Let $X$ be a smooth Fano variety in the Mori-Mukai family No. 4.2. Then there is a diagram
        $$
        \begin{tikzcd}
            & X \arrow[dl]\arrow[dr] & \\
            Y_{1} \arrow[dr] & & Y'_{1} \arrow[dl]\\
            & Y_{2} &
        \end{tikzcd}
        $$
        where $Y_{1}$ and $Y'_{1}$ have an ordinary double point. There is a smoothing of $Y_{1}$ and $Y'_{1}$ to a smooth Fano variety in the Mori-Mukai family No. 2.23. The Fano variety $Y_{2}$ has two ordinary double points. There is a smoothing of $Y_{2}$ to a smooth Fano variety $B_{4}$.
\end{example}

\subsection{Outline of the paper}
    The remainder of this paper is structured as follows:
    
    In \cref{section: preliminaries}, we introduced some preliminaries, including central models and toric Landau-Ginzburg models.

    In \cref{section: framework}, we construct the main framework of the program.

    In \cref{section: toric}, we prove results for smooth toric Fano varieties.

    In \cref{section: surface}, we prove results for general smooth del Pezzo surfaces.

    In \cref{section: threefold}, we prove results for general smooth Fano threefolds.

    Finally, in \cref{section: applications}, we applied the results in \cref{section: threefold} to compute elementary syzygies of smooth Fano threefolds.

\subsection{Acknowledgments}
The second author is supported by grants from Beijing Institute of Mathematical Sciences and Applications (BIMSA), the Beijiing NSF, and the China National Science Foundation (NSFC). He would also like to thank China's National Program of Overseas High Level Talent for generous support.
    
\section{Preliminaries}\label{section: preliminaries}

\subsection{Central models}

    In this subsection, we list the definitions and basic properties of central models and related topics. A more detailed explanation can be found in \cite{Myself_Syzygy}. We begin by clarifying what we mean by ``(birational) models''.

    \begin{definition}[Birational models]\label{def:birational model}
    Let $\pi:Y \rightarrow R$ be a morphism. A \emph{(birational) model of $Y/R$ (of morphism type)} is an \emph{equality class} of the following data:
    \begin{itemize}
    \item \textbf{Data:} a morphism $f: X \rightarrow Z$ together with a birational map $g: X \dashrightarrow Y$ and a morphism $h: Z \rightarrow R$ such that we have the following commutative diagram:
    $$
    \begin{tikzcd}
    Y  \arrow["\pi"]{dd} & X \arrow[dashed,"g"]{l} \arrow["f"]{d}\\
                            & Z \arrow["h"]{ld}\\
    R                              &
    \end{tikzcd}.
    $$
    
    In particular, for two models $X_{1}/Z_{1}$ and $X_{2}/Z_{2}$ of $Y/R$, we have a natural birational map $g_{2}^{-1}\circ g_{1} : X_{1} \dashrightarrow X_{2}$, where $g_{1} : X_{1} \dashrightarrow Y$ and $g_{2}: X_{2} \dashrightarrow Y$ are the birational maps in the data.
    
    \item \textbf{Equality:} Two models $X_{1}/Z_{1}$ and $X_{2}/Z_{2}$ are called \emph{equal} if there exists an isomorphism $Z_{1} \rightarrow Z_{2}$ such that we have the following commutative diagram:
    $$
    \begin{tikzcd}
    & Y   & \\
    X_{1} \arrow[ru,dashed,"g_{1}"] \arrow[rr,"g_{2}^{-1}\circ g_{1}",crossing over]\arrow[d,"f_{1}"]  & & X_{2}\arrow[d,"f_{2}"] \arrow[lu,dashed,"g_{2}"] \\
    Z_{1}\arrow[rd,"h_{1}"]  \arrow[rr,crossing over] & & Z_{2} \arrow[ld,"h_{2}"] \\
    & R \arrow[from=1-2,"\pi"] &
    \end{tikzcd}
    $$
    where the horizontal arrows are isomorphisms. A birational model is an equality classes of the above data.

    The collection of birational models of morphism type forms a set. Indeed, up to isomorphism, a rational map is determined by its underlying map on topological space and the local morphism on structure sheaves, both of which form a set. Hence the birational maps $g^{-1}:Y \dashrightarrow X$ together with the rational maps $f\circ g^{-1}: Y \dashrightarrow Z$ form a set. Denote by $\mathrm{BirMod}(Y/R)$ the set of birational models of $Y/R$ of morphism type.
    \item \textbf{Action of birational automorphisms:} Denote by $G = \mathrm{Bir}(Y/R)$ the group of birational automorphisms of $Y/R$. Then there is a natural action of $G$ on $\mathrm{BirMod}(Y/R)$. More precisely, for every $\sigma \in G$, we define the action to be:
    $$
    \sigma \left( \begin{tikzcd}
    Y  \arrow["\pi"]{dd} & X \arrow[dashed,"g"]{l} \arrow["f"]{d}\\
                             & Z \arrow["h"]{ld}\\
    R                              &
    \end{tikzcd}\right)
    = \left( \begin{tikzcd}
           Y  \arrow["\pi"]{dd} & X \arrow[dashed,"\sigma \circ g"]{l} \arrow["f"]{d}\\
                             & Z \arrow["h"]{ld}\\
    R                              &
       \end{tikzcd} \right)
    $$
    \end{itemize}
    \end{definition}

    Now we introduce two types of contractions firstly proposed by Shokurov, and a type of contractions firstly appeared in \cite{BLZ}. These definitions are closely related.

\begin{definition}[Central models]
A projective contraction $X \longrightarrow Z$ between normal quasi-projective varieties is called a \emph{central model} if

\begin{enumerate}[label=(CM\arabic*),align=left]
  \item $dim\ X\ >\ dim\ Z\ \geq\ 0$;
  \item $-K_{X}$ is ample over $Z$;
  \item $X$ has only terminal singularities.
\end{enumerate}

The \emph{rank} of a central model is its relative class number $r=\mathrm{Rank}\ \mathrm{Cl}(X/Z)$. Fix a morphism $Y/R$, we say that $X/Z$ is a \emph{central model of $Y/R$}, if $X/Z$ is a birational model of $Y/R$ and is a central model. A central model of rank 1 is actually a \emph{Mori model}.

\end{definition}

\begin{definition}[Weak central models]

A projective contraction $X \longrightarrow Z$ between normal quasi-projective varieties is called a \emph{weak central model} if

\begin{enumerate}[label=(WCM\arabic*),align=left]
  \item $dim\ X\ >\ dim\ Z\ \geq\ 0$;
  \item $-K_{X}$ is semiample over $Z$, and the contraction $X \longrightarrow X'/Z$ given by $-K_X$ over $Z$ is small;
  \item $X$ is $\mathbb{Q}$-factorial with only terminal singularities.
\end{enumerate}

The \emph{rank} of a weak central model is its relative Picard number $r = \rho(X/Z)$. Fix a morphism $Y/R$, we say that $X/Z$ is a \emph{weak central model of $Y/R$}, if $X/Z$ is a birational model of $Y/R$ and is a weak central model.

\end{definition}

    The following models were first introduced in \cite{BLZ}, where they were called ``rank $r$ fibrations''.

\begin{definition}[Intermediate models, cf. Definition 3.1, \cite{BLZ}]\label{def:rank r fibration}
A projective contraction $X \longrightarrow Z$ between normal quasi-projective varieties is called an \emph{intermediate model} if:

\begin{enumerate}[label=(IM\arabic*),align=left]
  \item $dim\ X\ >\ dim\ Z\ \geq\ 0$;
  \item $X/Z$ has Fano type. In particular, $-K_{X}$ is big over $Z$;
  \item $X$ is $\mathbb{Q}$-factorial with terminal singularities;
  \item For any divisor $D$ on $X$, every model in the $D$-MMP of $X$ over $Z$ is still $\mathbb{Q}$-factorial and terminal.
\end{enumerate}

  Here a \emph{$D$-MMP over $Z$} means a sequence of elementary $D$-negative divisorial contractions and $D$-flips over $Z$. The outcome of a $D$-MMP over $Z$ is either a model $X'/Z$ such that $D_{X'}$ is nef over $Z$, or a $D$-negative Mori fibration $X'/Z'$ over $Z$. The \emph{rank} of an intermediate model is its relative Picard number $r = \rho(X/Z)$. Fix a morphism $Y/R$, we say that $X/Z$ is an \emph{intermediate model of $Y/R$}, if $X/Z$ is a birational model of $Y/R$ and is an intermediate model of rank $r$.
\end{definition}

    The order between birational models is defined as follows:
    
\begin{definition}[Order between models]
    Let $Y/R$ be a morphism. Let $X_{1}/Z_{1}$ and $X_{2}/Z_{2}$ be two birational models of $Y/R$, and $g: X_{1} \dashrightarrow X_{2}$ be the natural birational map between them. We say that $X_{1}/Z_{1}$ is \emph{over} $X_{2}/Z_{2}$, or equivalently, $X_{2}/Z_{2}$ is \emph{under} $X_{1}/Z_{1}$, if $g$ is a birational 1-contraction and there exists a morphism $h:Z_{2} \rightarrow Z_{1}$ such that the following diagram commutes:
    $$
    \begin{tikzcd}
      X_{1} \arrow[dashed,"g"]{r} \arrow["f_{1}"]{dd} & X_{2} \arrow["f_{2}"]{d}\\
                                     & Z_{2} \arrow["h"]{ld}\\
      Z_{1}                              &
    \end{tikzcd}
    $$

    We denote this by $X_{1}/Z_{1} \succeq X_{2}/Z_{2}$.
\end{definition}

    In the remaining of this subsection, we list some properties of the above defined models without a proof. For a detailed proof, we refer to \cite{Myself_Syzygy}, Section 2.1. First, we examine the action of $G$ on birational models defined in \cref{def:birational model}.
    
\begin{proposition}[Action on models]\label{prop:action on models}
    Let $G = \mathrm{Bir}(Y/R)$ and $\mathrm{BirMod}(Y/R)$ be as in \cref{def:birational model}. The action of $G$ on $\mathrm{BirMod}(Y/R)$ has the following properties:
    \begin{enumerate}[align=left, label=(\arabic*)]
    \item \textbf{Stabilizer:} For any $X/Z \in \mathrm{BirMod}(Y/R)$, the stabilizer subgroup of $X/Z$ is isomorphic to $\mathrm{Aut}(X \rightarrow Z/R)$, the fibrewise regular automorphism group of $X \rightarrow Z$ over $R$. More precisely, denote by $g: X \dashrightarrow Y/R$ the natural birational map, then the stabilizer subgroup of $X/Z$ consists of the elements of $G$ of the form $g\circ \sigma \circ g^{-1}$, where $\sigma \in \mathrm{Aut}(X/R)$ is an element such that there exists an isomorphism $h: Z \rightarrow Z$ making the following diagram commutes:
    $$
    \begin{tikzcd}
      X \arrow["\sigma"]{r} \arrow["f"]{d} & X \arrow["f"]{d}\\
      Z                 & Z \arrow{l}{\simeq}[swap]{h}
    \end{tikzcd}
    $$

    \item \textbf{Orbit:} For any $X/Z \in \mathrm{BirMod}(Y/R)$, the orbit of $X/Z$ is the isomorphism class of $X/Z$. In particular, $G$ maps central models (resp. weak central models, intermediate models) to central models (resp. weak central models, intermediate models) of the same rank.

    \item \textbf{Order-preserving:} $G$ preserves the order of models, that is, for all $\sigma \in G$ we have
    $$
    X_{1}/Z_{1} \preceq X_{2}/Z_{2} \Leftrightarrow \sigma (X_{1}/Z_{1}) \preceq \sigma (X_{2}/Z_{2}).
    $$
  \end{enumerate}
\end{proposition}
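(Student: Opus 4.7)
The proof is an unpacking of the definitions of the action in \cref{def:birational model}; the three parts are largely bookkeeping, and the key observation driving everything is that whenever we replace $g_i: X_i \dashrightarrow Y$ by $\sigma \circ g_i$, the "natural" birational map $g_2^{-1} \circ g_1: X_1 \dashrightarrow X_2$ between two models is unchanged, because $(\sigma \circ g_2)^{-1} \circ (\sigma \circ g_1) = g_2^{-1} \circ g_1$.

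For \textbf{(1) Stabilizer}, my plan is to translate the condition $\sigma \cdot (X/Z) = X/Z$ through the equality-of-models relation. By definition, $\sigma$ stabilizes $X/Z$ exactly when the datum $(f: X \to Z, \sigma \circ g, h)$ equals $(f, g, h)$ as models, which by the equality clause means there exists an isomorphism $h_{Z}: Z \to Z$ fitting into the commutative diagram of \cref{def:birational model}. From the top triangle $g = (\sigma\circ g) \circ \mathrm{id}_X$ is replaced by a birational automorphism $\tau: X \dashrightarrow X$ with $\sigma \circ g = g \circ \tau$, so $\tau = g^{-1} \circ \sigma \circ g$; and the middle/lower commuting squares force $\tau$ to be a regular automorphism of $X$ compatible with $f$ via $h_Z$ and with $\pi$ via $\mathrm{id}_R$. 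This gives exactly an element of $\mathrm{Aut}(X \to Z / R)$, and the map $\sigma \mapsto g^{-1}\sigma g$ is the desired group isomorphism.

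For \textbf{(2) Orbit}, one direction is immediate from the action: $\sigma \cdot (X/Z)$ has the same source $X \to Z$ as $X/Z$, so it is trivially isomorphic to $X/Z$ in the sense of the equality relation. For the reverse inclusion, given any model $X'/Z'$ isomorphic to $X/Z$ via some $\phi: X \xrightarrow{\sim} X'$ and $\psi: Z \xrightarrow{\sim} Z'$ compatible with $\pi$, define $\sigma := g' \circ \phi \circ g^{-1} \in G$; then a direct check shows $\sigma \cdot (X/Z) = X'/Z'$ as models. The second sentence of (2) then reduces to noting that the three conditions (CM1)--(CM3), (WCM1)--(WCM3), (IM1)--(IM4), as well as the relevant notion of rank ($\mathrm{Rank}\,\mathrm{Cl}(X/Z)$ or $\rho(X/Z)$), are invariant under isomorphism of morphisms $X \to Z$.

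For \textbf{(3) Order-preserving}, I invoke the key observation above: if $X_1/Z_1 \preceq X_2/Z_2$ is witnessed by a birational $1$-contraction $g_2^{-1}\circ g_1: X_1 \dashrightarrow X_2$ and a morphism $Z_2 \to Z_1$, then the natural birational map between $\sigma(X_1/Z_1)$ and $\sigma(X_2/Z_2)$ is $(\sigma g_2)^{-1} \circ (\sigma g_1) = g_2^{-1} \circ g_1$, which is the \emph{same} birational map, and the morphism $Z_2 \to Z_1$ is unchanged. So the order relation is literally preserved, not merely up to isomorphism. The reverse implication follows by applying the same argument to $\sigma^{-1}$.

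The whole argument is essentially formal, and the only thing to be careful about is the equality-of-models clause in (1): one must verify that the isomorphism $h_Z$ of bases in the equality diagram is consistent with the requirement that $\tau$ be regular (not just birational) on $X$, which follows because $\tau$ equals the composite of regular isomorphisms in the equality diagram. I do not anticipate a genuine obstacle; this is a warm-up lemma establishing notation for the rest of the paper.
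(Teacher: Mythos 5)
Your argument is correct and is the expected formal verification; note that the paper itself does not prove this proposition but defers to \cite{Myself_Syzygy}, Section~2.1, so there is no in-paper proof to compare against. The only cosmetic slip is in (1): the natural birational map in the equality diagram is $(\sigma\circ g)^{-1}\circ g = g^{-1}\circ\sigma^{-1}\circ g$ rather than $g^{-1}\circ\sigma\circ g$, but since the stabilizer and $\mathrm{Aut}(X\to Z/R)$ are groups this inversion is immaterial, and your identification of the stabilizer, the orbit construction $\sigma = g'\circ\phi\circ g^{-1}$ (which one should check lies in $\mathrm{Bir}(Y/R)$, as it does since $\pi\circ g'\circ\phi = h'\circ\psi\circ f = h\circ f = \pi\circ g$), and the observation that $(\sigma g_2)^{-1}\circ(\sigma g_1)=g_2^{-1}\circ g_1$ all go through.
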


    The next proposition shows the relation between the definition of central models, weak central models and intermediate models.

\begin{proposition}\label{equivalence of syzygies}
  Let $r$ be a positive integer.

  \begin{enumerate}[label=(\roman*),align=left]
    \item Every central model $X$ of rank $r$ has a small blow-up $\widetilde{X} \rightarrow X/Z$ which is a weak central model of the same rank $r$, and such weak central model is unique up to small flops. Conversely, every weak central model $\widetilde{X}/Z$ of rank $r$ has a small birational contraction $\widetilde{X} \rightarrow X/Z$ into a central model $X/Z$ of the same rank $r$, such central model is unique up to equality, cf. \cref{def:birational model}.
    \item Every weak central model of rank $r$ is an intermediate model of rank $r$. Conversely, every intermediate model $X/Z$ of rank $r$ is equivalent (cf. \cref{def:birational model}) to a weak central model $X'/Z$ of the same rank $r$, and all such weak central models are equal up to small flops.
    \item Every intermediate model $X/Z$ of rank $r$ is equivalent to a central model $X'/Z$ of the same rank $r$, and such central model is unique up to equality. Conversely, every central model $X$ of rank $r$ has a small blow-up $\widetilde{X} \rightarrow X/Z$ which is a weak central model of rank $r$, and in particular, an intermediate model.
  \end{enumerate}

\end{proposition}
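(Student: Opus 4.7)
The plan is to prove the three equivalences by establishing (i) and (ii); claim (iii) will then follow by composing the resulting bijections, with the small-flop ambiguities in (i) and (ii) combining cleanly into the corresponding ambiguity in (iii). Throughout I rely on BCHM and its standard consequences concerning Fano type varieties, small $\mathbb{Q}$-factorializations, and the base point free theorem.

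For (i), the forward direction constructs, from a central model $X/Z$, a weak central companion $\widetilde{X}/Z$ as a small $\mathbb{Q}$-factorialization of $X$. Such an $\widetilde{X}$ exists because $X$ is terminal (hence klt), and terminality is preserved along small birational modifications. The pullback of the relatively ample $-K_{X}$ is semiample on $\widetilde{X}$ with the associated anticanonical contraction equal to the small map $\widetilde{X}\to X$ over $Z$, so $\widetilde{X}/Z$ is a weak central model. The ranks agree because $\mathbb{Q}$-factorialization identifies $\mathrm{Pic}(\widetilde{X}/Z)_{\mathbb{Q}}$ with $\mathrm{Cl}(X/Z)_{\mathbb{Q}}$. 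Uniqueness up to small flops is the standard statement that any two small $\mathbb{Q}$-factorializations of a terminal variety are connected by a sequence of $K$-trivial flops. Conversely, a weak central model $\widetilde{X}/Z$ possesses a canonical small anticanonical contraction to a central model $X/Z$ of the same rank, with $X$ uniquely determined as the relative anticanonical model.

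For (ii), a weak central model carries a boundary $\Delta$ making $(\widetilde{X},\Delta)$ klt with $-(K_{\widetilde{X}}+\Delta)$ ample over $Z$, so it is of Fano type; condition (IM4) then follows from the standard preservation of $\mathbb{Q}$-factoriality and terminality along the MMP of Fano type varieties. Conversely, given an intermediate model $X/Z$ of rank $r$, the plan is to pass to an associated weak central model by running a sequence of small modifications guided by the $-K_{X}$-direction over $Z$: the process terminates by BCHM, property (IM4) guarantees that each intermediate model remains $\mathbb{Q}$-factorial terminal, and the outcome $X'/Z$ satisfies $-K_{X'}$ semiample over $Z$ by the base point free theorem, with the resulting anticanonical contraction small because $-K_{X'}$ remains big.

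The main technical obstacle is the converse direction of (ii): showing that the passage from an intermediate model to its weak central companion can be realized by small modifications only, so that the rank is preserved. This is precisely the role of condition (IM4), which must be exploited to preclude divisorial steps along the $-K_{X}$-MMP over $Z$, and it requires a careful analysis of the relative movable and effective cones of $X/Z$ together with how the base point free theorem interacts with flops in the Fano type setting. Once this point is secured, the remaining parts of (ii) and all of (i) reduce to routine BCHM applications, and claim (iii) follows by composing the equivalences from (i) and (ii), with the two flop ambiguities merging into a single flop ambiguity on the central side.
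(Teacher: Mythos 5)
A preliminary remark: the paper itself gives no proof of this proposition. It appears in a list of ``properties of the above defined models without a proof,'' with a pointer to \cite{Myself_Syzygy}, Section 2.1, and the comparison with intermediate models is essentially the content of the corresponding lemmas in \cite{BLZ}. So your proposal can only be assessed on its own terms. Your overall architecture (prove (i) and (ii), compose to get (iii)) is the natural one, and your treatment of (i) is essentially correct: the weak central models over a fixed central model $X/Z$ are exactly the small $\mathbb{Q}$-factorializations of $X$, which exist by BCHM, are crepant (so terminality and the class group are preserved and $\rho(\widetilde{X}/Z)=\mathrm{rank}\,\mathrm{Cl}(X/Z)$), and are connected by $K$-trivial flops; the converse is the passage to the relative anticanonical model.

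The genuine gap is in (ii), and it sits in the \emph{forward} direction rather than in the converse that you single out as the main obstacle. You assert that (IM4) ``follows from the standard preservation of $\mathbb{Q}$-factoriality and terminality along the MMP of Fano type varieties.'' That is false for an arbitrary $D$-MMP: terminality is preserved along $K$-negative steps, but a $D$-negative step may be $K$-trivial or $K$-positive. Concretely, $\mathbb{F}_{2}$ is of Fano type and the $C$-MMP for the $(-2)$-section $C$ (which satisfies $K\cdot C=0$) contracts $C$ onto $\mathbb{P}(1,1,2)$, which is canonical but not terminal; this is precisely why $\mathbb{F}_{2}$ fails (WCM2). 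The ingredient you are missing is the smallness in (WCM2): since $-K_{X}$ is nef over $Z$ and its semiample contraction is small, every $K$-trivial extremal ray has small contraction, so the first MMP step is either $K$-negative (terminality preserved) or a $K$-trivial flop (discrepancies preserved); one then needs an inductive control of the later steps, where $-K$ need no longer be nef --- this is the non-routine part and is exactly what the Lemma 3.5-type statements of \cite{BLZ} accomplish. A second, related gap occurs in your converse of (ii): you claim the anticanonical contraction of the $(-K)$-MMP output $X'$ is small ``because $-K_{X'}$ remains big,'' but bigness only yields that it is birational. To exclude a contracted divisor $E$ one must again invoke (IM4), e.g.\ by observing that an $E$-MMP would then perform a $K$-trivial divisorial contraction with non-terminal output. (Similarly, divisorial steps in the $(-K)$-MMP itself are excluded because a $K$-positive divisorial contraction produces an exceptional divisor of negative discrepancy, contradicting (IM4) --- this is the argument that preserves the rank, and it should be stated.) Until these points are supplied, the proof of (ii), and hence of (iii), is incomplete.
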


\subsection{Period of Laurent polynomials}

    In this subsection, we define the period of a Laurent polynomial and some properties of it.

\begin{definition}\label{definition: period of Laurent polynomials}
    Let $f$ be a Laurent polynomial. The \emph{period} of $f$ is the formal power series
    $$
    P_{f}(t) = 1 + c(f)t + \frac{1}{2!}c(f^2)t^{2} + \cdots + \frac{1}{d!}c(f^{d})t^{d} + \cdots
    $$
    where $c(f^{d})$ is the constant term of $f^{d}$. The \emph{regularized period} of $f$ is the formal power series
    $$
    \hat{P}_{f} = 1 + c(f)t + c(f^2)t^{2} + \cdots + c(f^{d})t^{d} + \cdots.
    $$
\end{definition}

    The following lemma is useful in this article:

\begin{lemma}\label{lemma: period after shifting constants}
    Let $f$ be a Laurent polynomial and $a \in \mathbb{C}$ be a complex number. Then 
    $$
    P_{f+a}(t) = e^{at} P_{f}(t).
    $$
\end{lemma}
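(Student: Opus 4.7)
The plan is to prove the identity by expanding $(f+a)^d$ via the binomial theorem, extracting constant terms, and recognizing the resulting series as a Cauchy product.

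First, I would observe that since $a$ is a constant (scalar), the binomial theorem applies to give
$$
(f+a)^{d} = \sum_{k=0}^{d} \binom{d}{k} a^{d-k} f^{k}.
$$
Taking the constant term is a $\mathbb{C}$-linear operation on Laurent polynomials, and the constant term of $a^{d-k} f^{k}$ is simply $a^{d-k} c(f^{k})$, so
$$
c\bigl((f+a)^{d}\bigr) = \sum_{k=0}^{d} \binom{d}{k} a^{d-k} c(f^{k}).
$$

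Next, I would substitute this into the definition of the period and reindex. Writing $\binom{d}{k} = \frac{d!}{k!(d-k)!}$ and setting $j = d-k$ gives
$$
P_{f+a}(t) = \sum_{d \geq 0} \frac{t^{d}}{d!} \sum_{k=0}^{d} \binom{d}{k} a^{d-k} c(f^{k}) = \sum_{k,j \geq 0} \frac{(at)^{j}}{j!} \cdot \frac{c(f^{k})\, t^{k}}{k!}.
$$
The double sum factors as a Cauchy product of two formal power series in $t$, namely $\sum_{j \geq 0} (at)^{j}/j! = e^{at}$ and $\sum_{k \geq 0} c(f^{k}) t^{k}/k! = P_{f}(t)$, which yields the claim.

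There is no real obstacle here; the only thing to be careful about is that the manipulations are purely formal in $\mathbb{C}[[t]]$ (no convergence issues arise), and that constant-term extraction commutes with finite $\mathbb{C}$-linear combinations, which is immediate from its definition as a coefficient projection on the Laurent polynomial ring.
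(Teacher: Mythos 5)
Your proof is correct and follows essentially the same route as the paper's: expand $(f+a)^d$ by the binomial theorem, use linearity of constant-term extraction, and recognize the resulting double sum as the Cauchy product of $e^{at}$ and $P_f(t)$. Your write-up is slightly more explicit about the reindexing and the formal-power-series setting, but the argument is the same.
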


\begin{proof}
    We can write
    $$
    (f+a)^{d} = \sum\limits_{i=0}^{d} \frac{d!}{i!(d-i)!} a^{d-i}f^{i}.
    $$
    Taking the constant terms, we get
    $$
    \frac{c((f+a)^{d})}{d!} = \sum\limits_{i=0}^{d} \frac{a^{d-i}}{(d-i)!} \frac{c(f^{i})}{i!}.
    $$
    Notice that 
    $$
    e^{at} = \sum\limits_{i=0}^{+\infty} \frac{a^{i}}{i!}t^{i}.
    $$
    Hence we can conclude the lemma by comparing each term.
\end{proof}

\subsection{Mirror symmetry}

    The main goal of this article is to use the tool of mirror symmetry, especially Landau-Ginzburg models, to study the Sarkisov program. In this subsection, we introduce the definition and properties of Landau-Ginzburg models. First, we set up the data we need to define Landau-Ginzburg models. Now we recall the definition of Gromov-Witten invariants and regularized I-series. One can find details in \cite{Manin}, Chapter VI, \S 2.1 and \cite{Przyjalkowski_2018}, \S 2.1.

\begin{definition}
    The moduli space of stable maps to $X$ of rational curves of class $\beta \in H_{2}(X)$ with $n$ marked points (we denote it by $\overline{M}_{n}(X,\beta)$) is the Deligne-Mumford stack of stable maps $f: C \rightarrow X$ of curves of genus 0 with $n$ marked points such that $f_{*}C = \beta$.
\end{definition}

    We consider the evaluation maps 
    $$
    ev_{i}: \overline{M}_{n}(X,\beta) \rightarrow X, ev(C;p_{1}, \cdots , p_{n},f) = f(p_{i}).
    $$
    Let $\pi_{n+1}: \overline{M}_{n+1}(X,\beta) \rightarrow \overline{M}_{n}(X,\beta)$ be the forgetful map at the point $p_{n+1}$ that forgets this point and contracts the components which then become unstable. Consider the section $\sigma_{i}:\overline{M}_{n}(X,\beta) \rightarrow \overline{M}_{n+1}(X,\beta)$ corresponding to the marked points $p_{i}$ and defined as follows: the image of a curve $(C;p_{1}, \cdots , p_{n},f)$ under the map $\sigma_{i}$ is a curve $(C';p_{1}, \cdots , p_{n}, p_{n+1},f')$ where $C' = C \cup C_{0}$, $C_{0} \simeq \mathbb{P}^1$, $C_{0}$ and $C$ intersect in the unmarked point $p_{i}$ on $C'$, and $p_{n+1}$ and $p_{i}$ lie on $C_{0}$. The map $f'$ contracts the curve $C_{0}$ to a point and $f'|_{C} = f$. Consider the sheaf $L_{i}$ given by $L_i = \sigma^{*}_{i} \omega_{\pi_{n+1}}$, where $\omega_{\pi_{n+1}}$ is the relative dualizing sheaf of $\pi_{n+1}$. Its fibre over the point $(C; p_{1}, . . . , p_n, {f})$ is $T^{*}_{p_i}C$.

\begin{definition}
    The \emph{cotangent line class} is the class
    $$
    \psi_{i}= c_{1}(L_{i}) \in H^{2}(\overline{M}_{n}(X,\beta)).
    $$
\end{definition}

\begin{definition}
    Consider $\gamma_{1}, \cdots, \gamma_{n} \in H^{*}(X)$. Let $a_{1},\cdots, a_{n}$ be non-negative integers, and let $\beta \in H_{2}(X)$. Then the \emph{Gromov-Witten invariant with descendants} is the number given by
    $$
    \langle \tau_{a_{1}}\gamma_{1}, \cdots , \tau_{a_{n}}\gamma_{n} \rangle_{\beta} = \int_{[\overline{M}_{n}(X,\beta)]^{virt}} \psi_{1}^{a_{1}}ev_{1}^{*}(\gamma_{1}) \cdots \psi_{n}^{a_{n}}ev_{n}^{*}(\gamma_{n}).
    $$
\end{definition}

\begin{statement}[Set-up]\label{setup:I-series}
        During this subsection, we set up the following data:
    \begin{enumerate}
        \item Let $X$ be a smooth Fano variety of dimension $n$ and Picard number $r$.
        \item Let $D$ be a $\mathbb{C}$-divisor on $X$.
        \item Let $K \subseteq H_{2}(X,\mathbb{Z})$ be the monoid of classes of moving curves of $X$, that is, $K$ consists of classes $\beta \in H_{2}(X,\mathbb{Z})$ such that the morphism $ev_{1}: \overline{M}_{0,1}(X,\beta) \rightarrow X$ is surjective. In particular, for every class $0 \neq \beta \in K$ we have $K_{X} \cdot \beta < 0$.
        \item Let 
        $$
        \widetilde{I}^{X}_{0}(q=(q_{1},\cdots,q_{r})) = 1 + \sum\limits_{\beta \in K}(-K_{X} \cdot \beta)!\langle \tau_{-K_{X}\cdot\beta -2}\mathbf{1} \rangle_{\beta} \cdot q^{\beta}
        $$
        be \emph{the constant term of regularized $I$-series} or \emph{the regularized quantum period} of $X$. Here the class $\mathbf{1} \in H^{2 \mathrm{dim}X}(X)$ is the fundamental class of $X$ under the Poincare duality.
        \item Let 
        \begin{align*}
        \hat{G}_{X,D}(t) = \widetilde{I}_{0}^{X,D}(t) &= 1 + \sum\limits_{\beta \in K}(-K_{X} \cdot \beta)!\langle \tau_{-K_{X}\cdot\beta -2}\mathbf{1} \rangle_{\beta} \cdot e^{-D \cdot \beta}t^{-K_{X} \cdot \beta}\\
        &= 1 + a_{1}t + a_{2}t^2 + \cdots
        \end{align*} 
        be \emph{the restriction of the constant term of regularized $I$-series of $X$ to the
        anti-canonical direction corresponding to $D$}.
    \end{enumerate}
\end{statement}

    Now we give the definitions of mirrors and Landau-Ginzburg models.

\begin{definition}[Weak Landau-Ginzburg models]
    Let $X$ be a smooth Fano variety. A \emph{weak Landau-Ginzburg model of $X$} is a Calabi-Yau fibration $f: Y \rightarrow \mathbb{A}^1$ satisfying the period condition, i.e., the period of $f$ corresponds to the $I$-series of $X$. In particular, the anti-canonical sections of $X$ are mirrored to the fibres of $f$.
\end{definition}

    The most important and practically calculable cases of Landau-Ginzburg models are the toric Landau-Ginzburg models, which are defined as follows:

\begin{definition}[Toric Landau-Ginzburg models]
    Let $X$ be a smooth Fano variety and $D$ a $\mathbb{C}$-divisor on $X$. A \emph{toric Landau-Ginzburg model} of $(X,D)$ is a Laurent polynomial $f: (\mathbb{C}^{*})^{n} \rightarrow \mathbb{C}$ such that
    \begin{enumerate}
        \item \emph{Period condition.} $\hat{P}_{f}(t) = \hat{G}_{X,D}(t)$.
        \item \emph{Calabi-Yau compactification. } There exists a fiberwise compactification (the so called \emph{Calabi–Yau compactification}) $Y \rightarrow \mathbb{C}$ such that $Y$ is a smooth Calabi-Yau variety.
        \item \emph{Polytope condition. }There is a degeneration $X \leadsto X_{T}$ to a toric variety $X_{T}$ whose fan polytope (the convex hull of generators of its rays) coincides with the Newton polytope (the convex hull of non-zero coefficients) of $f$.
    \end{enumerate}

    A Laurent polynomial $f$ is said to be a \emph{divisorial toric Landau-Ginzburg model of $X$} if there exists a $\mathbb{C}$-divisor $D$ such that $f$ is a toric LG model of $(X,D)$.
\end{definition}

    We sometimes compactified the Calabi-Yau fibration to obtain a Calabi-Yau fibration over $\mathbb{P}^1$ as follows:

\begin{definition}[Tame compactified Landau-Ginzburg models]
    A \emph{proper, tame compactified Landau–Ginzburg model} is a triple $(Z,D,f)$ consisting of a
    smooth projective variety $Z$, a simple normal crossings (snc) divisor $D$, and a morphism $f : Z \rightarrow \mathbb{P}^1$ so that $f^{*}(\infty) = D$. We say that a tame compactified Landau–Ginzburg model satisfies the Calabi–Yau condition if $D$ is an anti-canonical divisor of $Z$. We say that $(Z,D,f)$ is \emph{a tame compactified Landau-Ginzburg model of $X$} if the restriction of $f$ over $\mathbb{A}^1$ is a Calabi-Yau compactification of a toric Landau-Ginzburg model of $X$.
\end{definition}

    Given a Landau-Ginzburg model of $X$, we can deform it by changing its coefficients. However, the new Laurent polynomial after a deformation may not be a Landau-Ginzburg model of $X$. Hence we only consider deformations satisfying certain conditions, namely, divisorial toric LG models.

\begin{definition}[Parametrized toric Landau-Ginzburg model]\label{definition: parametrized toric LG model}
    Let $X$ be a smooth Fano variety of dimension $n$ and Picard number $r$. Fix a basis of $\mathrm{Pic}(X)$ so that we can write $\mathrm{Pic}(X) \cong \mathbb{Z}^{r}$. We say that a Laurent polynomial $p \in \mathbb{Z}[a_{1}^{\pm 1}, a_{2}^{\pm 1}, \cdots, a_{r}^{\pm 1}][x_{1}^{\pm 1}, x_{2}^{\pm 1}, \cdots, x_{n}^{\pm 1}]$ is a \emph{parametrized Landau-Ginzburg model of $X$} if for any choice of the complexified divisor class $D = (\alpha_{1}, \cdots, \alpha_{r}) \in \mathrm{Pic}(X) \otimes \mathbb{C}$ the induced parameter specialization $p_{D} = p|_{a_{i} = e^{-\alpha_{i}}}: (\mathbb{C}^{*})^{n} \rightarrow \mathbb{C}$ is a toric Landau-Ginzburg model for the pair $(X,D)$.
\end{definition}

\begin{remark}
    Although a parametrized toric LG model can be evaluated on the complexified divisor class $\mathrm{Pic}(X) \otimes \mathbb{C}$, we usually restrict it on the real part $\mathrm{Pic}(X) \otimes \mathbb{R}$ for explicit computation.
\end{remark}

\begin{statement}[Action of the automorphism group]
    There is a natural action of the regular automorphism group $\mathrm{Aut}(X)$ on $\mathrm{Pic}(X)$. Hence this induces a natural action of $\mathrm{Aut}(X)$ on a parametrized toric Landau-Ginzburg model by sending the toric Landau-Ginzburg model of $(X,D)$ to the toric Landau-Ginzburg model of $(g(X),g(D))$ for any $g \in \mathrm{Aut}(X)$. This action is independent to the choice of basis of $\mathrm{Pic}(X)$.
\end{statement}

    Now we study the Picard group $\mathrm{Pic}(X_{T})$ under the toric degeneration. Let $\mathcal{X} \rightarrow Z \ni z_{0},z_{1}$ be a degeneration of $\mathcal{X}_{z_{1}} \cong X$ to $\mathcal{X}_{z_{0}} \cong X_{T}$. If the sheaf $U \mapsto \mathrm{Pic}(X_{U})$ is constant, then we say that the degeneration is \emph{small}, and it is easy to obtain the parametrized toric LG model in this case. In general, we can degenerate a divisor $D$ on $X$ as follows:

\begin{lemma}[cf. \cite{Shokurov20}, Section 4]\label{lemma: constant Picard}
     Let $\mathcal{X} \rightarrow Z \ni z_{0},z_{1}$ be a degeneration of $\mathcal{X}_{z_{1}} \cong X$ to $\mathcal{X}_{z_{0}} \cong X_{T}$ and $D_{1},\cdots , D_{l}$ be a finite number of prime divisors on $X$. Denote by $Z^{sm} \subseteq Z$ the open subset of $Z$ where the morphism $\mathcal{X} \rightarrow Z$ is smooth. Then after a finite base change, the sheaf $U \mapsto \mathrm{Pic}(X_{U})$ is constant on $Z^{sm}$ and we can find prime divisors $\mathcal{D}_{1}, \cdots , \mathcal{D}_{l}$ such that $\mathcal{D}_{i}|_{X} = D_{i}$.
\end{lemma}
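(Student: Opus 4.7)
\medskip

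\noindent\textbf{Proof plan.}

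My plan is to analyze the relative Picard scheme of $\mathcal{X}$ over $Z^{sm}$, trivialize its monodromy by a finite étale base change, and then extend each $D_{i}$ to a prime divisor by taking its Zariski closure. The first step is to show that $\mathrm{Pic}_{\mathcal{X}/Z^{sm}}$ is an étale group scheme over $Z^{sm}$ whose connected components are finite étale covers. In the setting of interest $X = \mathcal{X}_{z_{1}}$ is Fano, and by openness of ampleness in a flat family of smooth connected projective varieties I may shrink $Z^{sm}$ around $z_{1}$ so that every fiber there is Fano; Kodaira vanishing then yields $H^{1}(\mathcal{X}_{z}, \mathcal{O}_{\mathcal{X}_{z}}) = 0$ throughout this open. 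By Grothendieck's representability theorem the relative Picard functor is represented by an algebraic space, and this vanishing forces the identity component $\mathrm{Pic}^{0}_{\mathcal{X}/Z^{sm}}$ to be trivial, so each connected component of $\mathrm{Pic}_{\mathcal{X}/Z^{sm}}$ is finite étale over $Z^{sm}$.

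Next I would trivialize the monodromy. The classes $[D_{1}], \ldots, [D_{l}]$, together with a fixed finite set of generators of $\mathrm{Pic}(X)$, lie in finitely many components of $\mathrm{Pic}_{\mathcal{X}/Z^{sm}}$. A connected component of the fiber product of these finite étale covers yields a finite étale base change $Z' \to Z^{sm}$ over which each chosen component of the Picard scheme acquires a section, so that the étale sheaf $U \mapsto \mathrm{Pic}(\mathcal{X}_{U})$ becomes the constant sheaf with value $\mathrm{Pic}(X)$ on $Z'$. Extending $Z' \to Z^{sm}$ to a finite cover $Z'' \to Z$ (for instance by normalizing $Z$ in the function field of $Z'$) allows me to work throughout with $\mathcal{X}'' = \mathcal{X} \times_{Z} Z''$.

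Finally, for each $D_{i}$ I would take $\mathcal{D}_{i}$ to be the scheme-theoretic closure of $D_{i} \subset \mathcal{X}''_{z_{1}}$ inside $\mathcal{X}''$. Irreducibility of $\mathcal{X}''$ and primeness of $D_{i}$ imply that $\mathcal{D}_{i}$ is an integral closed subscheme of pure codimension one, hence a prime divisor, and smoothness of $\mathcal{X}''$ over the preimage of $Z^{sm}$ ensures that $\mathcal{D}_{i}$ is Cartier near $X$ and restricts scheme-theoretically to $D_{i}$. The main obstacle is the first step: propagating the Fano condition and Kodaira vanishing across all of $Z^{sm}$ so that $\mathrm{Pic}^{0}_{\mathcal{X}/Z^{sm}}$ vanishes, and hence every component of the relative Picard scheme is a finite étale cover rather than a positive-dimensional abelian scheme, is what makes a \emph{finite} base change suffice. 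Once this reduction is in place the base-change and closure arguments are largely formal.
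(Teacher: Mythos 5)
The paper does not actually prove this lemma: it is stated with the reference ``cf.\ \cite{Shokurov20}, Section 4'' and no argument is given, so your proposal can only be judged on its own terms. The first two steps of your plan are sound and essentially the standard route: for a smooth projective family of Fano varieties in characteristic zero, Kodaira vanishing gives $H^{1}(\mathcal{X}_{z},\mathcal{O})=H^{2}(\mathcal{X}_{z},\mathcal{O})=0$, so $\mathrm{Pic}_{\mathcal{X}/Z^{sm}}$ is a local system of finitely generated abelian groups, the monodromy on $\mathrm{Pic}(X)$ is finite (it preserves $K_{X}$ and the Hodge-index form, whose restriction to $K_{X}^{\perp}$ is definite), and a finite \'etale cover killing it exists; normalizing $Z$ in the function field of that cover gives the finite base change. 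Two small caveats: shrinking $Z^{sm}$ around $z_{1}$ to propagate the Fano condition weakens the stated conclusion (constancy on all of $Z^{sm}$), although for an anticanonically polarized degeneration every smooth fibre is automatically Fano so the shrinking is unnecessary; and you should say a word about why the relevant components of $\mathrm{Pic}_{\mathcal{X}/Z^{sm}}$ are \emph{finite} (proper), not merely quasi-finite \'etale, over the base.

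The genuine gap is in the last step. Here $z_{1}$ is a closed point of $Z$ (the subsequent \cref{construction: degenerating divisors} explicitly chooses closed points $z_{0}',z_{1}'$), so $D_{i}$ is a closed subvariety of the closed fibre $\mathcal{X}''_{z_{1}'}$, which is itself closed in $\mathcal{X}''$. Its Zariski closure in $\mathcal{X}''$ is therefore $D_{i}$ itself: a subvariety of codimension $1+\dim Z$, not a prime divisor of $\mathcal{X}''$, and certainly not one restricting to $D_{i}$ on the fibre. Scheme-theoretic closure only performs the desired ``spreading out'' when $z_{1}$ is the generic point. To repair this you must use the constancy of $\mathrm{Pic}$ just established: extend the class $[D_{i}]$ to a line bundle $\mathcal{L}_{i}$ on $\mathcal{X}''$ over $Z''^{sm}$, and then exhibit an effective relative divisor in that class whose fibre at $z_{1}'$ is $D_{i}$ --- for instance by taking an irreducible component of $\mathrm{Div}_{\mathcal{X}''/Z''}$ through the point $[D_{i}]$, checking it dominates the base, and passing to a further finite base change along a multisection through $[D_{i}]$. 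This requires an actual argument that $D_{i}$ deforms to nearby fibres at all: upper semicontinuity of $h^{0}(\mathcal{X}_{z},\mathcal{L}_{i,z})$ only bounds the value at $z_{1}$ from below by the generic value, so effectivity at the closed point $z_{1}$ does not formally propagate, and one must invoke either the generality of $z_{1}$ or properties of the particular divisors being degenerated (here, generators of the effective cone). As written, the proposal does not produce the divisors $\mathcal{D}_{i}$.
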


\begin{construction}[degenerating divisors]\label{construction: degenerating divisors}
    Let $\mathcal{X} \rightarrow Z \ni z_{0},z_{1}$ be a degeneration of $\mathcal{X}_{z_{1}} \cong X$ to $\mathcal{X}_{z_{0}} \cong X_{T}$. We take $D_{1},\cdots , D_{l}$ to be the generators of the effective cone $\mathrm{Eff}_{\mathbb{R}}(X)$. Then by \cref{lemma: constant Picard}, we can take a finite base change $Z' \rightarrow Z$ such that the sheaf $U \mapsto \mathrm{Pic}(\mathcal{X}_{U})$ is constant on $Z^{sm}$ and we can find prime divisors $\mathcal{D}_{1}, \cdots , \mathcal{D}_{l}$ such that $\mathcal{D}_{i}|_{\mathcal{X}_{z'_{1}}} = D_{i}$, where $z'_{1}$ is a closed point over $z_{1}$. We can then choose a closed point $z'_{0}$ over $z_{0}$ and obtain divisors $D_{i,T}$ given by the divisorial part of $\mathcal{D}_{i}|_{\mathcal{X}_{z'_{0}}}$ on $X_{T}$.
\end{construction}

\begin{remark}
    Although any prime divisor can be degenerated, one must pay attention to the following facts:
    \begin{enumerate}
        \item The degeneration is not canonical. It depends on the family $\mathcal{X} \rightarrow Z$, the finite base change $Z'$ and the choice of the closed points $z'_{0}$ and $z'_{1}$.
        \item The restriction $\mathcal{D}_{i}|_{\mathcal{X}_{z'_{0}}}$ may not have any divisorial component if $\mathcal{D}_{i}$ is not $\mathbb{Q}$-Cartier. When $\mathcal{X}$ has klt singularities, one can remedy this problem by taking a $\mathbb{Q}$-factorization $\mathcal{X}' \rightarrow \mathcal{X}$ and degenerating the divisors to a higher model $X'_{T} = \mathcal{X}'_{z'_{0}}$ of $X_{T}$ instead.
        \item The degeneration may not be well-defined up to linear equivalence. See \cref{example: Mori-Mukai 2.15} for an example where the non-rational smooth cubic threefold $B_{3}$ degenerates to a rational terminal Gorenstein threefold $Y$. In particular, they have different principal divisors.
        \item The effective cone $\mathrm{Eff}_{\mathbb{R}}(X_{T})$ may be larger than the effective cone $\mathrm{Eff}_{\mathbb{R}}(X)$. See \cref{Case 3.2} for an example.
    \end{enumerate}
\end{remark}

\subsection{Mutations of Laurent polynomials}

    It is expected that deformations of Fano varieties are mirror to some operations on the Landau-Ginzburg models called mutations. We recall the definition of mutations.

\begin{definition}[Mutations, cf. \cite{CKPT}, Definition 1.6]
    Let $N$ be a lattice and $w \in M$ be a primitive vector in the dual lattice. Then $w$ induces a grading on $\mathbb{C}[N]$. Let $a \in \mathbb{C}[w^{\perp} \cap N]$ be a Laurent polynomial in the zeroth piece of $\mathbb{C}[N]$, where $w^{\perp} \cap N = \{v \in N \mid w(v)=0 \}$. The pair $(w,a)$ defines an automorphism of $\mathbb{C}(N)$ given by
    $$
    \mu_{w,a}: \mathbb{C}(N) \rightarrow \mathbb{C}(N), \qquad x^{v} \rightarrow x^{v}a^{w(v)}.
    $$
    Let $f \in \mathbb{C}[N]$. We say that \emph{$f$ is mutable with respect to $(w,a)$} if 
    $$
    g:=\mu_{w,a}(f) \in \mathbb{C}[N],
    $$
    in which case we call $g$ a \emph{mutation of $f$} and $a$ a \emph{factor}.
\end{definition}

    Here we mention the following basic property of mutations.

\begin{proposition}[cf. \cite{CKPT}, 1.5]
    Let $f$ be a Laurent polynomial. Then the period of $f$ is invariant under mutations.
\end{proposition}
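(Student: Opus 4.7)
The goal is to show $P_f(t) = P_g(t)$ where $g = \mu_{w,a}(f)$, which reduces by the definition of the period to verifying $c(f^d) = c(g^d)$ for every $d \geq 0$. The plan is to establish the stronger claim that the mutation automorphism $\mu_{w,a}$ of $\mathbb{C}(N)$ preserves constant terms on any Laurent polynomial it sends back into $\mathbb{C}[N]$, and then to apply this with $h = f^d$.

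First I would choose a basis $e_1, \ldots, e_n$ of $N$ with $w = e_1^*$, set $x_i = x^{e_i}$, so that $\mathbb{C}[w^\perp \cap N] = \mathbb{C}[x_2^{\pm 1}, \ldots, x_n^{\pm 1}]$ and $\mu_{w,a}$ is the automorphism of $\mathbb{C}(N)$ sending $x_1 \mapsto a\, x_1$ and fixing $x_2, \ldots, x_n$. Any $h \in \mathbb{C}[N]$ decomposes uniquely according to the $w$-grading as $h = \sum_{k \in \mathbb{Z}} h_k\, x_1^k$ with $h_k \in \mathbb{C}[w^\perp \cap N]$, and by definition $c(h) = c(h_0)$. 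Applying $\mu_{w,a}$ term-by-term gives $\mu_{w,a}(h) = \sum_k h_k a^k x_1^k$, so the $w$-degree zero piece is unchanged. Hence, whenever $\mu_{w,a}(h) \in \mathbb{C}[N]$, its $w$-grading expansion is exactly the one just written and $c(\mu_{w,a}(h)) = c(h_0) = c(h)$.

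It remains to apply this with $h = f^d$. Because $\mu_{w,a}$ is a ring automorphism of $\mathbb{C}(N)$ and $\mu_{w,a}(f) = g \in \mathbb{C}[N]$, the identity $\mu_{w,a}(f^d) = g^d$ holds in $\mathbb{C}(N)$ with right-hand side in $\mathbb{C}[N]$, so $f^d$ is itself mutable with mutation $g^d$; the preceding step then yields $c(f^d) = c(g^d)$ for all $d$. The only delicate point, and hence the only real obstacle, is bookkeeping about when an expression built from $\mu_{w,a}$ genuinely lies in $\mathbb{C}[N]$ rather than merely in $\mathbb{C}(N)$; the mutability of $f$ combined with the identity $\mu_{w,a}(f^d) = \mu_{w,a}(f)^d$ propagates the Laurent-polynomial condition to every power and thereby legitimizes the grading argument.
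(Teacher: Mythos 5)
Your argument is correct. Note that the paper does not actually prove this proposition; it is only quoted with a citation to \cite{CKPT}, so there is no in-paper proof to compare against. Your grading argument --- writing $h=\sum_k h_k x_1^k$ with respect to a basis in which $w=e_1^*$, observing that $\mu_{w,a}$ multiplies the degree-$k$ piece by $a^k$ and hence fixes the degree-zero piece (whose constant term is $c(h)$), and then using that $\mu_{w,a}$ is a ring automorphism to propagate the statement to $h=f^d$ --- is exactly the standard proof in the literature, and you correctly isolate the one delicate point, namely that membership of $g^d=\mu_{w,a}(f^d)$ in $\mathbb{C}[N]$ together with uniqueness of the $x_1$-graded decomposition over $\mathbb{C}(x_2,\dots,x_n)$ is what legitimizes reading off the degree-zero piece as $h_0$.
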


\section{Landau-Ginzburg models in birational geometry}\label{section: framework}

    In this section, we establish several conjectures about the behavior of the corresponding Landau-Ginzburg models of $X$ when running the minimal model program on $X$. First, we recall the conjecture about the existence of toric LG models.

    \begin{conjecture}
        Let $X$ be a terminal Fano variety and $D$ an $\mathbb{R}$(or $\mathbb{C}$)-Cartier divisor on $X$. Then $(X,D)$ has a toric LG model.
    \end{conjecture}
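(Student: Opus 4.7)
The plan is to proceed in three stages: first produce a toric degeneration of $X$, then use its Newton polytope to build a candidate Laurent polynomial whose parameters are indexed by $\mathrm{Pic}(X)\otimes \mathbb{C}$, and finally verify the three defining conditions (period, Calabi--Yau compactification, polytope). The starting point is a $\mathbb{Q}$-Gorenstein toric degeneration $\mathcal{X}\to Z\ni z_{0},z_{1}$ with $\mathcal{X}_{z_{1}}\cong X$ and $\mathcal{X}_{z_{0}}\cong X_{T}$ terminal Gorenstein (or at worst terminal locally toric $\mathbb{Q}$-Gorenstein-rigid, in the sense of CKPT, Conjecture~5.1). Assuming such a degeneration exists, one takes a maximally mutable Laurent polynomial $p\in \mathbb{C}[a_{1}^{\pm 1},\ldots,a_{r}^{\pm 1}][x_{1}^{\pm 1},\ldots,x_{n}^{\pm 1}]$ whose Newton polytope coincides with the fan polytope of $X_{T}$. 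This immediately secures the polytope condition.

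For the period condition, one first handles $D=0$. Applying \cref{construction: degenerating divisors}, degenerate a basis of $\mathrm{Eff}_{\mathbb{R}}(X)$ to prime divisors on $X_{T}$ (possibly after passing to a small $\mathbb{Q}$-factorialization), so that the grading of $p$ by $\mathrm{Pic}(X_{T})\twoheadleftarrow \mathrm{Pic}(X)$ produces coefficients of the shape $a^{\beta}=\prod a_{i}^{\alpha_{i}(\beta)}$ indexed by curve classes. Specializing $a_{i}=e^{-\alpha_{i}}$ then matches terms in $\hat{P}_{p_{D}}(t)$ to terms in the Batyrev--Givental--style I-series $\hat{G}_{X,D}(t)$. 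Invariance of $\hat{P}_{f}$ under mutation (Proposition~2.21) guarantees that this matching is independent of the particular choice of $p$ within its mutation class, and \cref{lemma: period after shifting constants} lets one absorb the constant term. Where a torus-equivariant Givental mirror theorem is available (smooth toric case, and partial cases via quantum Lefschetz or Coates--Corti--Iritani--Tseng), this reduces to a known statement; in general, one must appeal to mirror theorems for $\mathbb{Q}$-Gorenstein degenerations, which would require deformation-invariance of the regularized quantum period together with the known toric computation on $X_{T}$.

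For the Calabi--Yau compactification, one compactifies the total space of $p_{D}:(\mathbb{C}^{\ast})^{n}\to \mathbb{C}$ inside a toric variety associated to a refinement of the normal fan of the Newton polytope of $p$, and resolves the resulting singularities; this is standard in dimensions $\leq 3$ by work of Przyjalkowski, but in higher dimensions one must verify that a crepant resolution exists and that the generic fibre is smooth Calabi--Yau, which is subtle.

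The main obstacle is the first step: the existence of a $\mathbb{Q}$-Gorenstein toric degeneration of an arbitrary terminal Fano variety is itself open (this is essentially the TG condition of CKPT, Conjecture~5.1), and even when such a degeneration exists the identification of a \emph{parametrized} Laurent polynomial with the correct $\mathrm{Pic}(X)$-grading requires choosing the family $\mathcal{X}\to Z'$ and the closed points $z'_{0},z'_{1}$ compatibly, which \cref{construction: degenerating divisors} warns is non-canonical. A realistic path forward is therefore to restrict to the subclass of terminal Fanos admitting a parametrized toric Landau--Ginzburg model in the sense of \cref{definition: parametrized toric LG model}, prove the conjecture there (this is the content of the authors' later theorems for smooth toric Fanos, surfaces, and threefolds), and leave the general case contingent on progress toward CKPT Conjecture~5.1.
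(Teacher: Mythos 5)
You should first note that the statement you were asked to prove is presented in the paper as a \emph{conjecture} with no proof attached: the authors explicitly ``recall the conjecture about the existence of toric LG models'' and then only establish special cases later (smooth toric Fano varieties, smooth del Pezzo surfaces, and smooth Fano threefolds admitting a parametrized toric LG model, e.g.\ when $-K_{X}$ is very ample). So there is no proof in the paper to compare against, and your proposal --- which ends by conceding that the general case is ``contingent on progress toward CKPT Conjecture 5.1'' --- is likewise not a proof. That concession is correct, and the obstructions you isolate are the right ones: (i) the existence of a $\mathbb{Q}$-Gorenstein toric degeneration of an arbitrary terminal Fano variety is open (this is the class-TG hypothesis of \cref{conjecture: Q-Fano variety and LG model}); (ii) the period condition requires a mirror theorem beyond the toric and complete-intersection settings where Givental's theorem and quantum Lefschetz apply, together with deformation invariance of the regularized quantum period along the degeneration; (iii) the Calabi--Yau compactification is only known to exist systematically in dimension $\leq 3$. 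There is also a fourth conditional ingredient your sketch does not address: for a terminal non-Gorenstein $X$ the quantum period itself is only defined after a $\mathbb{Q}$-smoothing to a Fano with cyclic quotient singularities, as the paper sets up at the start of \cref{section: threefold}, and the existence of such a smoothing is itself conjectural beyond ordinary terminal singularities.

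Where your outline is on solid ground, it agrees with what the paper actually does in its proved cases: the existence theorem for parametrized toric LG models of smooth Fano threefolds with $-K_{X}$ very ample proceeds exactly by taking a Gorenstein terminal toric degeneration, degenerating a basis of divisors via \cref{construction: degenerating divisors} after a $\mathbb{Q}$-factorization, invoking \cref{prop: toric LG model for toric pairs} on the toric fibre, and correcting coefficients by Minkowski data; and mutation-invariance of the period together with \cref{lemma: period after shifting constants} is used throughout \cref{section: applications} just as you propose. The honest conclusion is that your proposal is an accurate road map for the cases the paper proves and a correct diagnosis of why the general statement remains a conjecture, but it is not --- and does not claim to be --- a proof of the statement as posed.
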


    The expected correspondence between extremal contractions in the minimal model program and the behavior of the corresponding LG models is the following:
\[
\begin{matrix}
  X & & Y \\
  \textrm{flips/flops} & \longleftrightarrow & \textrm{wall-crossings}\\
  \textrm{divisorial contractions} & \longleftrightarrow & \textrm{irreducible degenerations}\\
  \textrm{Mori fibre spaces} & \longleftrightarrow & \textrm{fibring degenerations}\\
  \textrm{degenerations} & \longleftrightarrow & \textrm{mutations}
\end{matrix}
\]

\subsection{Divisorial contractions and degenerations of LG models}

\begin{conjecture}
    Let $f: X \rightarrow Y$ be a divisorial contraction between terminal Fano varieties. Then the LG model of $X$ degenerates to the LG model of $Y$.
\end{conjecture}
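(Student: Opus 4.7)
The plan is to use the framework of parametrized toric Landau-Ginzburg models established earlier in the paper, realizing the divisorial contraction $f : X \to Y$ as a boundary limit in the parameter space of a parametrized LG model of $X$. Assume $X$ admits a parametrized toric LG model $p_X \in \mathbb{Z}[a_1^{\pm 1}, \ldots, a_r^{\pm 1}][x_1^{\pm 1}, \ldots, x_n^{\pm 1}]$ in the sense of \cref{definition: parametrized toric LG model}. Since $f$ contracts a prime divisor $E$, there is an injection $f^* : \mathrm{Pic}(Y) \hookrightarrow \mathrm{Pic}(X)$ and a decomposition $\mathrm{Pic}(X) \otimes \mathbb{R} = f^*(\mathrm{Pic}(Y) \otimes \mathbb{R}) \oplus \mathbb{R} \cdot [E]$; fix the parameter $a_E$ associated with $[E]$. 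The contraction $f$ corresponds to a codimension-one face of the Mori chamber decomposition of $\mathrm{Eff}_{\mathbb{R}}(X)$ across which $[E]$ ceases to be movable.

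The key step is to define $p_Y$ as the limit of $p_X$ as the complex coefficient $\alpha_E$ of $[E]$ tends to $+\infty$, equivalently $a_E = e^{-\alpha_E} \to 0$. Grading monomials by their pairing with $[E]$, write $p_X = \sum_{k \in \mathbb{Z}} a_E^{k} \, q_k$; in the limit only the $k \le 0$ pieces can survive, and a careful analysis must confirm that the resulting sum is a well-defined Laurent polynomial $p_Y$ in the remaining variables. One then verifies the three defining conditions of a toric LG model for $(Y, f_* D)$, where $D$ is the complexified divisor class on $X$ parametrizing $p_X$. The polytope condition follows by applying \cref{construction: degenerating divisors} to a toric degeneration of $X$ compatible with contracting the divisor corresponding to $E$, producing a toric degeneration of $Y$ whose fan polytope coincides with the Newton polytope of $p_Y$. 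The Calabi--Yau compactification of $p_X$ should restrict, along the wall, to a Calabi--Yau compactification of $p_Y$, which can be checked by an explicit semistable reduction in concrete situations.

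The main obstacle is the period condition $\hat{P}_{p_Y}(t) = \hat{G}_{Y, f_* D}(t)$. Heuristically, the Gromov--Witten invariants of $X$ decompose according to the pairing with $[E]$: classes $\beta$ with $E \cdot \beta = 0$ push forward to moving curve classes of $Y$ whose contributions to $\hat{G}_{Y, f_* D}$ should match the surviving terms of $p_Y$, while classes with $E \cdot \beta > 0$ correspond to monomials killed in the limit. Making this precise in general requires a comparison of quantum cohomology under divisorial contractions that is not yet available in the literature; \cref{lemma: period after shifting constants} handles some of the bookkeeping for constants introduced in the limit. In the cases resolved in the paper (smooth toric varieties, del Pezzo surfaces, and smooth Fano threefolds), this obstacle is circumvented by the explicit classification of divisorial contractions: one constructs $p_Y$ from $p_X$ directly and verifies the three conditions case by case, using \cref{conjecture: Q-Fano variety and LG model} and the associated mutation calculus to control the rigidity of the outcome.
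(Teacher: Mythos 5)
The statement you are addressing is posed in the paper as a conjecture and is not proved there in general; what the paper actually establishes are the special cases in \cref{prop: LG model for toric divisorial contraction} (smooth toric varieties), \cref{theorem: surface MMP and LG models} (surfaces), and \cref{theorem: threefold divisorial contraction and LG models} (smooth Fano threefolds admitting a parametrized toric LG model). Your mechanism --- split $\mathrm{Pic}(X)\otimes\mathbb{R}$ into $f^{*}(\mathrm{Pic}(Y)\otimes\mathbb{R})$ and $\mathbb{R}\cdot[E]$, send $a_{E}=e^{-\alpha_{E}}\to 0$, retain the $E$-degree-zero part of the parametrized LG model, and match the surviving period terms with Gromov--Witten invariants of classes $\beta$ with $E\cdot\beta=0$ --- is exactly the mechanism used in all three of those proofs, and you correctly isolate the period condition as the genuinely open point; this is consistent with the statement remaining a conjecture in the paper.

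Two points need correction. First, your account of how the paper closes the period condition in the known cases is inaccurate: the threefold proof does not proceed ``case by case using \cref{conjecture: Q-Fano variety and LG model} and the associated mutation calculus'' (invoking that conjecture would in any event be circular, since it is itself unproven). Instead, the paper constructs a degeneration $\mathcal{W}\to\mathbb{A}^{1}$ with special fibre $X\cup_{E}W$, where $W$ is determined by the type E1--E5 of the contraction, and applies the Gromov--Witten degeneration formula (following the strategy of the conifold-transition argument it cites) to identify $\langle\tau_{-K_{Y}\cdot\beta-2}\mathbf{1}\rangle^{Y}_{\beta}$ with $\langle\tau_{-K_{X}\cdot\widetilde{\beta}-2}\mathbf{1}\rangle^{X}_{\widetilde{\beta}}$ for the unique lift $\widetilde{\beta}$ with $E\cdot\widetilde{\beta}=0$; a separate lattice computation using $g^{*}K_{Y}=K_{X}-aE$ and $K_{X}\cdot l=-1$ shows the non-zero terms on the two sides are in bijection even when $Y$ is singular (types E3--E5). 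This degeneration-formula step is the ingredient your outline is missing if it is to reproduce even the proven cases. Second, a small imprecision in the limit: as $a_{E}\to 0$ the pieces with positive $E$-grading vanish and those with negative grading diverge, so the limit, when it exists, is the degree-\emph{zero} piece $q_{0}$ alone, not the sum of the $k\le 0$ pieces; well-definedness is exactly the absence of $k<0$ terms, which follows because moving curve classes pair non-negatively with the effective divisor $E$ (checked explicitly in the threefold proof). With these amendments your outline agrees with the paper's treatment, but, like the paper, it does not constitute a proof of the conjecture beyond the listed cases.
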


\subsection{Mori fibre spaces, fibring degenerations and Tyurin degenerations}

    In this section, we examine the toric Landau-Ginzburg models of $X$ when $X$ admits a non-trivial Mori fibre space $X \rightarrow Z$. The main conjecture in this section is that the Landau-Ginzburg models of a general fibre of $X/Z$ can be obtained by the fibring degenerations of the Landau-Ginzburg models of $X$.

\begin{conjecture}[Fibring degenerations for Mori fibre spaces]
    Let $X$ be a terminal Fano variety, $X \rightarrow Z$ be a Mori fibre space and $F$ be a general fibre. Let $Y \rightarrow \mathbb{A}^1$ be the Calabi-Yau compactification of a toric LG model of $X$ and $Y'$ be the Calabi-Yau compactification of a toric LG model of $F$. Then there exists a degeneration of $Y$ with the following commutative diagram:
    $$
    \begin{tikzcd}
        Y \arrow[d] \arrow[r,decorate,decoration={snake,segment length = 1.6mm,amplitude=0.2mm}] & Y^{*} \arrow[d] \\
        \mathbb{A}^1 & Y' \arrow[l]
    \end{tikzcd}
    $$
\end{conjecture}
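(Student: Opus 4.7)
The plan is to reduce the geometric claim to a combinatorial statement about parametrised Laurent polynomials, and then compactify. I would exploit \cref{definition: parametrized toric LG model} to write a parametrised LG model $p_X \in \mathbb{Z}[a_i^{\pm}][x_j^{\pm}]$ with parameters indexed by a basis of $\mathrm{Pic}(X)$. The Mori fibre structure $h\colon X \to Z$ supplies a distinguished subgroup $h^{*}\mathrm{Pic}(Z) \subseteq \mathrm{Pic}(X)$ fitting in an exact sequence
$$0 \longrightarrow \mathrm{Pic}(Z) \longrightarrow \mathrm{Pic}(X) \longrightarrow \mathrm{Pic}(F) \longrightarrow 0,$$
and I would choose the basis so that $s$ of the parameters $a_1,\ldots,a_s$ correspond to pullbacks from $Z$ while the remaining $b_1,\ldots,b_t$ restrict to a basis of $\mathrm{Pic}(F)$. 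The Newton polytope of $p_X$ then inherits a bigrading and should project, under the dual of $h^{*}\mathrm{Pic}(Z)\hookrightarrow\mathrm{Pic}(X)$, onto the Newton polytope of a toric LG model of $F$.

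Next I would perform the degeneration at the level of Laurent polynomials. The face of $\mathrm{Nef}(X)$ spanned by $h^{*}\mathrm{Pic}(Z)$ supports the Mori contraction $h$, so a path $a_i = \epsilon^{c_i}$ with appropriate weights $c_i > 0$ approaches this face as $\epsilon \to 0$. I would analyse the limit $\lim_{\epsilon \to 0}p_X$ after rescaling the monomial coordinates $x_j$ as dictated by the bigrading, aiming to show that the leading piece is a parametrised toric LG model $p_F$ of $F$ tensored with a ``trivial'' monomial factor encoding the toric directions mirror to the base. The period condition is the key check: using \cref{lemma: period after shifting constants} together with invariance of the period under mutations, and matching against the Mori fibre decomposition of Gromov-Witten invariants of $X$ into fibre and base contributions, the regularised period of the limit polynomial should match $\hat{G}_{F,D|_F}(t)$, confirming $p_F$ is indeed a toric LG model of $F$.

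Finally I would globalise this to the compactified side. Working in a one-parameter family of Laurent polynomials $p_X(\epsilon)$, I would apply the Calabi-Yau compactification construction relatively over $\{\epsilon\in\Delta\}$ to obtain a degeneration $\mathcal{Y}\to\Delta$ with general fibre $Y$ and special fibre $Y^{*}$. By the combinatorial analysis of the previous step, $Y^{*}$ carries a natural morphism to the compactification $Y'$ of $p_F$ whose fibres are the toric variety dual to the ``base'' directions; the composition $Y^{*}\to Y'\to\mathbb{A}^1$ agrees with the superpotential inherited from $\mathcal{Y}$, producing the claimed commutative diagram.

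The main obstacle is the last step: even when the combinatorial degeneration of Laurent polynomials is transparent, producing a simultaneous Calabi-Yau compactification that geometrically realises the map $Y^{*}\to Y'$ is subtle, since the singular fibres of the superpotential can merge or split in the limit, and different admissible Calabi-Yau compactifications of $p_X$ and $p_F$ need not be compatible. One likely needs either an additional hypothesis that $X$ admits a parametrised toric LG model in the strong sense of \cite{DHKOP} with a globally chosen compactification, or a case-by-case analysis as carried out in \cref{section: threefold} for smooth Fano threefolds. A secondary difficulty arises when $h^{*}\mathrm{Pic}(Z)$ is not visibly generated by classes with a transparent Laurent-polynomial interpretation; in that case a preliminary mutation of $p_X$ must be used to bring it into a form where the above bigrading is manifest.
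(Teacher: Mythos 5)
First, note that the statement you are proving is stated as a conjecture in the paper; the text only establishes it in special cases, namely for smooth toric Fano varieties (\cref{prop: LG model for toric Mori fibre space} together with the compactified construction in \cref{degeneration of compactified LG models for MFS}), for del Pezzo surfaces, and for smooth Fano threefolds at the level of Laurent polynomials and periods (\cref{theorem: threefold Mori fibre spaces and LG models}). Your overall strategy --- degenerate the parametrized LG model in the direction of the face of $\mathrm{Eff}_{\mathbb{R}}(X)$ supported on $h^{*}\mathrm{Pic}(Z)$, match periods against fibre curve classes, then compactify --- is the same strategy the paper uses in those cases. However, there is a concrete error at the start: the sequence $0 \to \mathrm{Pic}(Z) \to \mathrm{Pic}(X) \to \mathrm{Pic}(F) \to 0$ is not exact for a Mori fibre space. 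By definition $\rho(X/Z)=1$, so $\mathrm{Pic}(X)/h^{*}\mathrm{Pic}(Z)$ has rank one, whereas $\mathrm{Pic}(F)$ can have large rank (e.g.\ a del Pezzo fibration of low degree, where $\rho(F)$ can be as large as $9$ but the image of the restriction $\mathrm{Pic}(X)\to\mathrm{Pic}(F)$ has rank one). Consequently you cannot choose parameters $b_{1},\dots,b_{t}$ restricting to a basis of $\mathrm{Pic}(F)$, the proposed bigrading of the Newton polytope does not exist, and the limit polynomial can only be identified as a toric LG model of $(F,D|_{F})$ for $D$ ranging over the rank-one image of restriction. This is still enough for the conjecture as stated (which asks only for \emph{a} toric LG model of $F$), but the structural claim underlying your degeneration should be corrected: the right move, as in the paper's proof of \cref{theorem: threefold Mori fibre spaces and LG models}, is to degenerate in the single direction $D=h^{*}A$ for $A$ ample on $Z$ and to show that the surviving moving classes with $D\cdot\beta=0$ are exactly the fibre classes, with matching Gromov--Witten invariants via the Cartesian square over a point of $Z$.

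The second gap is the one you flag yourself, and it is the genuinely hard part: upgrading the Laurent-polynomial degeneration to a degeneration of Calabi--Yau compactifications realizing the morphism $Y^{*}\to Y'$. The paper only carries this out in the smooth toric case, where the compactified LG models are anticanonical pencils on the crepant terminalizations $\widetilde{X}^{\vee}$ and $\widetilde{F}^{\vee}$ of the dual toric varieties, and the degeneration is realized by splitting the pencil into a fixed part (boundary divisors not lying on faces pulled back from $\Sigma_{F}^{\vee}$) plus a movable part which is the pullback of $|-K_{\widetilde{F}^{\vee}}|$; the fibration $Y^{*}\to Y'$ is then induced by the toric morphism $\widetilde{X}^{\vee}\to\widetilde{F}^{\vee}$. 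Your proposal of a relative Calabi--Yau compactification over a disc of parameters is plausible but is not carried out in the paper either, and without an argument controlling how the compactification divisor and the singular fibres of the superpotential vary in the family, this step remains open in exactly the way the conjecture itself is open.
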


    When $X$ admits a Fano fibration to a curve, there is another type of degenerations, which is an analogue of the Tyurin degenerations of the mirrors of Calabi-Yau varieties.

\begin{conjecture}[Tyurin degenerations for Mori fibre spaces]\label{conj: Tyurin degeneration for central models}
    Let $X$ be a terminal Fano variety and $X \rightarrow \mathbb{P}^1$ be a Mori fibre space. Let $Y \rightarrow \mathbb{A}^1$ be the Calabi-Yau compactification of a toric LG model of $X$. Then there exists a degeneration of $Y$ with the following commutative diagram:
    $$
    \begin{tikzcd}
        Y \arrow[d] \arrow[r,decorate,decoration={snake,segment length = 1.6mm,amplitude=0.2mm}] & Y_{0} \cup Y_{1} \arrow[d] \\
        \mathbb{A}^1 \arrow[equal,r] & \mathbb{A}^1
    \end{tikzcd}
    $$
    where
    \begin{enumerate}
        \item All $Y_{i} \rightarrow \mathbb{A}^1$ are Fano fibrations, and they have simple normal crossing intersections;
        \item The intersection $Y_{1} \cap Y_{2}$ is an anti-canonical divisor of $Y_{1}$ and $Y_{2}$.
        \item The intersection $Y_{1} \cap Y_{2} \rightarrow \mathbb{A}^1$ is the mirror of the general fibre of $X \rightarrow Z$.
    \end{enumerate}
    Conversely, suppose that $X$ is a smooth Fano variety and let $Y \rightarrow \mathbb{A}^1$ be an LG model of $X$. If there exists a degeneration of $Y \rightarrow \mathbb{A}^1$ to $Y_{1} \cap Y_{2} \rightarrow \mathbb{A}^1$ satisfying the above conditions, then there exists a fibration $X \rightarrow \mathbb{P}^1$ such that the general fibre has a toric LG model given by $Y_{1} \cap Y_{2} \rightarrow \mathbb{A}^1$.
    \end{conjecture}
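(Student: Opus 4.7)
The plan is to reduce both directions of the conjecture to a combinatorial analysis in the toric case, and then to propagate the result to the general case by the $\mathbb{Q}$-Gorenstein degeneration provided by the polytope condition. For the forward direction, I start with a terminal Fano $X$ admitting a Mori fibre space $\pi : X \to \mathbb{P}^1$ with general fibre $F$. By the polytope condition, there is a degeneration $X \leadsto X_{T}$ to a toric variety whose fan polytope matches the Newton polytope of a toric LG model $f$ of $X$. The first key step is to show that $\pi$ descends, after a $\mathbb{Q}$-factorialization if needed, to a toric Mori fibration $\pi_{T}: X_{T} \to \mathbb{P}^1$ whose general fibre is the toric degeneration of $F$.

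For the toric case, a toric Mori fibration $X_{T} \to \mathbb{P}^1$ corresponds to a surjection of fans, realised as a splitting of lattices $N = N_{0} \oplus \mathbb{Z} e$, where the rays of $\Sigma_{X_T}$ project surjectively onto the two rays $\pm e$ of $\Sigma_{\mathbb{P}^1}$. The monomials of the toric LG model $f$ can then be partitioned according to their $e$-coordinate into three groups, $f = f_{+} + f_{0} + f_{-}$, with $f_{0}$ supported on $N_{0}$ being a toric LG model of the toric general fibre. Introducing a one-parameter rescaling $f_{s} := s f_{+} + f_{0} + s^{-1} f_{-}$ and passing to a Calabi-Yau compactification yields a family over $\mathbb{A}^1 \times \mathbb{A}^1_{s}$ whose special fibre is the union $Y_{0} \cup Y_{1}$ of two toric pieces glued along the compactification of $\{f_{0}=0\}$. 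The Calabi-Yau compactification condition $f^{*}(\infty) = D_{-K}$ together with adjunction then gives that $Y_{0} \cap Y_{1}$ is anti-canonical in each $Y_{i}$, while each $Y_{i} \to \mathbb{A}^1$ inherits a Fano fibration structure from the toric data.

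For the converse, given a degeneration of $Y$ to $Y_{0} \cup Y_{1}$ meeting in an anti-canonical divisor which is itself an LG compactification of some $F$, the strategy is to reverse the above construction: the combinatorial data of the two pieces determines an $e$-grading on the Newton polytope of $f$, which by the polytope condition in reverse should produce a toric variety $X_{T}$ equipped with a toric fibration onto $\mathbb{P}^1$. The Fano variety $X$ then acquires the desired $\mathbb{P}^1$-fibration by smoothing, provided one can show that the fibration structure lifts through the $\mathbb{Q}$-Gorenstein smoothing of $X_{T}$ to $X$.

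The main obstacle I anticipate lies in the two non-toric reductions. First, descending the Mori fibration $\pi$ to $X_{T}$ is not automatic: its existence depends on the chosen toric degeneration, and one must pick one compatible with $\pi$, typically via a relative version of the polytope condition that degenerates $X$ and $\pi^{*}\mathcal{O}_{\mathbb{P}^1}(1)$ simultaneously. Second, in the converse direction, lifting a toric $\mathbb{P}^1$-fibration through a $\mathbb{Q}$-Gorenstein smoothing requires deformation-theoretic input, such as a vanishing of obstruction classes for the relative deformation problem. In dimensions 2 and 3 these difficulties can presumably be circumvented case by case using the classifications cited in the introduction, which appears to be the route taken in \cref{section: threefold}; in general dimension, controlling the global deformation theory likely requires the moduli-space framework of \cref{conj: main theorem}, where the existence of such a degeneration corresponds to crossing a specific codimension-one wall.
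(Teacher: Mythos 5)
The statement you are proving is stated in the paper as a \emph{conjecture}, and the paper does not prove it in general: the only case actually established is the smooth toric one (the ``Tyurin degenerations'' statement in \cref{section: toric}), together with explicit surface examples in \cref{example: Tyurin degenerations for surfaces}. Your proposal is therefore best read as a reduction strategy rather than a proof, and the two steps you yourself flag as ``main obstacles'' --- (i) descending the Mori fibration $\pi: X \to \mathbb{P}^1$ to a chosen toric degeneration $X_T$ compatibly with the polytope condition, and (ii) lifting the fibration (and, in the converse direction, the whole structure) through the $\mathbb{Q}$-Gorenstein smoothing --- are exactly the parts that remain open and are the reason the statement is a conjecture. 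Naming an obstacle is not the same as overcoming it; as written, neither direction is proved outside the toric case, and the converse direction is not established even there.

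On the toric core, your construction also differs from the paper's and has a technical issue. The paper works on the dual side: it compactifies $f$ as a pencil of anticanonical divisors on the crepant terminalization $\widetilde{X}^{\vee}$, uses the projection $p_{n}: \mathbb{Z}^{n} \to \mathbb{Z}$ to produce a fibration $\widetilde{h}: \widetilde{X}^{\vee} \to \mathbb{P}^1$ with fibre class $D = \sum_{p(v)=1} D_{v}$, and degenerates a general member of $|-K_{\widetilde{X}^{\vee}}|$ into a member of $|-K_{\widetilde{X}^{\vee}} - D| + |D|$, giving $Y_{1} \cong \widetilde{F}^{\vee} \times \mathbb{A}^1$ and the anticanonical gluing by adjunction. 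You instead grade the monomials of $f$ by the splitting $N = N_{0} \oplus \mathbb{Z}e$ on the original fan and rescale, $f_{s} = s f_{+} + f_{0} + s^{-1} f_{-}$. As written this family has no limit at $s = 0$ (the $s^{-1}f_{-}$ term diverges), and after clearing denominators the flat limit of the hypersurfaces $\{f_{s} = c\}$ is not obviously the simple normal crossing union of two Fano fibrations glued along the compactification of $\{f_{0} = c\}$; note also that the gluing locus must vary with $c \in \mathbb{A}^1$, not sit over $\{f_{0}=0\}$. To make your version work you would need to carry out the degeneration on the compactified model and verify the three listed conditions there, which is essentially what the paper's dual-polytope construction does. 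I would recommend either adopting that construction for the toric case, or supplying a careful toric computation showing your rescaling produces the same special fibre.
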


\begin{example}[Some Tyurin degenerations for surfaces]\label{example: Tyurin degenerations for surfaces}
    We examine the Tyurin degenerations of the compactified Landau-Ginzburg models of some surfaces.

    An LG model of $\mathbb{P}^2$ is given by $f = x + y + \frac{1}{xy}$. If we compactify $\mathbb{C}^{*2}$ to $\mathbb{P}^2$, the level set $f=c$ can be written as 
    $$
    x^{2}y+xy^{2}+z^3 = cxyz.
    $$
    We think of it as a pencil of cubic curves. We proceed by 2 steps to make the pencil base-point free. The curve $C: x^{2}y+xy^{2}+z^3 = 0$ intersects the line $L_{x}: x=0$ at $[0,1,0]$ with multiplicity 3, the line $L_{y}: y=0$ at $[1,0,0]$ with multiplicity 3, and the line $L_{z}: z=0$ at $[0,1,0],[1,0,0],[-1,1,0]$ with multiplicity 1. The configuration is shown in the following graph:
    \begin{figure}[H]
        \centering
\ifx\du\undefined
  \newlength{\du}
\fi
\setlength{\du}{15\unitlength}
\begin{tikzpicture}
\pgftransformxscale{0.500000}
\pgftransformyscale{-0.500000}
\definecolor{dialinecolor}{rgb}{0.000000, 0.000000, 0.000000}
\pgfsetstrokecolor{dialinecolor}
\definecolor{dialinecolor}{rgb}{1.000000, 1.000000, 1.000000}
\pgfsetfillcolor{dialinecolor}
\pgfsetlinewidth{0.100000\du}
\pgfsetdash{}{0pt}
\pgfsetdash{}{0pt}
\pgfsetbuttcap
{
\definecolor{dialinecolor}{rgb}{0.000000, 0.000000, 0.000000}
\pgfsetfillcolor{dialinecolor}
\definecolor{dialinecolor}{rgb}{0.000000, 0.000000, 0.000000}
\pgfsetstrokecolor{dialinecolor}
\draw (29.000000\du,7.000000\du)--(21.000000\du,22.000000\du);
}
\pgfsetlinewidth{0.100000\du}
\pgfsetdash{}{0pt}
\pgfsetdash{}{0pt}
\pgfsetbuttcap
{
\definecolor{dialinecolor}{rgb}{0.000000, 0.000000, 0.000000}
\pgfsetfillcolor{dialinecolor}
\definecolor{dialinecolor}{rgb}{0.000000, 0.000000, 0.000000}
\pgfsetstrokecolor{dialinecolor}
\draw (24.000000\du,8.000000\du)--(32.000000\du,23.000000\du);
}
\pgfsetlinewidth{0.100000\du}
\pgfsetdash{}{0pt}
\pgfsetdash{}{0pt}
\pgfsetbuttcap
{
\definecolor{dialinecolor}{rgb}{0.000000, 0.000000, 0.000000}
\pgfsetfillcolor{dialinecolor}
\definecolor{dialinecolor}{rgb}{0.000000, 0.000000, 0.000000}
\pgfsetstrokecolor{dialinecolor}
\draw (15.000000\du,19.000000\du)--(40.000000\du,19.000000\du);
}

\definecolor{dialinecolor}{rgb}{0.000000, 0.000000, 0.000000}
\pgfsetstrokecolor{dialinecolor}
\node[anchor=west] at (36.000000\du,15.050000\du){$C$};
\definecolor{dialinecolor}{rgb}{0.000000, 0.000000, 0.000000}
\pgfsetstrokecolor{dialinecolor}
\node[anchor=west] at (23.000000\du,7.000000\du){$L_{x}$};
\definecolor{dialinecolor}{rgb}{0.000000, 0.000000, 0.000000}
\pgfsetstrokecolor{dialinecolor}
\node[anchor=west] at (29.000000\du,7.000000\du){$L_{y}$};
\definecolor{dialinecolor}{rgb}{0.000000, 0.000000, 0.000000}
\pgfsetstrokecolor{dialinecolor}
\node[anchor=west] at (41.000000\du,19.000000\du){$L_{z}$};

\useasboundingbox (20.000000\du,5.000000\du) rectangle (42.000000\du,20.000000\du);

\pgfsetlinewidth{0.100000\du}
\pgfsetdash{}{0pt}
\pgfsetdash{}{0pt}
\pgfsetmiterjoin
\pgfsetbuttcap
{
\definecolor{dialinecolor}{rgb}{0.000000, 0.000000, 0.000000}
\pgfsetfillcolor{dialinecolor}
\definecolor{dialinecolor}{rgb}{0.000000, 0.000000, 0.000000}
\pgfsetstrokecolor{dialinecolor}
\pgfpathmoveto{\pgfpoint{19.000000\du}{22.000000\du}}
\pgfpathcurveto{\pgfpoint{23.000000\du}{22.000000\du}}{\pgfpoint{25.000000\du}{11.000000\du}}{\pgfpoint{28.000000\du}{16.000000\du}}
\pgfpathcurveto{\pgfpoint{31.000000\du}{21.000000\du}}{\pgfpoint{34.000000\du}{26.000000\du}}{\pgfpoint{36.000000\du}{16.000000\du}}
\pgfusepath{stroke}
}
\end{tikzpicture}

    \end{figure}

    To make the intersections simple normal crossing, we successively blow up the base point on $L_{x}$ and $L_{y}$ 3 times. The configuration of curves is given in the following:

\begin{figure}[H]
  \centering
\ifx\du\undefined
  \newlength{\du}
\fi
\setlength{\du}{15\unitlength}
\begin{tikzpicture}
\pgftransformxscale{0.500000}
\pgftransformyscale{-0.500000}
\definecolor{dialinecolor}{rgb}{0.000000, 0.000000, 0.000000}
\pgfsetstrokecolor{dialinecolor}
\definecolor{dialinecolor}{rgb}{1.000000, 1.000000, 1.000000}
\pgfsetfillcolor{dialinecolor}
\pgfsetlinewidth{0.100000\du}
\pgfsetdash{}{0pt}
\pgfsetdash{}{0pt}
\pgfsetmiterjoin
\pgfsetbuttcap
\definecolor{dialinecolor}{rgb}{0.000000, 0.000000, 0.000000}
\pgfsetstrokecolor{dialinecolor}
\draw (26.000000\du,3.000000\du)--(31.000000\du,5.000000\du)--(35.000000\du,9.000000\du)--(34.000000\du,15.000000\du)--(29.000000\du,18.000000\du)--(23.000000\du,18.000000\du)--(18.000000\du,15.000000\du)--(17.000000\du,9.000000\du)--(21.000000\du,5.000000\du)--cycle;

\definecolor{dialinecolor}{rgb}{0.000000, 0.000000, 0.000000}
\pgfsetstrokecolor{dialinecolor}
\node[anchor=west] at (22.000000\du,4.000000\du){$-2$};
\definecolor{dialinecolor}{rgb}{0.000000, 0.000000, 0.000000}
\pgfsetstrokecolor{dialinecolor}
\node[anchor=west] at (20.000000\du,16.000000\du){$-2$};
\definecolor{dialinecolor}{rgb}{0.000000, 0.000000, 0.000000}
\pgfsetstrokecolor{dialinecolor}
\node[anchor=west] at (18.000000\du,7.000000\du){$-2$};
\definecolor{dialinecolor}{rgb}{0.000000, 0.000000, 0.000000}
\pgfsetstrokecolor{dialinecolor}
\node[anchor=west] at (33.000000\du,7.000000\du){$-2$};
\definecolor{dialinecolor}{rgb}{0.000000, 0.000000, 0.000000}
\pgfsetstrokecolor{dialinecolor}
\node[anchor=west] at (25.000000\du,18.500000\du){$-2$};
\definecolor{dialinecolor}{rgb}{0.000000, 0.000000, 0.000000}
\pgfsetstrokecolor{dialinecolor}
\node[anchor=west] at (33.000000\du,13.000000\du){$-2$};
\definecolor{dialinecolor}{rgb}{0.000000, 0.000000, 0.000000}
\pgfsetstrokecolor{dialinecolor}
\node[anchor=west] at (37.250000\du,10.300000\du){$\widetilde{C}$};
\definecolor{dialinecolor}{rgb}{0.000000, 0.000000, 0.000000}
\pgfsetstrokecolor{dialinecolor}
\node[anchor=west] at (29.000000\du,5.000000\du){$-1$};
\definecolor{dialinecolor}{rgb}{0.000000, 0.000000, 0.000000}
\pgfsetstrokecolor{dialinecolor}
\node[anchor=west] at (15.834100\du,12.251100\du){$-1$};
\definecolor{dialinecolor}{rgb}{0.000000, 0.000000, 0.000000}
\pgfsetstrokecolor{dialinecolor}
\node[anchor=west] at (30.305000\du,17.682500\du){$-1$};

\useasboundingbox (10.000000\du,0.000000\du) rectangle (31.000000\du,20.000000\du);

\pgfsetlinewidth{0.100000\du}
\pgfsetdash{}{0pt}
\pgfsetdash{}{0pt}
\pgfsetmiterjoin
\pgfsetbuttcap
{
\definecolor{dialinecolor}{rgb}{0.000000, 0.000000, 0.000000}
\pgfsetfillcolor{dialinecolor}
\definecolor{dialinecolor}{rgb}{0.000000, 0.000000, 0.000000}
\pgfsetstrokecolor{dialinecolor}
\pgfpathmoveto{\pgfpoint{12.000000\du}{15.000000\du}}
\pgfpathcurveto{\pgfpoint{14.000000\du}{17.000000\du}}{\pgfpoint{30.000000\du}{0.000000\du}}{\pgfpoint{34.000000\du}{1.000000\du}}
\pgfpathcurveto{\pgfpoint{38.000000\du}{2.000000\du}}{\pgfpoint{39.900000\du}{27.907524\du}}{\pgfpoint{28.000000\du}{11.500000\du}}
\pgfusepath{stroke}
}
\end{tikzpicture}
\end{figure}

    Now we consider the new pencil given by the strict transformation $\widetilde{C}$ of $C$ and the sum of the strict transformations of $L_{x},L_{y},L_{z}$ and the 6 exceptional curves. The two divisors intersect transversally at 3 base points, and both of them lie in the anti-canonical linear system. The anti-canonical linear system induces a morphism onto a cubic surface in $\mathbb{P}^3$. By the configuration above, the surface has 3 $\mathbf{A}_{2}$ singularities. This cubic surface is isomorphic to the surface $x_{0}x_{1}x_{2}-x_{3}^3=0$.

    Similarly, we compute for other del Pezzo surfaces and list the information about their LG models as follows:
        
\begin{center}
\begin{longtable}{|C|C|C|C|C|}
    \hline
    \text{Variety} & \text{An LG model} & \text{Config of }\overline{Y} & \overline{Y}_{\infty} & \text{Image in } \mathbb{P}^3 \\
    \hline
     \mathbb{P}^2 & x + y + \frac{1}{xy} & (4,4,1) & I_{9} & 3\mathbf{A}_{2}, x_{0}x_{1}x_{2}-x_{3}^3=0  \\
     \hline
     \mathbb{F}_{1} & x + y + \frac{1}{x} + \frac{1}{xy} & (4,3,1,1) & I_{8} & 1\mathbf{A}_{1}2\mathbf{A}_{2}\text{ or }1 \mathbf{A}_{1}1\mathbf{A}_{4},x_{0}x_{1}x_{2}-x_{0}x_{3}^2-x_{3}^3  \\
     \hline
     \mathbb{P}^1 \times \mathbb{P}^1 & x + y + \frac{1}{x} + \frac{1}{y} & (3,3,2,1) & I_{8} & 2\mathbf{A}_{1}1\mathbf{A}_{3},x_{0}x_{1}x_{2}-x_{0}x_{3}^2-x_{1}x_{3}^2 \\
     \hline
\end{longtable}
\end{center}
Here the configuration of $\overline{Y}$ represents the position of blown-up points of $\mathbb{P}^2$ when we represent $\overline{Y}$ as the blow-up of $\mathbb{P}^2$ at 9 points.

A general cubic surface can be degenerated to $Q \cup_{C} H$, where $Q$ is a smooth quadratic surface or a quadratic cone which has an $\mathbf{A}_{1}$ singularity on its vertex, $H$ is a plane and $C$ is a conic curve. The degeneration must fix the triangle of lines that passes through the 3 singular points.

The mirror of $\mathbb{P}^2$ doesn't have such degeneration. Indeed, since $Q \cup_{C} H$ doesn't have $\mathbf{A}_{2}$ singularities, the three singularities must be on $C$. However, in this case, the three lines passing through the 3 singular points degenerate to the plane $H$. This contradicts the flatness condition.

In the case of $\mathbb{P}^1 \times \mathbb{P}^1$, we can fix one of the $\mathbf{A}_{1}$ singularity to be the vertex of the quadratic cone, and degenerate the other 2 singularities to a nodal singularity along a curve. There are two choices, so $\mathbb{P}^1 \times \mathbb{P}^1$ admits two $\mathbb{P}^1$ fibrations.

The argument is the same for the case of $\mathbb{F}_{1}$. Notice that in the second configuration, we cannot fix the $\mathbf{A}_{1}$, since the intersection number at the nonsingular point is 1, and it cannot degenerate to nodal singularity.

 This coincides with the fact that $\mathbb{P}^1 \times \mathbb{P}^1$ admits two $\mathbb{P}^1$ fibrations, $\mathbb{F}_{1}$ admits one $\mathbb{P}^1$ fibration, and $\mathbb{P}^2$ doesn't admit any $\mathbb{P}^1$ fibration.
\end{example}

\subsection{Flips/flops and wall-crossings}

    Suppose we have a flipping/flopping diagram as follows:
    $$
    \begin{tikzcd}
        X \arrow[rr,dashed] \arrow[rd] & & X^{+} \arrow[ld]\\
        & T &
    \end{tikzcd}
    $$
    Then $\mathrm{Pic}_{\mathbb{R}}(T)$ is naturally a subspace of codimension 1 in $\mathrm{Pic}_{\mathbb{R}}(X)$. Hence we obtain a wall of codimension 1 in the moduli of LG models.

\subsection{A correspondence on geography}

    Let $X$ be a terminal Fano variety. Let $D$ be an effective $\mathbb{R}$-divisor on $X$. Then we can produce a Fano fibration $X' \rightarrow Z$ as follows: Let $0 \leq A \sim_{\mathbb{R}} -K_{X}$ be a sufficiently general $\mathbb{R}$-divisor. We take $X_{m}$ to be a minimal model of the pair $(X,A+\epsilon D)$ for $0< \epsilon \ll 1$, and $Z$ to be a log canonical model. Finally we let $X'$ be the ample model of $-K_{X_{m}}$ over $Z$. We have $\mathrm{dim}X' > \mathrm{dim}Z$ if and only if $D \in \partial\mathrm{Eff}_{\mathbb{R}}(X)$.

    In this section, we view this construction from the mirror side, that is, how to produce the toric Landau-Ginzburg models of $X'/Z$ from the toric Landau-Ginzburg models of $X$ and the $\mathbb{R}$-divisor $D$.

\subsubsection{Minimal degenerations and minimal models}

\begin{definition}
    Let $X$ be a terminal Fano variety and $D$ be an effective $\mathbb{R}$-divisor on $X$. Let $f$ be a toric Landau-Ginzburg model of $X$. We say that a degeneration $f_{t}$ of $f$ is a \emph{degeneration in the direction of $D$}, if $f_{0}=f$, $f_{t}$ is a toric Landau-Ginzburg model of $(X,tD)$, and the limit $f_{\infty} = \lim\limits_{t \to +\infty}f_{t}$ exists.

    We say that a degeneration $f_{min,D}$ (or simply $f_{min}$) of $f$ is a \emph{minimal degeneration in the direction of $D$}, if
    \begin{enumerate}
        \item For any degeneration $f_{t}$ of $f$ in the direction of $D$, we have $f_{min}$ degenerates to $f_{\infty}$.
        \item \emph{Universal property}. For any Laurent polynomial $f'$ satisfying the previous condition, we have $f'$ degenerates to $f_{min}$.
    \end{enumerate}
\end{definition}

We expect that minimal degenerations correspond to minimal models.

\begin{conjecture}\label{conjecture: minimal degeneration and minimal model}
    Let $X$ be a terminal Fano variety and $f$ be a toric Landau-Ginzburg model of $X$. Let $f_{min}$ be the minimal degeneration of $f$ in the direction of $D$. Let $X_{min}$ be the $D$-minimal model of $X$. Then $f_{min}$ is a toric Landau-Ginzburg model of $X_{min}$.
\end{conjecture}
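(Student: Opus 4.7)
The plan is to reduce \cref{conjecture: minimal degeneration and minimal model} to a step-by-step verification along the $D$-MMP and then pass to the limit. Concretely, since $X_{min}$ is a $D$-minimal model of $X$, the birational map $X \dashrightarrow X_{min}$ decomposes as a finite sequence
\[
X = X_{0} \dashrightarrow X_{1} \dashrightarrow \cdots \dashrightarrow X_{N} = X_{min}
\]
of elementary $D$-MMP steps, each of which is either a $D$-negative divisorial contraction or a $D$-negative flip. I would construct the desired family $\{f_{t}\}$ by composing the mirror operations produced by \cref{conj: mirror of extremal contractions} and the wall-crossings of Section 3.3, and then identify its limit at $t=\infty$ with a toric LG model of $X_{min}$.

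For the divisorial steps $X_{i} \to X_{i+1}$, \cref{conj: mirror of extremal contractions}(1) supplies a degeneration between the corresponding toric LG models; because the step is $D$-negative, this is a degeneration in the direction of $D$ in the sense of the preceding definition, so it fits into a family $f_{t}$ on a suitable interval of parameter values. For the flipping steps $X_{i} \dashrightarrow X_{i}^{+}$, the common contraction target lies on a codimension-one wall in $\mathrm{Pic}_{\mathbb{R}}(X_{i})$, and by the flip/wall-crossing correspondence the LG models on either side of the wall differ by a mutation of Laurent polynomials. Since mutations preserve periods and Newton polytopes, the Laurent polynomial after the wall-crossing is still a toric LG model, and the family $f_{t}$ can be continued across the wall. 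Glueing these elementary steps yields a degeneration of $f$ along the whole interval, whose limit $f_{\infty}$ is our candidate for $f_{min}$.

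Two remaining checks are the period condition and the polytope condition for $f_{\infty}$. For the period condition, I would combine the invariance of periods under mutation with \cref{lemma: period after shifting constants} to track how the period of $f_{t}$ evolves along the family, and match it against the regularized quantum period of $X_{min}$, using the expected deformation invariance of toric LG data along the MMP. For the polytope condition, I would use \cref{construction: degenerating divisors} to build compatible toric degenerations $X \leadsto X_{T}$ and $X_{min} \leadsto X_{T,min}$ so that the Newton polytope of $f_{\infty}$ agrees with the fan polytope of $X_{T,min}$. The universal property of $f_{min}$ would then follow from the uniqueness of $X_{min}$ up to flops together with the mutation-invariance of the LG model. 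The main obstacle is that this strategy depends fundamentally on \cref{conj: mirror of extremal contractions} and on the flip/wall-crossing correspondence, both of which are themselves conjectural in this generality; a complete proof therefore seems to require settling those conjectures first, which the paper carries out for smooth toric varieties, surfaces, and smooth Fano threefolds, but not beyond.
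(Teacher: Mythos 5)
The statement you are proving is stated in the paper as a conjecture, and the paper itself only establishes it in restricted settings: for smooth toric varieties (where the proof writes $f_{t}=\sum_{i}e^{-d_{i}t}x^{v_{i}}$ explicitly and characterizes which monomials survive as $t\to+\infty$), and in dimension~2 (where the $D$-MMP is a chain of $(-1)$-curve contractions and the period condition is checked via blow-up invariance of Gromov--Witten invariants). Your proposal is, as you acknowledge, a conditional reduction to \cref{conj: mirror of extremal contractions} plus a flip/wall-crossing correspondence rather than a proof; that is a reasonable way to organize the problem and is close in spirit to the paper's dimension-2 argument, but it does not close the conjecture, and it contains gaps beyond the ones you flag.

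The most serious gap is that you never verify the defining \emph{universal property} of $f_{min,D}$. The minimal degeneration is characterized against \emph{all} degenerations of $f$ in the direction of $D$: its limit must dominate every $f_{\infty}$, and it must be dominated by any other Laurent polynomial with that property. Constructing one particular family $f_{t}$ by concatenating mirror operations along one chosen $D$-MMP and showing its limit is a toric LG model of $X_{min}$ does not show that this limit is the minimal one; the appeal to ``uniqueness of $X_{min}$ up to flops together with mutation-invariance'' is not an argument. In the paper's toric proof this is exactly the point that requires work: one shows a monomial $x^{v_{i}}$ survives in $f_{min}$ if and only if $D_{i}$ fails to lie in the support of the $\mathbb{R}$-fixed part of $D$ for \emph{every} non-negative representation $D\sim_{\mathbb{R}}\sum d_{i}D_{i}$, which is what ties the limit to the $D$-minimal model. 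Two further problems: (i) the intermediate models $X_{i}$ of a $D$-MMP on a Fano variety need not themselves be Fano, and the paper explicitly notes that toric LG models are not defined for non-Fano central models, so \cref{conj: mirror of extremal contractions} cannot be applied stepwise as stated; (ii) you identify crossing a flip wall with a mutation of the Laurent polynomial, but the paper's dictionary pairs flips/flops with wall-crossings in the moduli of LG models and reserves mutations for toric degenerations of $X$ --- these are different rows of the table, and nothing in the paper asserts that the LG models on the two sides of a flipping wall differ by a mutation.
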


\subsubsection{Maximal degenerations and general fibres}

\begin{definition}
    Let $X$ be a terminal Fano variety and $D$ be an effective $\mathbb{R}$-divisor on $X$. Let $f$ be a toric Landau-Ginzburg model of $X$. We say that a degeneration $f_{max,D}$ (or simply $f_{max}$) of $f$ is a \emph{maximal degeneration in the direction of $D$}, if
    \begin{enumerate}
        \item For any degeneration $f_{t}$ of $f$ in the direction of $D$, we have $f_{\infty}$ degenerates to $f_{max}$.
        \item \emph{Universal property}. For any Laurent polynomial $f'$ satisfying the previous condition, we have $f_{max}$ degenerates to $f'$.
    \end{enumerate}
\end{definition}

We expect that maximal degenerations correspond to the general fibres of the Iitaka fibration.

\begin{conjecture}\label{conjecture: maximal degeneration and Iitaka fibration}
    Let $X$ be a terminal Fano variety and $D$ be an effective $\mathbb{R}$-divisor on $X$. Let $f$ be a toric Landau-Ginzburg model of $X$ and $f_{max}$ the minimal degeneration of $f$ in the direction of $D$. Let $X_{min}$ be a $D$-minimal model of $X$ and $\pi: X_{min} \rightarrow Z$ its log canonical model. Let $F$ be a general fibre of $\pi$. Then $f_{max}$ is a toric LG model of $F$.
\end{conjecture}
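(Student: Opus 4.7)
The plan is to reduce Conjecture \ref{conjecture: maximal degeneration and Iitaka fibration} to the two more primitive structural conjectures stated earlier in this section: Conjecture \ref{conjecture: minimal degeneration and minimal model} (minimal degenerations compute $D$-minimal models) and Conjecture \ref{conj: mirror of extremal contractions}(2) (Mori fibre spaces correspond to fibring degenerations of their LG models). These two, together with a careful analysis of universal properties using the local geography of Conjecture \ref{conj: local geography}, should together yield the result.

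First, I would reduce to the case where $X$ is already a $D$-minimal model. Apply Conjecture \ref{conjecture: minimal degeneration and minimal model} to replace $f$ by $f_{min}$, which is a toric LG model of $X_{min}$; the maximal degeneration of $f$ in the direction of $D$ coincides with the maximal degeneration of $f_{min}$ in the direction of the strict transform $D_{X_{min}}$, since both constructions only record the $t \to \infty$ limit behaviour and every degeneration of $f$ in the direction of $D$ factors through $f_{min}$ by the universal property of the minimal degeneration. Thus we may assume $X = X_{min}$ and $D$ is nef and semiample, and the log canonical model $\pi: X \to Z$ is a morphism with connected fibres whose general fibre $F$ is a terminal Fano variety (since $-K_X$ is ample on $F$).

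Next, I would factor $\pi$ into a sequence of elementary Mori fibre spaces. Although $\rho(X/Z)$ may exceed $1$, by running a relative $(-K_X)$-MMP over $Z$ one obtains a chain $X = X^{(0)} \to X^{(1)} \to \cdots \to X^{(k)} = Z$ in which each step is either a divisorial contraction, a flip, or a Mori fibre space, with $D$ numerically trivial on each intermediate base. By induction on $k$, Conjecture \ref{conj: mirror of extremal contractions}(2) together with the wall-crossing/degeneration behaviour at flips produces a chain of fibring degenerations of $f_{min}$ whose terminal object is a toric LG model of the general fibre $F$. To identify this terminal degeneration with $f_{max}$, I would invoke the universal property: given any degeneration $f_t$ of $f_{min}$ in the direction of $D$ with $f_\infty = \lim_{t \to \infty} f_t$ existing, the family $f_t$ corresponds to translating along the ray $\mathbb{R}_{\geq 0} \cdot D$ inside $\mathrm{Eff}_{\mathbb{R}}(X)$. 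By Conjecture \ref{conj: local geography}, this ray enters the boundary face corresponding to $\pi$, forcing $f_\infty$ to factor through the fibring degeneration, so that $f_{max}$ indeed equals the LG model of $F$ constructed above.

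The main obstacle lies in making the universal property rigorous. The three conjectures being combined are each highly nontrivial at this level of generality, and even granting them the matching of the $t \to \infty$ limit of a one-parameter family of Laurent polynomials with the boundary stratum of the Mori chamber decomposition requires a precise description of the space of parametrized toric LG models of $X$ as a partial compactification of $\mathrm{Pic}(X) \otimes \mathbb{R}$. Consequently, for the cases actually treated in the paper (smooth toric Fano varieties, smooth del Pezzo surfaces, and smooth Fano threefolds admitting a parametrized toric LG model), I would expect the verification to proceed by explicit computation: writing down the parametrized polynomial, specializing along the ray $t D$, and directly matching the limit against the LG model of $F$ obtained via the classification of Mori fibrations in that dimension, rather than via the abstract reduction sketched above.
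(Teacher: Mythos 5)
The statement you are proving is stated in the paper only as a conjecture; the paper's sole actual proof of it is for smooth toric Fano varieties, and that proof is a direct computation: writing $D \sim_{\mathbb{R}} \sum d_i D_i$, identifying the fan $\Sigma_F$ of the general fibre with the subfan of cones of $\Sigma_{X_{min}}$ mapping to the origin of $\Sigma_Z$, and observing that the monomials surviving in $f_{max}$ are exactly those attached to the horizontal toric invariant divisors, i.e.\ to the rays of $\Sigma_F$. Your first reduction (replace $f$ by $f_{min}$ via Conjecture~\ref{conjecture: minimal degeneration and minimal model} and assume $X = X_{min}$) is consistent with the paper's setup, and your closing remark that the known cases are verified by explicit computation matches what the paper actually does.

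However, the core of your second paragraph has a genuine gap. When $\rho(X_{min}/Z) > 1$, factoring $\pi$ into a tower $X = X^{(0)} \to X^{(1)} \to \cdots \to Z$ of elementary Mori fibre spaces and iterating Conjecture~\ref{conj: mirror of extremal contractions}(2) does not produce a toric LG model of $F$: it produces a toric LG model of the general fibre of the \emph{first} Mori fibration $X \to X^{(1)}$, which is a strictly further degeneration. Concretely, take $X = \mathbb{P}^1 \times \mathbb{P}^1 \times \mathbb{P}^1$ with $f = x + \tfrac{1}{x} + y + \tfrac{1}{y} + z + \tfrac{1}{z}$ and $D$ a fibre of $p_1$, so $Z = \mathbb{P}^1$ and $F = \mathbb{P}^1 \times \mathbb{P}^1$. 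The maximal degeneration in the direction of $D$ is $y + \tfrac{1}{y} + z + \tfrac{1}{z}$ (the LG model of $F$), whereas the tower $X \to \mathbb{P}^1 \times \mathbb{P}^1 \to \mathbb{P}^1$ combined with repeated fibring degenerations terminates at $z + \tfrac{1}{z}$. What is needed instead is a single degeneration in the direction of $\pi^*A$ (which kills precisely the vertical divisors), not an iteration of degenerations in the directions of the intermediate fibrations; your universal-property argument via Conjecture~\ref{conj: local geography} would have to be rebuilt around this point, since the ray $\mathbb{R}_{\geq 0}\cdot D$ lands on a face of $\partial\mathrm{Eff}_{\mathbb{R}}(X)$ of the correct codimension only for $D = \pi^*A$ itself. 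A minor additional slip: a relative $(-K_X)$-MMP over $Z$ terminates immediately on a Fano variety (there are no $(-K_X)$-negative rays); you mean a relative $K_X$-MMP.
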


\subsubsection{Information of the base}

    Finally, we briefly discuss the log canonical model $Z$. Notice that $Z$ may not have terminal singularities. In fact, $Z$ even may not be $\mathbb{Q}$-Gorenstein. Hence we cannot talk about the quantum period of $Z$ and the period condition of a toric LG model. In general, it's difficult to obtain information about $Z$.

\subsubsection{An example}

    In this subsection, we give an example to illustrate the above constructions.

\begin{example}
    Let $X$ be the blow-up of $\mathbb{P}^3$ at 2 points. Then $X$ is a smooth weak Fano variety. More precisely, the divisor $-K_{X}$ is semi-ample, and defines a projective birational morphism that contracts the strict transformation of the unique line passing through the two centers of the blow-up. The effective cone $\mathrm{Eff}_{\mathbb{R}}(X)$ is generated by the class of two exceptional divisors $E_{1}$, $E_{2}$, as well as the class of the strict transformation $\widetilde{H}$ of a hyperplane passing through the two centers of the blow-up. The geography is the following:
\begin{figure}[H]
    \centering
    \ifx\du\undefined
  \newlength{\du}
\fi
\setlength{\du}{15\unitlength}
\begin{tikzpicture}
\pgftransformxscale{1.000000}
\pgftransformyscale{-1.000000}
\definecolor{dialinecolor}{rgb}{0.000000, 0.000000, 0.000000}
\pgfsetstrokecolor{dialinecolor}
\definecolor{dialinecolor}{rgb}{1.000000, 1.000000, 1.000000}
\pgfsetfillcolor{dialinecolor}
\pgfsetlinewidth{0.100000\du}
\pgfsetdash{}{0pt}
\pgfsetdash{}{0pt}
\pgfsetbuttcap
{
\definecolor{dialinecolor}{rgb}{0.000000, 0.000000, 0.000000}
\pgfsetfillcolor{dialinecolor}
\definecolor{dialinecolor}{rgb}{0.000000, 0.000000, 0.000000}
\pgfsetstrokecolor{dialinecolor}
\draw (25.000000\du,10.000000\du)--(20.000000\du,19.000000\du);
}
\pgfsetlinewidth{0.100000\du}
\pgfsetdash{}{0pt}
\pgfsetdash{}{0pt}
\pgfsetbuttcap
{
\definecolor{dialinecolor}{rgb}{0.000000, 0.000000, 0.000000}
\pgfsetfillcolor{dialinecolor}
\definecolor{dialinecolor}{rgb}{0.000000, 0.000000, 0.000000}
\pgfsetstrokecolor{dialinecolor}
\draw (25.000000\du,10.000000\du)--(30.000000\du,19.000000\du);
}
\pgfsetlinewidth{0.100000\du}
\pgfsetdash{}{0pt}
\pgfsetdash{}{0pt}
\pgfsetbuttcap
{
\definecolor{dialinecolor}{rgb}{0.000000, 0.000000, 0.000000}
\pgfsetfillcolor{dialinecolor}
\definecolor{dialinecolor}{rgb}{0.000000, 0.000000, 0.000000}
\pgfsetstrokecolor{dialinecolor}
\draw (20.000000\du,19.000000\du)--(30.000000\du,19.000000\du);
}
\pgfsetlinewidth{0.100000\du}
\pgfsetdash{}{0pt}
\pgfsetdash{}{0pt}
\pgfsetbuttcap
{
\definecolor{dialinecolor}{rgb}{0.000000, 0.000000, 0.000000}
\pgfsetfillcolor{dialinecolor}
\definecolor{dialinecolor}{rgb}{0.000000, 0.000000, 0.000000}
\pgfsetstrokecolor{dialinecolor}
\draw (30.000000\du,19.000000\du)--(22.500000\du,14.500000\du);
}
\pgfsetlinewidth{0.100000\du}
\pgfsetdash{}{0pt}
\pgfsetdash{}{0pt}
\pgfsetbuttcap
{
\definecolor{dialinecolor}{rgb}{0.000000, 0.000000, 0.000000}
\pgfsetfillcolor{dialinecolor}
\definecolor{dialinecolor}{rgb}{0.000000, 0.000000, 0.000000}
\pgfsetstrokecolor{dialinecolor}
\draw (20.000000\du,19.000000\du)--(27.500000\du,14.500000\du);
}
\pgfsetlinewidth{0.100000\du}
\pgfsetdash{}{0pt}
\pgfsetdash{}{0pt}
\pgfsetbuttcap
{
\definecolor{dialinecolor}{rgb}{0.000000, 0.000000, 0.000000}
\pgfsetfillcolor{dialinecolor}
\definecolor{dialinecolor}{rgb}{0.000000, 0.000000, 0.000000}
\pgfsetstrokecolor{dialinecolor}
\draw (22.500000\du,14.500000\du)--(27.500000\du,14.500000\du);
}
\definecolor{dialinecolor}{rgb}{0.000000, 0.000000, 0.000000}
\pgfsetstrokecolor{dialinecolor}
\node[anchor=west] at (24.500000\du,13.000000\du){$X'$};
\definecolor{dialinecolor}{rgb}{0.000000, 0.000000, 0.000000}
\pgfsetstrokecolor{dialinecolor}
\node[anchor=west] at (24.500000\du,15.000000\du){$X$};
\definecolor{dialinecolor}{rgb}{0.000000, 0.000000, 0.000000}
\pgfsetstrokecolor{dialinecolor}
\node[anchor=west] at (24.500000\du,18.000000\du){$\mathbb{P}^3$};
\definecolor{dialinecolor}{rgb}{0.000000, 0.000000, 0.000000}
\pgfsetstrokecolor{dialinecolor}
\node[anchor=west] at (21.500000\du,16.000000\du){$\mathrm{Bl}_{P_{1}}\mathbb{P}^3$};
\definecolor{dialinecolor}{rgb}{0.000000, 0.000000, 0.000000}
\pgfsetstrokecolor{dialinecolor}
\node[anchor=west] at (25.500000\du,16.000000\du){$\mathrm{Bl}_{P_{2}}\mathbb{P}^3$};
\definecolor{dialinecolor}{rgb}{0.000000, 0.000000, 0.000000}
\pgfsetstrokecolor{dialinecolor}
\node[anchor=west] at (22.000000\du,12.000000\du){$\mathbb{F}_{1}$};
\definecolor{dialinecolor}{rgb}{0.000000, 0.000000, 0.000000}
\pgfsetstrokecolor{dialinecolor}
\node[anchor=west] at (21.000000\du,16.000000\du){};
\definecolor{dialinecolor}{rgb}{0.000000, 0.000000, 0.000000}
\pgfsetstrokecolor{dialinecolor}
\node[anchor=west] at (20.000000\du,16.000000\du){$\mathbb{P}^2$};
\definecolor{dialinecolor}{rgb}{0.000000, 0.000000, 0.000000}
\pgfsetstrokecolor{dialinecolor}
\node[anchor=west] at (24.500000\du,9.000000\du){$\mathbb{P}^1$};
\definecolor{dialinecolor}{rgb}{0.000000, 0.000000, 0.000000}
\pgfsetstrokecolor{dialinecolor}
\node[anchor=west] at (26.500000\du,12.000000\du){$\mathbb{F}_{1}$};
\definecolor{dialinecolor}{rgb}{0.000000, 0.000000, 0.000000}
\pgfsetstrokecolor{dialinecolor}
\node[anchor=west] at (29.000000\du,16.000000\du){$\mathbb{P}^2$};
\definecolor{dialinecolor}{rgb}{0.000000, 0.000000, 0.000000}
\pgfsetstrokecolor{dialinecolor}
\node[anchor=west] at (24.500000\du,20.000000\du){pt};
\end{tikzpicture}
\end{figure}

    where $X \dashrightarrow X'$ is a flop. On the other side, the toric LG model of $X$ is
    $$
    f = x+y+z+\frac{1}{x} + \frac{1}{y} + \frac{1}{xyz}.
    $$
    We take an effective divisor $D = \widetilde{H} + 2E_{1}$ as an example. A degeneration of $f$ in the direction of $D$ is given by
    $$
    f_{t} = x+y+z+\frac{e^{-2t}}{x} + \frac{1}{y} + \frac{e^{-t}}{xyz}.
    $$
    Hence
    $$
    f_{\infty} = x+y+z+\frac{1}{y}
    $$
    for this degeneration. Another degeneration of $f$ in the direction of $D$ is given by
    $$
    f'_{t} = e^{-t}x+y+z+\frac{e^{-t}}{x} + \frac{1}{y} + \frac{1}{xyz}.
    $$
    Hence
    $$
    f_{\infty} = y+z+\frac{1}{y}+\frac{1}{xyz}
    $$
    for this degeneration. One can verify that in this case, we have
    $$
    f_{min} = x+y+z + \frac{1}{y} + \frac{1}{xyz}
    $$
    and
    $$
    f_{max} = y + \frac{1}{y}.
    $$ 
    Hence we obtain a central model $\mathrm{Bl}\mathbb{P}^3 \rightarrow \mathbb{P}^2$, where the general fibre is isomorphic to $\mathbb{P}^1$. This coincides with our construction.
\end{example}

\subsection{Deformations and mutations of Landau-Ginzburg models}

    The correspondence in this subsection is not much related to our main results. We list the correspondence for completeness of the whole picture.

    \begin{conjecture}
        Let $X$ be a smooth Fano variety and $f$ be a toric LG model of $X$. Then there is a one-to-one correspondence between the small  toric degenerations of $X$ and the mutations of $f$.
    \end{conjecture}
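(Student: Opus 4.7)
The plan is to use the polytope condition in the definition of a toric Landau--Ginzburg model as the structural bridge between the two sides. To any Laurent polynomial $g$ in the mutation orbit of $f$, the polytope condition associates a toric variety $X_{T}(g)$ whose fan polytope is the Newton polytope $N(g)$, together with a $\mathbb{Q}$-Gorenstein degeneration $X \leadsto X_{T}(g)$. Since mutations preserve the period of $f$ (by the proposition following \cref{definition: period of Laurent polynomials}), every such $g$ remains a toric LG model of $X$, so one obtains a candidate forward map from the mutation orbit of $f$ to the set of small toric degenerations of $X$.

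To promote this to a bijection, I would proceed in three steps. First, make the forward map precise using the Akhtar--Coates--Galkin--Kasprzyk theory of combinatorial mutations of lattice polytopes, together with Ilten's construction of one-parameter flat families of toric varieties whose central fibres are associated to combinatorially mutated polytopes; this produces the degeneration $X \leadsto X_{T}(\mu_{w,a}(f))$ explicitly and respects composition of mutations. Second, for the reverse direction, given a small toric degeneration $X \leadsto X'_{T}$, use \cref{construction: degenerating divisors} together with the polytope condition to produce a toric LG model $f'$ of $X$ with $N(f')$ equal to the fan polytope of $X'_{T}$. Third, show that $f'$ lies in the mutation orbit of $f$, which amounts to mutation-completeness of the class of toric LG models of a fixed smooth Fano variety.

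The main obstacle is the third step, the mutation-completeness of toric LG models of $X$. This is essentially a form of \cref{conjecture: Q-Fano variety and LG model}, and outside the smooth toric case and the low-dimensional situations treated in \cref{section: toric}--\cref{section: threefold} there is no known general mechanism to verify it. A secondary subtlety is the precise meaning of ``small'' on the degeneration side: two mutations whose outputs have isomorphic lattice Newton polytopes should give the same toric degeneration, so one must pass to equivalence classes of mutations on both sides, using \cref{lemma: constant Picard} to control the Picard rank along the family. I therefore expect a complete proof to require a prior resolution of the CKPT conjecture, while the tools developed later in this paper already suffice to establish the correspondence on a case-by-case basis in low dimensions.
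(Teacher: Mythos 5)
The first thing to say is that the paper does not prove this statement: it is stated as a conjecture in the subsection on deformations and mutations, explicitly flagged as ``not much related to our main results,'' and the only supporting material is the example of $\mathbb{P}^2$, where the correspondence is verified by matching the known classification of Picard-rank-one toric $\mathbb{Q}$-Gorenstein degenerations of $\mathbb{P}^2$ (the weighted planes $\mathbb{P}(a^2,b^2,c^2)$ for Markov triples) against explicit mutations of the Hori--Vafa mirror, together with the fact that every Markov triple is reachable from $(1,1,1)$. So there is no proof in the paper to compare yours against, and your proposal should be judged on its own terms.

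On those terms, your forward direction is sound and is the standard mechanism: combinatorial mutations of Fano polytopes do induce $\mathbb{Q}$-Gorenstein pencils of toric varieties, mutations preserve the period, and the polytope condition then hands you a toric degeneration of $X$ for each mutation of $f$. But the proposal is not a proof, and you correctly identify the fatal gap yourself: surjectivity of the forward map (every small toric degeneration of $X$ arises from a mutation of $f$) is essentially mutation-completeness of the set of toric LG models of $X$, which is a form of \cref{conjecture: Q-Fano variety and LG model} and is open outside the toric and low-degree del Pezzo cases. Two further points you should make explicit if you develop this. First, injectivity is also not free: the passage from a Laurent polynomial to a degeneration factors through the Newton polytope, so distinct mutations with $GL_n(\mathbb{Z})$-equivalent Newton polytopes give the same degeneration, and the ``one-to-one correspondence'' only makes sense after quotienting both sides by the appropriate equivalences --- exactly as the paper does in its example, where the bijection is stated ``up to change of coordinates'' on one side and ``up to isomorphism'' on the other. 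Second, the paper's notion of \emph{small} is constancy of the sheaf $U \mapsto \mathrm{Pic}(\mathcal{X}_U)$ along the family, and it is not automatic that the degeneration produced by an arbitrary mutation is small in this sense (compare the remark that $\mathrm{Eff}_{\mathbb{R}}(X_T)$ can strictly contain $\mathrm{Eff}_{\mathbb{R}}(X)$, cf.\ \cref{Case 3.2}); so even the forward map needs an argument that its image lands in the small degenerations, or the conjecture needs to be read with a weaker notion of degeneration. In short: your plan is a reasonable reduction of the conjecture to CKPT-type mutation-completeness plus some bookkeeping, but it is a reduction, not a proof, and the paper offers no proof either.
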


\begin{example}[Toric degeneration of del Pezzo surfaces]
    Consider the $\mathbb{Q}$-Gorenstein degenerations of $\mathbb{P}^2$. Toric $\mathbb{Q}$-Gorenstein degenerations of $\mathbb{P}^2$ of Picard number 1 are weighted projective planes $\mathbb{P}(a^{2},b^{2},c^{2})$, where $a,b,c$ are positive integers satisfying the Markov equation
    $$
    a^{2}+b^{2}+c^{2}=3abc.
    $$
    A triple $(a,b,c)$ satisfying the above equation is called a Markov triple. Given a weighted projective plane $\mathbb{P}(a^{2},b^{2},c^{2})$ satisfying the above conditions, we can construct a new weighted projective plane $\mathbb{P}(a^{2},b^{2},(3ab-c)^{2})$. It is easy to verify that $a,b,3ab-c$ also satisfies the Markov equation. The triple $(a,b,3ab-c)$ is called a \emph{mutation} of the triple $(a,b,c)$.

    On the other side, the Hori-Vafa mirror of $\mathbb{P}(a^{2},b^{2},c^{2})$ is given by
    \begin{align*}
    f = u_{0}+u_{1}+u_{2}, \\
    u_{0}^{a^2}u_{1}^{b^2}u_{2}^{c^2}=1.
    \end{align*}
    We can take a change of variables given by
    \begin{align*}
    u_{0} = \frac{x^{c}}{y^{3bc-a}}, \\
    u_{1} = x^{c}y^{a}, \\
    u_{2} = \frac{y^{a}}{x^{3ab-c}}.
    \end{align*}
    Then the Landau-Ginzburg model can be rewritten as
    $$
    f = \frac{x^{c}}{y^{3bc-a}}(1+y^{3bc})+\frac{y^{a}}{x^{3ab-c}} \sim \frac{x^{c}}{y^{3bc-a}}(1+y^{3b})^{c}+\frac{y^{a}}{x^{3ab-c}}
    $$
    where $\sim$ denotes that the two Laurent polynomials have the same Newton polytope. Consider the mutation given by weight $w=(-1,0)$ and factor $a = 1+y^{3b}$, we get
    $$
    f' = \frac{x^{c}}{y^{3bc-a}} + \frac{y^{a}}{x^{3ab-c}}(1+y^{3b})^{3ab-c} \sim \frac{x^{c}}{y^{3bc-a}} + \frac{y^{a}}{x^{3ab-c}} + \frac{y^{9ab^{2}-3bc+a}}{x^{3ab-c}}.
    $$
    Let
    \begin{align*}
    v_{0} = \frac{y^{a}}{x^{3ab-c}}, \\
    v_{1} = \frac{y^{9ab^{2}-3bc+a}}{x^{3ab-c}}, \\
    v_{2} = \frac{x^{c}}{y^{3bc-a}}.
    \end{align*}
    Then we have
    \begin{align*}
    f' = v_{0}+v_{1}+v_{2}, \\
    v_{0}^{a^2}v_{1}^{b^2}v_{2}^{(3ab-c)^2}=1.
    \end{align*}
    Hence $f'$ becomes a Hori-Vafa mirror of $\mathbb{P}(a^{2},b^{2},(3ab-c)^{2})$. Notice that every Markov triple $(a,b,c)$ can be obtain from the triple $(1,1,1)$ by a sequence of mutations (cf. \cite{Baulina1996}). Hence the above diagram shows that we have the 1-1 correspondence
    \begin{multline*}
    \{\text{Toric }\mathbb{Q}\text{-Gorenstein degenerations of }\mathbb{P}^2 \text{ of Picard number 1 up to isomorphisms}\}  \\
    \longleftrightarrow \{\text{Mutations of the Laurent polynomial }x+y+\frac{1}{xy} \text{ up to change of coordinates} \}\\
    \longleftrightarrow \{ \text{Mutations of the Markov triple }(1,1,1) \text{ up to permutations} \}.
    \end{multline*}
    Moreover, the same argument applies to smooth del Pezzo surfaces of degree $\geq 5$.
\end{example}

\section{Toric case}\label{section: toric}
    In this section we consider the case when $X$ is a smooth toric Fano variety. 
    
\subsection{Toric LG models in an extremal contraction}

    First, we look at \cref{conj: mirror of extremal contractions}.
    
\begin{proposition}\label{prop: LG model for toric divisorial contraction}
    Let $g: X \rightarrow Y$ be a toric divisorial contraction between smooth toric Fano varieties. Then the Hori-Vafa mirror of $X$ degenerates to the Hori-Vafa mirror of $Y$.
\end{proposition}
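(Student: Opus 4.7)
The plan is to write down the Hori-Vafa mirror explicitly in terms of the fans of $X$ and $Y$, and exhibit the degeneration as the specialization obtained by sending the Kähler parameter of the exceptional divisor $E$ of $g$ to infinity along the anti-canonical direction.

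First I would recall the explicit form of the Hori-Vafa mirror. For a smooth toric Fano variety $X$ with fan $\Sigma_X$ and primitive ray generators $v_1,\dots,v_n \in N$, the parametrized toric Landau-Ginzburg model is
\[
f_X(a_1,\dots,a_n) \;=\; \sum_{i=1}^{n} a_i\, x^{v_i},
\]
where the coefficients $a_i$ are subject to the monomial relations $\prod_i a_i^{\langle m, v_i\rangle} = 1$ for all $m \in M$, so that $f_X$ is effectively parametrized by $\mathrm{Pic}(X)\otimes\mathbb{C}$ via $a_i = e^{-\alpha_i}$ for a divisor $D = \sum\alpha_i D_i$.

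Next I would analyze the combinatorial effect of the divisorial contraction $g: X \to Y$. Since both $X$ and $Y$ are smooth toric Fano varieties, $g$ corresponds to a star-subdivision: there is a unique ray $v_E \in \Sigma_X(1)$ not in $\Sigma_Y(1)$, the exceptional toric divisor is $E = D_{v_E}$, and $v_E = v_{i_1} + \cdots + v_{i_k}$ for the generators of the cone whose star is subdivided. In particular, $\Sigma_Y(1) = \Sigma_X(1)\setminus\{v_E\}$, and the Hori-Vafa mirror of $Y$ is
\[
f_Y \;=\; \sum_{v_\rho \in \Sigma_Y(1)} a_\rho\, x^{v_\rho},
\]
again modulo the $M$-relations pulled back from $\Sigma_Y$.

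Then I would construct the degeneration. Choose a basis of $\mathrm{Pic}(X)$ containing the class $[E]$, and let $D_t = D_0 + t E$ for $t \in \mathbb{R}_{\geq 0}$, where $D_0$ is a fixed reference class. Under the parametrization $a_i = e^{-\alpha_i}$, only the coefficient $a_E = e^{-t}\cdot(\text{const})$ of $x^{v_E}$ is affected (since $D_0$ is fixed and $[E]$ is an element of the chosen basis), while all other $a_\rho$ remain equal to their value at $t = 0$ up to a constant multiplicative factor, because the monomial relation expressing $[E]$ in terms of the $[D_\rho]$'s for $\rho \in \Sigma_Y(1)$ is exactly the toric relation $v_E = \sum v_{i_j}$, which decouples the $a_E$ parameter in the chosen basis. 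Hence
\[
f_{X,D_t} \;=\; e^{-t}\, x^{v_E} + \sum_{v_\rho \in \Sigma_Y(1)} a_\rho(D_0)\, x^{v_\rho},
\]
and $\lim_{t\to\infty} f_{X,D_t} = f_Y$, which is the desired degeneration.

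The main obstacle, and the step that requires care, is step three: the parametrization of $f_X$ by $\mathrm{Pic}(X)\otimes\mathbb{C}$ is only defined modulo the $M$-relations, so one must argue that $[E]$ can be chosen as a basis element and that the remaining coefficients in $f_{X,D_t}$, after imposing the relations, are exactly the coefficients of $f_Y$ in its parametrization by $\mathrm{Pic}(Y)\otimes\mathbb{C}$. This amounts to checking that the surjection $\mathrm{Pic}(X)\twoheadrightarrow \mathrm{Pic}(Y)$ induced by $g_*$ identifies, via $D_0 \mapsto g_*D_0$, the correct boundary parameters; this will follow from the relation $v_E = \sum v_{i_j}$ together with $g^*D_{v_{i_j}} = D_{v_{i_j}} + E$ for a smooth blow-up. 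Once this bookkeeping is done, verifying the period and polytope conditions for $f_{X,D_t}$ is routine, since Newton polytopes only shrink under the degeneration $t \to \infty$ and the Hori-Vafa recipe matches the fan polytopes on both sides.
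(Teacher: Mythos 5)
Your proposal is correct and follows essentially the same route as the paper, whose entire proof is the one-line remark that the claim is ``trivial by the definition of Hori-Vafa mirrors''; you simply supply the combinatorial bookkeeping (the exceptional ray $v_E$, the specialization $D_t=D_0+tE$, the coefficient of $x^{v_E}$ tending to $0$) that the paper leaves implicit. The only small imprecision is that the parameters $a_i$ are not cut out by relations $\prod_i a_i^{\langle m,v_i\rangle}=1$ but are rather identified under the torus rescaling $a_i\mapsto a_i c^{v_i}$, $c\in(\mathbb{C}^*)^n$ --- equivalently, the representation $D\sim_{\mathbb{R}}\sum d_iD_i$ is non-unique --- but this does not affect the argument.
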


\begin{proof}
    For smooth toric Fano varieties, the Hori-Vafa mirror is a Landau-Ginzburg model. Hence the result is trivial by the definition of Hori-Vafa mirrors.
\end{proof}

\begin{proposition}\label{prop: LG model for toric Mori fibre space}
    Let $X$ be a smooth toric Fano variety. Let $h: X \rightarrow Z$ be a toric Mori fibre space and $F$ be a general fibre of $h$. Then the Hori-Vafa mirror of $X$ degenerates to the Hori-Vafa mirror of $F$.
\end{proposition}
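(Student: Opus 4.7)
The plan is to exploit the parametrized toric Landau--Ginzburg model structure (cf.\ \cref{definition: parametrized toric LG model}) and move the divisor class in the direction of the pullback of an ample class from $Z$. Because $X$ is a smooth toric Fano variety, its Hori--Vafa mirror has the explicit form $f_{X} = \sum_{\rho \in \Sigma_{X}(1)} a_{\rho}\,x^{v_{\rho}}$, with parameters $a_{\rho} = e^{-\lambda_{\rho}}$ read off from the toric divisor coefficients $D = \sum_{\rho} \lambda_{\rho} D_{\rho}$.

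First, I would use the toric geometry of the Mori fibration. A toric morphism $h : X \to Z$ corresponds to a surjection of lattices $N \to N_{Z}$ compatible with the fans. Choose a splitting $N \cong N_{F} \oplus N_{Z}$, where $N_{F} := \ker(N \to N_{Z})$ is the cocharacter lattice of the general fibre $F$. The rays of $\Sigma_{X}$ then split into vertical rays (those lying in $N_{F}$), whose primitive generators form exactly $\Sigma_{F}(1)$, and horizontal rays $\rho$ with $v_{\rho} = (v_{\rho}^{F}, v_{\rho}^{Z})$ and $v_{\rho}^{Z} \neq 0$. Writing the torus coordinates accordingly as $(x,y)$, the Hori--Vafa mirror decomposes as
\[
f_{X}(x,y) \;=\; \sum_{\rho \text{ vertical}} a_{\rho}\,x^{v_{\rho}^{F}} \;+\; \sum_{\rho \text{ horizontal}} a_{\rho}\,x^{v_{\rho}^{F}} y^{v_{\rho}^{Z}}.
\]

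Next, I would pick an ample divisor $A$ on $Z$ and consider the one-parameter family of divisor classes $D_{t} = D_{0} + t\,h^{*}A$ on $X$. Since $h^{*}A = \sum_{\rho} \langle A, \overline{v}_{\rho}\rangle D_{\rho}$ with $\overline{v}_{\rho}$ the image of $v_{\rho}$ in $N_{Z}$, the parameter $\lambda_{\rho}(t)$ is shifted by a positive multiple of $t$ for horizontal rays and is unchanged for vertical rays. Plugging into the parametrization, the horizontal coefficients scale as $e^{-t c_{\rho}}$ with $c_{\rho} > 0$ and therefore vanish in the limit $t \to +\infty$, while the vertical coefficients are preserved. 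This produces a family of parametrized toric LG models of $X$ in the sense of \cref{definition: parametrized toric LG model} whose $t \to \infty$ limit is
\[
f_{\infty}(x) \;=\; \sum_{\rho \in \Sigma_{F}(1)} a_{\rho}\,x^{v_{\rho}^{F}},
\]
which, by the definition of the Hori--Vafa mirror applied to the smooth toric Fano variety $F$ with fan $\Sigma_{F}$, is precisely (a Hori--Vafa mirror of) $F$.

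The main obstacle I anticipate is bookkeeping rather than conceptual: one must check that the residual divisor class $D_{0}|_{F}$ matches the one used to write down the Hori--Vafa mirror of $F$, so that the limit is the \emph{Hori--Vafa mirror of $F$} and not merely a Laurent polynomial with the same Newton polytope. This amounts to tracing through the exact sequence $\mathrm{Pic}(Z) \xrightarrow{h^{*}} \mathrm{Pic}(X) \to \mathrm{Pic}(F) \to 0$ (valid for toric fibrations) and verifying that the $h^{*}A$-direction of degeneration exactly quotients out the contribution of $h^{*}\mathrm{Pic}(Z)$, so that the surviving parameters $\{a_{\rho}\}_{\rho \text{ vertical}}$ are the images under $\mathrm{Pic}(X) \twoheadrightarrow \mathrm{Pic}(F)$. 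A small additional verification is that the period condition and Calabi--Yau compactification conditions pass to the limit; for smooth toric Fano varieties these are standard and reduce to the known Hori--Vafa construction for $F$.
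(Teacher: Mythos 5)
Your proposal is correct and follows essentially the same route as the paper: the paper's proof simply identifies $\Sigma_{F}$ as the subfan of cones of $\Sigma_{X}$ mapping to zero under the projection of fans and declares the degeneration immediate from the definition of the Hori--Vafa mirror, which is exactly your vertical/horizontal splitting of the rays. Your explicit degeneration along $h^{*}A$ (killing the horizontal monomials via $e^{-tc_{\rho}}$ with $c_{\rho}>0$) and the $\mathrm{Pic}(Z)\to\mathrm{Pic}(X)\to\mathrm{Pic}(F)$ bookkeeping just spell out the details the paper leaves implicit.
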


\begin{proof}
    Let $\rho: \Sigma_{X} \rightarrow \Sigma_{Z}$ be the associated projection of the toric fans. Let $\Sigma_{F}$ be the fan consisting of cones $C$ in $\Sigma_{X}$ such that $\rho(C) = 0$. Then $\Sigma_{F}$ is the toric fan of the general fibre $F$ of $h$. Hence the result is trivial by the definition of Hori-Vafa mirrors.
\end{proof}
        
    Now we study the induced degeneration on their Calabi-Yau compactifications.

\begin{statement}[degenerations of compactified LG models, divisorial contraction]
    Let notations be as in \cref{prop: LG model for toric divisorial contraction}. Let $\Sigma_{X}$ and $\Sigma_{Y}$ be the associated toric fan of $X$ and $Y$ respectively. Since $X$ and $Y$ are Fano, we can consider the dual fan $\Sigma_{X}^{\vee}$ and $\Sigma_{Y}^{\vee}$ of $\Sigma_{X}$ and $\Sigma_{Y}$ given by their dual polytopes, which corresponds to Fano varieties $X^{\vee}$ and $Y^{\vee}$ respectively. Let $\widetilde{X}^{\vee}$ and $\widetilde{Y}^{\vee}$ be the crepant terminalization of $X^{\vee}$ and $Y^{\vee}$ respectively. Then we have a dual morphism $\widetilde{g}^{\vee}: \widetilde{Y}^{\vee} \rightarrow \widetilde{X}^{\vee}$. Recall that the Calabi-Yau compactification of the Hori-Vafa mirrors of $X$ and $Y$ are given by pencils of anti-canonical divisors of $\widetilde{X}^{\vee}$ and $\widetilde{Y}^{\vee}$ respectively. Write
    $$
    \widetilde{g}^{\vee *} K_{\widetilde{X}^{\vee}} = K_{\widetilde{Y}^{\vee}} - E^{\vee}
    $$
    where $E^{\vee} > 0$ is an integral divisor. Take a pencil of divisors in $|-K_{\widetilde{X}^{\vee}}|$ generated by the toric boundary of $\widetilde{X}^{\vee}$ and a general anti-canonical divisor on $\widetilde{X}^{\vee}$. We can pull back this pencil to a pencil of divisors in $|-K_{\widetilde{Y}^{\vee}}+E^{\vee}|$ on $\widetilde{Y}^{\vee}$. We can construct a degeneration of this pencil to a pencil of divisors of the form $|-K_{\widetilde{Y}^{\vee}}| + E^{\vee}$, where $E$ is the fixed part of the pencil. The movable part now becomes the compactified Landau-Ginzburg model of $Y$. Hence for a divisorial contraction, the induced degeneration of the compactified LG models is an irreducible degeneration.
    \end{statement}
    
\begin{statement}[degenerations of compactified LG models, Mori fibre space]\label{degeneration of compactified LG models for MFS}

    Let the notations be as in \cref{prop: LG model for toric Mori fibre space}. Since $X$ and $F$ are smooth, we can assume that the standard basis of the lattice generates a top-dimensional cone of $\Sigma_{X}$ which contains a top-dimensional cone of $\Sigma_{F}$. Since $X$ and $F$ are Fano, we can consider the dual fan $\Sigma_{X}^{\vee}$ and $\Sigma_{F}^{\vee}$ of $\Sigma_{X}$ and $\Sigma_{F}$ given by their dual polytopes, which corresponds to Fano varieties $X^{\vee}$ and $F^{\vee}$ respectively. By our choice of basis, the polytope of $\Sigma_{X}^{\vee}$ contains the following extremal faces:
    
    \begin{enumerate}
        \item $L_{i}(x_{1},\cdots,x_{l}) = 1$, where $L_{i}$ defines an extremal face of the polytope of $\Sigma_{F}^{\vee}$;
        \item $x_{j} = 1$, where $j = l+1, \cdots , n$.
    \end{enumerate}

     Let $\widetilde{\Sigma_{X}^{\vee}}$ and $\widetilde{\Sigma_{F}^{\vee}}$ be the crepant refinement of $\Sigma_{X}^{\vee}$ and $\Sigma_{F}^{\vee}$, which corresponds to their crepant terminalization $\widetilde{X}^{\vee}$ and $\widetilde{F}^{\vee}$. Consider the projection
     $$
     p: \mathbb{Z}^{n} \rightarrow \mathbb{Z}^{l}
     $$
     given by the projection to the first $l$ components. Choosing an appropriate triangulation of the polytope, we obtain a morphism $\widetilde{h}: \widetilde{X}^{\vee} \rightarrow \widetilde{F}^{\vee}$. Now $f$ and $f'$ are compactified as pencils of anti-canonical divisors of $\widetilde{X}^{\vee}$ and $\widetilde{F}^{\vee}$ respectively. Let $Y$ and $Y'$ be the Calabi-Yau compactification of $f$ and $f'$ respectively.  Now we consider the degeneration of $f$ to $f'$. This corresponds to a pencil of anti-canonical divisors where the fixed components are given by primitive vectors of $\widetilde{\Sigma_{X}^{\vee}}$ which aren't contained in any face of the form $L_{i}(x_{1},\cdots,x_{l}) = 1$, where $L_{i}$ defines an extremal face of the polytope of $\Sigma_{F}^{\vee}$. The movable part of this linear system is the pull-back of the anti-canonical divisor of $\widetilde{F}^{\vee}$. Hence we obtain a degeneration
    $$
    \begin{tikzcd}
        Y \arrow[d] \arrow[r,decorate,decoration={snake,segment length = 1.6mm,amplitude=0.2mm}] & Y^{*} \arrow[d] \\
        \mathbb{A}^1 & Y' \arrow[l]
    \end{tikzcd}
    $$
\end{statement}

\begin{statement}[Tyurin degenerations]
    Let the notations be as above. We consider a special case where $h$ has relative dimension 1. Consider the projection
     $$
     p_{n}: \mathbb{Z}^{n} \rightarrow \mathbb{Z}
     $$
     given by the projection to the last component. Then there is a morphism $\widetilde{h}: \widetilde{X}^{\vee} \rightarrow \mathbb{P}^1$ with general fibre isomorphic to $\widetilde{F}^{\vee}$. Now $f$ and $f'$ are compactified as pencils of anti-canonical divisors of $\widetilde{X}^{\vee}$ and $\widetilde{F}^{\vee}$ respectively. Let $Y$ and $Y'$ be the Calabi-Yau compactification of $f$ and $f'$ respectively. Then there is a morphism
    $$
    \widetilde{f}: Y \rightarrow \mathbb{P}^1 \times \mathbb{A}^{1}
    $$
    where a general fibre over $\mathbb{P}^1$ is isomorphic to $Y'$. Let $D = \sum\limits_{p(v)=1}D_{v}$, then $D$ is the linear equivalence class of a general fibre of $\widetilde{h}$. A general member in the complete linear system $|-K_{\widetilde{X}^{\vee}}|$ degenerates to a general member in the linear system $|-K_{\widetilde{X}^{\vee}} - D| + |D|$. Let $Y_{1} \cong \widetilde{F}^{\vee} \times \mathbb{A}^1$. We obtain a degeneration
    $$
    \begin{tikzcd}
        Y \arrow[d] \arrow[r,decorate,decoration={snake,segment length = 1.6mm,amplitude=0.2mm}] & Y_{0} \cup Y_{1} \arrow[d] \\
        \mathbb{A}^1 \arrow[equal,r] & \mathbb{A}^1
    \end{tikzcd}
    $$
    One can check that
    \begin{enumerate}
        \item each component $Y_{0} \rightarrow \mathbb{A}^1$ and $Y_{1} \rightarrow \mathbb{A}^1$ are weak Fano fibrations;
        \item $Y_{0} \cap Y_{1} = Y'$;
        \item $Y'$ is an anti-canonical divisor of both $Y_{0}$ and $Y_{1}$.
    \end{enumerate}
    This can be think of as the Fano version of the Tyurin degenerations of the mirrors of Calabi-Yau varieties.
\end{statement}

\subsection{Toric LG models in a toric minimal model program}

      Now we show that \cref{conjecture: minimal degeneration and minimal model} holds for smooth toric varieties.

\begin{proposition}\label{prop: toric LG model for toric pairs}
    Let $X$ be a smooth toric weak Fano variety and $D$ be an effective $\mathbb{R}$-divisor on $X$. Let $D_{1},\cdots,D_{l}$ be the toric invariant divisors of $X$. Write
    $$
    D \sim_{\mathbb{R}} \sum\limits_{i=1}^{l}d_{i}D_{i}.
    $$
    Then the LG model
    $$
    f = \sum\limits_{i=1}^{l}e^{-d_{i}}x^{v_{i}}
    $$
    is a toric LG model of $(X,D)$.
\end{proposition}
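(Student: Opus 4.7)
The plan is to verify the three defining conditions of a toric Landau--Ginzburg model (polytope condition, Calabi--Yau compactification, period condition) separately, in that order.

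First I would dispose of the polytope condition, which is essentially automatic. The Newton polytope of $f$ is the convex hull of the primitive ray generators $\{v_1,\dots,v_l\}$ of the fan $\Sigma_X$, which is by definition the fan polytope of $X$. Since $X$ is already toric, the trivial degeneration $X \leadsto X$ satisfies the polytope condition. Note that this condition is independent of the coefficients $e^{-d_i}$, so the choice of $D$ is irrelevant here.

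Second, I would construct the Calabi--Yau compactification. Let $\Sigma_X^{\vee}$ be the dual fan given by the dual polytope, defining a toric Fano variety $X^{\vee}$, and let $\widetilde{X}^{\vee} \to X^{\vee}$ be a crepant terminalization (which exists by standard toric MMP, since the fan polytope is integral). The Laurent polynomial $f$ extends to a rational section of $-K_{\widetilde{X}^{\vee}}$, and the pencil generated by $f$ and the toric boundary of $\widetilde{X}^{\vee}$ provides the desired Calabi--Yau fibration $Y \to \mathbb{A}^1$. As in the previous step, this construction is insensitive to the coefficients $e^{-d_i}$.

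The main step is the period condition $\hat{P}_f(t) = \hat{G}_{X,D}(t)$. Here I would invoke Givental's mirror theorem for smooth toric (weak) Fano varieties, which identifies the regularized quantum period of $X$ with the period of the parametrized Hori--Vafa mirror, once the Kähler parameters are matched with the divisor coefficients. Concretely, by the multinomial expansion,
$$
c(f^d) = \sum_{\substack{n_1 + \cdots + n_l = d \\ n_1 v_1 + \cdots + n_l v_l = 0}} \frac{d!}{n_1!\cdots n_l!} \prod_{i=1}^{l} e^{-n_i d_i}.
$$
Each lattice relation $\sum n_i v_i = 0$ with $\sum n_i = d$ parametrizes an effective curve class $\beta \in K$ with $D_i \cdot \beta = n_i$ and $-K_X \cdot \beta = d$, whence $\prod_i e^{-n_i d_i} = e^{-D\cdot\beta}$. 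By Givental's theorem, the combinatorial coefficient $\tfrac{d!}{\prod n_i!}$ precisely reassembles into $(-K_X\cdot\beta)! \langle \tau_{-K_X\cdot\beta-2}\mathbf{1}\rangle_\beta$, so after summing over $d$ one recovers the regularized $I$-series restricted to $D$.

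The main obstacle is the final identification of the combinatorial expansion of $c(f^d)$ with the one-point descendant Gromov--Witten invariants of $X$. This is exactly the content of the toric mirror theorem, and the difficulty is bookkeeping rather than mathematical: since $X$ is smooth toric weak Fano the mirror map is trivial, so the substitution $a_i = e^{-d_i}$ matches Kähler coordinates directly with the divisor $D = \sum d_i D_i$, and one only needs to verify that the relations $\sum n_i v_i = 0$ are in bijection with the classes $\beta \in K$ contributing nontrivially to $\widetilde{I}_0^{X,D}(t)$.
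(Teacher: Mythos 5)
Your argument for the period condition is essentially identical to the paper's proof: multinomial expansion of $c(f^{d})$ over the monoid of non-negative lattice relations, identification of these relations with toric invariant (moving) curve classes with $D_{i}\cdot\beta = n_{i}$, and Givental's mirror theorem to match $\frac{d!}{\prod n_{i}!}$ with $(-K_{X}\cdot\beta)!\langle\tau_{-K_{X}\cdot\beta-2}\mathbf{1}\rangle_{\beta}$. The only difference is that you also spell out the polytope and Calabi--Yau compactification conditions, which the paper treats as immediate for the Hori--Vafa mirror of a toric variety; this is a harmless and reasonable addition.
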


\begin{proof}
    Denote by $L \subseteq \mathbb{Z}_{\geq 0}^{l}$ the monoid consisting of tuples $(k_{1},\cdots,k_{l})$ such that $\sum\limits_{i=1}^{k}k_{i}v_{i}=0$. Then the constant term of $f^{d}$ is
    $$
    c(f^{d}) = \sum\limits_{\substack{(k_{1},\cdots,k_{l}) \in L \\ \sum\limits_{i=1}^{l} k_{i} = d}} \frac{d!}{k_{1}!\cdots k_{l}!}e^{-(\sum\limits_{i=1}^{l} k_{i}d_{i})}.
    $$
    
    We recall the regularized $I$-series
    $$
    \widetilde{I}_{0}^{X,tD}(s) = 1 + \sum\limits_{\beta \in K}(-K_{X} \cdot \beta)!\langle \tau_{-K_{X}\cdot\beta -2}\mathbf{1} \rangle_{\beta} \cdot e^{-D \cdot \beta}s^{-K_{X} \cdot \beta}.  
    $$

    Since $X$ is $\mathbb{Q}$-factorial, the cones in the fan of $X$ are simplicial. The moving part of the homology group $H_{2}(X,\mathbb{Z})$ are generated by toric invariant curves, which 1-1 corresponds to a tuple $(k_{1},\cdots,k_{l}) \in L$. Using \cite{Batyrev_1995}, Proposition 6.2.1, the intersection pair $L \times \mathrm{Pic}(X)$ is given by
    $$
    \beta \cdot D : (k_{1},\cdots,k_{l}) \times (d_{1},\cdots,d_{l}) \mapsto \sum\limits_{i=1}^{l} k_{i}d_{i}.
    $$
    By Givental's formula (cf. \cite{givental1997mirrortheoremtoriccomplete}, Theorem 0.1), the Gromov-Witten invariant $\langle \tau_{-K_{X}\cdot\beta -2}\mathbf{1} \rangle_{\beta} = \frac{1}{k_{1}!\cdots k_{l}!}$. Hence we have $\widetilde{I}_{0}^{X,D}(s) = 1 + \sum\limits_{d=1}^{+\infty} a_{0}(f^{d})$. This is the period condition.

\end{proof}

\begin{remark}
    In general, the representation 
    $$
    D \sim_{\mathbb{R}} \sum\limits_{i=1}^{l}d_{i}D_{i}
    $$
    is not unique. Hence the above proposition can produce many toric LG models of the pair $(X,D)$.
\end{remark}

\begin{proposition}
    Let $f$ be a toric Landau-Ginzburg model of $X$ and $f_{min}$ a minimal degeneration in the direction of $D$. Let $X_{min}$ be a toric $D$-minimal model of $X$. Then $f_{min}$ is a Landau-Ginzburg model of $X_{min}$.
\end{proposition}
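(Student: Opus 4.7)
The plan is to reduce the statement to \cref{prop: toric LG model for toric pairs} by making both sides explicit in terms of the combinatorics of the toric fan $\Sigma_{X}$. First, I would apply that proposition to write a distinguished toric LG model of the pair $(X,D)$ in the form
\[
f_{(X,D)} = \sum_{i=1}^{l} e^{-d_{i}} x^{v_{i}},
\]
where $v_{1},\ldots,v_{l}$ are the primitive ray generators of $\Sigma_{X}$ and $D \sim_{\mathbb{R}} \sum d_{i} D_{i}$. Replacing $D$ by $D + s\sum D_{i}$ for a suitable $s\in\mathbb{R}$, which merely rescales $f$ by $e^{-s}$ and therefore leaves the notion of minimal degeneration unaffected, I may assume that $\min_{i} d_{i} = 0$. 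Then the one-parameter family $f_{t} = \sum_{i} e^{-td_{i}} x^{v_{i}}$ is, by \cref{prop: toric LG model for toric pairs} applied to each pair $(X,tD)$, a degeneration of $f$ in the direction of $D$, with well-defined limit
\[
f_{\infty} = \lim_{t \to +\infty} f_{t} = \sum_{d_{i}=0} x^{v_{i}}.
\]

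Second, I would interpret the toric $D$-MMP on $X$ combinatorially. A toric $D$-negative extremal contraction is either divisorial, deleting a ray $v_{i}$ with $d_{i}>0$ and recombining the adjacent cones, or small, in which case the fan is rearranged without changing its set of rays; after finitely many such steps every ray with $d_{i}>0$ is eliminated. Hence $\Sigma_{X_{min}}$ has ray generators exactly $\{v_{i}: d_{i}=0\}$, and the toric boundary of $X_{min}$ is $\sum_{d_{i}=0} D_{i}^{min}$, which represents $-K_{X_{min}}$. Applying \cref{prop: toric LG model for toric pairs} to $(X_{min},0)$---its proof uses only Batyrev's description of the toric intersection pairing together with Givental's mirror theorem, both of which extend from the smooth case to $\mathbb{Q}$-factorial terminal toric weak Fano varieties---identifies $f_{\infty}$ as a toric LG model of $X_{min}$.

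It remains to verify the universal property defining $f_{min}$. I would argue that any degeneration $f'_{t}$ of $f$ in the direction of $D$ must have $f'_{\infty}$ with Newton polytope contained in that of $f_{\infty}$, because the period condition applied to the pairs $(X,tD)$ forces the weights of the monomials with $d_{i}>0$ to vanish in the limit; conversely any such $f'_{\infty}$ is itself a further degeneration of $f_{\infty}$, by direct comparison of coefficients using the mutation invariance of periods. This sandwiches $f_{\infty}$ as the unique minimal degeneration, completing the identification $f_{min}=f_{\infty}$.

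The main obstacle is precisely this last step: one must control \emph{all} degenerations of $f$ in the direction of $D$, not just the explicit $t$-family constructed above. The representation $D = \sum d_{i} D_{i}$ is not unique, and different choices can a priori yield different candidates for $f_{\infty}$; the key point is that these differ only by the action of the kernel of $\mathbb{Z}^{l} \to \mathrm{Pic}(X)$, i.e.\ by a change of coordinates on the big torus, and so give equivalent minimal degenerations. Extracting the universal property cleanly from this fan-theoretic picture, while simultaneously tracking how the period condition constrains degenerations that are not of the explicit form $e^{-td_{i}}$, is the main technical step.
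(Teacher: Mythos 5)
Your first step coincides with the paper's: both arguments use \cref{prop: toric LG model for toric pairs} to produce the explicit family $f_{t}=\sum_{i}e^{-d_{i}t}x^{v_{i}}$, a toric LG model of $(X,tD)$. The gap comes immediately afterwards, and it sits exactly at the point you yourself flag as the main technical step. You fix one effective representation $D\sim_{\mathbb{R}}\sum d_{i}D_{i}$, normalize $\min_{i}d_{i}=0$, and identify $f_{min}$ with the limit $f_{\infty}=\sum_{d_{i}=0}x^{v_{i}}$ of that single family. This fails on two counts. First, by definition $f_{min}$ must degenerate to $f'_{\infty}$ for \emph{every} degeneration $f'_{t}$ in the direction of $D$, so its monomial support must \emph{contain} the union of the supports of all such limits; your claim that every $f'_{\infty}$ has Newton polytope contained in that of your chosen $f_{\infty}$ runs the inclusion the wrong way, and is false: different effective representations of $D$ give genuinely different limits, and these are not related by a change of coordinates, because the torus translation relating $f_{t}$ and $f'_{t}$ is $t$-dependent ($x^{v}\mapsto e^{t\langle m,v\rangle}x^{v}$) and does not converge as $t\to+\infty$. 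Second, the combinatorial claim that the toric $D$-MMP deletes exactly the rays with $d_{i}>0$ is likewise representation-dependent and false in general. A counterexample to both: $X=\mathbb{P}^{2}$, $D=D_{0}$ a coordinate line, representation $(d_{0},d_{1},d_{2})=(1,0,0)$. Then your $f_{\infty}=y+\tfrac{1}{xy}$, but $D$ is ample, the $D$-MMP does nothing, $X_{min}=\mathbb{P}^{2}$ retains all three rays, and $f_{min}=x+y+\tfrac{1}{xy}$ --- which is not the limit of any single family $f_{t}$, but is the least common source of all such limits.

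The paper's proof repairs precisely this by quantifying over all representations: the term $x^{v_{i}}$ vanishes in $f_{min}$ if and only if $d_{i}>0$ in \emph{every} effective representation $D\sim_{\mathbb{R}}\sum d_{i}D_{i}$, i.e.\ if and only if $D_{i}$ lies in the support of the $\mathbb{R}$-fixed part of $D$; and these are exactly the divisors contracted by the toric $D$-MMP. If you replace your single chosen representation by this intersection over all representations --- equivalently, characterize the rays surviving in $f_{min}$ as those whose divisors are not in the fixed part --- then your fan-theoretic identification of $f_{min}$ with a toric LG model of $X_{min}$ via \cref{prop: toric LG model for toric pairs} goes through.
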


\begin{proof}
    Let $f_{min}$ be a toric LG model of $X_{min}$. We show that $f_{min}$ satisfies the conditions of minimal degenerations. Consider the 
    degeneration
    $$
    f_{t} = \sum\limits_{i=1}^{l}e^{-d_{i}t}x^{v_{i}}
    $$
    where $v_{i}$ is the generating vector of the divisor $D_{i}$. Then $f_{t}$ is a toric LG model of $(X,tD)$ by \cref{prop: toric LG model for toric pairs}. As $t \rightarrow +\infty$, the term $x^{v_{i}}$ vanishes if and only if $d_{i}>0$.

    Hence the term $x^{v_{i}}$ vanishes in $f_{min}$ if and only if $d_{i}>0$ for every representation 
    $$
    D \sim_{\mathbb{R}} \sum\limits_{i=1}^{l}d_{i}D_{i}
    $$
    where $d_{i} \geq 0$, i.e., $D_{i}$ is contained in the support of the $\mathbb{R}$-fixed part of $D$. We conclude that $f_{min}$ is a LG model of the $D$-minimal model of $X$.
\end{proof}

    Now we show that \cref{conjecture: maximal degeneration and Iitaka fibration} holds for smooth toric Fano varieties.

\begin{proposition}
    Let $X$ be a smooth toric Fano variety and $D$ an effective $\mathbb{R}$-divisor on $X$. Let $f$ be a toric Landau-Ginzburg model of $X$ and $f_{max}$ a minimal degeneration in the direction of $D$. Let $X_{min}$ be a toric $D$-minimal model of $X$ and $f: X_{min} \rightarrow Z$ its log canonical model. Let $F$ be a general fibre of $f$. Then $f_{max}$ is a toric LG model of $F$.
\end{proposition}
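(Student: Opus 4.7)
The plan is to mirror the argument for the minimal degeneration, replacing the ``$\mathbb{R}$-movable ray'' criterion by a ``kernel of the Iitaka contraction'' criterion, and then to invoke \cref{prop: toric LG model for toric pairs} applied to $(F, 0)$.

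For each effective representation $D \sim_{\mathbb{R}} \sum_{i=1}^{l} d_{i} D_{i}$ with $d_i \geq 0$, the family $f_{t} = \sum_{i} e^{-t d_{i}} x^{v_{i}}$ is a degeneration of $f$ in the direction of $D$ by \cref{prop: toric LG model for toric pairs}, and $f_{\infty}$ keeps exactly the monomials $x^{v_{i}}$ with $d_i = 0$, each with coefficient $1$. The universal property of the maximal degeneration therefore pins down $f_{max} = \sum_{i \in I} x^{v_i}$, where
$$I = \{ i : d_i = 0 \text{ in every effective representation of } D \}.$$

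The combinatorial heart of the argument is the identification of $I$ with the ray indices of the toric fan $\Sigma_{F}$ of the general fibre. Let $\pi : X_{min} \to Z$ be the toric Iitaka contraction corresponding to the lattice quotient $p : N_{X_{min}} \to N_{Z}$; then the rays of $\Sigma_{F}$ are exactly the rays of $\Sigma_{X_{min}}$ lying in $\ker p$. If $v_i$ is contracted during the $D$-MMP then it lies in the $\mathbb{R}$-fixed part of $D$, so $d_i > 0$ in every representation. If $v_i$ is a ray of $X_{min}$ with $p(v_i) \neq 0$, the representation $D_{min} \sim \pi^{*} A$ with $A$ an ample toric divisor on $Z$ of full support assigns $v_i$ a strictly positive coefficient. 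Finally, for $v_i \in \ker p$ a ray of $X_{min}$, any two effective representations of $D_{min}$ differ by $\operatorname{div}(\chi)$ for some character $\chi \in M_{X_{min}}$, and effectiveness of the new representation at every ray of $\Sigma_{F}$ (whose original coefficients in $\pi^{*}A$ vanish) forces $\chi(v) \geq 0$ for every such ray; since $F$ is complete, its fan spans $\ker p_{\mathbb{R}}$, so $\chi|_{\ker p} = 0$, whence $d_i = 0$ in every effective representation. Thus $f_{max} = \sum_{v \in \Sigma_F(1)} x^v$, which by \cref{prop: toric LG model for toric pairs} applied to the toric Fano $F$ with trivial divisor class is precisely a toric Landau-Ginzburg model of $F$.

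The main obstacle is the ``if'' direction of the combinatorial claim: the completeness-to-vanishing step requires knowing that the toric $D$-MMP produces a toric $X_{min}$ admitting a toric Iitaka contraction $\pi$ whose general fibre is complete and whose fan spans all of $\ker p_{\mathbb{R}}$, and that every effective representation of $D_{min}$ really arises from the canonical $\pi^{*} A$ by adding $\operatorname{div}(\chi)$. A secondary technical point is confirming that $F$ inherits a smooth toric Fano structure so that \cref{prop: toric LG model for toric pairs} applies on it; both issues follow from the standard toric MMP, but deserve a brief verification.
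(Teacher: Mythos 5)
Your proposal follows essentially the same route as the paper's proof: both reduce to the observation that $D_{min} \sim_{\mathbb{R}} \pi^{*}A$ for an ample $A$ on $Z$, so that the monomials surviving in $f_{max}$ are exactly those attached to rays of $\Sigma_{X_{min}}$ mapping to the origin of $\Sigma_{Z}$, i.e.\ the rays of $\Sigma_{F}$. The paper's version is considerably terser --- it does not spell out the ``every effective representation'' verification via $\operatorname{div}(\chi)$ and the positive spanning of $\ker p_{\mathbb{R}}$, nor the two technical points you flag at the end --- so your write-up is a more complete rendering of the same idea.
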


\begin{proof}
    By our assumptions, we have $D_{min} \sim_{\mathbb{R}} f^{*} A$ for some ample divisor $A$ on $Z$. Let $\Sigma_{X_{min}} \rightarrow \Sigma_{Z}$ be the morphism between the corresponding fans. Then the fan $\Sigma_{F}$ of $F$ can be represented by the fan of cones of $\Sigma_{X_{min}}$ that maps to the origin of $\Sigma_{Z}$. In particular, the rays of the $\Sigma_{F}$ correspond to horizontal toric invariant divisors. 
\end{proof}

\subsection{Parametrized toric LG models for toric complete intersections}

    In this subsection, we compute a parametrized toric LG model for toric complete intersections.

\begin{proposition}
    Let $X$ be a smooth toric Fano variety and $\Sigma_{X}$ be its associated fan. Let $S_{0}, S_{1}, \cdots , S_{s}$ be a nef partition of $\Sigma_{X}$, i.e., we have
    $$
    \{ 1, 2, \cdots , l \} = S_{0} \sqcup S_{1} \cdots \sqcup S_{s},
    $$
    where 
    $$
    L_{i} = \sum\limits_{j \in S_{i}} D_{j}
    $$
    is nef for $i \geq 1$ and ample when $i = 0$. Let $Y \subseteq X$ be a smooth complete intersection in $X$ of dimension $\geq 3$ given by general sections of $L_{1}, L_{2}, \cdots, L_{s}$.
    
    Let $D$ be an effective $\mathbb{R}$-divisor on $X$ and $D_{1},\cdots , D_{l}$ be the toric invariant divisors of $X$. Write
    $$
    D \sim_{\mathbb{R}} \sum\limits_{i=1}^{l}d_{i}D_{i}.
    $$
    Then a parametrized toric LG model of $Y$ can be given by the equations
    $$
    f_{i} = \sum\limits_{j \in S_{i}} e^{-d_{j}} x^{v_{j}} = 1,\qquad i=1,2,\cdots s,
    $$
    with the potential
    $$
    f_{0} = \sum\limits_{i \in S_{0}}e^{-d_{i}}x^{v_{i}}.
    $$
\end{proposition}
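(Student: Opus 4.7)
The plan is to verify the three defining conditions of a parametrized toric LG model in turn --- polytope, Calabi--Yau compactification, and period --- by reducing to the ambient toric case treated in the previous proposition and then applying Quantum Lefschetz to pass from $X$ to $Y$. The polytope condition is essentially automatic: the nef partition $(S_0, S_1, \ldots, S_s)$ determines a Batyrev--Borisov toric degeneration of $Y$, whose Newton polytope coincides with the convex hull of $\{v_j : j \in S_0\}$ modulo the linear relations imposed by $f_1 = \cdots = f_s = 1$, and this matches the fan polytope of the limiting toric complete intersection. For the Calabi--Yau compactification, I would invoke the standard Batyrev--Borisov construction, compactifying the fibration $f_0 : \{f_1 = \cdots = f_s = 1\} \to \mathbb{A}^1$ inside the toric variety associated to the dual nef partition; smoothness of a generic compactification follows from the assumption $\dim Y \geq 3$ together with genericity of the parameters $(e^{-d_j})$, and tameness at infinity is forced by the nef condition on $L_1, \ldots, L_s$.

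The period condition is the heart of the proof. By the Quantum Lefschetz formula applied to the toric Fano $X$ and the nef line bundles $L_1, \ldots, L_s$, the regularized restricted $I$-series of $(Y, D|_Y)$ takes the form
\[
\hat{G}_{Y, D|_Y}(t) \;=\; 1 + \sum_{\beta \in K} \Bigl(\prod_{i=1}^{s}(L_i\cdot\beta)!\Bigr) \cdot \frac{e^{-D\cdot\beta}}{\prod_{j=1}^{l}(D_j\cdot\beta)!} \cdot t^{-K_Y\cdot\beta},
\]
where $K \subseteq H_2(X) \cong H_2(Y)$ is the monoid of moving curve classes (identified across $X$ and $Y$ by the Lefschetz hyperplane theorem, using $\dim Y \geq 3$) and $-K_Y = -K_X - L_1 - \cdots - L_s$. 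On the LG side I would compute the constrained constant term of $f_0^d$ by multinomially expanding $f_0^d$ and solving each equation $f_i = 1$ by Laurent expansion of $1/f_i$; each such expansion contributes a multinomial factor that, when assembled by curve class $\beta \in K$ and combined with the Batyrev--Givental expression from the proof of the ambient toric proposition, produces exactly the Quantum Lefschetz prefactor $\prod_i (L_i\cdot\beta)!$.

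The main obstacle is the combinatorial bookkeeping in this constrained constant-term calculation: one must track how the multinomials from the $s$ constraints recombine into a single product $\prod_i (L_i \cdot \beta)!$ indexed by a curve class $\beta$, with the exponents of $t$ shifting correctly from $-K_X \cdot \beta$ to $-K_Y \cdot \beta$. Rather than grinding through this by hand, I would appeal directly to Givental's toric mirror theorem for complete intersections \cite{givental1997mirrortheoremtoriccomplete}, where this matching is established in the unparametrized case; the extra parameters $e^{-d_j}$ are transparent to the argument and merely shift coefficients in the same way as in the previous proposition, so the whole claim reduces to formal power-series manipulation already present in the literature.
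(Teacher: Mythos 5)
Your proposal follows essentially the same route as the paper's proof: the period condition is checked by computing the constrained constant term of $f_{0}^{d}$ on $\{f_{1}=\cdots=f_{s}=1\}$ and matching it against the quantum period of the complete intersection given by Quantum Lefschetz (the paper cites \cite{CCGK} where you cite Givental, but it is the same formula, with $L_{0}\cdot\beta=-K_{Y}\cdot\beta$), using $\mathrm{dim}\,Y\geq 3$ to identify $\mathrm{Pic}$ and $H_{2}$ of $X$ and $Y$, while the polytope and Calabi--Yau conditions are treated as standard. The one point you gloss over is that the Quantum Lefschetz output carries an $e^{-ct}$ prefactor, so the two periods only agree after shifting $f_{0}$ by a constant; the paper disposes of this via \cref{lemma: period after shifting constants}.
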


\begin{proof}
    The strategy is similar to the proof of \cref{prop: toric LG model for toric pairs}. Denote by $\overline{c}(f_{0}^{d})$ the constant term of $f_{0}^{d}$ considered as a Laurent polynomial in $(\mathbb{C}^{*})^{l-s}$. We have
    $$
    \overline{c}(f_{0}^{d}) = \sum\limits_{(a_{1},\cdots, a_{s}) \in \mathbb{Z}^{s}} c(\prod\limits_{i=1}^{s}f_{i}^{a_{i}}f_{0}^{d}).
    $$
    
    Denote by $L \subseteq \mathbb{Z}_{\geq 0}^{l}$ the monoid consisting of tuples $(k_{1},\cdots,k_{l})$ such that $\sum\limits_{i=1}^{k}k_{i}v_{i}=0$. Then we can write
    $$
    \overline{c}(f_{0}^{d}) = \sum\limits_{\substack{(k_{1},\cdots,k_{l}) \in L \\ \sum\limits_{i \in S_{0}} k_{i} = d}} \frac{\prod\limits_{i = 0}^{s} (\sum\limits_{j \in S_{i}} k_{j})!}{\prod\limits_{i=1}^{l} k_{i}!}e^{-(\sum\limits_{i=1}^{l} k_{i}d_{i})}.
    $$

    On the other side, $L$ represents toric invariant curve classes in $H_{2}(X,\mathbb{Z})$. Since $\mathrm{dim}Y \geq 3$, the natural restrictions $\mathrm{Pic}(X) \rightarrow \mathrm{Pic}(Y)$ and $H_{2}(X,\mathbb{Z}) \rightarrow H_{2}(Y, \mathbb{Z})$ are isomorphisms. By \cite{CCGK}, the quantum period of $(Y,D|_{Y}))$ is
    $$
    G_{Y,D}(t) = e^{-ct} \sum\limits_{\substack{\beta \in H_{2}(Y,\mathbb{Z}), \\ \forall i, D_{i} \cdot \beta \geq 0}} \frac{\prod\limits_{i=1}^{s} (L_{i} \cdot \beta)!}{\prod\limits_{i=1}^{l}(D_{i} \cdot \beta)!} e^{-D \cdot \beta} t^{L_{0} \cdot \beta}.
    $$
    By \cref{lemma: period after shifting constants} the two periods coincide after shifting $f$ by a constant $c$. This is the period condition. The Calabi-Yau condition and the toric condition are already known.
\end{proof}

\section{General case (surfaces)}\label{section: surface}
    In this section we prove our main results \cref{conj: mirror of extremal contractions}, \cref{conj: mirror of MMP}, \cref{conj: local geography} and \cref{conj: main theorem} for surfaces.

\subsection{Existence of Landau-Ginzburg models for surfaces}\label{section:existence of LG models for surfaces}

    Before the proof, we list some preliminary results on the existence of toric LG models. First, for smooth del-Pezzo surfaces, the existence of the moduli of Landau-Ginzburg models are known.

    \begin{theorem}[\cite{CKP}, Theorem 28]\label{theorem:LG model of del Pezzo surfaces}
    There exists a moduli stack of LG models of smooth del-Pezzo surfaces.
    \end{theorem}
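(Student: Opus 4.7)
The plan is to build the moduli stack family by family and then glue the local pieces along the walls predicted by the geography of each del Pezzo.

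First, for each deformation family of smooth del Pezzo surface $X$ (namely $\mathbb{P}^2$, $\mathbb{P}^1\times\mathbb{P}^1$, $\mathbb{F}_1$, and the blow-ups $\mathrm{Bl}_n\mathbb{P}^2$ at $n\le 8$ general points) I would write down an explicit parametrized toric Landau--Ginzburg model $p_X \in \mathbb{C}[\mathrm{Pic}(X)\otimes\mathbb{C}][x^{\pm},y^{\pm}]$ in the sense of \cref{definition: parametrized toric LG model}. Natural candidates are the Hori--Vafa mirrors of the minimal toric degenerations: the Markov-type weighted projective planes for $\mathbb{P}^2$, the standard toric models listed in \cref{example: Tyurin degenerations for surfaces} for $\mathbb{F}_1$ and $\mathbb{P}^1\times\mathbb{P}^1$, and the Minkowski Laurent polynomials associated to the higher-index del Pezzos. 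The period, Calabi--Yau, and polytope conditions can each be checked directly, so $p_X$ defines a family of toric LG models over the complexified Picard torus $\mathrm{Pic}(X)\otimes\mathbb{C}$.

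Second, I would pass to the local charts by taking the quotient by the regular automorphism group $\mathrm{Aut}(X)$, whose action on $\mathrm{Pic}(X)$ induces an action on the parametrized model. The chart attached to $X$ is the stack quotient $[\mathrm{Pic}(X)\otimes\mathbb{C}\,/\,\mathrm{Aut}(X)]$, with stabilizers controlled by the fibrewise automorphism groups of \cref{prop:action on models}. To glue these charts I would use the geography on the effective cone: specializing a parameter of $p_X$ to a codimension-one face of $\mathrm{Eff}_{\mathbb{R}}(X)$ produces, via the minimal and maximal degeneration mechanism of \cref{conjecture: minimal degeneration and minimal model} and \cref{conjecture: maximal degeneration and Iitaka fibration}, either the parametrized LG model of a neighboring del Pezzo $X'$ (a divisorial wall) or the LG model of a Mori fibre central model (a fibring wall). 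Matching these boundary LG models up to mutation and monomial change of coordinates on $(\mathbb{C}^{*})^2$ supplies the transition data between the local stacks.

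Third, I would verify the stack axioms: the mutation equivalences must assemble into an \'etale groupoid with algebraic stabilizers, and descent must hold. The two-dimensional situation is favorable because every mutation between Minkowski Laurent polynomials in two variables is a composition of elementary mutations, each factor being a binomial determined by a $(-1)$-curve on the del Pezzo; explicit tracking on the Markov-like mutation tree identifies these automorphisms with monomial transformations on the Picard torus, yielding the groupoid structure.

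The main obstacle will be the consistency of the gluing on the low-degree boundary. For $\mathrm{Bl}_n\mathbb{P}^2$ with $n\ge 5$ the set of Minkowski representatives grows quickly, and two a priori distinct walls in $\mathrm{Eff}_{\mathbb{R}}(X)$ may be identified by non-trivial mutations internal to a single del Pezzo; one must prove that all such identifications are compatible. The key technical input needed is that every mutation between Minkowski LG models of a fixed del Pezzo factors through elementary mutations whose factors are determined by the configuration of $(-1)$-curves, so that the gluing cocycle is forced by the $(-1)$-curve combinatorics rather than chosen; this is the combinatorial heart of the construction.
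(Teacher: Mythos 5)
The statement you are trying to prove is not proved in this paper at all: it is quoted verbatim from \cite{CKP} (Theorem 28) as a known preliminary input, and the paper's own contribution in dimension two is the weaker, explicitly constructed space of \cref{theorem: surface geography}, built via \cref{prop:local-to-global}. So there is no in-paper proof to match your argument against; the relevant comparison is with what \cite{CKP} actually establishes. There your proposal diverges in substance, not just in route: the moduli stack of LG models in \cite{CKP} parametrizes (compactified) Landau--Ginzburg models together with their Hodge-theoretic and categorical structure, and is constructed by deformation theory of the LG model as a geometric object, not by gluing complexified Picard tori of Laurent polynomials along walls of effective cones. What your construction would produce, even if completed, is a parameter space of \emph{toric} LG models up to mutation --- essentially the object the paper itself builds in \cref{section:geography of LG models for surfaces} --- which is a different (and a priori coarser) object than the stack asserted in the quoted theorem.

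Beyond this mismatch of target, the proposal has concrete gaps that would block it as a proof of anything. First, your gluing mechanism invokes \cref{conjecture: minimal degeneration and minimal model} and \cref{conjecture: maximal degeneration and Iitaka fibration}, which at the point of this statement are conjectures (the paper only later proves the surface cases, in \cref{theorem: surface MMP and LG models}); a proof cannot rest on them without first establishing them. Second, the step you correctly identify as ``the combinatorial heart'' --- that every mutation between Minkowski LG models of a fixed del Pezzo factors through elementary mutations whose factors are determined by $(-1)$-curves, so that the gluing cocycle is forced --- is asserted, not proved, and it is precisely the hard point: for $\mathrm{Bl}_n\mathbb{P}^2$ with $n \ge 5$ the mutation graph is large, and for degrees $1$ and $2$ the anticanonical class is not very ample, so even the existence of a parametrized toric LG model in the sense of \cref{definition: parametrized toric LG model} is not covered by the explicit list in \cref{example: parametrized LG model} (which stops at degree $3$). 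Third, nothing in the proposal addresses what a \emph{family} of LG models over a base scheme is, hence nothing establishes that the glued object represents a moduli functor or satisfies descent; without that, one obtains at best a topological space or a set of equivalence classes, not a stack.
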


More explicitly, we list the following explicit moduli space of Landau-Ginzburg models for smooth del-Pezzo surfaces.

\begin{example}[cf. \cite{DHKOP} 2023, Appendix K]\label{example: parametrized LG model}
    Consider the parametrized Landau-Ginzburg model of del Pezzo surfaces of degree $\geq 3$. In this case $-K_{X}$ is very ample. Recall that for a complexified divisor $(\alpha_{1},\cdots, \alpha_{r})$, the coefficient specialization is given by $a_{i} = e^{-\alpha_{i}}$.
\begin{figure}[H]
\centering
\begin{longtable}{|C|C|}
    \hline
    \text{Variety} & \text{parametrized LG model} \\
    \hline
    \mathbb{P}^2 & x + y + \frac{a}{xy} \\
    \hline
    \mathbb{P}^1 \times \mathbb{P}^1 & x + y + \frac{a}{x} + \frac{b}{y} \\
    \hline
    \mathbb{F}_{1} & x + y + \frac{a_{1}}{xy} + \frac{a_{1}a_{2}}{x} \\
    \hline
    \mathrm{Bl}_{2}\mathbb{P}^2 & x + y + \frac{a_{1}}{xy} + \frac{a_{1}a_{2}}{x} + \frac{a_{1}a_{3}}{y} \\
    \hline
    \mathrm{Bl}_{3}\mathbb{P}^2 & x + y + \frac{a_{1}}{xy} + \frac{a_{1}a_{2}}{x} + \frac{a_{1}a_{3}}{y} + a_{4}xy\\
    \hline
    \mathrm{Bl}_{4}\mathbb{P}^2 & x + (1+a_{1}a_{2}a_{4}a_{5})y + \frac{a_{1}}{xy} + \frac{a_{1}(a_{2}+a_{5})}{x} + \frac{a_{1}a_{3}}{y} + a_{4}xy + \frac{a_{1}a_{2}a_{5}y}{x}\\
    \hline
    \multirow{2}{*}{$\mathrm{Bl}_{5}\mathbb{P}^2$} & x + (1+a_{1}a_{2}a_{4}a_{5})y + \frac{a_{1}(1 + a_{1}a_{3}a_{6}(a_{2}+a_{5}))}{xy} + \frac{a_{1}(a_{1}a_{2}a_{3}a_{5}a_{6} + a_{2} + a_{5})}{x} + \frac{a_{1}(a_{3}+a_{6})}{y} \\
    & + a_{4}xy + \frac{a_{1}a_{2}a_{5}y}{x} + \frac{a_{1}^{2}a_{3}a_{6}}{xy^2}\\
    \hline
    \multirow{3}{*}{$\mathrm{Bl}_{6}\mathbb{P}^2$} & (a_{1}a_{4}a_{7}(a_{3}+a_{6})+1)x + (a_{1}a_{2}a_{5}(a_{4}+a_{7})+1)y + \frac{a_{1}(1 + a_{1}a_{3}a_{6}(a_{2}+a_{5}))}{xy} \\ 
    & + \frac{a_{1}(a_{1}a_{2}a_{3}a_{5}a_{6} + a_{2} + a_{5})}{x} + \frac{a_{1}(a_{1}a_{3}a_{4}a_{6}a_{7} + a_{3} + a_{6})}{y} + (a_{1}a_{2}a_{4}a_{5}a_{7}+a_{4}+a_{7})xy\\
    & + \frac{a_{1}a_{2}a_{5}y}{x} + \frac{a_{1}^{2}a_{3}a_{6}}{xy^2} + a_{4}a_{7}x^{2}y \\
    \hline
    \end{longtable}
    \end{figure}

    The standard Landau-Ginzburg models can be obtained by letting $a_{i}=1$ for all $i$. We list them as follows:

    \begin{figure}[H]
\centering
\begin{longtable}{|C|C|}
    \hline
    \text{Variety} & \text{standard LG model} \\
    \hline
    \mathbb{P}^2 & x + y + \frac{1}{xy} \\
    \hline
    \mathbb{P}^1 \times \mathbb{P}^1 & x + y + \frac{1}{x} + \frac{1}{y} \\
    \hline
    \mathbb{F}_{1} & x + y + \frac{1}{xy} + \frac{1}{x} \\
    \hline
    \mathrm{Bl}_{2}\mathbb{P}^2 & x + y + \frac{1}{xy} + \frac{1}{x} + \frac{1}{y} \\
    \hline
    \mathrm{Bl}_{3}\mathbb{P}^2 & x + y + \frac{1}{xy} + \frac{1}{x} + \frac{1}{y} + xy\\
    \hline
    \mathrm{Bl}_{4}\mathbb{P}^2 & x + 2y + \frac{1}{xy} + \frac{2}{x} + \frac{1}{y} + xy + \frac{y}{x}\\
    \hline
    \mathrm{Bl}_{5}\mathbb{P}^2 & x + 2y + \frac{3}{xy} + \frac{3}{x} + \frac{2}{y} + xy + \frac{y}{x} + \frac{1}{xy^2}\\
    \hline
    \mathrm{Bl}_{6}\mathbb{P}^2 & 3x + 3y + \frac{3}{xy} + \frac{3}{x} + \frac{3}{y} + 3xy + \frac{y}{x} + \frac{1}{xy^2} + x^{2}y\\
    \hline
    \end{longtable}
    \end{figure}
\end{example}

\subsection{Correspondence between the MMP and Landau-Ginzburg models}
    
    In this subsection we prove \cref{conj: mirror of extremal contractions} and \cref{conj: mirror of MMP}.
    
\begin{theorem}\label{theorem: surface MMP and LG models}
    \cref{conj: mirror of extremal contractions} and \cref{conj: mirror of MMP} hold in dimension 2.
\end{theorem}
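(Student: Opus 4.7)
The plan is to reduce to the classification of smooth del Pezzo surfaces and use the explicit parametrized toric LG models listed in Example 4.3 (from \cite{DHKOP}). The extremal contractions on a smooth del Pezzo surface are of exactly two types: divisorial contractions of $(-1)$-curves (yielding another smooth del Pezzo surface of one higher degree), and Mori fibre spaces $X\to\mathbb{P}^1$ with general fibre $\mathbb{P}^1$. In each parametrized LG model, the parameters $a_1,\dots,a_r$ correspond to a basis of $\mathrm{Pic}(X)$, so one can identify the specific $a_i$ associated to the contracted exceptional class (or to the relative anti-canonical direction for an MFS). The associated degeneration is then produced by letting the corresponding complexified divisor coefficient $\alpha_i\to+\infty$, equivalently $a_i=e^{-\alpha_i}\to 0$.

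For Conjecture~\ref{conj: mirror of extremal contractions}(1), I would run through each $(-1)$-curve contraction $X\to X'$ in the list $\mathrm{Bl}_k\mathbb{P}^2\to \mathrm{Bl}_{k-1}\mathbb{P}^2$, $\mathbb{F}_1\to\mathbb{P}^2$, etc., and simply compare: the limit of the parametrized LG model of $X$ as $a_i\to 0$ must equal, after an explicit monomial change of variables, the parametrized LG model of $X'$. For the low-degree cases this is a direct monomial calculation. For Conjecture~\ref{conj: mirror of extremal contractions}(2), each MFS to $\mathbb{P}^1$ is, on the LG side, the statement that the degeneration in the fibre-class direction produces a Laurent polynomial whose reduction is $x+a/x$, the standard parametrized LG model of the general fibre $\mathbb{P}^1$; this again reduces to an explicit monomial check for $\mathbb{P}^1\times\mathbb{P}^1$, $\mathbb{F}_1$, and the remaining blow-ups equipped with a conic bundle structure.

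For Conjecture~\ref{conj: mirror of MMP}, I would use the polyhedrality of $\mathrm{Eff}_{\mathbb{R}}(X)$ for smooth del Pezzo surfaces together with their explicit Mori chamber decomposition. Given an effective $\mathbb{R}$-divisor $D=\sum d_iD_i$ expressed in the toric-invariant generators associated to the parametrization, set $f_t$ to be the specialization obtained by $\alpha_i=td_i$; this is a one-parameter degeneration in the direction of $D$ by \cref{prop: toric LG model for toric pairs} applied to an appropriate toric degeneration of $X$. The minimal limit $f_{\min}$ (retaining exactly the monomials whose $d_i=0$, i.e.\ those not in the $\mathbb{R}$-fixed part of $D$) should then match the parametrized LG model of the $D$-minimal model $X'$, while the maximal limit $f_{\max}$ (retaining only monomials whose $d_i$ attains the minimum along $f^*A$) should match the parametrized LG model of the general fibre of the ample model $X'\to Z$. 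The correspondence between the Mori chambers of $\mathrm{Eff}_{\mathbb{R}}(X)$ and which monomials survive in the two limits is what makes the two sides agree.

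The main obstacle will be the bookkeeping for $\mathrm{Bl}_k\mathbb{P}^2$ with $k\geq 4$, whose parametrized LG models contain non-monomial coefficients such as $1+a_1a_2a_4a_5$ or $a_1(1+a_1a_3a_6(a_2+a_5))$; one must verify that under the limit $a_i\to 0$ the compound coefficients reduce correctly, with nothing surviving that shouldn't, and that the resulting Laurent polynomial matches (sometimes only up to an affine monomial change of coordinates or a harmless mutation) the parametrized LG model of the target. A secondary subtlety is that a single surface admits several $(-1)$-curve contractions sharing the same abstract target; choosing the correct $a_i$ for each requires tracking how the toric degeneration underlying the parametrization identifies each exceptional class with a ray of the toric fan, and this identification is fixed by the construction in \cite{DHKOP}. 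Once these case-by-case checks are assembled, both conjectures follow for surfaces, yielding \cref{theorem: surface MMP and LG models}.
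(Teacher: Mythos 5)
Your proposal is essentially correct in its reduction (contractions of $(-1)$-curves plus the two Mori fibre spaces $\mathbb{F}_0/\mathbb{P}^1$ and $\mathbb{F}_1/\mathbb{P}^1$), but it takes a genuinely different route from the paper. You propose a case-by-case verification: take the explicit parametrized LG models from \cite{DHKOP}, send the relevant parameter to $0$, and match the limit against the tabulated model of the target, up to monomial changes of coordinates and mutations. The paper instead argues conceptually: the degeneration $f_t=\widetilde{f}_{tE}$ kills exactly the terms of the regularized period indexed by curve classes $\beta$ with $E\cdot\beta\neq 0$, and the blow-up invariance of genus-$0$ Gromov--Witten invariants for classes with $C\cdot E=0$ identifies the surviving series with $\hat{G}_{X'}(t)$; the Mori fibre space case is then delegated to the toric result (\cref{prop: LG model for toric Mori fibre space}) since $\mathbb{F}_0$ and $\mathbb{F}_1$ are toric. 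The paper's argument buys uniformity --- one computation covers every $(-1)$-curve on every del Pezzo surface --- while your approach buys explicitness but at the cost of a substantial finite enumeration, since for $\mathrm{Bl}_k\mathbb{P}^2$ with $k\geq 3$ the $(-1)$-curves are far more numerous than the $k$ coordinate exceptional divisors (lines through two points, conics through five, etc.).

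This last point is where your write-up has a concrete soft spot. You phrase the degeneration as ``$a_i\to 0$ for the parameter associated to the contracted exceptional class,'' but a general $(-1)$-curve class such as $H-E_1-E_2$ has \emph{negative} coefficients in the basis underlying the tabulated parametrization, so the degeneration in its direction sends some $a_i\to\infty$ and the existence of the limit as a Laurent polynomial is not automatic; you would either need to re-derive the parametrized model in a basis adapted to each contraction (justifying that the re-parametrization is intrinsic to $\mathrm{Pic}(X)\otimes\mathbb{C}$), or argue separately that the limit exists. You acknowledge the bookkeeping but not this specific failure mode. With that repaired, your argument goes through, since matching the limit against a polynomial already certified as a toric LG model of the target yields the period, Calabi--Yau, and polytope conditions without further work.
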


\begin{proof}
    A divisorial contraction $X \rightarrow X'$ is a contraction of a $(-1)$-curve $E$. Let $\widetilde{f}$ be a parametrized toric LG model of $X$. Let $f_{t} = \widetilde{f}_{tE}$ be the associated toric LG model of $(X,tE)$ and $f' = \lim\limits_{t \rightarrow + \infty} f_{t}$. Then the regularized period of $f'$ is given by
    $$
    \hat{P}_{f'}(t) = 1 + \sum\limits_{\substack{\beta \in H_{2}(X;\mathbb{Z}) \\ E \cdot \beta = 0}} (-K_{X} \cdot \beta)!\langle \tau_{-K_{X} \cdot \beta - 2} \mathbf{1} \rangle_{\beta}^{X} \cdot t^{-K_{X} \cdot \beta}.
    $$
    Since $X$ and $X'$ are del Pezzo surfaces, $H_{2}(X;\mathbb{Z})$ and $H_{2}(X';\mathbb{Z})$ are generated by the classes of curves. There is a 1-1 correspondence between the linearly equivalent classes of curves $C'$ on $X'$ and the linear equivalence classes of curves $C$ on $X$ such that $C \cdot E = 0$, and it's well-known that the Gromov-Witten invariants of such $\beta$ are the same for a blow-up (for instance, see \cite{Manolache_2012}). Hence $\hat{P}_{f'} = \hat{G}_{X'}(t)$. The toric condition and the Calabi-Yau condition directly follows, so $f'$ is a toric LG model of $X'$. This is the divisorial contraction case of \cref{conj: mirror of extremal contractions}. In dimension 2 there are only 2 cases of Mori fibre space, namely $\mathbb{F}_{0}/\mathbb{P}^1$ and $\mathbb{F}_{1}/\mathbb{P}^1$. These are toric varieties, so the result follows from \cref{prop: LG model for toric Mori fibre space}. This completes the proof of \cref{conj: mirror of extremal contractions}.

    In dimension 2, a minimal model program on a smooth del Pezzo surface is simply a sequence of contractions of $(-1)$-curves
    $$
    \begin{tikzcd}
        X \arrow[r] & X_{1} \arrow[r] & \cdots \arrow[r] & X_{m} \arrow[d] \\
         & & & Z
    \end{tikzcd}
    $$
    where $X_{m}/Z$ is $\mathbb{P}^2/pt$ or $\mathbb{F}_{0}/\mathbb{P}^1$ or $\mathbb{F}_{1}\mathbb{P}^1$. Hence it suffices to consider a divisorial contraction and a Fano fibration. The case of divisorial contractions follows from \cref{conj: mirror of extremal contractions}. Let $X \rightarrow \mathbb{P}^1$ be a Fano fibration with a general fibre $F$, and $\widetilde{f}$ be a parametrized toric LG model of $X$. Let $f_{t} = \widetilde{f}_{tF}$ be the associated toric LG model of $(X,tF)$ and $f' = \lim\limits_{t \rightarrow + \infty} f_{t}$. Then the regularized period of $f'$ is given by
    $$
    \hat{P}_{f'}(t) = 1 + \sum\limits_{\substack{\beta \in H_{2}(X;\mathbb{Z}) \\ F \cdot \beta = 0}} (-K_{X} \cdot \beta)!\langle \tau_{-K_{X} \cdot \beta - 2} \mathbf{1} \rangle_{\beta}^{X} \cdot t^{-K_{X} \cdot \beta}.
    $$
    We have $\langle \tau_{-K_{X} \cdot \beta - 2} \mathbf{1} \rangle_{\beta}^{X} \neq 0$ only if $\beta$ is a multiple of the class of $F$. Hence we can blow down the $(-1)$-curves and compute the Gromov-Witten invariants on $\mathbb{F}_{0}$ or $\mathbb{F}_{1}$, where it can be verified explicitly. This completes the proof of \cref{conj: mirror of extremal contractions}.
\end{proof}

\subsection{Local geography}
    In this subsection we prove \cref{conj: local geography} for surfaces.

    \begin{theorem}
        \cref{conj: local geography} holds in dimension 2.
    \end{theorem}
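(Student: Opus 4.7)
The plan is to leverage the parametrized toric Landau--Ginzburg models of smooth del Pezzo surfaces tabulated in Example~\ref{example: parametrized LG model}, together with the MMP-to-LG correspondence established in Theorem~\ref{theorem: surface MMP and LG models}. First I would fix a smooth del Pezzo surface $X$ of Picard number $r$ and observe that the geography of log models cuts $\mathrm{Eff}_{\mathbb{R}}(X)$ into the usual Mori chambers, producing a rational polyhedral fan $\Sigma$ whose maximal cones index the ample models dominated by $X$ and whose boundary faces correspond bijectively to the Mori fibre structures on the resulting minimal models. Since every variety appearing in the MMP from a smooth del Pezzo is itself a del Pezzo (or a weak del Pezzo with known structure) surface, this identification exhausts the set of central models under $X$.

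Next I would identify $X_{\Sigma}$ with a natural compactification of the parameter space of the parametrized LG model. The parameters $(a_{1},\ldots,a_{r})$ in Example~\ref{example: parametrized LG model} take values in the torus $T \cong \mathrm{Spec}\,\mathbb{C}[\mathrm{Pic}(X)]$, which is exactly the open orbit of the toric variety $X_{\Sigma}$. By the standard toric orbit--cone correspondence, torus orbits of $X_{\Sigma}$ are in bijection with cones of $\Sigma$, so the conjecture reduces to matching, for each boundary face $\sigma$, the orbit $T_{\sigma} \subset X_{\Sigma}$ with the parametrized LG model of the corresponding central model $X_{\sigma}/Z_{\sigma}$.

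The core step is therefore to show that the specialization of the parametrized LG model $\widetilde{f}$ as the parameters tend toward $T_{\sigma}$ recovers the parametrized LG model of $X_{\sigma}/Z_{\sigma}$. This is essentially the content of Theorem~\ref{theorem: surface MMP and LG models}, which asserts that the minimal and maximal degenerations of $\widetilde{f}$ in the direction of a boundary divisor $D$ yield the LG model of the $D$-minimal model and of the general fibre of its log canonical model, respectively. I would combine this with a direct inspection of Example~\ref{example: parametrized LG model} to confirm that, along each face $\sigma$, sending the relevant monomial parameters to $0$ or $\infty$ kills exactly the Laurent monomials that should disappear and produces, after a monomial change of coordinates, the tabulated parametrized LG model of the central model on the other side of the wall.

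The main obstacle will be uniformity for $\mathrm{Bl}_{k}\mathbb{P}^{2}$ with $k \geq 4$, where the effective cone has many facets, the geography fan is genuinely complicated, and $(-2)$-curves appear on walls between weak del Pezzo chambers. Here one must verify that the chamber decomposition of $\mathrm{Eff}_{\mathbb{R}}(X)$ really agrees with the stratification of $X_{\Sigma}$, possibly after a finite toric refinement, and that every Sarkisov link between adjacent chambers lifts to a compatible birational transformation of parametrized LG models. I would handle this case by case, using the tables in Example~\ref{example: parametrized LG model} as the primary computational input and invoking Theorem~\ref{theorem: surface MMP and LG models} to guarantee that each wall-crossing is mirrored correctly, so that the union of the tori $T_{\sigma}$ assembles into the expected moduli structure on $X_{\Sigma}$.
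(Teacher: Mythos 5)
Your proposal follows essentially the same route as the paper: both identify the parameter torus of the parametrized toric LG model with the open orbit of $X_{\Sigma}$ and then invoke the surface case of the MMP--LG correspondence (\cref{theorem: surface MMP and LG models}, applied in parametrized form to a single contraction of a $(-1)$-curve or a single $\mathbb{P}^1$-fibration) to match the boundary strata with the central models under $X$. The only substantive divergence is your concern about $(-2)$-curves on walls for $\mathrm{Bl}_{k}\mathbb{P}^2$ with $k \geq 4$, which does not arise here: a smooth del Pezzo surface carries no $(-2)$-curves (adjunction would force $K_{X}\cdot C = 0$, contradicting ampleness of $-K_{X}$), so the chambers of $\mathrm{Eff}_{\mathbb{R}}(X)$ are cut out only by $(-1)$-curves and fibration classes, exactly as the paper's reduction assumes.
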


    \begin{proof}
    In dimension 2, a minimal model program on a smooth del Pezzo surface is just a sequence of divisorial contractions
    $$
    \begin{tikzcd}
        X \arrow[r] & X_{1} \arrow[r] & \cdots \arrow[r] & X_{m} \arrow[d] \\
         & & & Z
    \end{tikzcd}
    $$
    where $X_{m}/Z$ is $\mathbb{P}^2/pt$ or $\mathbb{F}_{0}/\mathbb{P}^1$ or $\mathbb{F}_{1}\mathbb{P}^1$. Hence it suffices to show that we have the torus of given dimension for a divisorial contraction and a Fano fibration. Let $\widetilde{f}$ be a parametrized toric LG model of $X$. Let $X \rightarrow Y$ be a divisorial contraction of an exceptional divisor $E$. Then we can construct a deformation of $\widetilde{f}$ given by
    $$
    \widetilde{f}_{t,D} = \widetilde{f}_{D + tE}.
    $$
    In other words, a toric LG model of $(X,D)$ deforms to a toric LG model of $(X,D+tE)$. Let $\widetilde{f}' = \lim\limits_{t \rightarrow + \infty} \widetilde{f}_{t}$. Then by a parametrized version of \cref{conj: mirror of extremal contractions}, $\widetilde{f}'$ together with the projection of $\mathrm{Pic}(X)$ to $\mathrm{Pic}(X')$ is a parametrized toric LG model of $X'$. Hence we obtain the torus parametrizing the toric LG models of $X'$.

    Let $X \rightarrow \mathbb{P}^1$ be a fibration with a general fibre $F$. Then we can construct a deformation of $\widetilde{f}$ given by
    $$
    \widetilde{f}_{t,D} = \widetilde{f}_{D + tF}.
    $$
    Let $\widetilde{f}' = \lim\limits_{t \rightarrow + \infty} \widetilde{f}_{t}$. Then by a parametrized version of \cref{conj: mirror of MMP}, $\widetilde{f}'$ together with the projection of $\mathrm{Pic}(X)$ to $\mathrm{Pic}(X/Z)$ is a family of Laurent polynomials parametrized by a torus of expected dimension, whose maximal degeneration gives the toric Landau-Ginzburg models of $F$. This completes the proof.
    \end{proof}

    \begin{example}\label{example:local geography}
    Let $X$ be the blow-up of $\mathbb{P}^2$ at 2 points $P_{1},P_{2}$. Then $\mathrm{Eff}_{\mathbb{R}}(X)$ is generated by 2 exceptional divisors $E_{1},E_{2}$ and the strict transformation $\widetilde{L}$ of the line $L$ passing through $P_{1},P_{2}$. Its geography is given by the following:

    \begin{figure}[H]
    \centering
    \ifx\du\undefined
  \newlength{\du}
\fi
\setlength{\du}{15\unitlength}
\begin{tikzpicture}
\pgftransformxscale{1.000000}
\pgftransformyscale{-1.000000}
\definecolor{dialinecolor}{rgb}{0.000000, 0.000000, 0.000000}
\pgfsetstrokecolor{dialinecolor}
\definecolor{dialinecolor}{rgb}{1.000000, 1.000000, 1.000000}
\pgfsetfillcolor{dialinecolor}
\pgfsetlinewidth{0.100000\du}
\pgfsetdash{}{0pt}
\pgfsetdash{}{0pt}
\pgfsetbuttcap
{
\definecolor{dialinecolor}{rgb}{0.000000, 0.000000, 0.000000}
\pgfsetfillcolor{dialinecolor}
\definecolor{dialinecolor}{rgb}{0.000000, 0.000000, 0.000000}
\pgfsetstrokecolor{dialinecolor}
\draw (25.000000\du,10.000000\du)--(20.000000\du,19.000000\du);
}
\pgfsetlinewidth{0.100000\du}
\pgfsetdash{}{0pt}
\pgfsetdash{}{0pt}
\pgfsetbuttcap
{
\definecolor{dialinecolor}{rgb}{0.000000, 0.000000, 0.000000}
\pgfsetfillcolor{dialinecolor}
\definecolor{dialinecolor}{rgb}{0.000000, 0.000000, 0.000000}
\pgfsetstrokecolor{dialinecolor}
\draw (25.000000\du,10.000000\du)--(30.000000\du,19.000000\du);
}
\pgfsetlinewidth{0.100000\du}
\pgfsetdash{}{0pt}
\pgfsetdash{}{0pt}
\pgfsetbuttcap
{
\definecolor{dialinecolor}{rgb}{0.000000, 0.000000, 0.000000}
\pgfsetfillcolor{dialinecolor}
\definecolor{dialinecolor}{rgb}{0.000000, 0.000000, 0.000000}
\pgfsetstrokecolor{dialinecolor}
\draw (20.000000\du,19.000000\du)--(30.000000\du,19.000000\du);
}
\pgfsetlinewidth{0.100000\du}
\pgfsetdash{}{0pt}
\pgfsetdash{}{0pt}
\pgfsetbuttcap
{
\definecolor{dialinecolor}{rgb}{0.000000, 0.000000, 0.000000}
\pgfsetfillcolor{dialinecolor}
\definecolor{dialinecolor}{rgb}{0.000000, 0.000000, 0.000000}
\pgfsetstrokecolor{dialinecolor}
\draw (30.000000\du,19.000000\du)--(22.500000\du,14.500000\du);
}
\pgfsetlinewidth{0.100000\du}
\pgfsetdash{}{0pt}
\pgfsetdash{}{0pt}
\pgfsetbuttcap
{
\definecolor{dialinecolor}{rgb}{0.000000, 0.000000, 0.000000}
\pgfsetfillcolor{dialinecolor}
\definecolor{dialinecolor}{rgb}{0.000000, 0.000000, 0.000000}
\pgfsetstrokecolor{dialinecolor}
\draw (20.000000\du,19.000000\du)--(27.500000\du,14.500000\du);
}
\pgfsetlinewidth{0.100000\du}
\pgfsetdash{}{0pt}
\pgfsetdash{}{0pt}
\pgfsetbuttcap
{
\definecolor{dialinecolor}{rgb}{0.000000, 0.000000, 0.000000}
\pgfsetfillcolor{dialinecolor}
\definecolor{dialinecolor}{rgb}{0.000000, 0.000000, 0.000000}
\pgfsetstrokecolor{dialinecolor}
\draw (22.500000\du,14.500000\du)--(27.500000\du,14.500000\du);
}
\definecolor{dialinecolor}{rgb}{0.000000, 0.000000, 0.000000}
\pgfsetstrokecolor{dialinecolor}
\node[anchor=west] at (23.500000\du,13.000000\du){$\mathbb{P}^1 \times \mathbb{P}^1$};
\definecolor{dialinecolor}{rgb}{0.000000, 0.000000, 0.000000}
\pgfsetstrokecolor{dialinecolor}
\node[anchor=west] at (24.500000\du,15.000000\du){$X$};
\definecolor{dialinecolor}{rgb}{0.000000, 0.000000, 0.000000}
\pgfsetstrokecolor{dialinecolor}
\node[anchor=west] at (24.500000\du,18.000000\du){$\mathbb{P}^2$};
\definecolor{dialinecolor}{rgb}{0.000000, 0.000000, 0.000000}
\pgfsetstrokecolor{dialinecolor}
\node[anchor=west] at (21.500000\du,16.000000\du){$\mathrm{Bl}_{P_{1}}\mathbb{P}^2$};
\definecolor{dialinecolor}{rgb}{0.000000, 0.000000, 0.000000}
\pgfsetstrokecolor{dialinecolor}
\node[anchor=west] at (25.500000\du,16.000000\du){$\mathrm{Bl}_{P_{2}}\mathbb{P}^2$};
\definecolor{dialinecolor}{rgb}{0.000000, 0.000000, 0.000000}
\pgfsetstrokecolor{dialinecolor}
\node[anchor=west] at (22.000000\du,12.000000\du){$\mathbb{P}^1$};
\definecolor{dialinecolor}{rgb}{0.000000, 0.000000, 0.000000}
\pgfsetstrokecolor{dialinecolor}
\node[anchor=west] at (21.000000\du,16.000000\du){};
\definecolor{dialinecolor}{rgb}{0.000000, 0.000000, 0.000000}
\pgfsetstrokecolor{dialinecolor}
\node[anchor=west] at (20.000000\du,16.000000\du){$\mathbb{P}^1$};
\definecolor{dialinecolor}{rgb}{0.000000, 0.000000, 0.000000}
\pgfsetstrokecolor{dialinecolor}
\node[anchor=west] at (24.500000\du,9.000000\du){pt};
\definecolor{dialinecolor}{rgb}{0.000000, 0.000000, 0.000000}
\pgfsetstrokecolor{dialinecolor}
\node[anchor=west] at (26.500000\du,12.000000\du){$\mathbb{P}^1$};
\definecolor{dialinecolor}{rgb}{0.000000, 0.000000, 0.000000}
\pgfsetstrokecolor{dialinecolor}
\node[anchor=west] at (29.000000\du,16.000000\du){$\mathbb{P}^1$};
\definecolor{dialinecolor}{rgb}{0.000000, 0.000000, 0.000000}
\pgfsetstrokecolor{dialinecolor}
\node[anchor=west] at (24.500000\du,20.000000\du){pt};
\end{tikzpicture}
\caption{Geography of $\mathrm{Bl}_{P_{1},P_{2}}\mathbb{P}^2$}
\end{figure}
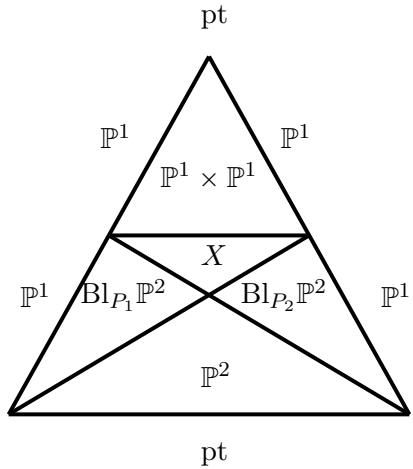
\end{example}

    Let $\Sigma$ be the associated fan and $X_{\Sigma}$ the associated toric variety. A parametrized toric LG model of $X$ is given by
    $$
    \widetilde{f} = x + y + \frac{a_{1}}{xy} + \frac{a_{2}}{x} + \frac{a_{3}}{y}
    $$
    which corresponds to the maximal torus of $X_{\Sigma}$. There are 5 rays in $\Sigma$ on the boundary of $\mathrm{Eff}_{\mathbb{R}}(X)$, which corresponds to
    \begin{align*}
        f_{1} &= x + y + \frac{a_{2}}{x} + \frac{a_{3}}{y}, \\
        f_{2} &= x + \frac{a'_{1}}{xy} + \frac{a_{2}}{x} + \frac{1}{y}, \\
        f_{3} &= x + y + \frac{a_{1}}{xy} + \frac{a_{2}}{x}, \\
        f_{4} &= x + y + \frac{a_{1}}{xy} + \frac{a_{3}}{y}, \\
        f_{5} &= y + \frac{a''_{1}}{xy} + \frac{1}{x} + \frac{a_{3}}{y}.
    \end{align*}
    The 5 rays in the boundary correspond to the parametrized LG models of the 5 central models of rank 2 under $X$. Similarly, the 5 faces in the boundary give
    \begin{align*}
        f'_{1} &= x + \frac{a_{2}}{x} + \frac{1}{y}, \\
        f'_{2} &= x + \frac{1}{xy} + \frac{a_{2}}{x}, \\
        f'_{3} &= x + y + \frac{a_{1}}{xy}, \\
        f'_{4} &= y + \frac{1}{xy} + \frac{a_{3}}{y}, \\
        f'_{5} &= y + \frac{1}{x} + \frac{a_{3}}{y}.
    \end{align*}
    They correspond to the parametrized LG models of the 5 central models of rank 1 under $X$.

\subsection{A local-to-global construction}

    Recall that a model is a variety together with a birational map (cf. \cref{def:birational model}). As a result, to parametrize toric LG models for all central models, one must put the birational maps into the parametrization. In this subsection, we present a local-to-global construction, which begins with a parametrization for a fixed central model and ends with a parametrization for a family of central models lying in the same isomorphism class. More precisely, the result can be stated as follows:

  \begin{proposition}\label{prop:local-to-global}
      Let $X$ be a central model such that a parametrized LG model of $X$ exists. Then there exists a moduli space $M(X)$ parametrizing toric LG models of $(X',D')$, where $X'$ is a central model isomorphic to $X$ and $D' \in \mathrm{Pic}(X') \otimes \mathbb{C}$. Moreover, $M(X)$ admits a natural action of $\mathrm{Bir}(X)$.
  \end{proposition}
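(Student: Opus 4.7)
The plan is to construct $M(X)$ as the quotient
$$
M(X) := \bigl(\mathrm{Bir}(X) \times T_X\bigr) / \mathrm{Aut}(X),
$$
where $T_X := \mathrm{Pic}(X) \otimes_{\mathbb{Z}} \mathbb{C}^{\ast}$ is the complexified Picard torus, and then to read the $\mathrm{Bir}(X)$-action off left multiplication on the first factor. First I would use the hypothesis: a parametrized toric LG model $p \in \mathbb{Z}[a_1^{\pm 1}, \ldots, a_r^{\pm 1}][x_1^{\pm 1}, \ldots, x_n^{\pm 1}]$ of $X$ gives, for each complexified class $D \in \mathrm{Pic}(X) \otimes \mathbb{C}$, a toric LG model $p_D$ of $(X,D)$. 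Since $p_D$ depends only on $D$ modulo $2\pi i\,\mathrm{Pic}(X)$, the assignment $D \mapsto p_D$ descends to an algebraic family over $T_X$, providing a \emph{local} moduli of toric LG models of pairs $(X,D')$.

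Next I would globalise using the orbit/stabiliser description in \cref{prop:action on models}: central models isomorphic to $X$ correspond bijectively to cosets in $\mathrm{Bir}(X)/\mathrm{Aut}(X)$ via post-composition of the structural birational map $X \dashrightarrow Y$ with an element $\sigma \in \mathrm{Bir}(X)$. Any such $\sigma$ identifies $(X,\mathrm{Pic}(X))$ with $(X',\mathrm{Pic}(X'))$ up to an $\mathrm{Aut}(X)$-ambiguity and transports $p$ to a Laurent polynomial $\sigma_{\ast}p$ which, by the functoriality of the three conditions in \cref{definition: parametrized toric LG model}, is a parametrized toric LG model of $X'$. Letting $\mathrm{Aut}(X)$ act on $\mathrm{Bir}(X) \times T_X$ by $\gamma \cdot (\sigma,t) = (\sigma\gamma^{-1},\gamma \cdot t)$ and taking the quotient yields $M(X)$: a point $[\sigma,t]$ represents a pair $(X',D')$ together with the toric LG model obtained by transporting $p_t$ along $\sigma$. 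The natural $\mathrm{Bir}(X)$-action by left multiplication on the first factor commutes with the $\mathrm{Aut}(X)$-action and is therefore well-defined on $M(X)$.

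The main obstacle is verifying that the pushforward of $p$ along $\sigma \in \mathrm{Bir}(X)$ really yields a parametrized LG model in the strict sense of \cref{definition: parametrized toric LG model}. The Calabi--Yau compactification and Newton polytope conditions are intrinsic to the Laurent polynomial itself and are therefore preserved, so the real content is the period condition: one must show that the regularised quantum period $\hat{G}_{X',\sigma_{\ast}D}$ of the transported pair coincides with $\hat{G}_{X,D}$. Because $\sigma$ is only birational, this is not automatic. I would factor $\sigma$ through a common resolution and chain together the divisorial-contraction invariance encoded in \cref{conj: mirror of extremal contractions} together with the intrinsic (biregular) invariance of Gromov--Witten invariants, which is enough to conclude in the cases treated in this paper, namely smooth toric Fano varieties and smooth del Pezzo surfaces, where the relevant form of \cref{conj: mirror of extremal contractions} is already established by \cref{prop: LG model for toric divisorial contraction}, \cref{prop: LG model for toric Mori fibre space}, and \cref{theorem: surface MMP and LG models}.
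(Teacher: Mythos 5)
Your construction is essentially the paper's own proof: the paper likewise forms $M^{\circ}(X) = (G \times \mathbb{C}^{*r})/\mathrm{Aut}(X)$ with $G = \mathrm{Bir}(X)$, descending the parametrization from $\mathrm{Pic}(X)\otimes\mathbb{C}$ to $\mathrm{Pic}(X)\otimes\mathbb{C}^{*}$ via $z \mapsto e^{-z}$ and letting $G$ act by multiplication on the first factor. Your final paragraph's worry about the period condition is moot and the detour through \cref{conj: mirror of extremal contractions} unnecessary: by \cref{prop:action on models} the orbit of a model under $\mathrm{Bir}(X)$ consists of models whose underlying variety is isomorphic to $X$ (only the marking map to the ambient variety changes), so the regularized quantum period, being an invariant of the abstract variety, is preserved without further argument.
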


  \begin{proof}
    Write $r = \rho(X) = \mathrm{rank} \mathrm{Pic}(X)$. A parametrized toric LG model $\widetilde{f}$ of $X$ is a family of toric LG models parametrized by $\mathrm{Pic}(X) \otimes \mathbb{C} \cong \mathbb{C}^{r}$. Notice that a toric LG model of $(X,D)$ is also a toric LG model of $(X, D + 2\pi i D')$ for any Cartier divisor $D'$. Hence we can consider $\widetilde{f}$ to be a family of toric LG models parametrized by $\mathrm{Pic}(X) \otimes \mathbb{C}^{*} \cong \mathbb{C}^{*r}$ under the ``natural'' morphism $\mathrm{Pic}(X) \otimes \mathbb{C} \rightarrow \mathrm{Pic}(X) \otimes \mathbb{C}^{*}$ given by $z \mapsto e^{-z}$. Write $G = \mathrm{Bir}(X)$ and consider the product $G \times \mathbb{C}^{*r}$. There is a free action of $\mathrm{Aut}(X)$ on $G \times \mathbb{C}^{*r}$ where
    \begin{itemize}
        \item the action on the first component $G$ is the left multiplication;
        \item the action on the second component $\mathbb{C}^{*r}$ is the natural action induced by the action of $\mathrm{Aut}(X)$ on $\mathrm{Pic}(X)$.
    \end{itemize}
    Now we take the quotient space
    $$
    M^{\circ}(X) = (G \times \mathbb{C}^{*r}) / \mathrm{Aut}(X).
    $$
    The quotient is a family of affine spaces parametrized by $G/\mathrm{Aut}(X)$, with a natural action of $G$ given as follows:
    \begin{itemize}
        \item the action on the first component $G$ is the left multiplication;
        \item the action on the second component $\mathbb{C}^{*r}$ is trivial.
    \end{itemize}
    This completes the construction.
  \end{proof}

\subsection{Global geography}\label{section:geography of LG models for surfaces}

    In this subsection, we prove a weaker version of \cref{theorem: surface geography} in dimension 2.

\begin{theorem}\label{theorem: surface geography}
    There exists a moduli space $M'_{2}$ parametrizing LG models of all central models under some smooth del Pezzo surfaces.
\end{theorem}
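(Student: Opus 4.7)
The plan is to construct $M'_2$ by gluing together the local moduli spaces $M(X)$ produced by \cref{prop:local-to-global} as $X$ ranges over the smooth del Pezzo surfaces. First I would fix the input: for each smooth del Pezzo surface $X$, the explicit parametrized toric LG models tabulated in \cref{example: parametrized LG model} supply exactly the data required by \cref{prop:local-to-global}, so the proposition produces a moduli space $M(X)$ carrying a natural $\mathrm{Bir}(X)$-action and parametrizing toric LG models of pairs $(X', D')$ with $X'$ isomorphic to $X$ as a central model. By the local geography statement proven just before, each chart $M(X)$ has an underlying torus structure dictated by the geography fan $\Sigma$ on $\mathrm{Eff}_{\mathbb{R}}(X)$, and the strata in the boundary of $\mathrm{Eff}_{\mathbb{R}}(X)$ correspond to toric LG models of the finitely many central models strictly under $X$ (as illustrated in \cref{example:local geography}).

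Next I would specify the gluing. The key input is \cref{theorem: surface MMP and LG models}: for a divisorial contraction $X \to X'$, the limit of the parametrized LG model of $X$ along the direction of the contracted exceptional curve is the parametrized LG model of $X'$; for a Mori fibre space $X \to Z$, the analogous limit yields the parametrized LG model of the general fibre. These statements give canonical identifications between a codimension-$1$ boundary stratum of $M(X)$ and (an open piece of) $M(X')$, respectively a lower-dimensional stratum parametrizing LG models of $\mathbb{P}^1$-fibres. Since the smooth del Pezzo surfaces form a finite list and all MMP links between them are classically understood, the collection of identifications is a finite combinatorial gluing datum. I would then set
$$
M'_2 := \Bigl(\bigsqcup_{X} M(X)\Bigr) \Big/ \sim,
$$
where $X$ ranges over smooth del Pezzo surfaces and $\sim$ is generated by the boundary identifications above. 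The equivariance clause of \cref{prop:local-to-global} is exactly what is needed for the quotient to inherit a well-defined action by the birational automorphism groups involved, and the parametrized families on the individual charts descend, by construction, to a universal family of toric LG models on $M'_2$, giving the moduli interpretation.

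The hard part will be verifying that the boundary identifications are consistent up to the $\mathrm{Aut}$-quotients baked into \cref{prop:local-to-global}. Concretely, two divisorial contractions from nonisomorphic del Pezzos $X_1$ and $X_2$ can land on the same central model $Y$, and one must check that the two induced parametrizations of LG models near $Y$ agree not merely up to a relabelling of torus coordinates, but after passing through the $\mathrm{Aut}(Y)$-quotient; similarly, $\mathbb{P}^1 \times \mathbb{P}^1$ admits two distinct $\mathbb{P}^1$-fibrations and these must be correctly identified with the two specializations coming from the parametrization of $\mathrm{Bl}_k\mathbb{P}^2$. This reduces to a finite, explicit comparison of the Laurent polynomials in \cref{example: parametrized LG model} and their mutation classes, which I expect to be routine but combinatorially delicate. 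Because there are only nine smooth del Pezzo families and the geography of each is small, the check is tractable and the construction terminates.
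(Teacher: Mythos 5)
Your proposal follows essentially the same route as the paper: compactify each local moduli space along the geography fan on $\mathrm{Eff}_{\mathbb{R}}(X)$ so that boundary strata encode the central models of lower rank (via \cref{theorem: surface MMP and LG models}), then apply the local-to-global construction of \cref{prop:local-to-global} and glue the resulting charts $M(X) = (G \times X_{\Sigma})/\mathrm{Aut}(X)$ along their boundaries. Your explicit flagging of the consistency check for the boundary identifications (which the paper dispatches with ``it can be directly verified'' and illustrates in the subsequent example on rational surfaces) is a reasonable elaboration rather than a departure.
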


    Note that we require the central models to be under some smooth del Pezzo surfaces, since we don't have a definition of toric Landau-Ginzburg models of a central model $X/Z$ when $X$ is not Fano.

\begin{proof}[Proof of \cref{theorem: surface geography}]
    The proof consists of two parts. 
    
    \textbf{Step 1:} For a fixed smooth del Pezzo surface $X$, we show that we can compactify the moduli space of toric LG models such that the boundary corresponds to central models of lower ranks. Write $r = \rho(X) = \mathrm{rank} \mathrm{Pic}(X)$. A parametrized toric LG model $\widetilde{f}$ of $X$ is a family of toric LG models parametrized by $\mathrm{Pic}(X) \otimes \mathbb{C} \cong \mathbb{C}^{r}$. Notice that a toric LG model of $(X,D)$ is also a toric LG model of $(X, D + 2\pi i D')$ for any Cartier divisor $D'$. Hence we can consider $\widetilde{f}$ to be a family of toric LG models parametrized by $\mathrm{Pic}(X) \otimes \mathbb{C}^{*} \cong \mathbb{C}^{*r}$ under the ``natural'' morphism $\mathrm{Pic}(X) \otimes \mathbb{C} \rightarrow \mathrm{Pic}(X) \otimes \mathbb{C}^{*}$ given by $z \mapsto e^{-z}$. We compactify it to the toric variety $X_{\Sigma}$, where $\Sigma$ is the fan given by the cone of effective divisors $\mathrm{Eff}_{\mathbb{R}}(X)$. By \cref{theorem: surface MMP and LG models}, the points in the boundary correspond to a sequence of contractions of $(-1)$-curves or a $\mathbb{P}^1$-fibration, which is a central model of lower rank.
    
    \textbf{Step 2:} We glue up these moduli spaces together to obtain a ``global'' moduli space. We have a local-to-global construction by \cref{prop:local-to-global}. It can be directly verified that this construction is compatible with the geography on $\mathrm{Eff}_{\mathbb{R}}(X)$. Hence we also have a compactification of $M^{\circ}(X)$ given by
    $$
    M(X) = (G \times X_{\Sigma}) / \mathrm{Aut}(X).
    $$
    This completes the construction.
\end{proof}

\begin{example}[Case of rational surfaces]
    We look at some pieces of the total space $M'$ corresponding to central models of small rank. We begin with $M_{0} = p$, where $p$ is a point. We regard $p$ as \emph{the moduli of divisorial degenerations corresponding to the identity map}.

    \textbf{Rank 1:} There are two types of central models of rank 1 for surfaces: the projective planes $\mathbb{P}^2$ and the ruled surfaces $\mathbb{F}_{n}/\mathbb{P}^1$ for $n = 0,1$.

    \textbf{Projective planes:} In this case, the LG models are of the form $x + y + \frac{a}{xy}$. The automorphism group $\mathrm{Aut}(\mathbb{P}^2)$ acts trivially on $\mathrm{Pic}(X)$. By \cref{prop:local-to-global}, the moduli space of LG models is just $G/\mathrm{Aut}(\mathbb{P}^2) \times \mathbb{C}$. We attach this to $M_{0}$ by gluing $ G/\mathrm{Aut}(\mathbb{P}^2) \times \{0\}$ to $p$. Hence for each central model $g: \mathbb{P}^2 \dashrightarrow 
    \mathbb{P}^2$, $M(\mathbb{P}^2)$ is an affine line.

    \textbf{Ruled surfaces:} Ruled surfaces are of the form $\mathbb{F}_{n}/\mathbb{P}^1$ for $n = 0,1$. A moduli space of fibring degenerations corresponding to a ruled surface is $x + y + \frac{a}{x}$. In this case, the subgroup $\mathrm{Bir}(\mathbb{F}_{n} \rightarrow \mathbb{P}^1)$ of $G$ that fixes the ruled structure is a \emph{Jonquière subgroup} $\mathrm{Jon}(\mathbb{P}^1)$ of $G$. The induced action of $\mathrm{Jon}(\mathbb{P}^1)$ on $\mathrm{Cl}(\mathbb{P}^1)$ is trivial. By \cref{prop:local-to-global}, the moduli space is $G/\mathrm{Jon}(\mathbb{P}^1) \times \mathbb{C}$. We attach this to $M_{0}$ by gluing $G/\mathrm{Jon}(\mathbb{P}^1) \times \{0\}$ to $p$. For each ruled structure $\mathbb{P}^2 \dashrightarrow \mathbb{P}^1$, $T(\mathbb{P}^1)$ is an affine line.

    Next, we attach the moduli space of LG models of $\mathbb{F}_{1}$. A moduli space is given by $x + y + \frac{a}{x} + \frac{b}{xy}$. The action of $\mathrm{Aut}(\mathbb{F}_{n})$ on $\mathrm{Pic}(\mathbb{F}_{n})$ is trivial. By \cref{prop:local-to-global}, the total moduli space is $G/\mathrm{Aut}(X) \times \mathbb{C}^2$. We then attach the space to $M_{0}$ by gluing the line $b=0$ naturally to $T(\mathbb{P}^1)$.

    \textbf{Rank 2:} There are 3 types of central models of rank 2 for surfaces: The Hirzebruch surface $\mathbb{F}_{0},\mathbb{F}_{1}$, and the fibration $\mathrm{Bl}_{2}\mathbb{P}^2 \rightarrow \mathbb{P}^1$.

    \textbf{Hirzebruch surface $\mathbb{F}_{0}$:} A moduli space of LG model of $\mathbb{F}_{0}$ is given by $x + y + \frac{a}{x} + \frac{b}{y}$. Notice that in this case, the action of the automorphism group $\mathrm{Aut}(\mathbb{F}_{0})$ on $\mathrm{Pic}(\mathbb{F}_{0})$ is not trivial. Moreover, the moduli space of LG models is already partially attached in rank 1 case. We further attach the space to $M_{1}$ by gluing both lines $a=0$ and $b=0$ to the corresponding divisorial degenerations.

    \textbf{Hirzebruch surface $\mathbb{F}_{1}$:} The parametrized toric LG model of $\mathbb{F}_{1}$ is given by $x + y + \frac{a_{1}}{xy} + \frac{a_{1}a_{2}}{x}$. Let $a=a_{1}$ and $b = a_{1}a_{2}$. The moduli of LG models is already partially attached in the rank 1 case. We further attach the space to $M_{1}$ by gluing both lines $a=0$ and $b=0$ to the corresponding moduli of LG models and divisorial degenerations.

    \textbf{Fibration $\mathrm{Bl}_{2}\mathbb{P}^2 \rightarrow \mathbb{P}^1$:} The parametrized toric LG model of $\mathrm{Bl}_{2}\mathbb{P}^2$ after a change of basis is $x+y+\frac{a_{1}}{xy}+\frac{a_{2}}{x}+\frac{a_{3}}{y}$. The divisorial degenerations are described in \cref{example: Tyurin degenerations for surfaces} and other degenerations are described in \cref{example: parametrized LG model}. In particular, the two divisorial degenerations are given by $a_{1},a_{2} \rightarrow 0$ and $a_{1}, a_{3} \rightarrow 0$.  The compactification of the moduli space is described in \cref{example:local geography}. The action of an element $g \in \mathrm{Aut}(\mathrm{Bl}_{2}\mathbb{P}^2)$ on $\mathrm{Pic}(\mathrm{Bl}_{2}\mathbb{P}^2)$ either fix everything, or permute the two exceptional divisors, that is, permute $a_{2}$ and $a_{3}$ in the parametrized LG model. We can attach the space $G \times X_{\Sigma}/\mathrm{Aut}(\mathrm{Bl}_{2}\mathbb{P}^2)$ into $M'_{2}$ by the above information.
\end{example}

\section{General case (threefolds)}\label{section: threefold}

    In this section, we study the threefold case. First, we list some preliminaries about Landau-Ginzburg models for terminal Fano threefolds.

\subsection{Landau-Ginzburg models for terminal Fano threefolds}

    In this section, we briefly recall the Gromov-Witten theory for terminal Fano threefolds. Firstly, orbifold Gromov-Witten theory was defined in \cite{ChenRuan2001}. On the other side, terminal Fano threefolds deform to orbifolds if $X$ has ordinary terminal singularities (which is called the \emph{$\mathbb{Q}$-smoothing} of $X$):

\begin{theorem}[cf. \cite{Sano15}]
    Let $X$ be a Fano threefold with only ordinary terminal singularities, then $X$ deforms to a Fano threefold with only terminal cyclic quotient singularities.
\end{theorem}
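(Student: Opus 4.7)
The plan is to combine two ingredients: (a) the unobstructedness of deformations of Fano threefolds with terminal singularities, and (b) the local $\mathbb{Q}$-smoothability of each ordinary terminal singularity of a threefold. First, I would recall Namikawa's theorem that the Kuranishi space $\mathrm{Def}(X)$ of a Fano threefold with only terminal singularities is smooth, so global deformations are unobstructed and the tangent space is $\mathrm{Ext}^1(\Omega_X^1, \mathcal{O}_X)$. Dually, Minagawa's analysis of ordinary terminal threefold singularities shows that the local deformation functor at each singular point $p \in X$ is unobstructed, and admits a one-parameter deformation whose general fibre has only terminal cyclic quotient singularities.

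Next, I would invoke the standard local-to-global comparison: the natural restriction map
\[
\mathrm{Ext}^1(\Omega_X^1, \mathcal{O}_X) \longrightarrow \bigoplus_{p \in \mathrm{Sing}(X)} T^1_{(X,p)}
\]
is surjective, which follows from the local-to-global spectral sequence for $\mathrm{Ext}$ combined with the vanishing of $H^2(X, T_X)$ that one gets from Kodaira--Akizuki--Nakano type vanishing and the Fano condition on a threefold with rational singularities. Hence every choice of simultaneous local deformation at the singular points of $X$ can be realized by a global first-order deformation, and by unobstructedness this lifts to an honest one-parameter family $\mathcal{X} \to \Delta$ with $\mathcal{X}_0 = X$.

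Choosing the local data to be the $\mathbb{Q}$-smoothings provided by Minagawa at each ordinary terminal singularity, I would then examine the general fibre $\mathcal{X}_t$ for $0 \neq t \in \Delta$. Away from the singular points of $X$, the total space $\mathcal{X}$ is smooth (since $X^{sm}$ stays smooth under deformation), so new singularities can only appear on the closures of the singular loci of the special fibre. By construction, the induced deformation of each germ $(X,p)$ agrees with Minagawa's, hence its nearby fibres carry only terminal cyclic quotient singularities. Ampleness of $-K_{\mathcal{X}_t}$ is preserved by upper semi-continuity for small $t$, so $\mathcal{X}_t$ is a Fano threefold with only terminal cyclic quotient singularities, as required.

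The main obstacle is the local-to-global surjectivity step: one must verify the $H^2$-type vanishing that controls obstruction to realizing simultaneous local deformations, and confirm that the cokernel of the restriction map truly vanishes in the setting of terminal (possibly non-Gorenstein) Fano threefolds. The rest of the argument — Namikawa's unobstructedness and Minagawa's local classification — is invoked as a black box, and the preservation of the Fano condition under small deformation is straightforward.
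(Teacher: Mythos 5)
The paper does not actually prove this statement: it is quoted from \cite{Sano15} without proof, so the comparison here is against the argument of that reference. Your overall strategy --- unobstructedness of $\mathrm{Def}(X)$, Minagawa's local $\mathbb{Q}$-smoothings of ordinary terminal germs to cyclic quotient singularities, and a local-to-global step to realize these simultaneously in a one-parameter family --- is exactly the skeleton of the known proof, and the peripheral points (openness of ampleness of $-K$, the fact that deformations of terminal threefold germs are $\mathbb{Q}$-Gorenstein, smoothness of the total space away from $\mathrm{Sing}(X)$) are all fine.

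The genuine gap is the step you yourself flag as the main obstacle: the surjectivity of $\mathrm{Ext}^1(\Omega^1_X,\mathcal{O}_X) \to \bigoplus_{p} T^1_{(X,p)}$. The local-to-global spectral sequence reduces this to $H^2(X,\Theta_X)=0$, and for a smooth or Gorenstein terminal Fano threefold this does follow from Akizuki--Nakano type vanishing via $\Theta_X \cong \Omega^2_X \otimes \omega_X^{-1}$ (this is Namikawa's smoothing argument). But the statement at hand concerns $\mathbb{Q}$-Fano threefolds whose terminal points are in general non-Gorenstein --- the target singularities $\tfrac{1}{r}(1,-1,b)$ have index $r>1$ --- and there $\omega_X^{-1}$ is only a Weil divisorial sheaf, $\Theta_X$ is merely reflexive, and no Kodaira--Akizuki--Nakano argument yields $H^2(X,\Theta_X)=0$ directly. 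This is precisely why the proof in \cite{Sano15} does not establish full surjectivity of the restriction map: it instead proves the weaker assertion that for each singular point some global deformation induces a local deformation moving off the stratum of the given singularity, by passing to the index-one cover with its cyclic group action, decomposing $T^1$ into eigenparts, and running an induction (on axial multiplicity) that uses the ``ordinary'' hypothesis in an essential way. So your reduction to ``$H^2$-vanishing plus the Fano condition'' does not close the argument; as written the proof is incomplete exactly at its load-bearing step, and repairing it requires the more roundabout route of the cited reference rather than a one-line vanishing theorem.
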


    Hence we can define the quantum period for these varieties as follows:

\begin{convention}
    Let $X$ be a Fano threefold with only ordinary terminal singularities and $\beta \in H_{2}(X;\mathbb{Z})$ be a curve class. We write $M_{g,n}(X,\beta) = M_{g,n}(X,J,\beta,\vec{1})$, where $J$ is the natural complex structure on $X$ and $\vec{1} = (1,1,\cdots,1)$. In other words, we always assume that the marked points are smooth.
\end{convention}

\begin{definition}
    Let $X$ be a Fano threefold with only ordinary terminal singularities. Then the \emph{regularized quantum period} of $X$ is
    $$
    \widetilde{I}_{0}^{X,D}(t) = 1 + \sum\limits_{\beta \in K}(-K_{X} \cdot \beta)!\langle \tau_{-K_{X}\cdot\beta -2}\mathbf{1} \rangle_{\beta} \cdot e^{-D \cdot \beta}t^{-K_{X} \cdot \beta}
    $$
    where
    $$
    \langle \tau_{-K_{X}\cdot\beta -2}\mathbf{1} \rangle_{\beta} = \int_{[\overline{M}_{0,1}(X,\beta)]^{virt}} \psi_{1}^{-K_{X}\cdot\beta -2}ev_{1}^{*}(\mathbf{1}_{X}).
    $$
\end{definition}
    
    Hence we can generalize the definitions of toric Landau-Ginzburg models in Section 2 to many terminal Fano threefolds. It is also conjectured that every terminal Fano threefold can be deformed to a Fano threefold with only terminal cyclic quotient singularities.

\subsection{Existence of LG models}

    In this subsection, we recall some results on the existence of LG models in dimension 3.

    \begin{theorem}[cf. \cite{Przyjalkowski_2008}; \cite{Przyjalkowski_2013}; \cite{ACGK12}; \cite{CCG+}]
        Smooth Fano threefolds have toric Landau-Ginzburg models.
    \end{theorem}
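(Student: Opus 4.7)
The plan is to proceed case by case through the Iskovskikh--Mori--Mukai classification of smooth Fano threefolds, which provides $17$ families of Picard rank $1$ and $88$ further families of Picard rank $\geq 2$. For each family $X$, I would exhibit an explicit Laurent polynomial $f \in \mathbb{C}[x^{\pm 1},y^{\pm 1},z^{\pm 1}]$ and verify the three conditions in the definition of a toric Landau--Ginzburg model: the period condition $\hat{P}_f(t) = \hat{G}_X(t)$, the existence of a smooth Calabi--Yau fiberwise compactification of $f:(\mathbb{C}^*)^3 \to \mathbb{C}$, and the existence of a $\mathbb{Q}$-Gorenstein toric degeneration $X \leadsto X_T$ whose fan polytope coincides with the Newton polytope of $f$.

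For the Picard rank $1$ cases, most families are realized either as complete intersections in weighted projective spaces, as linear sections of small Grassmannians, or as zero loci of sections of homogeneous vector bundles on partial flag varieties. I would obtain the candidate Laurent polynomial by applying the Hori--Vafa/Givental mirror recipe together with the appropriate Laurent phase birational change of variables to reduce the superpotential from an ambient-variety description to a polynomial on $(\mathbb{C}^*)^3$. The period condition then follows from Givental's mirror theorem for toric complete intersections, combined with the quantum Lefschetz principle and its extensions to partial flag varieties (Bertram--Ciocan-Fontanine--Kim, Brown). For higher Picard rank, the Mori--Mukai families admit explicit descriptions as blowups of simpler Fano threefolds along smooth curves or as projectivizations of vector bundles, and the LG model can be assembled from those of the simpler base varieties, following the Coates--Corti--Galkin--Golyshev--Kasprzyk construction for nef partitions on toric ambient spaces.

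The polytope condition is handled by producing an explicit small $\mathbb{Q}$-Gorenstein toric degeneration of $X$: for Picard rank $1$ these are the weighted projective degenerations tabulated by Galkin and others, and for higher Picard rank the degenerations arise from combinatorial mutations of the Newton polytope (equivalently, from toric Minkowski decompositions of the factors). The Calabi--Yau compactification, which is the most delicate step, is obtained by closing the fibers of $f$ inside the projective toric variety associated to the spanning fan of the Newton polytope, and then resolving the indeterminacies and singularities of the total space; one checks that the relative canonical class of the resulting family is trivial by a direct computation on the fan, using that the Newton polytope is reflexive or at least the vertices are all lattice distance $1$ from the origin in the relevant chart.

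The main obstacle is the period condition in those non--complete-intersection cases where Givental-type mirror theorems are not directly applicable, notably some rank $1$ threefolds arising as nontrivial sections of homogeneous bundles and several of the more intricate Mori--Mukai families where multiple blow-up centers interact. In such cases one must either reduce to a Fano fibration and use Przyjalkowski's descent of the period computation to a curve-type calculation, or compute the small $J$-function of the ambient space directly and project onto the fundamental class to extract $\hat{G}_X$. Once the period condition is matched family by family, which is precisely what the collective work \cite{Przyjalkowski_2008,Przyjalkowski_2013,ACGK12,CCG+} accomplishes, the theorem follows.
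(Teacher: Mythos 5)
The paper gives no proof of this statement: it is recalled from the literature, with the burden carried entirely by the cited works of Przyjalkowski, Akhtar--Coates--Galkin--Kasprzyk, and Coates--Corti--Galkin--Golyshev--Kasprzyk. Your outline --- case-by-case verification over the Mori--Mukai classification, Givental/quantum-Lefschetz arguments for the period condition, Minkowski/mutation techniques for the toric degeneration, and Przyjalkowski-style fiberwise compactification for the Calabi--Yau condition --- is an accurate summary of exactly what those references do, so it is essentially the same approach as the one the paper relies on.
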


    Moreover, explicit LG models are constructed, for instance, in \cite{CCG+} when $-K_{X}$ is very ample, and \cite{CCGK} in general. When $-K_{X}$ is very ample, we can construct a parametrized toric LG model from parametrized Minkowski polynomials:

    \begin{theorem}
         Let $X$ be a smooth Fano variety such that $-K_{X}$ is very ample. Then $X$ has a parametrized toric LG model.
    \end{theorem}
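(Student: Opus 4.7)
The plan is to promote the known (unparametrized) toric Landau-Ginzburg model of $X$ to a parametrized one by exponentially twisting its coefficients. As input I use the existence theorem cited just above, which produces an explicit Minkowski polynomial $f$ together with a $\mathbb{Q}$-Gorenstein toric degeneration $\mathcal{X}\to Z$ of $X$ to a Gorenstein toric Fano $X_T$ whose fan polytope equals the Newton polytope of $f$.

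First I would apply \cref{construction: degenerating divisors} to a $\mathbb{Z}$-basis $D_1,\dots,D_r$ of $\mathrm{Pic}(X)$, producing (after a finite base change) divisors $D_{i,T}$ on a $\mathbb{Q}$-factorialization $X'_T$ of $X_T$. Since $X'_T$ is toric, each $D_{i,T}$ is $\mathbb{Z}$-linearly equivalent to a combination $\sum_j c_{ij} E_j$ of toric boundary divisors $E_j$. Using this linear data, I would define the parametrized Laurent polynomial
$$
p \;=\; \sum_{v \in \mathrm{supp}(f)} \Big(\prod_{i=1}^{r} a_i^{-c_i(v)}\Big)\,\lambda_v\,x^{v}
\;\in\; \mathbb{Z}[a_1^{\pm 1},\dots,a_r^{\pm 1}][x_1^{\pm 1},\dots,x_n^{\pm 1}],
$$
where $\lambda_v\in\mathbb{Z}$ is the coefficient of $x^v$ in the Minkowski polynomial $f$ and $c_i(v)$ is read off by writing $D_{i,T}$ as an integral combination of the toric divisors of $X'_T$ indexed by the rays containing $v$. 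Setting $a_i=1$ recovers $f$; setting $a_i=e^{-\alpha_i}$ for $D=\sum_i\alpha_iD_i$ multiplies $x^v$ by $\exp(-D\cdot C_v)$ for the associated curve class $C_v$.

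I would then verify the three conditions of \cref{definition: parametrized toric LG model} for each specialization $p_D$. Scaling nonzero coefficients preserves both the Newton polytope and the fibrewise Calabi-Yau compactification, so the polytope and Calabi-Yau conditions are inherited from the unparametrized case. The period identity $\hat{P}_{p_D}(t)=\hat{G}_{X,D}(t)$ is the substantive requirement. Imitating the proof of \cref{prop: toric LG model for toric pairs}, the constant term $c(p_D^d)$ expands as a sum over tuples $(k_v)$ with $\sum_v k_v v=0$, weighted by multinomial factors and by $\exp(-\sum_{i,v}\alpha_ic_i(v)k_v)=\exp(-D\cdot\beta)$, where $\beta$ is the effective curve class on $X'_T$ determined by $(k_v)$; on the Gromov-Witten side $\hat{G}_{X,D}$ factors identically through the curve-class grading as $e^{-D\cdot\beta}$.

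The main obstacle is establishing the period identity away from $D=0$ when $X$ is not itself toric. At $D=0$ it is exactly the Minkowski period condition already used in the unparametrized existence theorem. For general $D$ one must match the $D$-weighting curve class by curve class, which requires that the curve-class grading and the associated Gromov-Witten invariants transport correctly through the smoothing $\mathcal{X}\to Z$ from $X'_T$ to $X$. When $X$ is a complete intersection in a smooth toric Fano, this follows by combining the complete-intersection argument from \cref{section: toric} with Givental's mirror theorem with a divisor input. For the remaining smooth Fano threefolds with $-K_X$ very ample (linear sections of homogeneous spaces, and so on), I would verify the parametrized period identity case by case using the explicit Landau-Ginzburg models available in the literature.
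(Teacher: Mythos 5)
Your proposal follows essentially the same route as the paper: take the Gorenstein toric degeneration $X_T$ with reflexive polytope, degenerate a basis of $\mathrm{Pic}(X)$ via \cref{construction: degenerating divisors} after a finite base change and a $\mathbb{Q}$-factorization, build the parametrized model on the toric side via \cref{prop: toric LG model for toric pairs}, and transport it to $X$ by twisting the Minkowski coefficients. The paper likewise leaves the period verification for the parameter directions to the Minkowski-coefficient machinery of \cite{DHKOP}, so your explicit flagging of that step as the substantive gap, and your fallback to Givental plus case-by-case checks, matches the level of detail actually supplied there.
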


    \begin{proof}
         In this case $X$ has a Gorenstein terminal toric degeneration $X_{T}$ with reflexive polytope $P$. Denote by $\mathcal{X} \rightarrow \mathbb{A}^1$ the toric degeneration from $X$ to $X_{T}$. Then by \cite{Kawamata_deformation_canonical_singularities}, $\mathcal{X}$ has canonical singularities. We take prime divisors $D_{1},\cdots, D_{r}$ which form a basis of the Picard group $\mathrm{Pic}(X)$. Then by \cref{construction: degenerating divisors}, after a finite base change, we can degenerate every $D_{i}$ to an effective divisor $D_{i,T}$ on $X_{T}$. After taking a $\mathbb{Q}$-factorization of $\mathcal{X}$, we can degenerate every $D_{i}$ to an effective $\mathbb{Q}$-Cartier divisor $D'_{i,T}$ on $X'_{T}$, where $X'_{T}$ is a crepant extraction of $X_{T}$. By \cref{prop: toric LG model for toric pairs} we can construct a parametrized toric LG model $\widetilde{f}'$ on $\widetilde{X}_{T}$, which can be restricted to the subspace of $\mathrm{Pic}(\widetilde{X}_{T})$ generated by $\widetilde{D}_{i,T}$. Finally, we can modify the coefficients of $\widetilde{f}$ by the Minkowski data on $P$ to obtain a parametrized toric LG model of $X$ (cf. \cite{DHKOP}, Section 3).
    \end{proof}

\subsection{Threefold extremal contractions}

    We recall some results about threefold extremal contractions, which are used in this section and \cref{section: applications}.

\begin{theorem}[Divisorial contractions for smooth threefolds, cf. \protect{\cite[Theorem 3.3]{Mori1982}}]
    Let $X$ be a smooth projective variety of dimension 3. Let $R$ be an extremal ray on $X$, and let $\phi: X \rightarrow Y$ be the extremal contraction corresponding to $R$. Then one of the following holds:
    \begin{enumerate}
        \item $\phi$ is a conic bundle of one of the following types:
        \begin{longtable}{|C|C|C|C|}
            \hline
            \text{Type} & f & l(R) & C_{R} \\
            \hline
            \multirow{2}{*}{C1} & \text{contains a reducible or non-reduced fibre} & 1 & \text{a reduced and irreducible} \\
            & & & \text{component of a degenerate fibre} \\
            \hline
            \text{C2} & \text{a bundle of a locally free sheaf of rank 2} & 2 & \text{a fibre}\\
            \hline
        \end{longtable}
        \item $\phi$ is a del Pezzo fibration of one of the following types:
        \begin{longtable}{|C|C|C|C|}
            \hline
            \text{Type} & \text{general fibres} & l(R) & C_{R} \\
            \hline
            \text{D1} & \text{smooth del Pezzo surfaces of degree } \leq 6 & 1 & \text{a line in a fibre} \\
            \hline
            \text{D2} & \text{quadric surfaces} & 2 & \text{a line in a fibre} \\
            \hline
            \text{D3} & \mathbb{P}^2 & 3 & \text{a line in a fibre}\\
            \hline
        \end{longtable}
        \item $\phi$ is a divisorial contraction of one of the following types:
        \begin{longtable}{|C|C|C|C|}
            \hline
            \text{Type} & E\text{ and }\phi(E) & l(R) & C_{R} \\
            \hline
            \multirow{2}{*}{E1} & E\text{ is a ruled surface,} & \multirow{2}{*}{$1$} & \multirow{2}{*}{a ruling fibre of $E$} \\
            & \phi(E)\text{ is a smooth curve and }Y \text{is smooth} & & \\
            \hline
            \multirow{2}{*}{E2} & E \simeq \mathbb{P}^2, \mathcal{O}_{E}(E) \simeq \mathcal{O}_{\mathbb{P}^{2}}(-1), & \multirow{2}{*}{$2$} & \multirow{2}{*}{a line in $E$} \\
            & \phi(E)\text{ is a smooth point of }Y & & \\
            \hline
            \multirow{2}{*}{E3} & E \simeq \mathbb{P}^1 \times \mathbb{P}^1, \mathcal{O}_{E}(E) \simeq \mathcal{O}_{\mathbb{P}^1 \times \mathbb{P}^1}(-1,-1), & \multirow{2}{*}{$1$} & \multirow{2}{*}{a line in $E$} \\
            & \phi(E)\text{ is an ordinary double point of }Y & & \\
            \hline
            \multirow{2}{*}{E4} & E\text{ is a quadric cone in }\mathbb{P}^3, \mathcal{O}_{E}(E) \simeq \mathcal{O}_{E}\otimes \mathcal{O}_{\mathbb{P}^3}(-1) & \multirow{2}{*}{$1$} & \multirow{2}{*}{a ruling line of $E$} \\
            & \phi(E)\text{ is a double (cDV)-point of }Y & & \\
            \hline
            \multirow{2}{*}{E5} & E \simeq \mathbb{P}^2, \mathcal{O}_{E}(E) \simeq \mathcal{O}_{\mathbb{P}^{2}}(-2) & \multirow{2}{*}{$1$} & \multirow{2}{*}{a line in $E$} \\
            & \phi(E)\text{ is a quadruple point of }Y & & \\
            \hline            
        \end{longtable}
        \end{enumerate}
        Here $l(R)$ is the \emph{length} of $R$, i.e. 
        $$
        l(R) = \min\{ (-K_{X})\cdot C \mid C \in R \text{ is a rational curve} \},
        $$ 
        and $C_{R}$ is a \emph{primitive generator} of $R$, i.e. $C_{R} \in R$ and $(-K_{X})\cdot C_{R} = l(R)$.
\end{theorem}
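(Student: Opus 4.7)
My plan is to follow Mori's original strategy from \cite{Mori1982}, which combines the Cone Theorem with a careful deformation-theoretic analysis of rational curves in the extremal ray. First, I would apply the Cone Theorem to $X$ to produce, for the given extremal ray $R$, a contraction $\phi : X \to Y$ onto a normal projective variety $Y$ with $\mathrm{Exc}(\phi)$ consisting of curves whose numerical class lies in $R$. The basepoint-free theorem, applied to $-K_X + m H$ for $H$ an ample divisor orthogonal enough to $R$, guarantees that this contraction exists as a genuine morphism. I would then split into four cases according to $\dim Y$.

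Next, I would bound the length $l(R)$. By the bend-and-break lemma applied to a minimal rational curve $C_R$ generating $R$, one has $l(R) \le \dim X + 1 = 4$ on a smooth threefold, and one in fact rules out $l(R) = 4$ unless the contraction is to a point, so at worst $l(R) \in \{1,2,3\}$. In the fibering cases ($\dim Y < 3$), a general fibre $F$ is a smooth variety of dimension $\dim X - \dim Y$, and by adjunction $-K_F = -K_X|_F$ is ample; combined with the length computation this forces $F$ to be (i) a smooth conic $\mathbb{P}^1$ when $\dim Y = 2$, splitting into the ``general fibre irreducible'' and ``some fibre degenerate'' subcases (types C2/C1, with the length being 2 in the bundle case by a jumping argument on the Chern class of $\phi_*\mathcal{O}_X(-K_X)$), and (ii) a smooth del Pezzo surface of degree $K_F^2$ when $\dim Y = 1$; the refined types D1, D2, D3 then correspond to the sub-cases where $-K_F$ is very ample of high degree, a quadric (giving $l(R)=2$), or $\mathbb{P}^2$ (giving $l(R)=3$), with the last two detected via the length bound.

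The hard part will be the divisorial case ($\dim Y = 3$), where $\phi$ contracts a prime divisor $E$ to a subscheme $Z \subset Y$ and one must pin down the five possibilities E1--E5. Here I would proceed as follows: first show, via length considerations and the fact that a curve $C_R \subset E$ deforms freely in $E$, that $(E, -K_X|_E)$ is one of $(\mathbb{P}^1\text{-bundle over a curve})$, $(\mathbb{P}^2, \mathcal{O}(1))$, $(\mathbb{P}^2,\mathcal{O}(2))$, $(\mathbb{P}^1\times\mathbb{P}^1,\mathcal{O}(1,1))$, or a quadric cone. This is done by analyzing the restriction map $R \hookrightarrow \overline{\mathrm{NE}}(E)$ and the polarization, then applying adjunction $K_E = (K_X + E)|_E$ combined with the cohomological vanishing that governs small deformations of smooth rational curves in threefolds. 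The normal bundle $\mathcal{O}_E(E)$ is then computed from $-K_X\cdot C_R = l(R)$ and $K_E\cdot C_R$ using adjunction, which simultaneously yields the singularity type of $\phi(E)\subset Y$: smooth curve (E1), smooth point (E2), cDV double point of type $\mathbf{A}_1$ (E3), general cDV double point (E4), and a $\tfrac{1}{2}(1,1,1)$ quadruple point (E5). The main obstacle, as in Mori's original paper, is excluding pathological exceptional divisors and proving that the normal bundle identification is rigid; this requires the deformation-theoretic input that the Hilbert scheme of deformations of $C_R$ inside $X$ has the expected dimension, together with the classification of surfaces of minimal degree to identify $(E,-K_X|_E)$.

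Finally, the length column and the primitive generator column are read off directly from the constructions above: $l(R) = -K_X \cdot C_R$ is determined by the explicit $C_R$ (ruling fibre, line in $E$, line in a fibre) in each case, and one verifies minimality of $C_R$ by checking that any curve in $R$ has class a positive integer multiple of $[C_R]$ using the fact that $\mathrm{Pic}(E)$ has small rank in each case. This will complete the enumeration.
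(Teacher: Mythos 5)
The paper does not prove this statement: it is quoted verbatim as a recollection of Mori's classification (\cite[Theorem~3.3]{Mori1982}) and used as a black box in Sections~6--7, so there is no in-paper argument to compare yours against. Judged on its own terms, your proposal correctly identifies the skeleton of Mori's actual proof — cone theorem, length bounds via bend-and-break, a case division on $\dim Y$, and deformation theory of the minimal rational curve $C_R$ in the divisorial case — but as written it is a plan rather than a proof, and the places you flag as "the hard part" are precisely where the theorem lives.

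Concretely: (i) the existence of the contraction morphism cannot, in Mori's original framework, be obtained from the basepoint-free theorem (which postdates \cite{Mori1982}); Mori constructs $\phi$ by hand from the structure of the exceptional locus, and if you instead invoke Kawamata--Shokurov you should say so and accept that you are no longer "following Mori's original strategy." (ii) In the del Pezzo case you never explain why degree $7$ is absent from types D1--D3; this requires showing that the general fibre of an extremal contraction has no $\mathrm{Gal}$-orbit of disjoint $(-1)$-curves invariant under monodromy other than in degrees $\le 6$, $8$, $9$ — it does not follow from the length bound alone. (iii) In the divisorial case, the assertion that "the Hilbert scheme of deformations of $C_R$ inside $X$ has the expected dimension" is not something you may assume; what is actually available is the lower bound $\dim_{[C_R]}\mathrm{Hom}(\mathbb{P}^1,X) \ge -K_X\cdot C_R + 3$, and the argument runs by playing this bound against the fact that all deformations of $C_R$ are trapped in $E$. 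The identification of the pairs $(E,\mathcal{O}_E(-E))$ when $\phi(E)$ is a point is the technical core (Mori's analysis of polarized surfaces of small degree, including the non-normal and cone cases), and "the classification of surfaces of minimal degree" does not suffice as stated since $\mathcal{O}_E(-E)$ need not be very ample (type E5 has $\mathcal{O}_E(-E)=\mathcal{O}_{\mathbb{P}^2}(2)$ but the relevant polarization giving degree $1$ is $-K_X|_E=\mathcal{O}_{\mathbb{P}^2}(1)$, and the quadric cone case E4 must be kept). Finally, note that the tabulated statement omits the case $\dim Y=0$ ($X$ Fano of Picard number one), which your length discussion touches on but which you should either include or explicitly set aside.
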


\begin{theorem}[Mori fibre spaces for smooth threefolds, cf. \protect{\cite{Cutkosky1988}}]
    Let $\phi: X \rightarrow Y$ be a Mori fibration from a terminal Gorenstein threefold to a surface. Then $Y$ is smooth and $\phi$ is a flat embedded conical fibration.
\end{theorem}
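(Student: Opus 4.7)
The plan is to build the conic bundle structure from the relative anti-canonical sheaf and then bootstrap smoothness of $Y$ from it. First, since $\phi$ is a Mori fibration onto a normal surface, we have $\phi_{*}\mathcal{O}_{X} = \mathcal{O}_{Y}$, so $Y$ is normal. For a general fibre $F$, the MMP structure forces $F$ to be a smooth rational curve, and by adjunction $-K_{X}\cdot F = -K_{F}\cdot F = 2$. Since $X$ is Gorenstein, $-K_{X}$ is a genuine Cartier divisor, and relative Kawamata--Viehweg vanishing combined with $\phi$-ampleness of $-K_{X}$ gives that $R^{i}\phi_{*}\mathcal{O}_{X}(-K_{X})=0$ for $i>0$ while $\phi_{*}\mathcal{O}_{X}(-K_{X})$ is a rank-$3$ reflexive sheaf on $Y$.

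Next I would show that $\mathcal{E}:=\phi_{*}\mathcal{O}_{X}(-K_{X})$ is locally free and that the natural evaluation map $\phi^{*}\mathcal{E}\to \mathcal{O}_{X}(-K_{X})$ is surjective: both follow from cohomology and base change once one verifies $h^{0}(F_{y},-K_{X}|_{F_{y}})=3$ for \emph{every} closed fibre $F_{y}$, which in turn comes from the fact that $-K_{X}|_{F_{y}}$ has degree $2$ on a (possibly singular, possibly non-reduced) conic and the Euler characteristic is a locally constant function $\chi(F_{y},\mathcal{O}_{F_{y}}(-K_{X}))=3$ by Riemann--Roch (using $p_{a}(F_{y})=0$). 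This yields a closed embedding
\[
X \hookrightarrow \mathbb{P}(\mathcal{E}) \longrightarrow Y
\]
realizing $X$ as a family of plane conics in a $\mathbb{P}^{2}$-bundle, with the fibres of $\phi$ having constant Hilbert polynomial; this gives flatness.

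Finally, to get smoothness of $Y$, I would combine two inputs. On the one hand, since $X$ is $\mathbb{Q}$-factorial and terminal, a standard argument (contracting-type, using that $\phi$ has relative Picard number one and the fibres are at most $1$-dimensional) shows $Y$ is $\mathbb{Q}$-factorial with only rational singularities; in fact by canonical bundle formula/adjunction one obtains that $(Y,\Delta_{Y})$ is klt for some boundary $\Delta_{Y}$. On the other hand, the embedded conic bundle structure gives $X\subset\mathbb{P}(\mathcal{E})$ cut out fibrewise by a section of $\mathrm{Sym}^{2}\mathcal{E}^{\vee}\otimes(\det\mathcal{E})$; the discriminant $\Delta\subset Y$ is a divisor (or empty), and at any point $y\in Y$ where $Y$ were singular, the Gorenstein-terminal hypothesis on $X$ combined with the explicit local equation of a conic (rank $3$, $2$, or $1$) forces a contradiction: a rank-$2$ fibre over a singular point of $Y$ produces a non-terminal or non-Gorenstein singularity on $X$, a rank-$1$ fibre produces a non-Gorenstein singularity on $X$, and a smooth fibre forces smoothness of $X$ over $y$ and hence of $Y$ by flatness. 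Working through the classification of cDV points compatible with a pencil of conics then rules out singular $Y$.

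The routine parts are the cohomology-and-base-change computation giving the $\mathbb{P}^{2}$-bundle embedding and flatness. The main obstacle is the last step: extracting smoothness of $Y$ from the embedded conic bundle in the terminal Gorenstein setting. The delicate point is that a priori the discriminant locus $\Delta\subset Y$ could pass through a singular point $y_{0}\in Y$ with a degenerate fibre, and one must show that the resulting total space $X$ would be either non-Gorenstein or worse than terminal at the preimage; this is where Cutkosky's local classification of cDV-compatible conic degenerations is used, and it is the technical heart of the argument.
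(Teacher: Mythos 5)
First, a point of comparison: the paper does not prove this statement. It is quoted as a known theorem of Cutkosky \cite{Cutkosky1988} and used purely as background for the threefold computations, so there is no in-paper proof to measure your attempt against. Your sketch does follow the standard Mori--Cutkosky strategy (embed $X$ into the $\mathbb{P}^2$-bundle $\mathbb{P}(\phi_{*}\mathcal{O}_{X}(-K_{X}))$ as a family of conics), which is the correct skeleton for the result.

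That said, two steps have genuine gaps. The flatness argument is circular as written: you invoke constancy of $\chi(F_{y},\mathcal{O}_{F_{y}}(-K_{X}))$ and $p_{a}(F_{y})=0$ for \emph{every} closed fibre in order to get cohomology-and-base-change and then flatness, but constancy of these invariants across fibres is normally a \emph{consequence} of flatness, not a route to it. The clean order is: first show all fibres are one-dimensional (no divisor can be contracted to a point, since $\rho(X/Y)=1$ would force such a divisor to be a pullback from $Y$), then apply miracle flatness using that $X$ is Cohen--Macaulay (it is Gorenstein) and $Y$ is a normal surface, hence Cohen--Macaulay; only after that do the fibrewise cohomological computations make sense. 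More seriously, the smoothness of $Y$ --- which is the actual content of the theorem --- is left as an asserted case analysis (``a rank-$2$ fibre over a singular point forces non-terminal or non-Gorenstein,'' etc.) with no indication of why each case fails. This cannot be routine: for terminal but non-Gorenstein $X$ the conclusion is \emph{false}, since the base of a $\mathbb{Q}$-conic bundle can have Du Val singularities of type $\mathbf{A}$, as this paper itself records when citing Mori--Prokhorov. Any correct argument must therefore use the Gorenstein hypothesis at a specific, identifiable point, and your sketch does not locate it. Finally, the claim that a smooth conic fibre over $y$ ``forces smoothness of $X$ over $y$ and hence of $Y$ by flatness'' is backwards: terminal Gorenstein threefold singularities are isolated cDV points and could a priori lie on a fibre that is still a smooth conic as a scheme, so smoothness of the fibre does not by itself yield smoothness of the total space, let alone of the base.
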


\subsection{Divisorial contractions}

    Now we prove the case of divisorial contractions of \cref{conj: mirror of extremal contractions} in dimension 3.

    \begin{theorem}\label{theorem: threefold divisorial contraction and LG models}
        Let $X$ be a smooth Fano threefold with a parametrized toric Landau-Ginzburg model. Let $g: X \rightarrow Y$ be a $K_{X}$-negative divisorial contraction. Then the toric LG models of $X$ degenerate to the toric LG models of $Y$. In particular, $Y$ has a toric LG model.
    \end{theorem}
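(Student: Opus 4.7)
My plan is to mirror the structure of the surface proof (Theorem~\ref{theorem: surface MMP and LG models}), adapting to dimension three. Starting from a parametrized toric LG model $\widetilde{f}$ of $X$, I specialize along the ray $\{tE : t \ge 0\} \subset \mathrm{Pic}(X)\otimes\mathbb{R}$, producing the family $f_t := \widetilde{f}_{tE}$ of toric LG models for $(X,tE)$, and set $f' := \lim_{t \to +\infty} f_t$. The first task is to check that $f'$ exists as a Laurent polynomial: the coefficients of $\widetilde{f}$ are monomials $e^{-\alpha}$ in the complexified Picard variables, and since $E$ is effective and $\widetilde{f}$ is built from a toric degeneration via Minkowski data (as in the construction preceding this theorem), every coefficient has non-negative $E$-exponent. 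Hence $f'$ is precisely the sum of those monomials in $\widetilde{f}$ whose coefficient has zero $E$-exponent.

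Next I verify the three conditions for $f'$ to be a toric LG model of $Y$. The period condition follows by writing
$$
\hat{P}_{f_t}(s) = 1 + \sum_{\beta \in K} (-K_X \cdot \beta)!\,\langle \tau_{-K_X\cdot\beta - 2}\mathbf{1}\rangle_\beta^X \, e^{-t E \cdot \beta}\, s^{-K_X \cdot \beta}
$$
and letting $t \to +\infty$, which retains only the terms with $E \cdot \beta = 0$. Since $g$ is a divisorial contraction, $g_* \colon H_2(X;\mathbb{Z}) \to H_2(Y;\mathbb{Z})$ is surjective with kernel $\mathbb{Z}[C_R]$, where $C_R$ is the extremal curve, and $E \cdot C_R < 0$. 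Therefore the composition $\{\beta : E \cdot \beta = 0\} \hookrightarrow H_2(X;\mathbb{Z}) \twoheadrightarrow H_2(Y;\mathbb{Z})$ is a bijection: every class on $Y$ has a unique $E$-perpendicular lift to $X$. It remains to identify $\langle \tau_{-K_X \cdot \beta - 2}\mathbf{1}\rangle_\beta^X$ with $\langle \tau_{-K_Y \cdot g_*\beta - 2}\mathbf{1}\rangle_{g_*\beta}^Y$ for such $\beta$. For types E1 and E2 (where $Y$ is smooth) this is the classical comparison of genus-zero GW invariants under blow-up of a smooth subvariety. For types E3, E4, E5 (where $Y$ has an ordinary double point, a cDV double point, or a $\tfrac{1}{2}(1,1,1)$-singularity) I would invoke the $\mathbb{Q}$-Gorenstein smoothability of $Y$ together with deformation invariance of the (orbifold) regularized quantum period; in each case $X$ and a smoothing $Y_s$ are connected through a common toric degeneration in the parametrized family, and the identity of GW invariants for perpendicular classes falls out.

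The polytope and Calabi--Yau conditions come essentially for free: the Newton polytope of $f'$ is the face of the Newton polytope of $\widetilde{f}$ on which the $E$-linear functional vanishes, and this face is the fan polytope of a toric degeneration of $Y$ obtained by contracting the torus-invariant stratum corresponding to $E$. The fibrewise compactification of $f'$ over $\mathbb{A}^1$ arises as the flat limit of the compactifications of $f_t$ (compare with the toric divisorial-contraction degeneration discussed in Section~4.1), which is Calabi--Yau by the crepant nature of the pullback computation $\widetilde{g}^{\vee*}K_{\widetilde{X}^\vee} = K_{\widetilde{Y}^\vee} - E^\vee$ in the dual polytope picture.

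The hardest step will be the GW comparison in types E3--E5. While the smoothing argument works in principle, making deformation invariance rigorous across the singular central fibre requires either orbifold GW theory (for the cyclic quotient case E5) or a careful virtual-class analysis of $\mathbb{Q}$-Gorenstein deformations of Gorenstein terminal singularities (for E3 and E4). I expect that the hypothesis of a \emph{parametrized} toric LG model is precisely what trivializes this obstacle, because the family $\widetilde{f}$ already interpolates the quantum period across the relevant deformation of $Y$, reducing the equality of GW invariants to an equality of limits of periods in the parametrized family.
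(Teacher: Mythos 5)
Your overall architecture matches the paper's: specialize the parametrized model along $tE$, identify the limiting period as the sum over classes with $E\cdot\beta=0$, match classes on $X$ perpendicular to $E$ with classes on $Y$, and then observe that the polytope and Calabi--Yau conditions follow from the face structure. However, there are two genuine gaps. First, your claim that \emph{every} class on $Y$ has a unique $E$-perpendicular lift to $X$ fails for contractions of type E5: there $E\cdot C_R=-2$, so a perpendicular lift of $\beta$ exists only when $K_Y\cdot\beta$ is an integer. The paper salvages the correspondence by noting that $\langle \tau_{-K_Y\cdot\beta-2}\mathbf{1}\rangle_\beta$ vanishes whenever $K_Y\cdot\beta\notin\mathbb{Z}$, so the bijection holds between the \emph{nonzero} terms of the two periods; your argument needs this observation or the period comparison breaks down for Gorenstein-index-2 targets.

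Second, and more seriously, the central step --- the equality $\langle \tau_{-K_X\cdot\widetilde\beta-2}\mathbf{1}\rangle^X_{\widetilde\beta}=\langle \tau_{-K_Y\cdot\beta-2}\mathbf{1}\rangle^Y_\beta$ --- is not established by your proposal. For E1/E2 the behavior of genus-zero Gromov--Witten invariants under blow-up is not simply a citable classical fact for the descendant invariants in question, and for E3--E5 your appeal to ``deformation invariance of the orbifold quantum period'' gives no mechanism relating invariants of $X$ to invariants of a smoothing of $Y$, since $X$ and that smoothing are not $\mathbb{Q}$-Gorenstein deformation equivalent. Your closing suggestion that the parametrized family ``already interpolates the quantum period of $Y$'' is circular: the parametrized LG model is defined only by the quantum periods of the pairs $(X,D)$, and the assertion that its limit computes the quantum period of $Y$ is exactly what is to be proved. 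The paper's actual mechanism is an explicit degeneration $\mathcal{W}\to\mathbb{A}^1$ whose general fibre is $Y$ (or a smoothing of $Y$ in types E3/E4) and whose special fibre is $X\cup_E W$ for an explicit $W$ depending on the contraction type (e.g.\ $\mathbb{P}^3$ for E2, $\mathbb{P}(1,1,1,2)$ for E5), combined with the degeneration formula in Gromov--Witten theory and the fact that E4 contractions deform to E3 contractions; the relative invariant on $(X,E)$ in the unique contributing class is then identified with the absolute invariant on $X$. Without this (or an equivalent) input, the period condition is not verified.
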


    \begin{proof}
        Let $E$ be the exceptional divisor of the divisorial contraction and $f$ be a toric LG model of $X$. Let $f_{t}$ be a degeneration of $f$ in the direction of $E$, i.e. $f_{t}$ is a toric LG model of $(X,tE)$. Let $f' = \lim\limits_{t \rightarrow +\infty} f_{t}$. We claim that $f'$ is a toric LG model of $Y$. Indeed, the period given by $f'$ is
        $$
        \hat{G}'_{X}(t) = 1 + \sum\limits_{\substack{\beta \in H_{2}(X;\mathbb{Z}) \\ E \cdot \beta = 0}}(-K_{X} \cdot \beta)!\langle \tau_{-K_{X}\cdot\beta -2}\mathbf{1} \rangle_{\beta}^{X} \cdot t^{-K_{X} \cdot \beta}.
        $$
        On the other side, the quantum period of $Y$ is given by
        $$
        \hat{G}_{Y}(t) = 1 + \sum\limits_{\beta \in H_{2}(Y;\mathbb{Z})}(-K_{Y} \cdot \beta)!\langle \tau_{-K_{Y}\cdot\beta -2}\mathbf{1} \rangle_{\beta}^{Y} \cdot t^{-K_{Y} \cdot \beta}.
        $$
        Since $X$ and $Y$ are terminal Fano varieties, the singular homology $H_{2}(X;\mathbb{Z})$ and $H_{2}(Y;\mathbb{Z})$ are generated by algebraic 1-cycles. Hence the natural morphism $H_{2}(X;\mathbb{Z}) \rightarrow H_{2}(Y;\mathbb{Z})$ is surjective by Tsen's theorem. Let $l \in H_{2}(X;\mathbb{Z})$ be the primitive class of the extremal ray of the divisorial contraction. If the divisorial contraction is of type E1 or E2, then $Y$ is smooth and $E \cdot l = -1$. Hence for every class $\beta \in H_{2}(Y;\mathbb{Z})$, we can find a unique class $\widetilde{\beta} \in H_{2}(X;\mathbb{Z})$ such that $E \cdot \beta = 0$. If the divisorial contraction is of type E3, E4, or E5, then $Y$ is terminal and $K_{X} \cdot l = -1$. For every class $\beta \in H_{2}(Y;\mathbb{Z})$, let $\widetilde{\beta} \in H_{2}(X;\mathbb{Z})$ be a lifting of $\beta$ in $H_{2}(X;\mathbb{Z})$. Write
        $$
        g^{*}(K_{Y}) = K_{X} - aE
        $$
        where $a > 0$. Then we have
        \begin{align*}
            K_{X} \cdot \widetilde{\beta} &= K_{Y} \cdot \beta + a E \cdot \widetilde{\beta} \\
            K_{X} \cdot l &= a E \cdot l = -1.
        \end{align*}
        Hence $E \cdot (\widetilde{\beta} + ml) = 0$ for some integer $m$ if and only if $K_{Y} \cdot \beta$ is an integer. Notice that if $K_{Y} \cdot \beta$ is not an integer then $\langle \tau_{-K_{Y}\cdot\beta -2}\mathbf{1} \rangle_{\beta} = 0$. Hence there is a 1-1 correspondence between non-zero terms in $\hat{G}_{Y}(t)$ and non-zero terms in $\hat{G}'_{X}(t)$. Let $\widetilde{\beta} \in H_{2}(X;\mathbb{Z})$ and $\beta \in H_{2}(Y;\mathbb{Z})$ be a corresponding pair such that $\langle \tau_{-K_{Y}\cdot \beta -2}\mathbf{1} \rangle_{\beta} \neq 0$. Then the curve class $\beta$ is a moving class of $Y$. One can easily check that $D \cdot \widetilde{\beta} \geq 0$ for any effective $\mathbb{R}$-divisor $D$ on $X$. Hence $\widetilde{\beta}$ is contained in the cone of moving curves of $X$. In particular, we can find a curve $C \in \widetilde{\beta}$ which doesn't intersect $E$. Consider the Cartesian product
        $$
        \begin{tikzcd}
            \overline{M}_{0,1}(X,\beta) \arrow[d,"ev_{1}"] \arrow[r,"g"] & \overline{M}_{0,1}(Y,\beta) \arrow[d,"ev_{1}"] \\
            X \arrow[r,"g"] & Y
        \end{tikzcd}
        $$
        Then $g^{*}\mathbf{1}_{Y} = \mathbf{1}_{X}$. Now we can follow the strategy of \cite{Conifold_transition}, Proposition 2.1 to show that $\langle \tau_{-K_{X}\cdot\widetilde{\beta} -2}\mathbf{1} \rangle_{\widetilde{\beta}}^{X} = \langle \tau_{-K_{Y}\cdot \beta -2}\mathbf{1} \rangle_{\beta}^{Y}$. More precisely, we can construct a deformation $\mathcal{W} \rightarrow \mathbb{A}^1$ satisfying the following conditions:
        \begin{enumerate}
            \item The general fibre is isomorphic to $Y$ if the divisorial contraction is of type E1, E2 or E5, and is isomorphic to a smoothing of $Y$ if the divisorial contraction is of type E3 or E4.
            \item The special fibre is isomorphic to $X \cup_{E} W$. Here $W$ is given by the following table:
            \begin{center}
            \begin{longtable}{|C|C|}
            \hline
            \text{Type} & W \\
            \hline
            \text{E1} & \mathbb{P}(\mathcal{O}_{C} \oplus \mathcal{N}_{C/Y}) \\
            \hline 
            \text{E2} & \mathbb{P}^3 \\
            \hline
            \text{E3} & \text{smooth quadric threefold} \\
            \hline
            \text{E4} & \text{quadric cone} \\
            \hline
            \text{E5} & \mathbb{P}(1,1,1,2) \\
            \hline
            \end{longtable}
            \end{center}
        \end{enumerate}
        For smooth Fano threefolds, divisorial contractions of type E4 can be deformed to divisorial contractions of type E3. Hence we can assume that the intersection is transversal and apply the degeneration formula to get
        $$
        \langle \tau_{-K_{Y}\cdot \beta -2}\mathbf{1} \rangle_{\beta}^{Y} = \sum\limits_{\widetilde{\gamma}} \langle \tau_{-K_{X}\cdot\widetilde{\gamma} -2}\mathbf{1} \mid \emptyset \rangle_{\widetilde{\gamma}}^{(X,E)}
        $$
        where the sum is taken over classes $\widetilde{\gamma}$ on 
        $X \cup_{E} W$ such that
        $$
        g_{*}\widetilde{\gamma} = \beta, \qquad E \cdot \widetilde{\gamma} = 0, \qquad \widetilde{\gamma}_{W} = 0.
        $$
        One can immediately see that such class $\widetilde{\gamma}$ is unique and is exactly the class $\widetilde{\beta}$. Hence the right-hand side equals to $\langle \tau_{-K_{X}\cdot\widetilde{\beta} -2}\mathbf{1} \rangle_{\widetilde{\beta}}^{X}$. This is the period condition. It follows that the corresponding degeneration of the Calabi-Yau compactification of $f$ is irreducible. Hence the degeneration is a Calabi-Yau compactification of $f'$. Finally, the toric condition is trivial.
    \end{proof}

\subsection{Mori fibre spaces}
    Now we prove the case of Mori fibre space of \cref{conj: mirror of extremal contractions} in dimension 3.

    \begin{theorem}\label{theorem: threefold Mori fibre spaces and LG models}
        Let $X$ be a smooth Fano threefold with a parametrized toric Landau-Ginzburg model. Let $h: X \rightarrow Z$ be a $K_{X}$-negative Mori fibre space and $F$ be a general fibre. Then the toric LG models of $X$ degenerate to the toric LG models of $F$. In particular, $F$ has a toric LG model.
    \end{theorem}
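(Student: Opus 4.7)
The plan is to mimic the structure of the proof of \cref{theorem: threefold divisorial contraction and LG models}, replacing the exceptional divisor $E$ by the pullback $h^{*}H$ of an ample divisor $H$ on $Z$. Starting from a parametrized toric LG model $\widetilde{f}$ of $X$, I would form the family $f_{t}=\widetilde{f}_{D+th^{*}H}$ (each a toric LG model of $(X,D+th^{*}H)$ by definition of parametrization) and set $f'=\lim\limits_{t\to+\infty}f_{t}$. The claim is that $f'$ is a toric LG model of $F$.

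For the period condition, passing to the limit extracts exactly the terms indexed by classes $\beta\in H_{2}(X;\mathbb{Z})$ with $h^{*}H\cdot\beta=0$, i.e., classes of curves contracted by $h$, equivalently curves supported in fibres of $h$. Via the natural map $H_{2}(F;\mathbb{Z})\to H_{2}(X;\mathbb{Z})$ and the adjunction $K_{X}|_{F}=K_{F}$, each such class is represented by a class on $F$ with $-K_{X}\cdot\beta=-K_{F}\cdot\beta$. To conclude, I need the identity
$$
\langle \tau_{-K_{X}\cdot\beta-2}\mathbf{1}_{X}\rangle_{\beta}^{X}
\;=\;\langle \tau_{-K_{F}\cdot\beta-2}\mathbf{1}_{F}\rangle_{\beta}^{F}
$$
for every fibre class $\beta$, after which $\hat{P}_{f'}(t)=\hat{G}_{F}(t)$ follows termwise.

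The main obstacle is precisely this comparison of one-pointed Gromov--Witten invariants. Geometrically it is obvious: since $\mathbf{1}_{X}$ is Poincar\'e dual to a point, the left-hand side virtually counts rational curves in a fibre class through a general point $p\in X$, and any such curve lies in $h^{-1}(h(p))\cong F$. To upgrade this to an equality of virtual invariants I would split into cases using the Mori classification in dimension $3$: for the conic-bundle types C1, C2 the fibre is $\mathbb{P}^{1}$ and the only non-trivial fibre class is $[F]$, which is handled directly; for the del Pezzo types D1, D2, D3, I would run a degeneration to the normal cone of a general fibre (so the central fibre is $X\cup_{F}\mathbb{P}(\mathcal{O}_{F}\oplus\mathcal{N}_{F/X})$) and invoke the degeneration formula exactly as in the proof of \cref{theorem: threefold divisorial contraction and LG models}. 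Because $\beta$ is a fibre class and $\mathbf{1}_{X}$ restricts to a point class on $F$, the only surviving term corresponds to a class $\widetilde{\gamma}$ supported entirely on the $F$-component, yielding $\langle\tau_{-K_{F}\cdot\beta-2}\mathbf{1}_{F}\rangle_{\beta}^{F}$ on the right.

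The Calabi--Yau compactification and polytope conditions follow from the fibring degeneration structure already developed in the toric setting: the degeneration of Newton polytopes induced by $f_{t}\rightsquigarrow f'$ is the fibring degeneration described in \cref{degeneration of compactified LG models for MFS}, so its Newton polytope is the fan polytope of a toric degeneration of $F$, and its Calabi--Yau compactification arises as the generic fibre over $Z$ of the Calabi--Yau compactification of $f$, which is smooth for general choice of $D$. Combining these three checks shows $f'$ satisfies all the requirements of a toric LG model of $F$, completing the proof.
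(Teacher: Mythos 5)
Your skeleton coincides with the paper's: degenerate a parametrized toric LG model in the direction of $h^{*}H$, identify the limiting period as the sum over classes $\beta$ with $h^{*}H\cdot\beta=0$, and reduce everything to the identity $\langle\tau_{-K_{X}\cdot\beta-2}\mathbf{1}\rangle^{X}_{\beta}=\langle\tau_{-K_{F}\cdot\beta-2}\mathbf{1}\rangle^{F}_{\beta}$ for fibre classes. Where you diverge is in how that identity is established, and your route does not close. The paper argues directly: since $\beta$ is a fibre class, every stable map factors through a fibre, so $\overline{M}_{0,1}(X,\beta)\to Z$ has general fibre $\overline{M}_{0,1}(F,\beta)$, and $[\overline{M}_{0,1}(F,\beta)]^{virt}=i^{!}[\overline{M}_{0,1}(X,\beta)]^{virt}$ for the inclusion $i$ of a general point of $Z$; the invariants then match by the projection formula. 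Your degeneration to the normal cone of $F$ produces a central fibre $X\cup_{F}\mathbb{P}(\mathcal{O}_{F}\oplus\mathcal{N}_{F/X})$ in which the new component is $F\times\mathbb{P}^{1}$ (the normal bundle of a fibre over a curve is trivial), \emph{not} $F$. The degeneration formula therefore leaves you with a relative invariant of $(F\times\mathbb{P}^{1},F\times\{\infty\})$ in a class of the form $(\beta,0)$, and identifying that with the absolute invariant $\langle\cdots\rangle^{F}_{\beta}$ is precisely the fibre-class comparison you set out to prove, now for the trivial fibration $F\times\mathbb{P}^{1}\to\mathbb{P}^{1}$; so the argument is circular unless you insert the direct moduli-fibration argument at that point, in which case the normal-cone detour is unnecessary. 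Two further weak points: the degeneration formula with a descendant insertion $\tau_{a}\mathbf{1}$ requires justification (descendant classes do not distribute naively across the components), and the conic-bundle case is not genuinely easier, since for multiple-cover classes $d[F]$ with $d\geq2$ you face the same virtual-class comparison and cannot simply "handle it directly." You also do not address injectivity of $H_{2}(F;\mathbb{Z})\to H_{2}(X;\mathbb{Z})$, which the paper needs (and proves via Ehresmann's lemma) to match the two periods term by term.

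There is a second gap in your treatment of the Calabi--Yau and polytope conditions: you invoke the fibring-degeneration structure of \cref{degeneration of compactified LG models for MFS}, but that statement is established only for smooth \emph{toric} Fano varieties and their Hori--Vafa mirrors, whereas $X$ here is a general smooth Fano threefold; the MFS structure need not descend to the toric degeneration $X_{T}$, so the compactified picture does not transfer. The paper avoids this by passing to the maximal degeneration $f''$ in the direction of $D$, noting it has the same period and satisfies the toric condition, and observing that its Newton polytope corresponds to a terminal weak Fano variety of dimension at most $2$, which is automatically smooth, whence the Calabi--Yau condition holds.
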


\begin{proof}
    Let $D = h^{*}A$ for some very ample divisor $A$ on $Z$. Let $f$ be a toric LG model of $X$. Let $f_{t}$ be a degeneration of $f$ in the direction of $D$, i.e. $f_{t}$ is a toric LG model of $(X,tD)$. Let $f' = \lim\limits_{t \rightarrow +\infty} f_{t}$. We claim that $f'$ is a toric LG model of $F$. Indeed, the regularized period of $f'$ is given by
    $$
    \hat{P}_{f'}(t) = 1 + \sum\limits_{\substack{\beta \in H_{2}(X;\mathbb{Z}) \\ D \cdot \beta = 0}}(-K_{X} \cdot \beta)!\langle \tau_{-K_{X}\cdot\beta -2}\mathbf{1} \rangle_{\beta}^{X} \cdot t^{-K_{X} \cdot \beta}.
    $$
    On the other side, the quantum period of $F$ is given by
    $$
    \hat{G}_{F}(t) = 1 + \sum\limits_{\beta \in H_{2}(F;\mathbb{Z})}(-K_{F} \cdot \beta)!\langle \tau_{-K_{F}\cdot\beta -2}\mathbf{1} \rangle_{\beta}^{F} \cdot t^{-K_{F} \cdot \beta}.
    $$
    Since $X$ and $F$ are terminal Fano varieties, the singular homology $H_{2}(X;\mathbb{Z})$ and $H_{2}(F;\mathbb{Z})$ are generated by algebraic 1-cycles. Let $l \in H_{2}(F;\mathbb{Z})$ be the class given by $D \cap F$. We claim that the natural morphism $H_{2}(F;\mathbb{Z}) \rightarrow H_{2}(X;\mathbb{Z})$ is injective. Indeed, suppose there are two effective curves $C_{1}$ and $C_{2}$ on $F$ such that $[C_{1} - C_{2}] = 0 \in H_{2}(X;\mathbb{Z})$. Since $F$ is a general fibre, $X$ is smooth near $F$. By Ehresmann's lemma, $X$ is homeomorphic to a product of $F$ and $Z$ in a analytic neighborhood of $F$. Hence we conclude that $[C_{1} - C_{2}] = 0 \in H_{2}(F;\mathbb{Z})$. Conversely, if $\widetilde{\beta} \in H_{2}(X;\mathbb{Z})$ is a curve class such that $D \cdot \widetilde{\beta} = 0$ and the evaluation map $ev_{1}: \overline{M}_{0,1}(X,\beta) \rightarrow X$ is surjective, then $\widetilde{\beta}$ must have a representative lying in $F$. Hence there is a 1-1 correspondence between moving classes $\beta \in H_{2}(F;\mathbb{Z})$ and moving classes $\widetilde{\beta} \in H_{2}(X;\mathbb{Z})$ such that $D \cdot \widetilde{\beta} = 0$. Since $\beta$ is a fibre class, for any stable map $\phi: C \rightarrow X$ in the class $\beta$, $\phi$ factors through the fibre containing $F$. Hence we have a morphism $\overline{M}_{0,1}(X,\beta) \rightarrow Z$ with general fibre $\overline{M}_{0,1}(F,\beta)$. Consider the inclusion $i: P \rightarrow Z$ of a closed point and the Cartesian product
    $$
    \begin{tikzcd}
        \overline{M}_{0,1}(F,\beta) \arrow[d,"ev_{1}"] \arrow[r,"i"] & \overline{M}_{0,1}(X,\beta) \arrow[d,"ev_{1}"] \\
        F \arrow[r,"i"] & X
    \end{tikzcd}
    $$
    For genus $0$ stable maps, the obstruction bundle of $\overline{M}_{0,1}(Z,0)$ is trivial. Hence $[\overline{M}_{0,1}(F,\beta)]^{virt} = i^{!}[\overline{M}_{0,1}(X,\beta)]^{virt}$. We have 
    \begin{align*}
        \langle \tau_{-K_{X}\cdot\beta -2}\mathbf{1} \rangle_{\beta} & = \psi^{-K_{X}\cdot\beta -2}\cdot ev_{1}^{*} \mathbf{1}_{X} \cdot [\overline{M}_{0,1}(X,\beta)]^{virt} \\
         & = \psi^{-K_{X}\cdot\beta -2}\cdot ev_{1}^{*} \mathbf{1}_{F} \cdot [\overline{M}_{0,1}(F,\beta)]^{virt} \\
         & = \langle \tau_{-K_{F}\cdot\beta -2}\mathbf{1} \rangle_{\beta}
    \end{align*}
    This is the period condition. Let $f''$ be the maximal degeneration of $f$ in the direction of $D$. Then $f''$ and $f'$ have the same period, and $f''$ satisfies the toric condition. Finally, the Newton polytope of $f''$ corresponds to a terminal weak Fano variety of dimension $\leq 2$, which is smooth. Hence the Calabi-Yau condition holds.
\end{proof}

\section{Application: Threefold elementary syzygies}\label{section: applications}

    As an application of the results in \cref{section: threefold}, we want to use this method to compute some elementary syzygies for smooth Fano threefolds. In the following discussion, the families are numbered as in the classification in \cite{MoriMukai}. The geography of smooth Fano varieties is studied in \cite{Matsuki1995}. The structure of the elementary relations can be directly read from the geography. In particular, we are interested in the cases where singular Fano threefolds or del Pezzo fibrations appear. In such cases, we can obtain the toric LG models of the singular Fano threefolds and the general fibre of the del Pezzo fibrations using the following algorithm:

\begin{algorithm}
    Let $X$ be a smooth Fano threefold of Picard number $\geq 2$. Let $f$ be a toric LG model of $X$ and $X_{T}$ be the corresponding toric degeneration of $X$. Then we compute the elementary syzygy of $X$ as follows:

    \textbf{Step 1:} Obtain the cone of effective divisors $\mathrm{Eff}_{\mathbb{R}}(X)$.

    \textbf{Step 2:} Take the extremal rays of $\mathrm{Eff}_{\mathbb{R}}(X)$ and take the primitive vectors $v_{1},v_{2},\cdots, v_{s}$. Then any effective divisor $D$ can be written as non-negative linear combination of $v_{1},v_{2},\cdots, v_{s}$, and the representation is unique modulo the linear dependence relations among these generators.

    \textbf{Step 3:} Construct the parametrized toric LG model $\widetilde{f}$ of $X$. Represent $\widetilde{f}$ in terms of the coordinates $a_{1}, \cdots, a_{s}$.

    \textbf{Step 4:} Now for every divisor $D$ in the boundary of $\mathrm{Eff}_{\mathbb{R}}(X)$, consider the Landau-Ginzburg model for the divisor $tD$ and let $t \to +\infty$.
\end{algorithm}

\begin{convention}
    In this section, we ignore the constant terms of the Laurent polynomials when we talk about toric Landau-Ginzburg models. In other words, we say that a Laurent polynomial $f$ is a toric Landau-Ginzburg model of $X$ if $f - c(f)$ is a toric Landau-Ginzburg model of $X$.
\end{convention}
    
\subsection{Picard number 1}
    First, we recall the toric LG model for smooth Fano threefolds of Picard number 1.

    \begin{statement}[Picard number 1, cf. \cite{Przyjalkowski_2018}, Table 1]
        The following table presents the toric LG model of smooth Fano threefolds of Picard number 1.

\begin{center}
\begin{longtable}{|C|C|C|C|}
\caption{Some toric LG models for threefolds of Picard number 1}
\label{table: toric LG model picard number 1} \\
    \hline
    \text{No.} & \text{Notation}  & \text{Description} & \text{A toric LG model} \\
    \hline
    1.1 & V_{2} & \text{A hypersurface of degree 6 in } \mathbb{P}(1,1,1,1,3) & \frac{(x+y+z+1)^{6}}{xyz} \\
    \hline
    1.2 & V_{4} & \text{General member a quartic hypersurface in $\mathbb{P}^4$} & \frac{(x+y+z+1)^{4}}{xyz} \\
    \hline
    \multirow{2}{*}{1.3} & \multirow{2}{*}{$V_{6}$} & \text{A complete intersection} & \multirow{2}{*}{$\frac{(x+1)^{2}(y+z+1)^{3}}{xyz}$} \\
    & & \text{of a cubic and a quadric in } \mathbb{P}^{5} & \\
    \hline
    1.4 & V_{8} & \text{A complete intersection of 3 quadrics in } \mathbb{P}^{6} & \frac{(x+1)^{2}(y+1)^{2}(z+1)^2}{xyz} \\
    \hline
    1.5 & V_{10} & \text{Gushel–Mukai threefold} & \frac{(xy+yz+xz+x+y+z+1)^{2}}{xyz} \\
    \hline
    \multirow{3}{*}{1.6} & \multirow{3}{*}{$V_{12}$} & \text{Section of half-spinor embedding} & \multirow{3}{*}{$\frac{(x+z+1)(x+y+z+1)(z+1)(y+z)}{xyz}$} \\
    & & \text{of a connected component of $OGr_{+}(5,10)$} & \\
    & & \text{by codimension 7 subspace} & \\
    \hline
    \multirow{2}{*}{1.7} & \multirow{2}{*}{$V_{14}$} & \text{Section of Plücker embedding of $Gr(2,6)$} & \frac{(x+y+z+1)^{2}}{x} \\
    & & \text{by codimension 5 subspace} &  + \frac{(x+y+z+1)(y+z+1)(z+1)^{2}}{xyz} \\
    \hline
    \multirow{2}{*}{1.8} & \multirow{2}{*}{$V_{16}$} & \text{Section of Plücker embedding of $SGr(3,6)$} & \multirow{2}{*}{$\frac{(x+y+z+1)(x+1)(y+1)(z+1)}{xyz}$} \\
    & & \text{ by codimension 3 subspace} & \\
    \hline
    \multirow{2}{*}{1.9} & \multirow{2}{*}{$V_{18}$} & \text{Section of the adjoint $G_2$-Grassmannian} & \multirow{2}{*}{$\frac{(x+y+z)(x+y+z+xz+yz+xy+xyz)}{xyz}$} \\
    & & \text{$G_{2}Gr(2,7)$ by codimension 2 subspace} & \\
    \hline
    1.10 & V_{22} & \text{Zero locus of $(\bigwedge^{2} \mathcal{U}^{\vee})^{\oplus 3}$ on $Gr(3,7)$} & \frac{(z+1)(x+y+1)(xy+z)}{xyz} + \frac{xy}{z} + z + 3 \\
    \hline
    1.11 & B_{1} & \text{A hypersurface of degree 6 in } \mathbb{P}(1,1,1,2,3) & \frac{(x+y+1)^{6}}{xy^{2}z} + z \\
    \hline
    1.12 & B_{2} & \text{A hypersurface of degree 4 in } \mathbb{P}(1,1,1,1,2) & \frac{(x+y+1)^{4}}{xyz} + z \\
    \hline
    1.13 & B_{3} & \text{A cubic hypersurface in } \mathbb{P}^{4} & \frac{(x+y+1)^{3}}{xyz} + z \\
    \hline
    1.14 & B_{4} & \text{The intersection of two quadrics in } \mathbb{P}^{5} & \frac{(x+1)^{2}(y+1)^{2}}{xyz} + z \\
    \hline
    1.15 & B_{5} & \text{A linear section of }Gr(2, 5)\text{ of
codimension 3} & x+y+z + \frac{1}{x} + \frac{1}{y} + \frac{1}{z} + \frac{1}{xyz} \\
    \hline
    1.16 & Q & \text{A quadric hypersurface in } \mathbb{P}^4 & \frac{(x+1)^2}{xyz}+y+z \\
    \hline
    1.17 & \mathbb{P}^3 & \text{The projective space} & x+y+z+\frac{1}{xyz} \\
    \hline
\end{longtable}
\end{center}
\end{statement}

\begin{example}[Some mutations]
    Consider the Landau-Ginzburg model of $Q$ given by 
    $$
    f = \frac{(x+1)^2}{xyz}+y+z.
    $$
    Consider the mutation given by weight $w=(0,1,1)$ and $a=x+1$. Then we get
    $$
    f' = \frac{1}{xyz} + (y+z)(x+1) = xy+xz+y+z+\frac{1}{xyz}.
    $$
    After a change of variable $x'=\frac{1}{xyz} ,y'=xy, z'=xz$, we get
    $$
    f' = x'+y'+z'+\frac{1}{x'y'}+\frac{1}{x'z'}.
    $$
\end{example}

\subsection{Picard number 2}

    Now we study smooth Fano threefolds of Picard number 2. The elementary syzygies associated to central models of rank 2 are Sarkisov links. In this subsection, we compute them in multiple ways including the LG models method, while some of them can be directly derived from \cite{Matsuki1995}. When we use toric LG models to compute, we put the parametrized toric LG models at the beginning of the computation, otherwise we put the toric LG models at the end.

\begin{statement}[No. 2.1]
    In this case $X$ is a blow-up of $B_{1}$ in the intersection of two divisors from $|-\frac{1}{2}K_{B_{1}}|$. A toric LG model of $X$ is given by
    $$
    f = \frac{(x+y+1)^{6}(z+1)}{xy^{2}z} + z.
    $$
    A parametrized LG model of $X$ is given by
    $$
    \widetilde{f} = \frac{(x+y+1)^{6}(a_{2}z+1)}{xy^{2}z} + a_{1}z.
    $$
    Hence the associated Sarkisov link is
    $$
    \begin{tikzcd}
     B_{1} \arrow[d] & X \arrow[l] \arrow[equal,r] \arrow[dd] & X \arrow[d] \\
     pt \arrow[equal,dr] & & \mathbb{P}^1 \arrow[dl] \\
     & pt &
   \end{tikzcd}    
    $$
    The general fibre $F$ of $X \rightarrow \mathbb{P}^1$ can be computed by setting $a_{1}=0$. The resulting toric LG model is
    $$
    f_{F} = \frac{(x+y+1)^6}{xy^{2}}.
    $$
    This corresponds to del Pezzo surfaces of degree 1.
\end{statement}

\begin{statement}[No. 2.2]
    In this case $X$ is a double cover of $\mathbb{P}^1 \times \mathbb{P}^2$ branched over a $(2,4)$-divisor. The associated Sarkisov link is
    $$
    \begin{tikzcd}
     X \arrow[d] & X \arrow[equal,l] \arrow[equal,r] \arrow[dd] & X \arrow[d] \\
     \mathbb{P}^2 \arrow[dr] & & \mathbb{P}^1 \arrow[dl] \\
     & pt &
   \end{tikzcd}
    $$
    where a general fibre of $X \rightarrow \mathbb{P}^1$ is a del Pezzo surface of degree 2. A toric LG model of $X$ is given by
    $$
    f = \frac{(x+y+z+1)^2}{x} + \frac{(x+y+z+1)^4}{yz}.
    $$
\end{statement}

\begin{statement}[No. 2.3]
    In this case $X$ is a blow-up of $B_{2}$ in the intersection of two divisors from $|-\frac{1}{2}K_{B_{2}}|$. A toric LG model of $X$ is given by
    $$
    f = \frac{(x+y+1)^{4}(z+1)}{xyz}+z.
    $$
    A parametrized LG model of $X$ is given by
    $$
    \widetilde{f} = \frac{(x+y+1)^{4}(a_{2}z+1)}{xyz} + a_{1}z.
    $$
    Hence the associated Sarkisov link is
    $$
    \begin{tikzcd}
     B_{2} \arrow[d] & X \arrow[l] \arrow[equal,r] \arrow[dd] & X \arrow[d] \\
     pt \arrow[equal,dr] & & \mathbb{P}^1 \arrow[dl] \\
     & pt &
    \end{tikzcd}
    $$
    The general fibre $F$ of $X \rightarrow \mathbb{P}^1$ can be computed by setting $a_{1}=0$. The resulting toric LG model is
    $$
    f_{F} = \frac{(x+y+1)^4}{xy}.
    $$
    This corresponds to del Pezzo surfaces of degree 2.
\end{statement}

\begin{statement}[No. 2.4]
    In this case $X$ is a blow-up of $\mathbb{P}^3$ in the intersection of two cubics. By the construction, there is a natural pencil of cubics on $X$. Hence the associated Sarkisov link is 
    $$
    \begin{tikzcd}
     \mathbb{P}^3 \arrow[d] & X \arrow[l] \arrow[equal,r] \arrow[dd] & X \arrow[d] \\
     pt \arrow[equal,dr] & & \mathbb{P}^1 \arrow[dl] \\
     & pt &
   \end{tikzcd}
    $$
    where the Mori fibre space on the right-hand side is a del Pezzo fibration of degree 3. A toric LG model of $X$ is given by
    $$
    f = \frac{x^{2}}{z}+ \frac{xy}{z} + \frac{x^2}{y} + 4x + 2y + \frac{2xz}{y} + 4z + \frac{yz}{x} + \frac{z^{2}}{y} + \frac{z^{2}}{x}+ \frac{3x}{z} +
 \frac{2y}{z} + \frac{2x}{y} + \frac{2y}{x} + \frac{2z}{y} + \frac{3z}{x} + \frac{3}{z} + \frac{y}{xz}  + \frac{1}{y} + \frac{3}{x} + \frac{1}{xz}.
    $$
\end{statement}

\begin{statement}[No. 2.5]
    In this case $X$ is a blow-up of $B_{3}$ in a plane cubic. A toric LG model of $X$ is given by
    \begin{align*}
    f & = x+y+z+\frac{x^2}{yz} + \frac{3x}{z} + \frac{3y}{z} + \frac{y^2}{xz} + \frac{3x}{y} + \frac{3y}{x} + \frac{3z}{y}  + \frac{3z}{x} + \frac{z^2}{xy} + \frac{x}{yz} + \frac{2}{z} + \frac{y}{xz} + \frac{2}{y} + \frac{2}{x} + \frac{z}{xy} \\
    & = x+y+z + \frac{(x+y+z)^{3}}{xyz} + \frac{(x+y+z)^{2}}{xyz}.
    \end{align*}
    Taking a change of coordinate $x \mapsto xz$ and $y \mapsto yz$, we get
    $$
    f = z(x+y+1) + \frac{(x+y+1)^3}{xy} + \frac{(x+y+1)^{2}}{xyz}.
    $$
    Taking a mutation given by weight $w = (0,0,-1)$ and factor $a = x+y+z$, we get 
    $$
    f' = \frac{(x+y+1)^{3}(z+1)}{xyz} + z.
    $$
    A parametrized LG model of $X$ is given by
    $$
    \widetilde{f'} = \frac{(x+y+1)^{3}(a_{2}z+1)}{xyz} + a_{1}z.
    $$
    Hence the associated Sarkisov link is
    $$
    \begin{tikzcd}
     B_{3} \arrow[d] & X \arrow[l] \arrow[equal,r] \arrow[dd] & X \arrow[d] \\
     pt \arrow[equal,dr] & & \mathbb{P}^1 \arrow[dl] \\
     & pt &
    \end{tikzcd}
    $$
    The general fibre $F$ of $X \rightarrow \mathbb{P}^1$ can be computed by setting $a_{1}=0$. The resulting toric LG model is
    $$
    f_{F} = \frac{(x+y+1)^3}{xy}.
    $$
    This corresponds to del Pezzo surfaces of degree 3.
\end{statement}

\begin{statement}[No. 2.6]
    In this case, the family has two different types of members.
\begin{enumerate}
    \item A general member $X$ in this family is a divisor on $\mathbb{P}^2 \times \mathbb{P}^2$ of bidegree $(2,2)$. The associated Sarkisov link is
    $$
    \begin{tikzcd}
     X \arrow[d] & X \arrow[equal,l] \arrow[equal,r] \arrow[dd] & X \arrow[d] \\
     \mathbb{P}^2 \arrow[dr] & & \mathbb{P}^2 \arrow[dl] \\
     & pt &
   \end{tikzcd}
    $$
    This is a Sarkisov link of type IV. 
    
    \item A special member $X$ in this family is a double cover of a divisor on $\mathbb{P}^2 \times \mathbb{P}^2$ of bidegree $(1,1)$, branched over an anti-canonical divisor. The associated Sarkisov link is
    $$
    \begin{tikzcd}
     X \arrow[d] & X \arrow[equal,l] \arrow[equal,r] \arrow[dd] & X \arrow[d] \\
     \mathbb{P}^2 \arrow[dr] & & \mathbb{P}^2 \arrow[dl] \\
     & pt &
   \end{tikzcd}
    $$
    This is a Sarkisov link of type IV. 
\end{enumerate}
    A toric LG model of $X$ is given by
    $$
    f = x + y + \frac{xz}{y} + 2z + \frac{yz}{x}+ \frac{z^{2}}{y} + \frac{z^{2}}{x} + \frac{x}{z} + \frac{y}{z} + \frac{2x}{y} + \frac{2y}{x} +
\frac{3z}{y} + \frac{3z}{x} + \frac{x}{yz}+ \frac{2}{z} + \frac{y}{xz} + \frac{3}{y} + \frac{3}{x} + \frac{1}{yz} + \frac{1}{xz}.
    $$
\end{statement}

\begin{statement}[No. 2.7]
    In this case $X$ is the blow-up of $Q$ in the intersection of two divisors from $|\mathcal{O}_{Q}(2)|$. A toric Landau-Ginzburg model of $X$ is given by
    $$
    f = x + \frac{x}{y} + z + \frac{z}{y} + \frac{xy}{z} + \frac{2x}{z} + \frac{x}{yz} + 2y + \frac{2}{y} + \frac{yz}{x} + \frac{2z}{x} + \frac{z}{xy} + \frac{2y}{z} + \frac{2}{z} + \frac{2y}{x}+ \frac{2}{x} + \frac{y}{xz}.
    $$
    We take a change of coordinate $y \mapsto \frac{1}{y}$ and a mutation given by weight $w = (-1,0,-1)$ and factor $a = y+1$. The new toric LG model is 
    $$
    f' = \frac{(y+1)^{2}(x+z+1)^{2}}{xyz} + x + z.
    $$
    A parametrized LG model is 
    $$
    \widetilde{f'} = \frac{(y+1)^{2}(a_{2}x+a_{2}z+1)^{2}}{xyz} + a_{1}x + a_{1}z.
    $$
    Hence the associated Sarkisov link is
    $$
    \begin{tikzcd}
     Q \arrow[d] & X \arrow[l] \arrow[equal,r] \arrow[dd] & X \arrow[d] \\
     pt \arrow[equal,dr] & & \mathbb{P}^1 \arrow[dl] \\
     & pt &
    \end{tikzcd}
    $$
    The general fibre $F$ of $X \rightarrow \mathbb{P}^1$ can be computed by setting $a_{1}=0$. The resulting toric LG model is
    $$
    f_{F} = \frac{(y+1)^{2}(x+z)^{2}}{xyz}.
    $$
    This corresponds to del Pezzo surfaces of degree 4.
\end{statement}

\begin{statement}[No. 2.8]
    In this case, a toric LG model of $X$ is given by
    \begin{align*}
        f & = \frac{xy}{z} + 2x + 2y + \frac{xz}{y} + 2z + 
        \frac{yz}{x} + \frac{2x}{z} + \frac{2x}{y} +
        \frac{x}{yz} + \frac{2}{z} + \frac{2}{y} + \frac{2}{x} + \frac{2}{yz} + \frac{1}{xyz} \\
        & = \frac{(xy+yz+xz+x+1)^2}{xyz} - 2.
    \end{align*}
    A parametrized toric LG model of $X$ is given by
    $$
    \widetilde{f} = \frac{(xy+yz+xz+a_{2}x+a_{1})^2}{xyz}.
    $$
    This family has two different types of members.
    \begin{enumerate}
    \item A general member $X$ in this family is the double cover of $\mathrm{Bl}_{p}\mathbb{P}^3$ branched over a divisor from $|-K_{\mathrm{Bl}_{p}\mathbb{P}^3}|$ such that the intersection with the exceptional divisor is smooth. The associated Sarkisov link is 
    $$
    \begin{tikzcd}
       Y_{1} \arrow[d] & X \arrow[dd] \arrow[l] \arrow[equal,r] & X \arrow[d] \\\
       pt \arrow[equal,dr] & & \mathbb{P}^2 \arrow[dl] \\
        & pt &
    \end{tikzcd}
    $$
    where $X \rightarrow Y_{1}$ is a contraction of type E3.
    \item A special member $X$ in this family is the double cover of $\mathrm{Bl}_{p}\mathbb{P}^3$ branched over a divisor from $|-K_{\mathrm{Bl}_{p}\mathbb{P}^3}|$ such that the intersection with the exceptional divisor is singular but reduced. The associated Sarkisov link is 
    $$
    \begin{tikzcd}
       Y_{2} \arrow[d] & X \arrow[dd] \arrow[l] \arrow[equal,r] & X \arrow[d] \\\
       pt \arrow[equal,dr] & & \mathbb{P}^2 \arrow[dl] \\
        & pt &
    \end{tikzcd}
    $$
    where $X \rightarrow Y_{1}$ is a contraction of type E4.
    \end{enumerate}
    These links are of type I/III. A toric LG model of $Y_{1}$ and $Y_{2}$ is
    $$
    f_{Y_{1}} = f_{Y_{2}} = \frac{(xy+yz+xz+1)^2}{xyz} = f_{B_{2}}.
    $$
    Hence $Y_{1}$ and $Y_{2}$ deform to $B_{2}$.
\end{statement}

\begin{statement}[No. 2.9]
    In this case, $X$ is a complete intersection of a divisor of bidegree $(1,1)$ and a divisor of bidegree $(2,1)$ in $\mathbb{P}^3 \times \mathbb{P}^2$. Hence the associated Sarkisov link is
    $$
    \begin{tikzcd}
     \mathbb{P}^3 \arrow[d] & X \arrow[l] \arrow[equal,r] \arrow[dd] & X \arrow[d] \\
     pt \arrow[equal,dr] & & \mathbb{P}^2 \arrow[dl] \\
     & pt &
   \end{tikzcd}    
    $$
    This is a link of type I/III. A toric LG model of $X$ is given by
    $$
    f = x + y + z + \frac{x}{z} + \frac{y}{z} + \frac{x}{y} + \frac{y}{x} + \frac{2z}{y} + \frac{2z}{x} + \frac{z^{2}}{xy} + \frac{x}{yz} + \frac{2}{z} + \frac{y}{xz} + \frac{2}{y} + \frac{2}{x} + \frac{z}{xy}.
    $$
\end{statement}

\begin{statement}[No. 2.10]
    In this case, $X$ is the blow-up of $B_{4}$ in an intersection of two hyperplane sections. A toric LG model of $X$ is given by
    $$
    f = x + y + 2z + \frac{yz}{x} + \frac{z^{2}}{x} + 
    \frac{x}{z} + \frac{x}{y} + \frac{y}{x} + \frac{z}{y} +
    \frac{3z}{x} + \frac{x}{yz} + \frac{2}{z} + \frac{2}{y} + \frac{3}{x} + \frac{1}{yz} + \frac{1}{xz}.
    $$
    Adding a constant term and taking a change of coordinates $x \mapsto \frac{1}{x}$, we can rewrite the Laurent polynomial as
    $$
    f = y + \frac{x(z+1)^{3}}{z} + \frac{2(z+1)^2}{z} + \frac{z+1}{xz} + \frac{(z+1)}{xyz} + xy(z+1).
    $$
    Make a first mutation given by weight $w_{1} = (-1,0,0)$ and factor $a_{1} = z+1$, we get the new toric LG model
    $$
    f' = y(x+1) + \frac{(x+1)^{2}(z+1)^{2}}{xz} + \frac{(z+1)^2}{xyz}.
    $$
    Make a second mutation given by weight $w_{2} = (0,-1,-1)$ and factor $a_{2} = x+1$, we get the new toric LG model
    $$
    \frac{(x+1)^{2}(z+1)^{2}}{xyz} + y + \frac{(x+1)(x+z+1)^{2}}{xz}.
    $$
    A parametrized toric LG model of $X$ is given by
    $$
    \frac{(x+1)^{2}(z+1)^{2}}{xyz} + a_{1}y + \frac{a_{2}(x+1)(x+z+1)^{2}}{xz}.
    $$
    Hence the associated Sarkisov link is
    $$
    \begin{tikzcd}
     B_{4} \arrow[d] & X \arrow[l] \arrow[equal,r] \arrow[dd] & X \arrow[d] \\
     pt \arrow[equal,dr] & & \mathbb{P}^1 \arrow[dl] \\
     & pt &
    \end{tikzcd}
    $$
    The general fibre $F$ of $X \rightarrow \mathbb{P}^1$ can be computed by setting $a_{1}=0$. The resulting toric LG model is
    $$
    f_{F} = \frac{(x+1)(x+z+1)^{2}}{xz}.
    $$
    This corresponds to del Pezzo surfaces of degree 4.
\end{statement}

\begin{statement}[No. 2.11]
    In this case $X$ is the blow-up of $B_{3}$ in a line. A toric LG model of $X$ is given by
    $$
    f = y + z + \frac{yz}{x} + \frac{2z^{2}}{x} + 
    \frac{z^{3}}{xy} + \frac{x}{z} + \frac{y}{z} + \frac{2y}{x} + \frac{2z}{y} + \frac{2z}{x} + \frac{x}{yz} + \frac{2}{z} + \frac{y}{xz}.
    $$
    Taking a change of coordinates $x \mapsto xz^{2}, y\mapsto yz^{2}$, we can rewrite the polynomial $f$ as
    $$
    f = \frac{(x+y(1+z)+1)^2}{xyz} + z(x+y(1+z)+1).
    $$
    We make two mutations, the first mutation is given by the weight $w_{1} = (0,-1,0)$ and factor $a_{1} = z+1$. The new toric LG model is
    $$
    f' = \frac{(x+y+1)^{2}(z+1)}{xyz} + z(x+y+1).
    $$
    The second mutation is given by the weight $w_{2} = (0,0,-1)$ and factor $a_{2} = x+y+1$. The new toric LG model is
    $$
    f'' = \frac{(x+y+1)^{2}(x+y+z+1)}{xyz} + z.
    $$
    A parametrized LG model of $X$ is given by
    $$
    \widetilde{f''} = \frac{(x+y+1)^{2}(x+y+a_{2}z+1)}{xyz} + a_{1}z.
    $$
    Hence the associated Sarkisov link of $X$ is
    $$
    \begin{tikzcd}
     B_{3} \arrow[d] & X \arrow[l] \arrow[equal,r] \arrow[dd] & X \arrow[d] \\
     pt \arrow[equal,dr] & & \mathbb{P}^{2} \arrow[dl] \\
     & pt &
    \end{tikzcd}
    $$
    This is a link of type I/III.
\end{statement}

\begin{statement}[No. 2.12]
    In this case $X$ is an intersection of 3 divisors of type $(1,1)$ in $\mathbb{P}^3 \times \mathbb{P}^3$. The associated Sarkisov link is
    $$
    \begin{tikzcd}
     \mathbb{P}^3 \arrow[d] & X \arrow[l] \arrow[r] \arrow[dd] & \mathbb{P}^3 \arrow[d] \\
     pt \arrow[equal,dr] & & pt \arrow[equal,dl] \\
     & pt &
   \end{tikzcd}    
    $$
    This is a link of type II. This link is known as the \emph{cubo-cubic transformation} of $\mathbb{P}^3$. A toric LG model of $X$ is given by 
    $$
    f = \frac{xy}{z} + x + y + z + \frac{2x}{z} + \frac{2y}{z} + \frac{x}{yz} + \frac{2}{z} + \frac{y}{xz} + \frac{2}{y} + \frac{2}{x} + \frac{z}{xy}.
    $$
\end{statement}

\begin{statement}[No. 2.13]
    In this case $X$ is a blow-up of $Q$ in a curve of degree 6 and genus 2. A toric LG model of $X$ is given by
    $$
    f = x + y + z + \frac{yz}{x} + \frac{x}{y} + \frac{2y}{x} + \frac{z}{y} + \frac{x}{yz} + \frac{2}{z} + \frac{y}{xz} + \frac{2}{y} + \frac{1}{yz}.
    $$
    Taking a change of coordinates $x \mapsto xy, z \mapsto \frac{1}{z}$, we can write the Laurent polynomial as
    $$
    f = x + y + \frac{1}{z} + xy + \frac{(xz+z+1)^{2}}{xz} + \frac{(z+1)^2}{yz}.
    $$
    Here the constant term is ignored. A parametrized toric LG model of $X$ is given by
    $$
    \widetilde{f} = a_{1}x + a_{1}^{2}y + \frac{a_{2}}{z} + a_{1}a_{2}xy + \frac{a_{1}a_{2}x^{2}z^{2} + (a_{1}^{2}+a_{2}^{3})xz^{2} + a_{1}a_{2}^{2}z^{2} + 2a_{1}^{2}a_{2}z + a_{1}}{xz} + \frac{(a_{2}z+1)^2}{yz}.
    $$
    Hence the associated Sarkisov link is
    $$
    \begin{tikzcd}
        Q \arrow[d] & X \arrow[dd] \arrow[l] \arrow[equal,r] & X \arrow[d] \\
        pt \arrow[equal,dr] & & \mathbb{P}^2 \arrow[dl] \\
        & pt &
    \end{tikzcd}
    $$
    This is a link of type I/III.
\end{statement}

\begin{statement}[No. 2.14]
    In this case $X$ is the blow-up of $B_{5}$ in the intersection of two hyperplane sections. There is a natural pencil on $X$ of hyperplane sections of $B_{5}$. Hence the associated Sarkisov link is
    $$
    \begin{tikzcd}
        B_{5} \arrow[d] & X \arrow[dd] \arrow[l] \arrow[equal,r] & X \arrow[d] \\
        pt \arrow[equal,dr] & & \mathbb{P}^1 \arrow[dl] \\
        & pt &
    \end{tikzcd}
    $$
    where a general fibre of $X \rightarrow \mathbb{P}^1$ is a smooth del Pezzo surface of degree 5. This is a link of type I/III. A toric LG model of $X$ is given by
    $$
    \frac{xy}{z} + x + z + \frac{2y}{z} + \frac{z}{x} + \frac{z^{2}}{xy} + \frac{2}{z} + \frac{y}{xz} + \frac{3}{x} + \frac{3z}{xy} + \frac{2}{xz} + \frac{3}{xy} + \frac{1}{xyz}.
    $$
    Taking a change of coordinates $y \mapsto xyz$, we can rewrite the Laurent polynomial as
    $$
    f = x + y(x+1)^{2} + z + \frac{2}{z} + \frac{(z+1)(z+2)}{xz} + \frac{(z+1)^{3}}{x^{2}yz^{2}}.
    $$
    Make two mutations given by $w_{1} = (0,-2,0)$, $a_{1} = x+1$ and $w_{2} = (1,1,0)$, $a_{2} = z + 1$. We obtain a new toric LG model
    $$
    f' = x + y + z + xz + yz + \frac{1}{x} + \frac{2}{z} + \frac{2}{xz} + \frac{(xz + x + 1)^{2}}{x^{2}yz^{2}}.
    $$
    Taking a change of coordinates $y \mapsto \frac{xy}{z}, z \mapsto \frac{z}{x}$, we can rewrite the Laurent polynomial as
    $$
    f' = x + y + z + \frac{xy}{z} + \frac{z}{x} + \frac{2x}{z} + \frac{1}{x} + \frac{2}{z} + \frac{(x+z+1)^2}{xyz}
    $$
    A parametrized toric LG model of $X$ is given by
    \begin{multline*}
    \widetilde{f'} = x + y + z + \frac{a_{2}^{2}xy}{z} + \frac{a_{2}z}{x} + \frac{2a_{2}x}{z} + \frac{a_{1}^{2}}{x} + \frac{a_{1}^{2}(1 + a_{2}^2)}{z} \\
    + \frac{a_{2}^{2}x^{2} + a_{2}^{2}z^{2} +  (a_{2}^{4}+1)xz + 2a_{1}^{2}a_{2}z + 2a_{1}^{2}a_{2}x + a_{1}^{4}}{xyz}.    
    \end{multline*}
\end{statement}

\begin{statement}[No. 2.15]\label{example: Mori-Mukai 2.15}
    In this case $X$ is a blow-up of $\mathbb{P}^3$ in the intersection of a quadric $S_{2}$ and a cubic $S_{3}$. A toric LG model is given by
    $$
    f = x+y+z+\frac{x}{z} + \frac{y}{z} + \frac{x}{yz} + \frac{y}{xz} +\frac{2}{z} + \frac{2}{y} + \frac{2}{x} + \frac{z}{xy}.
    $$
    Taking a change of coordinates $x \mapsto xz, y \mapsto yz$, we can rewrite the Laurent polynomial as
    \begin{align*}
    f &= x+y+z+xz+yz+\frac{x}{yz} + \frac{y}{xz}+\frac{2}{z}+\frac{2}{xz}+\frac{2}{yz}+\frac{1}{xyz} \\ 
     &= x + y + z(x+y+1) + \frac{(x+y+1)^{2}}{xyz}.
    \end{align*}
    Take a mutation by weight $w = (0,0,-1)$ and factor $a = x + y + 1$. We obtain a new toric LG model
    $$
    f' = x + y + z + \frac{(x+y+1)^{3}}{xyz}.
    $$
    \begin{enumerate}
    \item  The quadric $S_{2}$ is smooth. In this case, the associated Sarkisov link of $X$ is
    $$
    \begin{tikzcd}
     \mathbb{P}^3 \arrow[d] & X \arrow[l] \arrow[r] \arrow[dd] & Y_{1} \arrow[d] \\
     pt \arrow[equal,dr] & & pt \arrow[equal,dl] \\
     & pt &
   \end{tikzcd}
    $$
    where $X \rightarrow Y_{1}$ is a contraction of type E3. This is a link of type II.
    \item The quadric $S_{2}$ is singular, i.e. a quadratic cone. In this case, the associated Sarkisov link of $X$ is
    $$
    \begin{tikzcd}
     \mathbb{P}^3 \arrow[d] & X \arrow[l] \arrow[r] \arrow[dd] & Y_{2} \arrow[d] \\
     pt \arrow[equal,dr] & & pt \arrow[equal,dl] \\
     & pt &
   \end{tikzcd}
    $$
    where $X \rightarrow Y_{2}$ is a contraction of type E4 to a terminal point. This is a link of type II.
    \end{enumerate}
    A toric LG model of $Y_{1}$ and $Y_{2}$ is given by
    $$
    f_{Y_{1}} = f_{Y_{2}} = z + \frac{(x+y+1)^{3}}{xyz}.
    $$
    Hence $Y_{1}$ and $Y_{2}$ deform to the smooth cubic threefold $B_{3}$.
\end{statement}

\begin{statement}[No. 2.16]
    In this case $X$ is a blow-up of $B_{4}$ in a conic. A toric LG model of $X$ is given by
    $$
    f = x + z + \frac{x}{z} + \frac{y}{z} + \frac{x}{y} + \frac{y}{x} + \frac{z}{y} + \frac{z}{x} + \frac{x}{yz} + \frac{1}{z} + \frac{2}{y} + \frac{1}{x} + \frac{z}{xy}.
    $$
    Adding a constant term, we can rewrite the Laurent polynomial as
    $$
    f = \frac{(y+1)(x+z)}{y} + \frac{(y+1)(x+z)^2}{xyz} + \frac{(y+1)(x+z)}{xz}.
    $$
    Taking a change of coordinates $x \mapsto xyz$, $z \mapsto yz$, we can rewrite the Laurent polynomial as
    $$
    f = z(x+1)(y+1) + \frac{(x+1)^{2}(y+1)}{xy} + \frac{(x+1)(y+1)}{xyz}.
    $$
    Make a mutation given by weight $w = (0,0,-1)$ and factor $a = (x+1)(y+1)$, we obtain a new toric LG model
    $$
    f' = \frac{(x+1)^{2}(y+1)^{2}}{xyz} + z + \frac{(x+1)^{2}(y+1)}{xy}.
    $$
    A parametrized toric LG model of $X$ is
    $$
    \widetilde{f'} = \frac{(x+1)^{2}(y+1)^{2}}{xyz} + a_{1}z + \frac{a_{2}(x+1)^{2}(y+1)}{xy}.
    $$
    Hence the associated Sarkisov link is
    $$
    \begin{tikzcd}
     B_{4} \arrow[d] & X \arrow[l] \arrow[equal,r] \arrow[dd] & X \arrow[d] \\
     pt \arrow[equal,dr] & & \mathbb{P}^2 \arrow[dl] \\
     & pt &
    \end{tikzcd}
    $$
    This is a link of type I/III.
\end{statement}

\begin{statement}[No. 2.17]
    In this case, $X$ is a blow-up of $Q$ in an elliptic curve of degree 5. The associated Sarkisov link is
    $$
    \begin{tikzcd}
     \mathbb{P}^3 \arrow[d] & X \arrow[l] \arrow[r] \arrow[dd] & Q \arrow[d] \\
     pt \arrow[equal,dr] & & pt \arrow[equal,dl] \\
     & pt &
   \end{tikzcd}    
    $$
    This is a link of type II. A toric LG model of $X$ is given by
    $$
    f = x + y + z + \frac{x}{y} + \frac{y}{x} + \frac{z}{y} + \frac{z}{x} + \frac{1}{z} + \frac{2}{y} + \frac{1}{x} + \frac{z}{xy} + \frac{1}{xz} + \frac{1}{xy}.
    $$
\end{statement}

\begin{statement}[No. 2.18]
    In this case $X$ is a double cover of $\mathbb{P}^1 \times \mathbb{P}^2$ ramified in a divisor of bidegree $(2,2)$. The associated Sarkisov link is
    $$
    \begin{tikzcd}
     X \arrow[d] & X \arrow[equal,l] \arrow[equal,r] \arrow[dd] & X \arrow[d] \\
     \mathbb{P}^1 \arrow[dr] & & \mathbb{P}^2 \arrow[dl] \\
     & pt &
   \end{tikzcd}
    $$
    where a general fibre of $X \rightarrow \mathbb{P}^1$ is isomorphic to $\mathbb{P}^1 \times 
    \mathbb{P}^1$. This is a link of type IV. A toric LG model of $X$ is given by
    $$
    f = x + y + z + \frac{y}{x} + \frac{z}{x} + \frac{x}{yz} + \frac{1}{z} + \frac{1}{y}+ \frac{1}{x} + 
    \frac{2}{yz} + \frac{1}{xz} + \frac{1}{xy} + \frac{1}{xyz}.
    $$
\end{statement}

\begin{statement}[No. 2.19]
    In this case $X$ is the blow-up of $\mathbb{P}^3$ in a curve of degree 5 and genus 2. Alternatively, $X$ is the blow-up of $B_{4}$ in a line. Hence the associated Sarkisov link is
    $$
    \begin{tikzcd}
     \mathbb{P}^3 \arrow[d] & X \arrow[l] \arrow[r] \arrow[dd] & B_{4} \arrow[d] \\
     pt \arrow[equal,dr] & & pt \arrow[equal,dl] \\
     & pt &
   \end{tikzcd}    
    $$
    This is a link of type II. A toric LG model of $X$ is given by
    $$
    f = x + y + z + \frac{yz}{x} + \frac{x}{z} + \frac{y}{z} + \frac{x}{y} + \frac{y}{x} + \frac{z}{y} + \frac{x}{yz} + \frac{1}{y}.
    $$
\end{statement}

\begin{statement}[No. 2.20]
    In this case $X$ is a blow-up of $B_{5}$ in a twisted cubic. A toric LG model of $X$ is given by
    $$
    f = x+y+z+\frac{1}{x}+\frac{1}{y}+\frac{1}{z}+ \frac{y}{z} + \frac{y}{x} + \frac{y}{xz} + \frac{1}{xz} + \frac{xz}{y}
    $$
    After a change of coordinates, we can write it as
    $$
    f = x+y+z+\frac{1}{x}+\frac{1}{y}+\frac{1}{z} + xy + yz + xz + xyz +\frac{1}{xyz}.
    $$
    A parametrized toric LG model of $X$ is given by
    $$
    \widetilde{f} = a_{1}x+a_{1}y+z+\frac{1}{x}+\frac{1}{y}+\frac{a_{1}}{z} + a_{1}a_{2}xy + a_{2}yz + a_{2}xz + a_{2}^{2}xyz +\frac{1}{xyz}.
    $$
    Hence the associated Sarkisov link is
    $$
    \begin{tikzcd}
     B_{5} \arrow[d] & X \arrow[l] \arrow[r,equal] \arrow[dd] & X \arrow[d] \\
     pt \arrow[equal,dr] & & \mathbb{P}^2 \arrow[dl] \\
     & pt &
   \end{tikzcd}    
    $$
    This is a link of type I/III.
\end{statement}

\begin{statement}[No. 2.21]
    In this case, $X$ is a blow-up of $Q$ in a twisted quartic. A toric LG model of $X$ is given by
    $$
    f = x+y+z+\frac{x}{z} + \frac{y}{z} + \frac{x}{y} + \frac{y}{x} + \frac{z}{y} + \frac{1}{x} + \frac{1}{z}.
    $$
    After a change of coordinates $x \mapsto \frac{1}{x}, z \mapsto \frac{1}{z}$, we can rewrite it as
    $$
    f = x+y+z  + \frac{1}{xy} + \frac{1}{yz} + xy + yz + \frac{1}{x} + \frac{1}{z} + \frac{z}{x}.
    $$
    A parametrized toric LG model of $X$ is given by
    $$
    \widetilde{f} = x + a_{1}^{2}y + a_{1}z + \frac{a_{1}}{xy} + \frac{1}{yz} + a_{2}xy + a_{1}a_{2}yz + \frac{a_{1}a_{2}}{x} + \frac{a_{2}}{z} + \frac{a_{2}^{2}z}{x}.
    $$
    Hence the associated Sarkisov link is
    $$
    \begin{tikzcd}
        Q \arrow[d] & X \arrow[dd] \arrow[l] \arrow[r] & Q \arrow[d] \\
        pt \arrow[equal,dr] & & pt \arrow[equal,dl] \\
        & pt &
    \end{tikzcd}
    $$
    This is a link of type II.
\end{statement}

\begin{statement}[No. 2.22]
    In this case, $X$ is a blow-up of $B_{5}$ in a conic. Alternatively, $X$ is a blow-up of $\mathbb{P}^3$ in a curve of degree 4 and genus 0. Hence the associated Sarkisov link is
    $$
    \begin{tikzcd}
     \mathbb{P}^3 \arrow[d] & X \arrow[l] \arrow[r] \arrow[dd] & B_{5} \arrow[d] \\
     pt \arrow[equal,dr] & & pt \arrow[equal,dl] \\
     & pt &
   \end{tikzcd}    
    $$
    This is a link of type II. A toric LG model of $X$ is given by
    $$
    f = x + y + \frac{xz}{y} + z + \frac{y}{x} + \frac{1}{z} + \frac{1}{y} + \frac{1}{x} + \frac{1}{xz}.
    $$
\end{statement}

\begin{statement}[No. 2.23]
    In this case $X$ is the blow-up of $Q$ in a space quartic curve $C$. A toric LG model of $X$ is given by
    $$
    f = x+y+z+\frac{x}{z}+\frac{y}{z}+\frac{z}{y}+\frac{z}{x}+\frac{1}{x}+\frac{1}{y}.
    $$
    After a change of coordinates we can rewrite the Laurent polynomial as
    $$
    f = x+y+z+\frac{1}{xz}+\frac{1}{yz}+xz+yz+\frac{1}{x}+\frac{1}{y}.
    $$

    Let $H$ be the hyperplane in $\mathbb{P}^4$ containing $C$. There are two cases:
    \begin{enumerate}
        \item The intersection $H \cap Q$ is a smooth quadratic surface. The associated Sarkisov link is
        $$
        \begin{tikzcd}
         Q \arrow[d] & X \arrow[l] \arrow[r] \arrow[dd] & Y_{1} \arrow[d] \\
         pt \arrow[equal,dr] & & pt \arrow[equal,dl] \\
         & pt &
       \end{tikzcd}
        $$
        \item The intersection $H \cap Q$ is a singular quadratic surface. The associated Sarkisov link is
        $$
        \begin{tikzcd}
         Q \arrow[d] & X \arrow[l] \arrow[r] \arrow[dd] & Y_{2} \arrow[d] \\
         pt \arrow[equal,dr] & & pt \arrow[equal,dl] \\
         & pt &
       \end{tikzcd}
       $$
    \end{enumerate}
    where $Y_{1}$ has an ordinary double point and $Y_{2}$ has a double (cDV)-point. A toric LG model of $Y_{1}$ and $Y_{2}$ is given by
    $$
    f_{Y_{1}} = f_{Y_{2}} = x+y+\frac{1}{xz}+\frac{1}{yz}+xz+yz+\frac{1}{x}+\frac{1}{y}.
    $$
    Taking a change of coordinates $y \mapsto xy$, and making a mutation given by weight $w = (-1,0,0)$ and factor $a = (y+1)(z+1)$, we obtain a new toric LG model
    $$
    f'_{Y_{1}} = f'_{Y_{2}} = x + \frac{(y+1)^{2}(z+1)^{2}}{xyz}.
    $$
    Hence $Y_{1}$ and $Y_{2}$ deform to $B_{4}$.
\end{statement}

\begin{statement}[No. 2.24]
    In this case $X$ is a divisor on $\mathbb{P}^2 \times \mathbb{P}^2$ of bidegree $(1,2)$. The associated Sarkisov link is
    $$
    \begin{tikzcd}
     X \arrow[d] & X \arrow[equal,l] \arrow[equal,r] \arrow[dd] & X \arrow[d] \\
     \mathbb{P}^2 \arrow[dr] & & \mathbb{P}^2 \arrow[dl] \\
     & pt &
   \end{tikzcd}
    $$
    This is a link of type IV. A toric LG model of $X$ is given by
    $$
    f = x + y + z + \frac{xy}{z} + \frac{x}{z} + \frac{y}{x} + \frac{z}{y} + \frac{1}{x} + \frac{1}{y}.
    $$
\end{statement}

\begin{statement}[No. 2.25]
    In this case $X$ is a blow-up of $\mathbb{P}^3$ in the intersection of two smooth quadrics. There is a natural pencil of smooth quadrics on $X$. Hence the associated Sarkisov link is
    $$
    \begin{tikzcd}
     \mathbb{P}^3 \arrow[d] & X \arrow[l] \arrow[equal,r] \arrow[dd] & X \arrow[d] \\
     pt \arrow[equal,dr] & & \mathbb{P}^1 \arrow[dl] \\
     & pt &
   \end{tikzcd}    
    $$
    where a general fibre of $X \rightarrow \mathbb{P}^1$ is isomorphic to $\mathbb{P}^1 \times \mathbb{P}^1$. This is a link of type I/III. A toric LG model of $X$ is given by
    $$
    f = x + y + z + xyz + \frac{1}{x} + \frac{1}{y} + \frac{1}{xz}  + \frac{1}{yz}.
    $$
\end{statement}

\begin{statement}[No. 2.26]
    In this case, $X$ is a blow-up of $B_{5}$ in a line. The associated Sarkisov link is
    $$
    \begin{tikzcd}
     Q \arrow[d] & X \arrow[l] \arrow[r] \arrow[dd] & B_{5} \arrow[d] \\
     pt \arrow[equal,dr] & & pt \arrow[equal,dl] \\
     & pt &
   \end{tikzcd}    
    $$
    This is a link of type II. A toric LG model of $X$ is given by
    $$
    f = x + y + z + \frac{1}{x} + \frac{1}{y} + \frac{1}{z} + \frac{1}{xyz} + xy.
    $$
\end{statement}

\begin{statement}[No. 2.27]
    In this case, $X$ is the blow-up of $\mathbb{P}^3$ in a twisted cubic curve. In this case $X$ is known to admit a structure of $\mathbb{P}^1$-bundle over $\mathbb{P}^2$. Hence the associated Sarkisov link is
    $$
    \begin{tikzcd}
     \mathbb{P}^3 \arrow[d] & X \arrow[l] \arrow[equal,r] \arrow[dd] & X \arrow[d] \\
     pt \arrow[equal,dr] & & \mathbb{P}^2 \arrow[dl] \\
     & pt &
   \end{tikzcd}    
    $$
    This is a link of type I/III. A toric LG model of $X$ is given by
    $$
    f = x+y+z+xz+\frac{1}{xz}+\frac{1}{yz}+\frac{1}{xyz}.
    $$
\end{statement}

\begin{statement}[No. 2.28]\label{case 2.28}
    In this case $X$ is the blow-up of $\mathbb{P}^3$ in a plane cubic curve. A toric LG model of $X$ is given by
    $$
    f = \frac{(xyz+1)(xyz^2 + xyz + x + y)}{xyz} - 1 = xyz^{2}+xyz+x+y+z+\frac{1}{yz}+\frac{1}{xz}.
    $$
    Make a change of coordinates $x \mapsto \frac{1}{xz}, y \mapsto \frac{1}{yz}$, we can write the Laurent polynomial as
    $$
    f = x+y+z+\frac{1}{xz}+\frac{1}{yz}+\frac{1}{xy}+\frac{1}{xyz}.
    $$
    A parametrized LG model is given by
    $$
    f = x+y+z+\frac{a_{2}}{xz}+\frac{a_{2}}{yz}+\frac{a_{2}}{xy}+\frac{a_{1}}{xyz}.
    $$
    Hence the associated Sarkisov link is
    $$
    \begin{tikzcd}
     \mathbb{P}^3 \arrow[d] & X \arrow[l] \arrow[r] \arrow[dd] & Y \arrow[d] \\
     pt \arrow[equal,dr] & & pt \arrow[equal,dl] \\
     & pt &
   \end{tikzcd}
    $$
    This is a link of type II. Here $X \rightarrow Y$ is a contraction of type E5 to a variety with singularity of type $\frac{1}{2}(1,1,1)$. A toric LG model of $Y$ is given by
    $$
    f_{Y} = x+y+z+\frac{1}{xz}+\frac{1}{yz}+\frac{1}{xy}.
    $$
\end{statement}

\begin{statement}[No. 2.29]
    In this case $X$ is the blow-up of $Q$ in a conic. A toric LG model of $X$ is given by
    $$
    f = x+y+z+\frac{1}{x} + \frac{1}{y} + \frac{1}{xz} + \frac{1}{yz}.
    $$
    A parametrized toric LG model is given by
    $$
    \widetilde{f} = x + y + a_{1}z + \frac{a_{2}}{x} + \frac{a_{2}}{y} + \frac{1}{xz} + \frac{1}{yz}.
    $$
    Hence the associated Sarkisov link is
    $$
    \begin{tikzcd}
     Q \arrow[d] & X \arrow[l] \arrow[equal,r] \arrow[dd] & X \arrow[d] \\
     pt \arrow[equal,dr] & & \mathbb{P}^1 \arrow[dl] \\
     & pt &
   \end{tikzcd}
    $$
    where a general fibre of $X \rightarrow \mathbb{P}^1$ is isomorphic to $\mathbb{P}^1 \times \mathbb{P}^1$. This is a link of type I/III.
\end{statement}

\begin{statement}[No. 2.30]
    In this case, $X$ is a blow-up of $\mathbb{P}^3$ in a conic. A toric LG model of $X$ is given by
    $$
    f = x+y+z+\frac{1}{xy}+\frac{1}{xz} + \frac{1}{xyz}.
    $$
    A parametrized toric LG model is given by
    $$
    \widetilde{f} = x + y + z + \frac{a_{2}}{xy} + \frac{a_{2}}{xz} + \frac{a_{1}}{xyz}.
    $$
    Hence the associated Sarkisov link is
    $$
    \begin{tikzcd}
     \mathbb{P}^3 \arrow[d] & X \arrow[l] \arrow[r] \arrow[dd] & Q \arrow[d] \\
     pt \arrow[equal,dr] & & pt \arrow[equal,dl] \\
     & pt &
   \end{tikzcd}
    $$
    This is a link of type II.
\end{statement}

\begin{statement}[No. 2.31]
    In this case, $X$ is a blow-up of $Q$ in a line. A toric LG model of $X$ is given by
    $$
    f = x + y + z + \frac{1}{x} + \frac{1}{xy} + \frac{1}{yz}.
    $$
    A parametrized toric LG model is given by
    $$
    \widetilde{f} = x + a_{1}y + z + \frac{a_{2}}{x} + \frac{1}{xy} + \frac{1}{yz}.
    $$
    Hence the associated Sarkisov link is
    $$
    \begin{tikzcd}
     Q \arrow[d] & X \arrow[l] \arrow[equal,r] \arrow[dd] & X \arrow[d] \\
     pt \arrow[equal,dr] & & \mathbb{P}^2 \arrow[dl] \\
     & pt &
    \end{tikzcd}
    $$
    This is a link of type I/III.
\end{statement}

\begin{statement}[No. 2.32]
    In this case $X$ is a general divisor of bidegree $(1,1)$ on $\mathbb{P}^2 \times \mathbb{P}^2$. The associated Sarkisov link is
    $$
    \begin{tikzcd}
     X \arrow[d] & X \arrow[equal,l] \arrow[equal,r] \arrow[dd] & X \arrow[d] \\
     \mathbb{P}^2 \arrow[dr] & & \mathbb{P}^2 \arrow[dl] \\
     & pt &
    \end{tikzcd}
    $$
    This is a link of type IV. A toric LG model of $X$ is given by
    $$
    f = x + y + z + \frac{1}{x} + \frac{1}{y} + \frac{1}{xyz}.
    $$
\end{statement}

\begin{statement}[No. 2.33]
    In this case $X \simeq \mathrm{Bl}_{L}\mathbb{P}^3$, where $L$ is a line on $\mathbb{P}^3$. The associated Sarkisov link is
    $$
    \begin{tikzcd}
    \mathbb{P}^3 \arrow[d] & X \arrow[l] \arrow[equal,r] \arrow[dd] & X \arrow[d] \\
    pt \arrow[equal,dr] & & \mathbb{P}^1 \arrow[dl] \\
    & pt &
    \end{tikzcd}
    $$
    where a general fibre of $X \rightarrow \mathbb{P}^1$ is isomorphic to $\mathbb{P}^2$. This is a link of type I/III. A toric LG model of $X$ is given by
    $$
    f = x + y + z + xy + \frac{1}{xyz}.
    $$
\end{statement}

\begin{statement}[No. 2.34]
    In this case $X \simeq \mathbb{P}^2 \times \mathbb{P}^1$. The associated Sarkisov link is
    $$
    \begin{tikzcd}
    X \arrow[d,"p_{1}"] & X \arrow[equal,l] \arrow[equal,r] \arrow[dd] & X \arrow[d,"p_{2}"] \\
     \mathbb{P}^2 \arrow[dr] & & \mathbb{P}^1 \arrow[dl] \\
     & pt &
    \end{tikzcd}
    $$
    This is a link of type IV. A toric LG model of $X$ is given by
    $$
    x + y + z + \frac{1}{x} + \frac{1}{yz}.
    $$
\end{statement}

\begin{statement}[No. 2.35]
    In this case $X \simeq \mathrm{Bl}_{p}\mathbb{P}^3$, where $p$ is a point on $\mathbb{P}^3$. The associated Sarkisov link is
    $$
   \begin{tikzcd}
     \mathbb{P}^3 \arrow[d] & X \arrow[l] \arrow[equal,r] \arrow[dd] & X \arrow[d] \\
     pt \arrow[equal,dr] & & \mathbb{P}^2 \arrow[dl] \\
     & pt &
   \end{tikzcd}
   $$
   This is a link of type I/III. A toric LG model of $X$ is given by
   $$
   x + y + z + \frac{1}{x} + \frac{1}{xyz}.
   $$
\end{statement}

\begin{statement}[No. 2.36]
    In this case, $X \simeq \mathbb{P}(\mathcal{O}_{\mathbb{P}^2} \oplus \mathcal{O}_{\mathbb{P}^2}(2))$. Notice that $X$ is a toric variety. Hence the associated Sarkisov link is
    $$
    \begin{tikzcd}
    \mathbb{P}(1,1,1,2) \arrow[d] & X \arrow[l] \arrow[equal,r] \arrow[dd] & X \arrow[d] \\
     pt \arrow[equal,dr] & & \mathbb{P}^2 \arrow[dl] \\
     & pt &
    \end{tikzcd}
    $$
    This is a link of type I/III. A toric LG model of $X$ is given by
    $$
    f = x + y + z + \frac{1}{x} + \frac{1}{x^{2}yz}.
    $$
\end{statement}

\subsection{Picard number 3}
    Now we study smooth Fano threefolds of Picard number 3. The elementary syzygies associated to central models of rank 3 are elementary relations of Sarkisov links. Similar to the previous subsection, we compute them in multiple ways including the LG models method, while some of them can be directly derived from \cite{Matsuki1995}.

\begin{statement}[No. 3.1]
    In this case $X$ is a double cover of $\mathbb{P}^1 \times \mathbb{P}^1 \times \mathbb{P}^1$ branched over a divisor of degree $(2,2,2)$. The associated elementary relation is
    $$
    \begin{tikzcd}
    X \arrow[d,"p_{3} \times p_{1}"] \arrow[equal,r] & X \arrow[dd,"p_{1}"] \arrow[equal,r] & X \arrow[d,"p_{1} \times p_{2}"] \arrow[equal,r] & X \arrow[dd,"p_{2}"] \arrow[equal,r] & X \arrow[d,"p_{2} \times p_{3}"] \arrow[equal,r] & X \arrow[dd,"p_{3}"] \arrow[equal,r] & X \arrow[d,"p_{3} \times p_{1}"] \\    
    \mathbb{P}^1 \times \mathbb{P}^1 \arrow[dr] & & \mathbb{P}^1 \times \mathbb{P}^1 \arrow[dl] \arrow[dr] & & \mathbb{P}^1 \times \mathbb{P}^1 \arrow[dl] \arrow[dr] & & \mathbb{P}^1 \times \mathbb{P}^1 \arrow[dl] \\    
    & \mathbb{P}^1 & & \mathbb{P}^1 & & \mathbb{P}^1 &
    \end{tikzcd}
    $$
\end{statement}

\begin{statement}[No. 3.2]\label{Case 3.2}
    In this case $X$ is a divisor on the $\mathbb{P}^{2}$ bundle $W = \mathbb{P}(\mathcal{O}_{\mathbb{P}^1 \times \mathbb{P}^1} \oplus (\mathcal{O}_{\mathbb{P}^1 \times \mathbb{P}^1}(-1,-1))^{\oplus 2})$ over $\mathbb{P}^1 \times \mathbb{P}^1$ from the linear system $|L^{\otimes 2} \otimes \mathcal{O}_{\mathbb{P}^1 \times \mathbb{P}^1}(2,3)|$, such that $X \cap Y$ is irreducible. Here $L$ is the tautological line bundle of $W$ and $Y$ is a
    member of $|L|$. Let $p_{1},p_{2}$ be the first and second projection, and $F_{1},F_{2}$ be the class of a general fibre of $p_{1},p_{2}$. Then the effective cone $\mathrm{Eff}_{\mathbb{R}}(X)$ is generated by $L,F_{1},F_{2}$. A toric LG model of $X$ is given by
    \begin{align*}
        f &= x + y + z + \frac{x^{2}}{yz} + \frac{3x}{z} + \frac{3y}{z} + \frac{y^{2}}{xz} + \frac{3x}{y} + \frac{3y}{x} + \frac{3z}{y} + \frac{3z}{x} + \frac{z^{2}}{xy} + \frac{1}{x} + \frac{1}{y} + \frac{z}{xy} \\
        & = x + y + z + \frac{(x+y+z)^3}{xyz} + \frac{x+y+z}{xy} - 6.
    \end{align*}
    A parametrized toric LG model of $X$ is given by
    \begin{multline*}
    \widetilde{f} = a_{1}(x + y + z) + \frac{1}{x} + \frac{1}{y} +  \frac{z}{xy} \\
    + \frac{a_{2}(x+y)^3 + (2a_{2}+a_{3})(x^{2}z + y^{2}z) +(a_{2}+2a_{3})(xz^{2} + yz^{2}) + a_{3}z^{3}}{xyz}.
    \end{multline*}
    The associated elementary relation is
    $$
    \begin{tikzcd}
        X \arrow[d] & X \arrow[dd,"p_{2}"] \arrow[equal,l] \arrow[r] & Y' \arrow[d] & Y \arrow[dd] \arrow[dashed,l] \arrow[dashed,r] & Y'' \arrow[d] & X \arrow[dd,"p_{1}"] \arrow[l] \arrow[r,equal] & X \arrow[d] \\
        \mathbb{P}^1 \times \mathbb{P}^1 \arrow[dr] & & \mathbb{P}^1 \arrow[equal,dl] \arrow[dr] & & \mathbb{P}^1 \arrow[dl] \arrow[equal,dr] & & \mathbb{P}^1 \times \mathbb{P}^1 \arrow[dl]\\
        & \mathbb{P}^1 & & pt & & \mathbb{P}^1
    \end{tikzcd}
    $$
    where $Y'$ and $Y''$ are smooth weak Fano varieties, and $Y$ is a terminal Fano variety with an ordinary double point. The birational map $Y' \dashrightarrow Y''$ is a flop over $Y$. A toric LG model of $Y$ is given by
    $$
    f_{Y} = x + y + z + \frac{1}{x} + \frac{1}{y} +  \frac{z}{xy} + \frac{(x+y)^3 + 2x^{2}z + 2y^{2}z + xz^{2} + yz^{2}}{xyz}.
    $$
    A general fibre of $Y' \rightarrow \mathbb{P}^1$ is $\mathbb{P}^1 \times \mathbb{P}^1$. A general fibre of $Y'' \rightarrow \mathbb{P}^1$ is a smooth del Pezzo surface of degree 4. The period of $f_{Y}$ coincides with the period of $V_{16}$. Hence $Y$ deforms to $V_{16}$. Here we give an explicit mutation from $f_{Y}$ to the toric LG model of $V_{16}$ in \cref{table: toric LG model picard number 1}. Indeed, we can first take a mutation given by weight $w_{1} = (0,0,1)$ and factor $a_{1} = x + y$, and after that we take a change of coordinates $y \mapsto xy$. The new toric LG model is
    $$
    f'_{Y} = \frac{(y+1)(z+1)(x^{2}yz + xyz + xy + xz + x + z)}{xyz}.
    $$
    Then we take 3 consecutive mutations given by
    \begin{align*}
        w_{2} = (0,0,-1)&,a_{2} = x + 1, \\
        w_{3} = (0,-1,0)&,a_{3} = z + 1, \\
        w_{4} = (0,0,1)&,a_{4} = y + 1.
    \end{align*}
    After a change of coordinates $x \mapsto xz$, the new toric LG model is
    $$
    f''_{Y} = \frac{(x+1)(y+1)(z+1)(xz+yz+z+1)}{xyz}.
    $$
    Finally, we make a mutation given by $w_{5} = (0,0,-1)$ and $a_{5} = x + y + 1$ and obtain the desired toric LG model.
\end{statement}

\begin{statement}[No. 3.3]
    In this case $X$ is a divisor on $\mathbb{P}^1 \times \mathbb{P}^1 \times \mathbb{P}^2$ of type $(1,1,2)$. The associated elementary relation is
    $$
    \adjustbox{scale=0.9,center}{%
    \begin{tikzcd}[cramped,column sep=small]
        \mathbb{P}^1 \times \mathbb{P}^2 \arrow[d] & \mathbb{P}^1 \times \mathbb{P}^2 \arrow[dd] \arrow[equal,l] \arrow[equal,r] & \mathbb{P}^1 \times \mathbb{P}^2 \arrow[d] & X \arrow[dd] \arrow[l] \arrow[r] & \mathbb{P}^1 \times \mathbb{P}^2 \arrow[d] & \mathbb{P}^1 \times \mathbb{P}^2 \arrow[dd] \arrow[equal,l] \arrow[equal,r] & \mathbb{P}^1 \times \mathbb{P}^2 \arrow[d] & X \arrow[dd] \arrow[l] \arrow[equal,r] & X \arrow[d] & X \arrow[dd] \arrow[equal,l] \arrow[r] & \mathbb{P}^1 \times \mathbb{P}^2 \arrow[d] \\
        \mathbb{P}^1 \arrow[dr] & & \mathbb{P}^2 \arrow[dl] \arrow[equal,dr] & & \mathbb{P}^2 \arrow[equal,dl] \arrow[dr] & & \mathbb{P}^1 \arrow[dl] \arrow[equal,dr] & & \mathbb{P}^1 \times \mathbb{P}^1 \arrow[dl] \arrow[dr] & & \mathbb{P}^1 \arrow[equal,dl] \\
        & pt & & \mathbb{P}^2 & & pt & & \mathbb{P}^1 & & \mathbb{P}^1
    \end{tikzcd}
    }
    $$
\end{statement}

\begin{statement}[No. 3.4]
    In this case $X$ is a blow-up of $X_{2.18}$ in a smooth fibre of the fibration $X_{2.18} \xrightarrow{\text{double cover}} \mathbb{P}^1 \times \mathbb{P}^2 \rightarrow \mathbb{P}^2$. The associated elementary relation is
    $$
    \begin{tikzcd}[cramped]
        X_{2.18} \arrow[d] & X_{2.18} \arrow[dd] \arrow[equal,l] \arrow[equal,r] & X_{2.18} \arrow[d] & X \arrow[dd] \arrow[l] \arrow[equal,r] & X \arrow[d] & X \arrow[dd] \arrow[equal,l] \arrow[equal,r] & X \arrow[d] & X \arrow[dd] \arrow[equal,l] \arrow[r] & X_{2.18} \arrow[d] \\
        \mathbb{P}^1 \arrow[dr] &  & \mathbb{P}^2 \arrow[dl] \arrow[equal,dr] & & \mathbb{F}_{1} \arrow[dl] \arrow[dr] & & \mathbb{P}^1 \times \mathbb{P}^1 \arrow[dl] \arrow[dr] & & \mathbb{P}^1 \arrow[equal,dl]\\
        & pt & & \mathbb{P}^2 & & \mathbb{P}^1 & & \mathbb{P}^1 &
    \end{tikzcd}
    $$
\end{statement}

\begin{statement}[No. 3.5]
    In this case $X$ is a blow-up of $\mathbb{P}^1 \times \mathbb{P}^2$ in a curve $C$ of degree $(5,2)$ such that the projection of $C$ to $\mathbb{P}^2$ is an embedding. A toric LG model of $X$ is given by
    \begin{align*}
    f &= \frac{x^2}{y} + 3x + 3y + \frac{y^{2}}{x} + z + 
    \frac{yz}{x} + \frac{x}{z} + \frac{y}{z} + \frac{2x}{y}  +  \frac{2y}{x} + \frac{1}{x} + \frac{1}{y} + \frac{1}{z} \\
     &= \frac{(x+y)(x+y+1)^2}{xy} + \frac{(x+y)z}{x} + \frac{x+y+1}{z}.
    \end{align*}
    Here the constant term is ignored. Taking a change of coordinates $y \mapsto \frac{1}{y}$, we can rewrite the Laurent polynomial as
    $$
    f = \frac{(xy+1)(xy+y+1)^2}{xy^{2}} + z + \frac{z}{xy} + \frac{x}{z} + \frac{1}{yz} + \frac{1}{z}.
    $$
    A parametrized toric LG model of $X$ is given by
    \begin{multline*}
    \widetilde{f} = \frac{xy^{3} + 2a_{3}x^{2}y^{3} + a_{3}^{2}x^{3}y^{3} + (a_{1}+2a_{3}^{3})x^{2}y^{2} + (2a_{1}a_{3}+a_{3}^{4})xy + a_{1}a_{3}^{2} + (a_{1}+a_{3}^{3})y + a_{3}y^{2}}{xy^{2}} \\
    + z + \frac{a_{3}z}{xy} + \frac{a_{2}a_{3}x}{z} + \frac{a_{2}a_{3}^{2}}{yz} + \frac{a_{2}}{z}.
    \end{multline*}
    Hence the associated elementary relation is
    $$
    \begin{tikzcd}[cramped]
        \mathbb{P}^1 \times \mathbb{P}^2 \arrow[d] & \mathbb{P}^1 \times \mathbb{P}^2 \arrow[dd] \arrow[equal,l] \arrow[equal,r] & \mathbb{P}^1 \times \mathbb{P}^2 \arrow[d] & X \arrow[dd] \arrow[l] \arrow[r] & Y' \arrow[d] & Y \arrow[dd] \arrow[dashed,l] \arrow[dashed,r] & Y'' \arrow[d] & X \arrow[dd] \arrow[l] \arrow[r] & \mathbb{P}^1 \times \mathbb{P}^2 \arrow[d] \\
        \mathbb{P}^2 \arrow[dr] & & \mathbb{P}^1 \arrow[dl] \arrow[equal,dr] & & \mathbb{P}^1 \arrow[equal,dl] \arrow[dr] & & \mathbb{P}^2 \arrow[dl] \arrow[equal,dr] & & \mathbb{P}^2 \arrow[equal,dl] \\ 
        & pt & & \mathbb{P}^1 & & pt & & \mathbb{P}^2
    \end{tikzcd}
    $$
    where $Y'$ and $Y''$ are smooth weak Fano varieties, and $Y$ is a terminal Fano variety with an ordinary double point. The birational map $Y' \dashrightarrow Y''$ is a flop over $Y$. A toric LG model of $Y$ is given by
    $$
    f_{Y} = \frac{xy^{3} + 2x^{2}y^{3} + x^{3}y^{3} + 2x^{2}y^{2} + xy + y + y^{2}}{xy^{2}} + z + \frac{z}{xy} + \frac{x}{z} + \frac{1}{yz} + \frac{1}{z}.
    $$
    A general fibre of $Y' \rightarrow \mathbb{P}^1$ is a smooth del Pezzo surface of degree 5. The birational map $Y' \dashrightarrow Y''$ is a flop over $Y$. The period of $f_{Y}$ is the quantum period of $V_{22}$. Hence $Y$ deforms to $V_{22}$.
\end{statement}

\begin{statement}[No. 3.6]
    In this case $X$ is a blow-up of $\mathbb{P}^3$ in the disjoint union of a line $L$ and an elliptic curve $C$ which is an intersection of two quadrics. The associated elementary relation is
    $$
    \begin{tikzcd}[cramped]
        \mathbb{P}^3 \arrow[d] & \mathrm{Bl}_{L}\mathbb{P}^3 \arrow[dd] \arrow[l] \arrow[equal,r] & \mathrm{Bl}_{L}\mathbb{P}^3 \arrow[d] & X \arrow[dd] \arrow[l] \arrow[equal,r] & X \arrow[d] & X \arrow[dd] \arrow[equal,l] \arrow[r] & \mathrm{Bl}_{C}\mathbb{P}^3 \arrow[d] & \mathrm{Bl}_{C}\mathbb{P}^3 \arrow[dd] \arrow[equal,l] \arrow[r] & \mathbb{P}^3 \arrow[d] \\
        pt \arrow[equal,dr] & & \mathbb{P}^1 \arrow[dl] \arrow[equal,dr] & & \mathbb{P}^1 \times \mathbb{P}^1 \arrow[dl] \arrow[dr] & & \mathbb{P}^1 \arrow[equal,dl] \arrow[dr] & & pt \arrow[equal,dl] \\
        & pt & & \mathbb{P}^{1} & & \mathbb{P}^1 & & pt &
    \end{tikzcd}
    $$
\end{statement}

\begin{statement}[No. 3.7]
    In this case $X$ is a blow-up of a variety $X_{2.32} \subseteq \mathbb{P}^2 \times \mathbb{P}^2$ in the family No. 2.32 in an elliptic curve $C$ which is the intersection of two divisors in $|-\frac{1}{2}K_{X_{2.32}}|$. Let $p_{1},p_{2}$ be the first and second projection respectively, and $H$ be the hyperplane class in $\mathbb{P}^2$. Then the effective cone $\mathrm{Eff}_{\mathbb{R}}(X)$ is generated by 4 extremal rays given by $3p_{1}^{*}H - E, 3p_{2}^{*}H - E, E, p_{1}^{*}H + p_{2}^{*}H - E$. 
    
    A toric LG model of $X$ is given by
    $$
    f = x + y + z + \frac{y}{z} + \frac{y}{x} + \frac{z}{y} + \frac{z}{x} + \frac{1}{z} + 
    \frac{y}{xz} + \frac{1}{y} + \frac{2}{x} + \frac{z}{xy} + \frac{1}{xz} + \frac{1}{xy}.
    $$
    Taking a change of coordinates $y \mapsto yz$, we can rewrite the Laurent polynomial as
    $$
    f = x + y + z + yz + \frac{yz}{x} + \frac{z}{x} + \frac{y}{x} + \frac{2}{x} + \frac{1}{y} + \frac{1}{z} + \frac{1}{xy} + \frac{1}{xz} + \frac{1}{yz} + \frac{1}{xyz}.
    $$
    A parametrized toric LG model of $X$ is given by
    $$
    \widetilde{f} = a_{4}x + a_{3}y + a_{3}z + a_{1}a_{3}yz + \frac{a_{1}^{2}yz}{x} + \frac{1+a_{1}^{3}}{x} + \frac{a_{1}a_{3}}{y} + \frac{a_{1}a_{3}}{z} + \frac{a_{1}y}{x} + \frac{a_{1}z}{x} + \frac{a_{1}^{2}}{xy} + \frac{a_{1}^{2}}{xz} + \frac{a_{3}}{yz} + \frac{a_{1}}{xyz}.
    $$
    with the relation $(a_{1},a_{2},a_{3},a_{4}) \sim (\lambda a_{1}, \lambda a_{2}, \lambda^{-1} a_{3}, \lambda^{-3} a_{4})$. On the other chart, a parametrized toric LG model of $X$ is given by 
    $$
    \widetilde{f} = a'_{4}x + a_{2}a'_{3}y + a_{2}a'_{3}z + a'_{3}yz + \frac{a_{2}yz}{x} + \frac{1+a_{2}^{3}}{x} + \frac{a'_{3}}{y} + \frac{a'_{3}}{z} + \frac{a_{2}^{2}y}{x} + \frac{a_{2}^{2}z}{x} + \frac{a_{2}}{xy} + \frac{a_{2}}{xz} + \frac{a_{2}a'_{3}}{yz} + \frac{a_{2}^{2}}{xyz}.
    $$
    Hence the associated elementary relation is
    $$
    \adjustbox{scale=0.9,center}{
    \begin{tikzcd}[cramped, column sep=small]
        X_{2.32} \arrow[d] & X_{2.32} \arrow[dd] \arrow[equal,l] \arrow[equal,r] & X_{2.32} \arrow[d] & X \arrow[dd] \arrow[l] \arrow[r] & \mathbb{P}^1 \times \mathbb{P}^2 \arrow[d] & \mathbb{P}^1 \times \mathbb{P}^2 \arrow[dd] \arrow[equal,l] \arrow[equal,r] & \mathbb{P}^1 \times \mathbb{P}^2 \arrow[d] & X \arrow[dd] \arrow[l] \arrow[r] & \mathbb{P}^1 \times \mathbb{P}^2 \arrow[d] & \mathbb{P}^1 \times \mathbb{P}^2 \arrow[dd] \arrow[equal,l] \arrow[equal,r] & \mathbb{P}^1 \times \mathbb{P}^2 \arrow[d] & X \arrow[dd] \arrow[l] \arrow[r] & X_{2.32} \arrow[d] \\
        \mathbb{P}^2 \arrow[dr] & & \mathbb{P}^2 \arrow[dl] \arrow[equal,dr]  & & \mathbb{P}^2 \arrow[equal,dl] \arrow[dr] & & \mathbb{P}^1 \arrow[dl] \arrow[equal,dr] & & \mathbb{P}^{1} \arrow[equal,dl] \arrow[dr] & & \mathbb{P}^2 \arrow[dl] \arrow[equal,dr] & & \mathbb{P}^2 \arrow[equal,dl] \\
        & pt & & \mathbb{P}^2 & & pt & & \mathbb{P}^1 & & pt & & \mathbb{P}^2
    \end{tikzcd}
    }
    $$
\end{statement}

\begin{statement}[No. 3.8]
    In this case $X$ is a divisor on $\mathbb{F}_{1} \times \mathbb{P}^2$ from the linear system $|p_{1}^{*}g^{*}\mathcal{O}_{\mathbb{P}^2}(1)\otimes p_{2}^{*}\mathcal{O}_{\mathbb{P}^2}(2)|$. Here $p_{1},p_{2},g$ are projections and blow-up morphisms as follows:
    $$
    \begin{tikzcd}
      &  \mathbb{F}_{1} \times \mathbb{P}^2 \arrow[dl,"p_{1}"] \arrow[dr,"p_{2}"] & \\
      \mathbb{F}_{1} \arrow[d,"g"] & & \mathbb{P}^2 \\
      \mathbb{P}^2 & &
    \end{tikzcd}
    $$
    The associated elementary relation is
    $$
    \begin{tikzcd}[cramped,column sep=small]
        X \arrow[d,"p_{1}"] & X \arrow[dd,"g \circ p_{1}"] \arrow[equal,l] \arrow[r] & X_{2.24} \arrow[d] & X_{2.24} \arrow[dd] \arrow[equal,l] \arrow[equal,r] & X_{2.24} \arrow[d] & X \arrow[dd,"p_{2}"] \arrow[l] \arrow[r] & \mathbb{P}^1 \times \mathbb{P}^2 \arrow[d] & \mathbb{P}^1 \times \mathbb{P}^2 \arrow[dd] \arrow[equal,l] \arrow[equal,r] & \mathbb{P}^1 \times \mathbb{P}^2 \arrow[d] & X \arrow[dd] \arrow[l] \arrow[equal,r] & X \arrow[d,"p_{1}"] \\
        \mathbb{F}_{1} \arrow[dr,"g"] & & \mathbb{P}^2  \arrow[equal,dl] \arrow[dr] & & \mathbb{P}^2 \arrow[dl] \arrow[equal,dr] & & \mathbb{P}^2 \arrow[equal,dl] \arrow[dr] & & \mathbb{P}^1 \arrow[dl] \arrow[equal,dr] & & \mathbb{F}_{1} \arrow[dl] \\
        & \mathbb{P}^2 & & pt & & \mathbb{P}^2 & & pt & & \mathbb{P}^1
    \end{tikzcd}
    $$
\end{statement}

\begin{statement}[No. 3.9]\label{case 3.9}
    In this case $X$ is a blow-up of the cone $W_{4} \subseteq \mathbb{P}^6$ over the Veronese surface $S_{4} \subseteq \mathbb{P}^5$ in the disjoint union of the vertex and a quartic $C$ in $S_{4} \simeq \mathbb{P}^2$. The effective cone $\mathrm{Eff}_{\mathbb{R}}(X)$ is generated by 4 extremal rays $2H-2E_{1}-E_{2},H-E_{2},E_{1},E_{2}$, where $E_{1}$ is the exceptional divisor centered at the vertex, $E_{2}$ is the exceptional divisor centered at $C$ and $H$ is the class of a hyperplane section. A toric LG model of $X$ is given by
    $$
    f = x + y + z + \frac{x^{2}}{yz} + \frac{y}{x} + \frac{z}{x} + \frac{2x}{yz} + \frac{1}{x} + \frac{1}{yz}.
    $$
    Taking a change of coordinates $y \mapsto xy, z \mapsto xz$, we can rewrite the Laurent polynomial as
    $$
    f = x + y + z + xy + xz + \frac{1}{x} + \frac{1}{yz} + \frac{2}{xyz} + \frac{1}{x^{2}yz}.
    $$
    A parametrized toric LG model of $X$ is given by
    $$
    \widetilde{f} = a_{2}x + y + z + a_{4}xy + a_{4}xz + \frac{a_{3}}{x} + \frac{(a_{4}x+1)^2}{x^{2}yz}
    $$
    with the relation $(a_{1},a_{2},a_{3},a_{4}) \sim (\lambda a_{1}, \lambda^{-2} a_{2}, \lambda^{2} a_{3}, \lambda^{-1} a_{4})$. On another chart, we can write the parametrized LG model as
    $$
    \widetilde{f} = a'_{2}x+a_{1}y+a_{1}z + a'_{4}xy + a'_{4}xz + \frac{1}{x} + \frac{(a'_{4}x+a_{1})^{2}}{x^{2}yz}.
    $$
    Hence the associated elementary relation is
    $$
    \begin{tikzcd}[cramped, column sep=small]
        X_{2.36} \arrow[d] & X \arrow[dd] \arrow[l] \arrow[r] & X'_{2.36} \arrow[d] & X'_{2.36} \arrow[dd] \arrow[equal,l] \arrow[r] & \mathbb{P}(1,1,1,2) \arrow[d] & Y_{1} \arrow[dd] \arrow[l] \arrow[r] & Y_{2} \arrow[d] & Y'_{1} \arrow[dd] \arrow[l] \arrow[r] & \mathbb{P}(1,1,1,2) \arrow[d] & X_{2.36} \arrow[dd] \arrow[l] \arrow[equal,r] & X \arrow[d] \\
        \mathbb{P}^2 \arrow[equal,dr] & & \mathbb{P}^2 \arrow[equal,dl] \arrow[dr] & & pt \arrow[equal,dl] \arrow[equal,dr] & & pt \arrow[equal,dl] \arrow[equal,dr] & & pt \arrow[equal,dl] \arrow[equal,dr] & & \mathbb{P}^2 \arrow[dl] \\
         & \mathbb{P}^2 & & pt & & pt & & pt & & pt
    \end{tikzcd}
    $$
    where $Y_{1}$ and $Y'_{1}$ have a quotient singularity of the form $\frac{1}{2}(1,1,1)$. A toric LG model of $Y_{1}$ and $Y'_{1}$ is given by
    $$
    f_{Y_{1}} = f_{Y'_{1}} = x + y + z + xy + xz + \frac{1}{yz} + \frac{2}{xyz} + \frac{1}{x^{2}yz}.
    $$
    The variety $Y_{2}$ has two quotient singularities of the form $\frac{1}{2}(1,1,1)$. A toric LG model of $Y_{2}$ is given by
    $$
    f_{Y_{2}} = y + z + xy + xz + \frac{1}{yz} + \frac{2}{xyz} + \frac{1}{x^{2}yz}.
    $$
\end{statement}

\begin{statement}[No. 3.10]
    In this case $X$ is a blow-up of $Q$ in the disjoint union of two conics $C_{1},C_{2}$. The associated elementary relation is
    $$
    \begin{tikzcd}[cramped]
        Q \arrow[d] & \mathrm{Bl}_{C_{1}}Q \arrow[dd] \arrow[l] \arrow[equal,r] & \mathrm{Bl}_{C_{1}}Q \arrow[d] & X \arrow[dd] \arrow[l] \arrow[equal,r] & X \arrow[d] & X \arrow[dd] \arrow[equal,l] \arrow[r] & \mathrm{Bl}_{C_{2}}Q \arrow[d] & \mathrm{Bl}_{C_{2}}Q \arrow[dd] \arrow[equal,l] \arrow[r] & Q \arrow[d] \\
        pt \arrow[equal,dr] & & \mathbb{P}^1 \arrow[dl] \arrow[equal,dr] & & \mathbb{P}^1 \times \mathbb{P}^1 \arrow[dl] \arrow[dr] & & \mathbb{P}^1 \arrow[equal,dl] \arrow[dr] & & pt \arrow[equal,dl] \\
        & pt & & \mathbb{P}^1 & & \mathbb{P}^1 & & pt &
    \end{tikzcd}
    $$
\end{statement}

\begin{statement}[No. 3.11]
    In this case $X$ is obtained by first blowing up $\mathbb{P}^3$ in a point $P$, then blowing up an elliptic curve $\widetilde{C}$ which is the intersection of two divisors in $|-\frac{1}{2}K_{\mathrm{Bl}_{P}\mathbb{P}^3}|$. The image $C$ of $\widetilde{C}$ in $\mathbb{P}^3$ is the intersection of two quadrics passing through $P$. The associated elementary relation is
    $$
    \begin{tikzcd}[cramped,column sep=small]
    \mathbb{P}^3 \arrow[d] & \mathrm{Bl}_{P} \mathbb{P}^3 \arrow[dd] \arrow[l] \arrow[equal,r] & \mathrm{Bl}_{P} \mathbb{P}^3 \arrow[d] & X \arrow[dd] \arrow[l] \arrow[r] & \mathbb{P}^1 \times \mathbb{P}^2 \arrow[d] & \mathbb{P}^1 \times \mathbb{P}^2 \arrow[dd] \arrow[equal,l] \arrow[equal,r] & \mathbb{P}^1 \times \mathbb{P}^2 \arrow[d] & X \arrow[dd] \arrow[l] \arrow[r] & \mathrm{Bl}_{C}\mathbb{P}^3 \arrow[d] & \mathrm{Bl}_{C}\mathbb{P}^3 \arrow[dd] \arrow[equal,l] \arrow[r] & \mathbb{P}^3 \arrow[d] \\
    pt \arrow[equal,dr] & & \mathbb{P}^2 \arrow[dl] \arrow[equal,dr] & & \mathbb{P}^2 \arrow[equal,dl] \arrow[dr] & & \mathbb{P}^1 \arrow[dl] \arrow[equal,dr] & & \mathbb{P}^1 \arrow[equal,dl] \arrow[dr] & & pt \arrow[equal,dl]\\
    & pt & & \mathbb{P}^2 & & pt & & \mathbb{P}^1 & & pt
    \end{tikzcd}
    $$
\end{statement}

\begin{statement}[No. 3.12]
    In this case $X$ is a blow-up of $\mathbb{P}^3$ in the disjoint union of a line $L$ and a twisted cubic $C$. The associated elementary relation is
    $$
    \begin{tikzcd}[cramped,column sep=small]
        \mathbb{P}^3 \arrow[d] & \mathrm{Bl}_{L}\mathbb{P}^3 \arrow[dd] \arrow[l] \arrow[equal,r] & \mathrm{Bl}_{L}\mathbb{P}^3 \arrow[d] & X \arrow[dd] \arrow[l] \arrow[r] & \mathbb{P}^1 \times \mathbb{P}^2 \arrow[d] & \mathbb{P}^1 \times \mathbb{P}^2 \arrow[dd] \arrow[equal,l] \arrow[equal,r] & \mathbb{P}^1 \times \mathbb{P}^2 \arrow[d] & X \arrow[dd] \arrow[l] \arrow[r] & \mathrm{Bl}_{C}\mathbb{P}^3 \arrow[d] & \mathrm{Bl}_{C}\mathbb{P}^3 \arrow[dd] \arrow[equal,l] \arrow[r] & \mathbb{P}^3 \arrow[d] \\
        pt \arrow[equal,dr] & & \mathbb{P}^1 \arrow[dl] \arrow[equal,dr] & & \mathbb{P}^1 \arrow[equal,dl] \arrow[dr] & & \mathbb{P}^2 \arrow[dl] \arrow[equal,dr] & & \mathbb{P}^2 \arrow[equal,dl] \arrow[dr] & & pt \arrow[equal,dl] \\
        & pt & & \mathbb{P}^1 & & pt & & \mathbb{P}^2 & & pt &
    \end{tikzcd}
    $$
\end{statement}

\begin{statement}[No. 3.13]
    In this case $X$ is a blow-up of a threefold $X_{2.32} \subseteq \mathbb{P}^2 \times \mathbb{P}^2$ from the family No. 2.32 in a general curve $C$ of bidegree $(2,2)$. A toric LG model of $X$ is given by
    $$
    f = x + y + z + yz + xy + \frac{1}{x} + \frac{1}{y} + \frac{1}{yz} + \frac{1}{xyz}.
    $$
    A parametrized LG model of $X$ is given by
    $$
    \widetilde{f} = a_{1}x + a_{2}y + a_{1}z + a_{3}yz + a_{3}xy + \frac{a_{1}}{x} + \frac{a_{2}}{y} + \frac{a_{3}}{yz} + \frac{a_{2}}{xyz}.
    $$
    
    Hence the associated elementary relation is
    $$
    \adjustbox{scale=0.9,center}{%
    \begin{tikzcd}[cramped, column sep=small]
        X_{2.32} \arrow[d] & X_{2.32} \arrow[dd] \arrow[equal,l] \arrow[equal,r] & X_{2.32} \arrow[d] & X \arrow[dd] \arrow[l] \arrow[r] & X'_{2.32} \arrow[d] & X'_{2.32} \arrow[dd] \arrow[equal,l] \arrow[equal,r] & X'_{2.32} \arrow[d] & X \arrow[dd] \arrow[l] \arrow[r] & X''_{2.32} \arrow[d] & X''_{2.32} \arrow[dd] \arrow[equal,l] \arrow[equal,r] & X''_{2.32} \arrow[d] & X \arrow[dd] \arrow[l] \arrow[r] & X_{2.32} \arrow[d] \\
        \mathbb{P}^2 \arrow[dr] & & \mathbb{P}^2 \arrow[dl] \arrow[equal,dr] & & \mathbb{P}^2 \arrow[equal,dl] \arrow[dr] & & \mathbb{P}^2 \arrow[dl] \arrow[equal,dr] & & \mathbb{P}^2 \arrow[equal,dl] \arrow[dr] & & \mathbb{P}^2 \arrow[dl] \arrow[equal,dr] & & \mathbb{P}^2 \arrow[equal,dl] \\
        & pt & &\mathbb{P}^2 & & pt & & \mathbb{P}^2 & & pt & & \mathbb{P}^2
    \end{tikzcd}
    }
    $$
\end{statement}

\begin{statement}[No. 3.14]\label{case 3.14}
    In this case $X$ is a blow-up of $\mathbb{P}^3$ in the disjoint union of a plane cubic curve $C$ and a point $P$ outside the plane containing $C$. A toric LG model of $X$ is given by
    $$
    f = x + y + z + xyz + \frac{1}{xy} + \frac{1}{yz} + \frac{1}{xz} + \frac{1}{xyz}.
    $$
    A parametrized LG model of $X$ is given by
    $$
    \widetilde{f} = a_{1}x + a_{1}y + a_{1}z + a_{3}xyz + \frac{a_{4}}{xy} + \frac{a_{4}}{yz} + \frac{a_{4}}{xz} + \frac{a_{2}}{xyz}
    $$
    with the relation $(a_{1},a_{2},a_{3},a_{4}) \sim (\lambda a_{1}, \lambda^{-3} a_{2}, \lambda^{3} a_{3}, \lambda^{-2} a_{4})$. Hence the associated elementary relation is
    $$
    \begin{tikzcd}[cramped, column sep=small]
        \mathbb{P}^3 \arrow[d] & \mathrm{Bl}_{P}\mathbb{P}^3 \arrow[dd] \arrow[l] \arrow[equal,r] & \mathrm{Bl}_{P}\mathbb{P}^3 \arrow[d] & X \arrow[dd] \arrow[l] \arrow[r] & X_{2.36} \arrow[d] & X_{2.36} \arrow[dd] \arrow[equal,l] \arrow[r] & \mathbb{P}(1,1,1,2) \arrow[d] & Y \arrow[dd] \arrow[l] \arrow[r] & Y' \arrow[d] & \mathrm{Bl}_{C}\mathbb{P}^3 \arrow[dd] \arrow[l] \arrow[r] & \mathbb{P}^3 \arrow[d] \\
        pt \arrow[equal,dr] & & \mathbb{P}^2 \arrow[dl] \arrow[equal,dr] & & \mathbb{P}^2 \arrow[equal,dl] \arrow[dr] & & pt \arrow[equal,dl] \arrow[equal,dr] & & pt \arrow[equal,dl] \arrow[equal,dr] &  & pt \arrow[equal,dl]  \\
        & pt & & \mathbb{P}^2 & & pt & & pt & & pt &
    \end{tikzcd}
    $$
    where $Y$ and $Y'$ have a quotient singularity of the form $\frac{1}{2}(1,1,1)$. A toric LG model of $Y$ is given by
    $$
    f_{Y} = x+y+z+xyz+\frac{1}{xy}+\frac{1}{yz}+\frac{1}{xz}
    $$
    and a toric LG model of $Y'$ is given by
    $$
    f_{Y'} = x+y+z+\frac{1}{xy}+\frac{1}{yz}+\frac{1}{xz}.
    $$
\end{statement}

\begin{statement}[No. 3.15]
    In this case $X$ is a blow-up of $Q$ in the disjoint union of a line $L$ and a conic $C$. The associated elementary relation is
    $$
    \begin{tikzcd}[cramped, column sep=small]
        Q \arrow[d] & \mathrm{Bl}_{L}Q \arrow[dd] \arrow[l] \arrow[equal,r] & \mathrm{Bl}_{L}Q \arrow[d] & X \arrow[dd] \arrow[l] \arrow[r] & \mathbb{P}^1 \times \mathbb{P}^2 \arrow[d] & \mathbb{P}^1 \times \mathbb{P}^2 \arrow[dd] \arrow[equal,l] \arrow[equal,r] & \mathbb{P}^1 \times \mathbb{P}^2 \arrow[d] & X \arrow[dd] \arrow[l] \arrow[r] & \mathrm{Bl}_{C}Q \arrow[d] & \mathrm{Bl}_{C}Q \arrow[dd] \arrow[equal,l] \arrow[r] & Q \arrow[d] \\
        pt \arrow[equal,dr] & & \mathbb{P}^2 \arrow[dl] \arrow[equal,dr] & & \mathbb{P}^2 \arrow[equal,dl] \arrow[dr] & & \mathbb{P}^1 \arrow[dl] \arrow[equal,dr] & & \mathbb{P}^1 \arrow[equal,dl] \arrow[dr] & & pt \arrow[equal,dl] \\
        & pt & & \mathbb{P}^2 & & pt & & \mathbb{P}^1 & & pt &
    \end{tikzcd}
    $$
\end{statement}

\begin{statement}[No. 3.16]
    In this case $X$ is obtained by first blowing up a point $P$ in $\mathbb{P}^3$, then blowing up the proper transformation $\widetilde{C}$ of a twisted cubic $C \subseteq \mathbb{P}^3$ containing $P$. The associated elementary relation is
    $$
    \begin{tikzcd}[cramped, column sep=small]
        \mathbb{P}^3 \arrow[d] & \mathrm{Bl}_{P} \mathbb{P}^3 \arrow[dd] \arrow[l] \arrow[equal,r] & \mathrm{Bl}_{P} \mathbb{P}^3 \arrow[d] & X \arrow[dd] \arrow[l] \arrow[r] & X_{2.32} \arrow[d] & X_{2.32} \arrow[dd] \arrow[equal,l] \arrow[equal,r] & X_{2.32} \arrow[d] & X \arrow[dd] \arrow[l] \arrow[r] & \mathrm{Bl}_{C}\mathbb{P}^3 \arrow[d] & \mathrm{Bl}_{C}\mathbb{P}^3 \arrow[dd] \arrow[equal,l] \arrow[r] & \mathbb{P}^3 \arrow[d] \\
        pt \arrow[equal,dr] & & \mathbb{P}^2 \arrow[dl] \arrow[equal,dr] & & \mathbb{P}^2 \arrow[equal,ld] \arrow[rd] & & \mathbb{P}^2 \arrow[ld] \arrow[equal,dr] & & \mathbb{P}^2 \arrow[equal,dl] \arrow[dr] & & pt \arrow[equal,dl] \\
        & pt & & \mathbb{P}^2 & & pt & & \mathbb{P}^2 & & pt &
    \end{tikzcd}
    $$  
\end{statement}

\begin{statement}[No. 3.17]
    In this case $X$ is a divisor on $\mathbb{P}^1 \times \mathbb{P}^1 \times \mathbb{P}^2$ of type $(1,1,1)$. The associated elementary relation is 

    $$
    \adjustbox{scale=0.9,center}{%
    \begin{tikzcd}[cramped,column sep=small]
        X \arrow[d] & X \arrow[equal,l] \arrow[dd] \arrow[r] & \mathbb{P}^1 \times \mathbb{P}^2 \arrow[d] & \mathbb{P}^1 \times \mathbb{P}^2 \arrow[dd] \arrow[equal,l] \arrow[equal,r] & \mathbb{P}^1 \times \mathbb{P}^2 \arrow[d] & X \arrow[dd] \arrow[l] \arrow[r] & \mathbb{P}^1 \times \mathbb{P}^2 \arrow[d] & \mathbb{P}^1 \times \mathbb{P}^2 \arrow[dd] \arrow[equal,l] \arrow[equal,r] & \mathbb{P}^1 \times \mathbb{P}^2 \arrow[d] & X \arrow[dd] \arrow[l] \arrow[equal,r] & X \arrow[d] \\
        \mathbb{P}^1 \times \mathbb{P}^1 \arrow[dr] & & \mathbb{P}^{1} \arrow[equal,dl] \arrow[dr] & & \mathbb{P}^2 \arrow[dl] \arrow[equal,dr] & & \mathbb{P}^2 \arrow[equal,dl] \arrow[dr] & & \mathbb{P}^1 \arrow[dl] \arrow[equal,dr] & & \mathbb{P}^1 \times \mathbb{P}^1 \arrow[dl] \\
        & \mathbb{P}^1 & & pt & & \mathbb{P}^2 & & pt & & \mathbb{P}^1
    \end{tikzcd}
    }
    $$
\end{statement}

\begin{statement}[No. 3.18]
    In this case $X$ is a blow-up of $\mathbb{P}^3$ in the disjoint union of a line $L$ and a conic $C$. The associated elementary relation is 
    $$
    \begin{tikzcd}
        \mathbb{P}^3 \arrow[d] & \mathrm{Bl}_{L}\mathbb{P}^3 \arrow[dd] \arrow[l] \arrow[equal,r] & \mathrm{Bl}_{L}\mathbb{P}^3 \arrow[d] & X \arrow[dd] \arrow[l] \arrow[r] & X_{2.29} \arrow[d] & X_{2.29} \arrow[dd] \arrow[equal,l] \arrow[r] & Q \arrow[d] & \mathrm{Bl}_{C}\mathbb{P}^3 \arrow[dd] \arrow[l] \arrow[r] & \mathbb{P}^3 \arrow[d] \\
        pt \arrow[equal,dr] & & \mathbb{P}^1 \arrow[dl] \arrow[equal,dr] & & \mathbb{P}^1 \arrow[equal,dl] \arrow[dr] 
        & & pt \arrow[equal,dl] \arrow[equal,dr] & & pt \arrow[equal,dl] \\
        & pt & & \mathbb{P}^1 & & pt & & pt &
    \end{tikzcd}
    $$
\end{statement}

\begin{statement}[No. 3.19]
    In this case $X$ is a blow-up of $Q$ in two non-collinear points $P_{1},P_{2}$. The associated elementary relation is
    $$
    \begin{tikzcd}[cramped,column sep=small]
        \mathbb{P}^3 \arrow[d] & \mathrm{Bl}_{P'_{1}}\mathbb{P}^3 \arrow[dd] \arrow[l] \arrow[equal,r] & \mathrm{Bl}_{P'_{1}}\mathbb{P}^3 \arrow[d] & X \arrow[dd] \arrow[l] \arrow[r] & \mathrm{Bl}_{P'_{2}}\mathbb{P}^3 \arrow[d] & \mathrm{Bl}_{P'_{2}}\mathbb{P}^3 \arrow[dd] \arrow[equal,l] \arrow[r] & \mathbb{P}^3 \arrow[d] & \mathrm{Bl}_{P_{1}} Q \arrow[dd] \arrow[l] \arrow[r] & Q \arrow[d] & \mathrm{Bl}_{P_{2}} Q \arrow[dd] \arrow[l] \arrow[r] & \mathbb{P}^3 \arrow[d] \\
        pt \arrow[equal,dr] & & \mathbb{P}^2 \arrow[dl] \arrow[equal,dr] & & \mathbb{P}^2 \arrow[equal,dl] \arrow[dr] & & pt \arrow[equal,dl] \arrow[equal,dr] & & pt \arrow[equal,dl] \arrow[equal,dr] & & pt \arrow[equal,dl] \\
         & pt & & \mathbb{P}^2 & & pt & & pt & & pt &
    \end{tikzcd}
    $$
\end{statement}

\begin{statement}[No. 3.20]
    In this case $X$ is a blow-up of $Q$ in the disjoint union of two lines $L_{1},L_{2}$. The associated elementary relation is
    $$
    \begin{tikzcd}[cramped, column sep=small]
        Q \arrow[d] & \mathrm{Bl}_{L_{1}}Q \arrow[dd] \arrow[l] \arrow[equal,r] & \mathrm{Bl}_{L_{1}}Q \arrow[d] & X \arrow[dd] \arrow[l] \arrow[r] & X_{2.32} \arrow[d] & X_{2.32} \arrow[dd] \arrow[equal,l] \arrow[equal,r] & X_{2.32} \arrow[d] & X \arrow[dd] \arrow[l] \arrow[r] & \mathrm{Bl}_{L_{2}}Q \arrow[d] & \mathrm{Bl}_{L_{2}}Q \arrow[dd] \arrow[equal,l] \arrow[r] & Q \arrow[d] \\
        pt \arrow[equal,dr] & & \mathbb{P}^2 \arrow[dl] \arrow[equal,dr] & & \mathbb{P}^2 \arrow[equal,dl] \arrow[dr] & & \mathbb{P}^2 \arrow[dl] \arrow[equal,dr] & & \mathbb{P}^2 \arrow[equal,dl] \arrow[dr] & & pt \arrow[equal,dl] \\
        & pt & & \mathbb{P}^2 & & pt & & \mathbb{P}^2 & & pt &
    \end{tikzcd}
    $$
\end{statement}

\begin{statement}[No. 3.21]
    In this case $X$ is a blow-up of $\mathbb{P}^1 \times \mathbb{P}^2$ in a curve $C$ of bidegree $(2,1)$. A toric LG model of $X$ is given by
    $$    
    f = x + y + z + \frac{1}{x} + \frac{1}{y} + \frac{1}{z} + \frac{1}{yz} + \frac{1}{xyz}.
    $$
    A parametrized toric LG model of $X$ is given by
    $$
    \widetilde{f} = a_{1}x + y + z + \frac{1}{x} + \frac{a_{3}}{y} + \frac{a_{3}}{z} + \frac{a_{2}}{yz} + \frac{a_{3}}{xyz}.
    $$
    The associated elementary relation is
    $$
    \begin{tikzcd}[cramped]
    \mathbb{P}^1 \times \mathbb{P}^2 \arrow[d] & \mathbb{P}^1 \times \mathbb{P}^2 \arrow[dd] \arrow[equal,l] \arrow[equal,r] & \mathbb{P}^1 \times \mathbb{P}^2 \arrow[d] & X \arrow[dd] \arrow[l] \arrow[r] & Y' \arrow[d] & Y \arrow[dd] \arrow[dashed,l] \arrow[dashed,r] & Y'' \arrow[d] & X \arrow[dd] \arrow[l] \arrow[r] & \mathbb{P}^1 \times \mathbb{P}^2 \arrow[d]  \\
    \mathbb{P}^1 \arrow[dr] & & \mathbb{P}^2 \arrow[dl] \arrow[equal,dr] & & \mathbb{P}^2 \arrow[equal,dl] \arrow[dr] & & \mathbb{P}^1 \arrow[dl] \arrow[equal,dr] & & \mathbb{P}^1 \arrow[equal,dl]  \\
    & pt & & \mathbb{P}^2 & & pt & & \mathbb{P}^1 &
    \end{tikzcd}
    $$
    where a general fibre of $Y'' \rightarrow \mathbb{P}^1$ is isomorphic to $\mathbb{P}^1 \times \mathbb{P}^1$. A toric LG model of $Y$ is given by
    $$
    f_{Y} = x + y + z + \frac{1}{x} + \frac{1}{y} + \frac{1}{z} + \frac{1}{xyz}.
    $$
    Hence $Y$ deforms to $B_{5}$. The birational map $Y' \dashrightarrow Y''$ is a flop over $Y$.
\end{statement}

\begin{statement}[No. 3.22]\label{case 3.22}
    In this case $X$ is a blow-up of $\mathbb{P}^1 \times \mathbb{P}^2$ in a conic $C_{x} \subseteq \{x\} \times \mathbb{P}^2$ for some $x \in \mathbb{P}^1$. A toric LG model of $X$ is given by
    $$
    f = x + y + z + \frac{z}{x} + \frac{1}{x} + \frac{1}{yz} + \frac{1}{xyz}.
    $$
    A parametrized LG model of $X$ is given by
    $$
    \widetilde{f} = x+y+a_{2}z+\frac{a_{3}z}{x} + \frac{a_{1}}{x}+\frac{a_{2}}{yz}+\frac{a_{3}}{xyz}.
    $$
    Hence the associated elementary relation is
    $$
    \begin{tikzcd}[cramped, column sep=small]
        \mathbb{P}^1 \times \mathbb{P}^2 \arrow[d] & X \arrow[dd] \arrow[l] \arrow[r] & X_{2.36} \arrow[d] & X_{2.36} \arrow[dd] \arrow[equal,l] \arrow[r] & \mathbb{P}(1,1,1,2) \arrow[d] & Y \arrow[dd] \arrow[l] \arrow[equal,r] & Y \arrow[d] & X \arrow[dd] \arrow[l] \arrow[r] & \mathbb{P}^1 \times \mathbb{P}^2 \arrow[d] & \mathbb{P}^1 \times \mathbb{P}^2 \arrow[dd] \arrow[equal,l] \arrow[equal,r] & \mathbb{P}^1 \times \mathbb{P}^2 \arrow[d] \\
        \mathbb{P}^2 \arrow[equal,dr] & & \mathbb{P}^2 \arrow[equal,dl] \arrow[dr] & & pt \arrow[equal,dl] \arrow[equal,dr] \arrow[equal,dr] & & \mathbb{P}^1 \arrow[dl] \arrow[equal,dr] & & \mathbb{P}^1 \arrow[equal,dl] \arrow[dr] & & \mathbb{P}^2 \arrow[dl] \\
         & \mathbb{P}^2 & & pt & & pt & & \mathbb{P}^1 & & pt &
    \end{tikzcd}
    $$
    where $Y$ has a singularity of type $\frac{1}{2}(1,1,1)$. A toric LG model of $Y$ is given by
    $$
    f_{Y} = x+y+z+\frac{z}{x}+\frac{1}{yz}+\frac{1}{xyz}.
    $$
\end{statement}

\begin{statement}[No. 3.23]
    In this case $X$ is obtained by first blowing up a point $P$ in $\mathbb{P}^3$, then blowing up the proper transformation $\widetilde{C}$ of a conic $C \subseteq \mathbb{P}^3$ containing $P$. The associated elementary relation is
    $$
    \begin{tikzcd}
        \mathbb{P}^3 \arrow[d] & \mathrm{Bl}_{P} \mathbb{P}^3 \arrow[dd] \arrow[l] \arrow[equal,r] & \mathrm{Bl}_{P} \mathbb{P}^3 \arrow[d] & X \arrow[dd] \arrow[l] \arrow[r] & X_{2.31} \arrow[d] & X_{2.31} \arrow[dd] \arrow[equal,l] \arrow[r] & Q \arrow[d] & X_{2.30} \arrow[dd] \arrow[l] \arrow[r] & \mathbb{P}^3 \arrow[d] \\
        pt \arrow[equal,dr] & & \mathbb{P}^2 \arrow[dl] \arrow[equal,dr] & & \mathbb{P}^2 \arrow[equal,dl] \arrow[dr] & & pt \arrow[equal,dl] \arrow[equal,dr] & & pt \arrow[equal,dl] \\
        & pt & & \mathbb{P}^2 & & pt & & pt &
    \end{tikzcd}
    $$
\end{statement}

\begin{statement}[No. 3.24]
    In this case $X$ is a fibre product
    $$
    \begin{tikzcd}
        X \arrow[d] \arrow[r] & X_{2.32} \arrow[d] \\
        \mathbb{F}_{1} \arrow[r] & \mathbb{P}^2
    \end{tikzcd}
    $$
    where $X_{2.32}$ is a Fano variety in family No. 2.32. The associated elementary relation is
    $$
    \begin{tikzcd}[cramped, column sep=small]
    X \arrow[d] & X \arrow[dd] \arrow[equal,l] \arrow[r] & X_{2.32} \arrow[d] & X_{2.32} \arrow[dd] \arrow[equal,l] \arrow[equal,r] & X_{2.32} \arrow[d] & X \arrow[dd] \arrow[l] \arrow[r] & \mathbb{P}^1 \times \mathbb{P}^2 \arrow[d] & \mathbb{P}^1 \times \mathbb{P}^2 \arrow[dd] \arrow[equal,l] \arrow[equal,r] & \mathbb{P}^1 \times \mathbb{P}^2 \arrow[d] & X \arrow[dd] \arrow[l] \arrow[equal,r] & X \arrow[d] \\
    \mathbb{F}_{1} \arrow[dr] & & \mathbb{P}^2 \arrow[equal,dl] \arrow[dr] & & \mathbb{P}^2 \arrow[dl] \arrow[equal,dr] & & \mathbb{P}^2 \arrow[equal,dl] \arrow[dr] & & \mathbb{P}^1 \arrow[dl] \arrow[equal,dr] & & \mathbb{F}_{1} \arrow[dl] \\
       & \mathbb{P}^2 & & pt & & \mathbb{P}^2 & & pt & & \mathbb{P}^1 &
    \end{tikzcd}
    $$
\end{statement}

\begin{statement}[No. 3.25]
    In this case $X$ is a blow-up of $\mathbb{P}^3$ in the disjoint union of two lines $L_{1}$ and $L_{2}$. The associated elementary relation is
    $$
    \begin{tikzcd}[cramped]
      \mathbb{P}^{3} \arrow[d] & \mathrm{Bl}_{L_{1}} \mathbb{P}^3 \arrow[dd] \arrow[l] \arrow[equal,r] & \mathrm{Bl}_{L_{1}} \mathbb{P}^3 \arrow[d] & X \arrow[dd] \arrow[l] \arrow[equal,r] & X \arrow[d] & X \arrow[dd] \arrow[equal,l] \arrow[r] & \mathrm{Bl}_{L_{2}} \mathbb{P}^3 \arrow[d] & \mathrm{Bl}_{L_{2}} \mathbb{P}^3 \arrow[dd] \arrow[equal,l] \arrow[r] & \mathbb{P}^3 \arrow[d] \\
      pt \arrow[equal,dr] & & \mathbb{P}^1 \arrow[dl] \arrow[equal,dr] & & \mathbb{P}^1 \times \mathbb{P}^1 \arrow[dl] \arrow[dr] & & \mathbb{P}^{1} \arrow[equal,dl] \arrow[dr] & & pt \arrow[equal,dl] \\
       & pt & & \mathbb{P}^1 & & \mathbb{P}^1 & & pt & \\
    \end{tikzcd}
    $$
\end{statement}

\begin{statement}[No. 3.26]
    In this case $X$ is a blow-up of $\mathbb{P}^3$ in the disjoint union of a point $P$ and a line $L$. A toric LG model of $X$ is given by
    $$
    f = x + y + z + xy + \frac{1}{x} + \frac{1}{xyz}.
    $$
    A parametrized toric LG model of $X$ is given by
    $$
    \widetilde{f} = x + a_{1}y + z + a_{2}xy + \frac{a_{3}}{x} + \frac{1}{xyz}.
    $$
    Hence the associated elementary relation is
    $$
    \begin{tikzcd}[cramped,column sep=small]
    \mathrm{Bl}_{P}\mathbb{P}^3 \arrow[d] & X \arrow[dd] \arrow[l] \arrow[r] & \mathbb{P}^2 \times \mathbb{P}^1 \arrow[d] & \mathbb{P}^2 \times \mathbb{P}^1 \arrow[dd] \arrow[equal,l] \arrow[equal,r] & \mathbb{P}^2 \times \mathbb{P}^1 \arrow[d] & X \arrow[dd] \arrow[l] \arrow[r] & \mathrm{Bl}_{L}\mathbb{P}^3 \arrow[d] \arrow[equal,r] & \mathrm{Bl}_{L}\mathbb{P}^3 \arrow[dd]\arrow[r] & \mathbb{P}^3 \arrow[d] & \mathrm{Bl}_{P}\mathbb{P}^3 \arrow[dd] \arrow[l] \arrow[equal,r] & \mathrm{Bl}_{P}\mathbb{P}^3 \arrow[d] \\
    \mathbb{P}^2 \arrow[equal,dr] & & \mathbb{P}^2 \arrow[equal,dl] \arrow[dr] & & \mathbb{P}^1 \arrow[dl] \arrow[equal,dr] & & \mathbb{P}^1 \arrow[equal,dl] \arrow[dr] & & pt \arrow[equal,dl] \arrow[equal,dr] & & \mathbb{P}^2 \arrow[dl] \\
     &\mathbb{P}^2 & & pt & & \mathbb{P}^1 & & pt & & pt &
    \end{tikzcd}    
    $$
\end{statement}

\begin{statement}[No. 3.27]
    In this case $X$ is isomorphic to $\mathbb{P}^1 \times \mathbb{P}^1 \times \mathbb{P}^1$. The associated elementary relation is
    $$
    \begin{tikzcd}
    X \arrow[d,"p_{3} \times p_{1}"] \arrow[equal,r] & X \arrow[dd,"p_{1}"] \arrow[equal,r] & X \arrow[d,"p_{1} \times p_{2}"] \arrow[equal,r] & X \arrow[dd,"p_{2}"] \arrow[equal,r] & X \arrow[d,"p_{2} \times p_{3}"] \arrow[equal,r] & X \arrow[dd,"p_{3}"] \arrow[equal,r] & X \arrow[d,"p_{3} \times p_{1}"] \\    
    \mathbb{P}^1 \times \mathbb{P}^1 \arrow[dr] & & \mathbb{P}^1 \times \mathbb{P}^1 \arrow[dl] \arrow[dr] & & \mathbb{P}^1 \times \mathbb{P}^1 \arrow[dl] \arrow[dr] & & \mathbb{P}^1 \times \mathbb{P}^1 \arrow[dl] \\    
    & \mathbb{P}^1 & & \mathbb{P}^1 & & \mathbb{P}^1 &
    \end{tikzcd}
    $$
\end{statement}

\begin{statement}[No. 3.28]
    In this case $X$ is isomorphic to $\mathrm{Bl}_{p}\mathbb{P}^2 \times \mathbb{P}^1$. The associated elementary relation is
    $$
    \begin{tikzcd}[cramped, column sep=small]
    \mathbb{P}^2 \times \mathbb{P}^1 \arrow[d,"p_{1}"] \arrow[equal,r] & \mathbb{P}^2 \times \mathbb{P}^1 \arrow[dd] \arrow[equal,r] & \mathbb{P}^2 \times \mathbb{P}^1 \arrow[d,"p_{2}"] & X \arrow[dd,"p_{2}"] \arrow[equal,r] \arrow[l] & X \arrow[d,"\pi \circ p_{1} \times p_{2}"] \arrow[equal,r] & X \arrow[dd,"\pi \circ p_{1}"] \arrow[equal,r] & X \arrow[d,"p_{1}"] \arrow[equal,r] & X \arrow[dd]\arrow[r] & \mathbb{P}^2 \times \mathbb{P}^1 \arrow[d,"p_{1}"] \\
    \mathbb{P}^2 \arrow[dr] & & \mathbb{P}^1 \arrow[dl] \arrow[dr] & & \mathbb{P}^1 \times \mathbb{P}^1 \arrow[dl] \arrow[dr] & & \mathrm{Bl}_{p}\mathbb{P}^2 \arrow[dl]\arrow[dr] & & \mathbb{P}^2 \arrow[dl] \\    
    & pt & & \mathbb{P}^1 & & \mathbb{P}^1 & & \mathbb{P}^2 &
    \end{tikzcd}
    $$
    where $\pi :\mathrm{Bl}_{p}\mathbb{P}^2 \rightarrow \mathbb{P}^1$ is the fibration of the Hirzebruch surface. A toric LG model of $X$ is given by
    $$
    f = x + y + z + \frac{x}{z} + \frac{1}{x} + \frac{1}{y}.
    $$
\end{statement}

\begin{statement}[No. 3.29]
    In this case $X$ is obtained by first blowing up a point $P$ in $\mathbb{P}^3$, then blowing up a line $L$ in the exceptional divisor $E \cong \mathbb{P}^2$. A toric LG model of $X$ is given by
    $$
    f = x + y + z + \frac{y}{x} + \frac{1}{x} + \frac{1}{xyz}.
    $$
    A parametrized toric LG model of $X$ is given by
    $$
    \widetilde{f} = x + a_{1}y + z + \frac{a_{3}y}{x} + \frac{a_{2}}{x} + \frac{1}{xyz}.
    $$
    Hence the associated elementary relation is
    $$
    \begin{tikzcd}[cramped]
        \mathbb{P}^3 \arrow[d] & \mathrm{Bl}_{P} \mathbb{P}^3 \arrow[dd] \arrow[l] \arrow[equal,r] & \mathrm{Bl}_{P} \mathbb{P}^3 \arrow[d] & X \arrow[dd] \arrow[l] \arrow[r] & X_{2.36} \arrow[d] & X_{2.36} \arrow[dd] \arrow[equal,l] \arrow[r] & \mathbb{P}(1,1,1,2) \arrow[d] & Y \arrow[dd] \arrow[l] \arrow[r] & \mathbb{P}^3 \arrow[d] \\
        pt \arrow[equal,dr] & & \mathbb{P}^2 \arrow[dl] \arrow[equal,dr] & & \mathbb{P}^2 \arrow[equal,dl] \arrow[dr] & & pt \arrow[equal,dl] \arrow[equal,dr] & & pt \arrow[equal,dl] \\
        & pt & & \mathbb{P}^2 & & pt & & pt &
    \end{tikzcd}
    $$
    where $Y \rightarrow \mathbb{P}(1,1,1,2)$ is a blow-up of a line outside the singular point.
\end{statement}

\begin{statement}[No. 3.30]
    In this case $X$ is obtained by first blowing up a point $P$ in $\mathbb{P}^3$, then blowing up the proper transformation $\widetilde{L}$ of a line $L \subseteq \mathbb{P}^3$ containing $P$. The associated elementary relation is
    $$
    \begin{tikzcd}
        \mathbb{P}^3 \arrow[d] & \mathrm{Bl}_{P} \mathbb{P}^3 \arrow[dd] \arrow[l] \arrow[equal,r] & \mathrm{Bl}_{P} \mathbb{P}^3 \arrow[d] & X \arrow[dd] \arrow[l] \arrow[equal,r] & X \arrow[d] & X \arrow[dd] \arrow[equal,l] \arrow[r] & \mathrm{Bl}_{L'}\mathbb{P}^{3} \arrow[d] & \mathrm{Bl}_{L'}\mathbb{P}^{3} \arrow[dd] \arrow[equal,l] \arrow[r] & \mathbb{P}^3 \arrow[d] \\
        pt \arrow[equal,dr] & & \mathbb{P}^2 \arrow[dl] \arrow[equal,dr] & & \mathbb{F}_{1} \arrow[dl] \arrow[dr] & & \mathbb{P}^1 \arrow[equal,dl] \arrow[dr] & & pt \arrow[equal,dl] \\
        & pt & & \mathbb{P}^2 & & \mathbb{P}^{1} & & pt &
    \end{tikzcd}
    $$
\end{statement}

\begin{statement}[No. 3.31]
    In this case $X$ is a blow-up of a cone $Y$ over smooth quadrics in $\mathbb{P}^3$ in the vertex. A toric LG model of $X$ is given by
    $$
    f = x + y + z + \frac{1}{x} + \frac{1}{xy} + \frac{1}{xz}.
    $$
    A parametrized toric LG model of $X$ is given by
    $$
    \widetilde{f} = x + y + z + \frac{a_{1}}{x} + \frac{a_{2}}{xy} + \frac{a_{3}}{xz}.
    $$
    The associated elementary relation is
    $$
    \begin{tikzcd}
        X \arrow[d] & X \arrow[dd]\arrow[equal,l] \arrow[r] & Y_{1} \arrow[d] & Y \arrow[dd] \arrow[dashed,l] \arrow[dashed,r] & Y_{2} \arrow[d] & X \arrow[dd] \arrow[l] \arrow[equal,r] & X \arrow[d] \\
        \mathbb{P}^1 \times \mathbb{P}^1 \arrow[dr] & & \mathbb{P}^1 \arrow[equal,dl] \arrow[dr] & & \mathbb{P}^1 \arrow[dl] \arrow[equal,dr] & & \mathbb{P}^1 \times \mathbb{P}^1 \arrow[dl] \\
        & \mathbb{P}^1 & & pt & & \mathbb{P}^1 &
    \end{tikzcd}
    $$
    where $Y_{1}$ and $Y_{2}$ are two small resolutions of $Y$, and the natural birational map $Y_{1} \dashrightarrow Y_{2}$ is a flop over $Y$. General fibres of $Y_{1} \rightarrow \mathbb{P}^1$ and $Y_{2} \rightarrow \mathbb{P}^1$ are isomorphic to $\mathbb{P}^2$. A toric LG model of $Y$ is given by
    $$
    f_{Y} = x + y + z + \frac{1}{xy} + \frac{1}{xz}.
    $$
    Hence $Y$ deforms to $Q$.
\end{statement}

\subsection{Picard number 4}
    Now we study smooth Fano threefolds of Picard number 4. The associated elementary syzygy is a polyhedral decomposition of the sphere $S^{2}$. In this subsection, we will present such polyhedral structure by projecting the sphere $S^{2}$ to the plane with infinity. We list some cases where we encounter a terminal but not smooth Fano variety.

\begin{example}[No. 4.2]\label{case 4.2}
    In this case, $X$ is a blow-up of the cone over a smooth quadric $S$ in $\mathbb{P}^3$ in the disjoint union of the vertex and an elliptic curve $C$ on $S$. Denote by $g:X \rightarrow X_{3.31}$ the blow-up of $C$ and $(p_{1},p_{2}): X_{3.31} \rightarrow \mathbb{P}^1 \times \mathbb{P}^1$ the natural projection. Let $F_{1}$ and $F_{2}$ be the fibre class of $p_{1}$ and $p_{2}$ respectively. The effective cone $\mathrm{Eff}_{\mathbb{R}}(X)$ is generated by 6 extremal rays:
    \begin{enumerate}
        \item The exceptional divisor $E_{1}$ centered at the vertex;
        \item The exceptional divisor $E_{2}$ centered at $C$;
        \item The strict transformation of $S \cong \mathbb{P}^1 \times \mathbb{P}^1$, whose divisor class is given by $g^{*}F_{1} + g^{*}F_{2} + E_{1} - E_{2}$;
        \item The strict transformation of the cone over $pt \times \mathbb{P}^1 \subseteq \mathbb{P}^1 \times \mathbb{P}^1 \cong S$, whose divisor class is given by $g^{*}F_{1}$;
        \item The strict transformation of the cone over $\mathbb{P}^1 \times pt \subseteq \mathbb{P}^1 \times \mathbb{P}^1 \cong S$, whose divisor class is given by $g^{*}F_{2}$;
        \item The strict transformation of the cone over $C$, whose divisor class is given by $2g^{*}F_{1} + 2g^{*}F_{2} - E_{2}$.
    \end{enumerate}
    Let $(v_{4},v_{1},v_{3},v_{5},v_{6},v_{2})$ be the corresponding prime vectors. A toric LG model of $X$ is given by
    $$
    f = x + y + \frac{x}{y} + \frac{y}{x} + \frac{z}{x} + \frac{z}{y} + \frac{2}{x} + \frac{2}{y} + \frac{1}{xz} + \frac{1}{yz}. 
    $$
    A parametrized toric LG model of $X$ is given by
    $$
    \widetilde{f} = a_{1}x + a_{2}y + \frac{a_{3}x}{y} + \frac{a_{4}y}{x} + \frac{a_{2}a_{5}z}{x} + \frac{a_{1}a_{5}z}{y} + \frac{a_{2}(a_{5}+a_{6})}{x} + \frac{a_{1}(a_{5}+a_{6})}{y} + \frac{a_{2}a_{6}}{xz} + \frac{a_{1}a_{6}}{yz}
    $$
    with the equivalence $(a_{1},a_{2},a_{3},a_{4},a_{5},a_{6}) \sim (\lambda a_{1},a_{2},\lambda a_{3},\lambda^{-1} a_{4},\lambda^{-1} a_{5},\lambda^{-1} a_{6})$ and $(a_{1},a_{2},a_{3},a_{4},a_{5},a_{6}) \sim ( a_{1},\lambda a_{2},\lambda^{-1} a_{3},\lambda a_{4},\lambda^{-1} a_{5},\lambda^{-1} a_{6})$. Hence the associated elementary syzygy is
    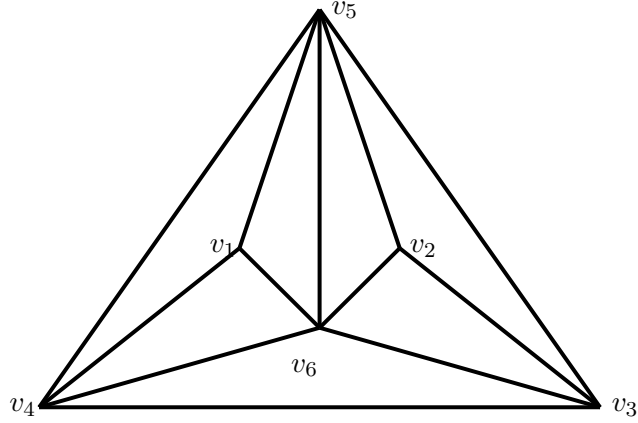
\begin{figure}[H]
        \centering
        \ifx\du\undefined
          \newlength{\du}
        \fi
        \setlength{\du}{15\unitlength}
        \begin{tikzpicture}
        \pgftransformxscale{1.000000}
        \pgftransformyscale{-1.000000}
        \definecolor{dialinecolor}{rgb}{0.000000, 0.000000, 0.000000}
        \pgfsetstrokecolor{dialinecolor}
        \definecolor{dialinecolor}{rgb}{1.000000, 1.000000, 1.000000}
        \pgfsetfillcolor{dialinecolor}
        \definecolor{dialinecolor}{rgb}{0.000000, 0.000000, 0.000000}
        \pgfsetstrokecolor{dialinecolor}
        \node[anchor=west] at (27.000000\du,14.000000\du){$v_{1}$};
        \definecolor{dialinecolor}{rgb}{0.000000, 0.000000, 0.000000}
        \pgfsetstrokecolor{dialinecolor}
        \node[anchor=west] at (32.000000\du,14.000000\du){$v_{2}$};
        \definecolor{dialinecolor}{rgb}{0.000000, 0.000000, 0.000000}
        \pgfsetstrokecolor{dialinecolor}
        \node[anchor=west] at (22.000000\du,18.000000\du){$v_{4}$};
        \pgfsetlinewidth{0.100000\du}
        \pgfsetdash{}{0pt}
        \pgfsetdash{}{0pt}
        \pgfsetbuttcap
        {
        \definecolor{dialinecolor}{rgb}{0.000000, 0.000000, 0.000000}
        \pgfsetfillcolor{dialinecolor}
        \definecolor{dialinecolor}{rgb}{0.000000, 0.000000, 0.000000}
        \pgfsetstrokecolor{dialinecolor}
        \draw (28.000000\du,14.000000\du)--(23.000000\du,18.000000\du);
        }
        \definecolor{dialinecolor}{rgb}{0.000000, 0.000000, 0.000000}
        \pgfsetstrokecolor{dialinecolor}
        \node[anchor=west] at (30.000000\du,8.000000\du){$v_{5}$};
        \definecolor{dialinecolor}{rgb}{0.000000, 0.000000, 0.000000}
        \pgfsetstrokecolor{dialinecolor}
        \node[anchor=west] at (37.000000\du,18.000000\du){$v_{3}$};
        \pgfsetlinewidth{0.100000\du}
        \pgfsetdash{}{0pt}
        \pgfsetdash{}{0pt}
        \pgfsetbuttcap
        {
        \definecolor{dialinecolor}{rgb}{0.000000, 0.000000, 0.000000}
        \pgfsetfillcolor{dialinecolor}
        \definecolor{dialinecolor}{rgb}{0.000000, 0.000000, 0.000000}
        \pgfsetstrokecolor{dialinecolor}
        \draw (32.000000\du,14.000000\du)--(37.000000\du,18.000000\du);
        }
        \pgfsetlinewidth{0.100000\du}
        \pgfsetdash{}{0pt}
        \pgfsetdash{}{0pt}
        \pgfsetbuttcap
        {
        \definecolor{dialinecolor}{rgb}{0.000000, 0.000000, 0.000000}
        \pgfsetfillcolor{dialinecolor}
        \definecolor{dialinecolor}{rgb}{0.000000, 0.000000, 0.000000}
        \pgfsetstrokecolor{dialinecolor}
        \draw (23.000000\du,18.000000\du)--(37.000000\du,18.000000\du);
        }
        \pgfsetlinewidth{0.100000\du}
        \pgfsetdash{}{0pt}
        \pgfsetdash{}{0pt}
        \pgfsetbuttcap
        {
        \definecolor{dialinecolor}{rgb}{0.000000, 0.000000, 0.000000}
        \pgfsetfillcolor{dialinecolor}
        \definecolor{dialinecolor}{rgb}{0.000000, 0.000000, 0.000000}
        \pgfsetstrokecolor{dialinecolor}
        \draw (28.000000\du,14.000000\du)--(30.000000\du,8.000000\du);
        }
        \pgfsetlinewidth{0.100000\du}
        \pgfsetdash{}{0pt}
        \pgfsetdash{}{0pt}
        \pgfsetbuttcap
        {
        \definecolor{dialinecolor}{rgb}{0.000000, 0.000000, 0.000000}
        \pgfsetfillcolor{dialinecolor}
        \definecolor{dialinecolor}{rgb}{0.000000, 0.000000, 0.000000}
        \pgfsetstrokecolor{dialinecolor}
        \draw (30.000000\du,8.000000\du)--(32.000000\du,14.000000\du);
        }
        \pgfsetlinewidth{0.100000\du}
        \pgfsetdash{}{0pt}
        \pgfsetdash{}{0pt}
        \pgfsetbuttcap
        {
        \definecolor{dialinecolor}{rgb}{0.000000, 0.000000, 0.000000}
        \pgfsetfillcolor{dialinecolor}
        \definecolor{dialinecolor}{rgb}{0.000000, 0.000000, 0.000000}
        \pgfsetstrokecolor{dialinecolor}
        \draw (30.000000\du,16.000000\du)--(28.000000\du,14.000000\du);
        }
        \pgfsetlinewidth{0.100000\du}
        \pgfsetdash{}{0pt}
        \pgfsetdash{}{0pt}
        \pgfsetbuttcap
        {
        \definecolor{dialinecolor}{rgb}{0.000000, 0.000000, 0.000000}
        \pgfsetfillcolor{dialinecolor}
        \definecolor{dialinecolor}{rgb}{0.000000, 0.000000, 0.000000}
        \pgfsetstrokecolor{dialinecolor}
        \draw (30.000000\du,16.000000\du)--(32.000000\du,14.000000\du);
        }
        \definecolor{dialinecolor}{rgb}{0.000000, 0.000000, 0.000000}
        \pgfsetstrokecolor{dialinecolor}
        \node[anchor=west] at (29.000000\du,17.000000\du){$v_{6}$};
        \pgfsetlinewidth{0.100000\du}
        \pgfsetdash{}{0pt}
        \pgfsetdash{}{0pt}
        \pgfsetbuttcap
        {
        \definecolor{dialinecolor}{rgb}{0.000000, 0.000000, 0.000000}
        \pgfsetfillcolor{dialinecolor}
        \definecolor{dialinecolor}{rgb}{0.000000, 0.000000, 0.000000}
        \pgfsetstrokecolor{dialinecolor}
        \draw (30.000000\du,16.000000\du)--(23.000000\du,18.000000\du);
        }
        \pgfsetlinewidth{0.100000\du}
        \pgfsetdash{}{0pt}
        \pgfsetdash{}{0pt}
        \pgfsetbuttcap
        {
        \definecolor{dialinecolor}{rgb}{0.000000, 0.000000, 0.000000}
        \pgfsetfillcolor{dialinecolor}
        \definecolor{dialinecolor}{rgb}{0.000000, 0.000000, 0.000000}
        \pgfsetstrokecolor{dialinecolor}
        \draw (37.000000\du,18.000000\du)--(30.000000\du,16.000000\du);
        }
        \pgfsetlinewidth{0.100000\du}
        \pgfsetdash{}{0pt}
        \pgfsetdash{}{0pt}
        \pgfsetbuttcap
        {
        \definecolor{dialinecolor}{rgb}{0.000000, 0.000000, 0.000000}
        \pgfsetfillcolor{dialinecolor}
        \definecolor{dialinecolor}{rgb}{0.000000, 0.000000, 0.000000}
        \pgfsetstrokecolor{dialinecolor}
        \draw (30.000000\du,8.000000\du)--(23.000000\du,18.000000\du);
        }
        \pgfsetlinewidth{0.100000\du}
        \pgfsetdash{}{0pt}
        \pgfsetdash{}{0pt}
        \pgfsetbuttcap
        {
        \definecolor{dialinecolor}{rgb}{0.000000, 0.000000, 0.000000}
        \pgfsetfillcolor{dialinecolor}
        \definecolor{dialinecolor}{rgb}{0.000000, 0.000000, 0.000000}
        \pgfsetstrokecolor{dialinecolor}
        \draw (30.000000\du,8.000000\du)--(37.000000\du,18.000000\du);
        }
        \pgfsetlinewidth{0.100000\du}
        \pgfsetdash{}{0pt}
        \pgfsetdash{}{0pt}
        \pgfsetbuttcap
        {
        \definecolor{dialinecolor}{rgb}{0.000000, 0.000000, 0.000000}
        \pgfsetfillcolor{dialinecolor}
        \definecolor{dialinecolor}{rgb}{0.000000, 0.000000, 0.000000}
        \pgfsetstrokecolor{dialinecolor}
        \draw (30.000000\du,8.000000\du)--(30.000000\du,16.000000\du);
        }
        \end{tikzpicture}
        \caption{The elementary syzygy of Mori-Mukai No. 4.2}
        \label{figure:4.2}
    \end{figure}    
    
    The correspondence of vertices and lines is given by:
    \begin{itemize}
        \item $v_{1}$: The smooth Fano variety $X_{3.31}$;
        \item $v_{2}$: The other smooth Fano variety $X'_{3.31}$;
        \item $v_{3}$: A Fano variety $Y_{1}$ with an ordinary double point, which deforms to $X_{2.23}$;
        \item $v_{4}$: A Fano variety $Y_{2}$ with an ordinary double point, which deforms to $X_{2.23}$;
        \item $v_{5}$: A Fano fibration $X \rightarrow \mathbb{P}^1$ whose general fibres are smooth del Pezzo surfaces of degree 6;
        \item $v_{6}$: A Fano fibration $X \rightarrow \mathbb{P}^1$ whose general fibres are smooth del Pezzo surfaces of degree 6;
        \item $\langle v_{1},v_{4} \rangle$: A Fano variety $Y_{3}$ with an ordinary double point, which deforms to $Q$;
        \item $\langle v_{1},v_{5} \rangle$: A Fano fibration $X_{3.31} \rightarrow \mathbb{P}^1$ whose general fibres are isomorphic to $\mathbb{F}_{1}$;
        \item $\langle v_{1},v_{6} \rangle$: Another Fano fibration $X_{3.31} \rightarrow \mathbb{P}^1$ whose general fibres are isomorphic to $\mathbb{F}_{1}$;
        \item $\langle v_{2},v_{3} \rangle$: A Fano variety $Y_{4}$ with an ordinary double point, which deforms to $Q$;
        \item $\langle v_{2},v_{5} \rangle$: A Fano fibration $X'_{3.31} \rightarrow \mathbb{P}^1$ whose general fibres are isomorphic to $\mathbb{F}_{1}$;
        \item $\langle v_{2},v_{6} \rangle$: Another Fano fibration $X'_{3.31} \rightarrow \mathbb{P}^1$ whose general fibres are isomorphic to $\mathbb{F}_{1}$;
        \item $\langle v_{3},v_{4} \rangle$: A Fano variety $Y_{5}$ with two ordinary double points, which deforms to $B_{4}$;
        \item $\langle v_{3},v_{5} \rangle$: A Fano fibration $\widetilde{Y_{1}} \rightarrow \mathbb{P}^1$ whose general fibres are del Pezzo surfaces of degree 7, where $\widetilde{Y_{1}}$ is a $\mathbb{Q}$-factorization of $Y_{1}$;
        \item $\langle v_{3},v_{6} \rangle$: A Fano fibration $\widetilde{Y_{1}}' \rightarrow \mathbb{P}^1$ whose general fibres are del Pezzo surfaces of degree 7, where $\widetilde{Y_{1}}'$ is a $\mathbb{Q}$-factorization of $Y_{1}$;
        \item $\langle v_{4},v_{5} \rangle$: A Fano fibration $\widetilde{Y_{2}} \rightarrow \mathbb{P}^1$ whose general fibres are del Pezzo surfaces of degree 7, where $\widetilde{Y_{2}}$ is a $\mathbb{Q}$-factorization of $Y_{2}$;
        \item $\langle v_{4},v_{6} \rangle$: A Fano fibration $\widetilde{Y_{2}}' \rightarrow \mathbb{P}^1$ whose general fibres are del Pezzo surfaces of degree 7, where $\widetilde{Y_{2}}'$ is a $\mathbb{Q}$-factorization of $Y_{2}$;
        \item $\langle v_{5},v_{6} \rangle$: The Fano fibration $X \rightarrow \mathbb{P}^1 \times \mathbb{P}^1$.
    \end{itemize}
    The correspondence of faces follows from the above correspondence and the elementary syzygies of smooth Fano threefolds of Picard rank $\leq 3$.
\end{example}

\begin{example}[No. 4.5]
    In this case, $X$ is a blow-up of $\mathbb{P}^1 \times \mathbb{P}^2$ in the disjoint union of a curve $C_{1}$ of degree $(2,1)$ and a curve $C_{2}$ of degree $(1,0)$. Denote by $g: X \rightarrow \mathbb{P}^1 \times \mathbb{P}^2$ the blow-up morphism, and denote by $F_{1}$ and $F_{2}$ the divisor class of the fibre of $p_{1}$ and $p_{2}$ respectively. The effective cone $\mathrm{Eff}_{\mathbb{R}}(X)$ is generated by 5 extremal rays:
    \begin{enumerate}
        \item The exceptional divisor $E_{1}$ centered at $C_{1}$;
        \item The exceptional divisor $E_{2}$ centered at $C_{2}$;
        \item The strict transformation of a general fibre of $p_{1}$, whose divisor class is given by $g^{*}F_{1}$;
        \item The strict transformation of the divisor given by connecting $F \cap C_{1}$ and $F  \cap C_{2}$ on each fibre $F$ of $p_{1}$, whose divisor class is given by $g^{*}F_{1} + 2g^{*}F_{2}-E_{1}-2E_{2}$;
        \item The strict transformation of a plane $\mathbb{P}^1 \times H$ which contains $C_{1}$, whose divisor class is given by $g^{*}F_{2} - E_{1}$.
    \end{enumerate}
    A toric LG model of $X$ is given by
    $$
    f = x + y + z + \frac{y}{x} + \frac{y}{z} + \frac{z}{y} + \frac{1}{x} + \frac{1}{y} + \frac{1}{z}.
    $$
    Taking a change of variables $y \mapsto \frac{1}{yz}, z \mapsto \frac{1}{z}$, we can rewrite the Laurent polynomial as
    $$
    f = x + y + z + yz + \frac{1}{x} + \frac{1}{y} + \frac{1}{z} + \frac{1}{yz} + \frac{1}{xyz}.
    $$
    A parametrized toric LG model of $X$ is given by
    $$
    \widetilde{f} = a_{2}x + a_{3}y + a_{3}z + a_{1}yz + \frac{a_{3}}{x} + \frac{a_{5}}{y} + \frac{a_{5}}{z} + \frac{a_{4}}{yz} + \frac{a_{5}}{xyz}
    $$
    with the equivalence $(a_{1},a_{2},a_{3},a_{4},a_{5}) \sim (\lambda^{2} a_{1}, \lambda^{-1} a_{2}, \lambda a_{3}, \lambda^{-2} a_{4}, \lambda^{-1} a_{5})$.
    Hence the associated elementary syzygy is
    \begin{figure}[H]
        \centering
        \ifx\du\undefined
          \newlength{\du}
        \fi
        \setlength{\du}{15\unitlength}
        \begin{tikzpicture}
        \pgftransformxscale{1.000000}
        \pgftransformyscale{-1.000000}
        \definecolor{dialinecolor}{rgb}{0.000000, 0.000000, 0.000000}
        \pgfsetstrokecolor{dialinecolor}
        \definecolor{dialinecolor}{rgb}{1.000000, 1.000000, 1.000000}
        \pgfsetfillcolor{dialinecolor}
        \definecolor{dialinecolor}{rgb}{0.000000, 0.000000, 0.000000}
        \pgfsetstrokecolor{dialinecolor}
        \node[anchor=west] at (25.000000\du,10.000000\du){$v_{1}$};
        \definecolor{dialinecolor}{rgb}{0.000000, 0.000000, 0.000000}
        \pgfsetstrokecolor{dialinecolor}
        \node[anchor=west] at (35.000000\du,20.000000\du){$v_{2}$};
        \definecolor{dialinecolor}{rgb}{0.000000, 0.000000, 0.000000}
        \pgfsetstrokecolor{dialinecolor}
        \node[anchor=west] at (19.000000\du,20.000000\du){$v_{4}$};
        \pgfsetlinewidth{0.100000\du}
        \pgfsetdash{}{0pt}
        \pgfsetdash{}{0pt}
        \pgfsetbuttcap
        {
        \definecolor{dialinecolor}{rgb}{0.000000, 0.000000, 0.000000}
        \pgfsetfillcolor{dialinecolor}
        \definecolor{dialinecolor}{rgb}{0.000000, 0.000000, 0.000000}
        \pgfsetstrokecolor{dialinecolor}
        \draw (25.050000\du,10.050000\du)--(20.000000\du,20.000000\du);
        }
        \pgfsetlinewidth{0.100000\du}
        \pgfsetdash{}{0pt}
        \pgfsetdash{}{0pt}
        \pgfsetbuttcap
        {
        \definecolor{dialinecolor}{rgb}{0.000000, 0.000000, 0.000000}
        \pgfsetfillcolor{dialinecolor}
        \definecolor{dialinecolor}{rgb}{0.000000, 0.000000, 0.000000}
        \pgfsetstrokecolor{dialinecolor}
        \draw (25.000000\du,10.000000\du)--(35.000000\du,20.000000\du);
        }
        \pgfsetlinewidth{0.100000\du}
        \pgfsetdash{}{0pt}
        \pgfsetdash{}{0pt}
        \pgfsetbuttcap
        {
        \definecolor{dialinecolor}{rgb}{0.000000, 0.000000, 0.000000}
        \pgfsetfillcolor{dialinecolor}
        \definecolor{dialinecolor}{rgb}{0.000000, 0.000000, 0.000000}
        \pgfsetstrokecolor{dialinecolor}
        \draw (20.000000\du,20.000000\du)--(35.000000\du,20.000000\du);
        }
        \pgfsetlinewidth{0.100000\du}
        \pgfsetdash{}{0pt}
        \pgfsetdash{}{0pt}
        \pgfsetbuttcap
        {
        \definecolor{dialinecolor}{rgb}{0.000000, 0.000000, 0.000000}
        \pgfsetfillcolor{dialinecolor}
        \definecolor{dialinecolor}{rgb}{0.000000, 0.000000, 0.000000}
        \pgfsetstrokecolor{dialinecolor}
        \draw (25.000000\du,10.000000\du)--(25.000000\du,16.000000\du);
        }
        \pgfsetlinewidth{0.100000\du}
        \pgfsetdash{}{0pt}
        \pgfsetdash{}{0pt}
        \pgfsetbuttcap
        {
        \definecolor{dialinecolor}{rgb}{0.000000, 0.000000, 0.000000}
        \pgfsetfillcolor{dialinecolor}
        \definecolor{dialinecolor}{rgb}{0.000000, 0.000000, 0.000000}
        \pgfsetstrokecolor{dialinecolor}
        \draw (25.000000\du,16.000000\du)--(20.000000\du,20.000000\du);
        }
        \pgfsetlinewidth{0.100000\du}
        \pgfsetdash{}{0pt}
        \pgfsetdash{}{0pt}
        \pgfsetbuttcap
        {
        \definecolor{dialinecolor}{rgb}{0.000000, 0.000000, 0.000000}
        \pgfsetfillcolor{dialinecolor}
        \definecolor{dialinecolor}{rgb}{0.000000, 0.000000, 0.000000}
        \pgfsetstrokecolor{dialinecolor}
        \draw (25.000000\du,16.000000\du)--(26.000000\du,18.000000\du);
        }
        \pgfsetlinewidth{0.100000\du}
        \pgfsetdash{}{0pt}
        \pgfsetdash{}{0pt}
        \pgfsetbuttcap
        {
        \definecolor{dialinecolor}{rgb}{0.000000, 0.000000, 0.000000}
        \pgfsetfillcolor{dialinecolor}
        \definecolor{dialinecolor}{rgb}{0.000000, 0.000000, 0.000000}
        \pgfsetstrokecolor{dialinecolor}
        \draw (26.000000\du,18.000000\du)--(35.000000\du,20.000000\du);
        }
        \pgfsetlinewidth{0.100000\du}
        \pgfsetdash{}{0pt}
        \pgfsetdash{}{0pt}
        \pgfsetbuttcap
        {
        \definecolor{dialinecolor}{rgb}{0.000000, 0.000000, 0.000000}
        \pgfsetfillcolor{dialinecolor}
        \definecolor{dialinecolor}{rgb}{0.000000, 0.000000, 0.000000}
        \pgfsetstrokecolor{dialinecolor}
        \draw (25.000000\du,16.000000\du)--(35.000000\du,20.000000\du);
        }
        \definecolor{dialinecolor}{rgb}{0.000000, 0.000000, 0.000000}
        \pgfsetstrokecolor{dialinecolor}
        \node[anchor=west] at (25.000000\du,15.800000\du){$v_{5}$};
        \definecolor{dialinecolor}{rgb}{0.000000, 0.000000, 0.000000}
        \pgfsetstrokecolor{dialinecolor}
        \node[anchor=west] at (26.000000\du,17.800000\du){$v_{3}$};
        \pgfsetlinewidth{0.100000\du}
        \pgfsetdash{}{0pt}
        \pgfsetdash{}{0pt}
        \pgfsetbuttcap
        {
        \definecolor{dialinecolor}{rgb}{0.000000, 0.000000, 0.000000}
        \pgfsetfillcolor{dialinecolor}
        \definecolor{dialinecolor}{rgb}{0.000000, 0.000000, 0.000000}
        \pgfsetstrokecolor{dialinecolor}
        \draw (26.000000\du,18.000000\du)--(20.000000\du,20.000000\du);
        }
        \end{tikzpicture}
    \caption{The elementary syzygy of Mori-Mukai No. 4.5}
    \label{figure:4.5}
    \end{figure}
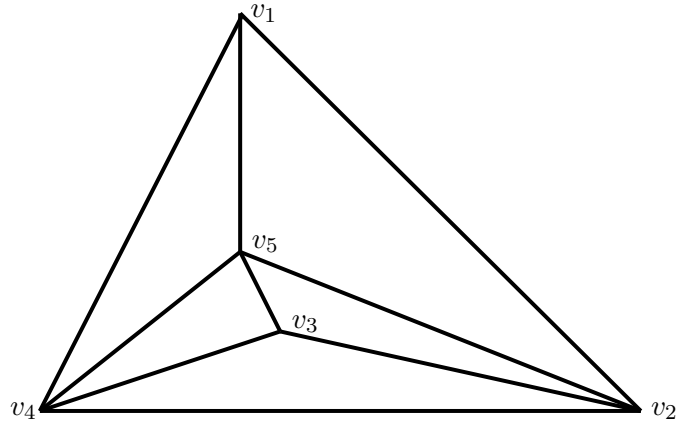
    The correspondence of vertices and lines is given as:
    \begin{itemize}
    \item $v_{1}$: The smooth Fano variety $X_{3.21}$.
    \item $v_{2}$: The composition $X \rightarrow \mathbb{P}^1 \times \mathbb{F}_{1} \xrightarrow{p_{1}} \mathbb{P}^1$.
    \item $v_{3}$: The smooth Fano variety $X_{3.31}$.
    \item $v_{4}$: A Fano variety $Y_{1}$ with an ordinary double point, which deforms to $X_{2.26}$.
    \item $v_{5}$: The smooth Fano variety $\mathbb{P}^1 \times \mathbb{F}_{1}$.
    \item $\langle v_{1},v_{2} \rangle$: The composition $X_{3.21} \rightarrow \mathbb{P}^1 \times \mathbb{P}^2 \rightarrow \mathbb{P}^1$.
    \item $\langle v_{1},v_{4} \rangle$: A Fano variety $Y_{2}$ with an ordinary double point, which deforms to $B_{5}$.
    \item $\langle v_{1},v_{5} \rangle$: The smooth Fano variety $\mathbb{P}^{1} \times \mathbb{P}^2$.
    \item $\langle v_{2},v_{3} \rangle$: The composition $X_{3.31} \rightarrow \mathbb{P}^1 \times \mathbb{P}^1 \rightarrow \mathbb{P}^1$.
    \item $\langle v_{2},v_{4} \rangle$: The other composition $X_{3.31} \rightarrow \mathbb{P}^1 \times \mathbb{P}^1 \rightarrow \mathbb{P}^1$.
    \item $\langle v_{2},v_{5} \rangle$: The first projection $\mathbb{P}^1 \times \mathbb{F}_{1} \rightarrow \mathbb{P}^1$.
    \item $\langle v_{3},v_{4} \rangle$: A Fano variety $Y_{3}$ with an ordinary double point, which deforms to $Q$.
    \item $\langle v_{3},v_{5} \rangle$: The composition $\mathbb{P}^1 \times \mathbb{F}_{1} \rightarrow \mathbb{F}_{1} \rightarrow \mathbb{P}^1$.
    \item $\langle v_{4},v_{5} \rangle$: The composition $\mathbb{P}^1 \times \mathbb{F}_{1} \rightarrow \mathbb{F}_{1} \rightarrow \mathbb{P}^2$.
    \end{itemize}
    The correspondence of faces follows from the above correspondence and the elementary syzygies of smooth Fano threefolds of Picard rank $\leq 3$.
\end{example}

\printbibliography

\end{document}